\documentclass{article}
\usepackage[journal=JEMS,lang=british]{ems-journal-jems} 



\DeclareMathOperator{\R}{\mathbb{R}}
\DeclareMathOperator{\Q}{\mathbb{Q}}

\DeclareMathOperator{\N}{\mathbb{N}}

\DeclareMathOperator{\Co}{\mathbb{C}}

\usepackage{mathtools}	 
\usepackage{ mathrsfs}


\newcommand{\pnorm}[2][]{\left\lVert#2\right\rVert \ifx\\#1\\\else_{#1}\fi}
\DeclarePairedDelimiterX{\abs}[1]{\lvert}{\rvert}{#1}




\newcommand*{\lop}[1]{\mathcal{L}(#1)}
\newcommand*{\lopp}[1]{\mathcal{L}^+(#1)}
\newcommand*{\loppt}[1]{\mathcal{L}_1^+(#1)}

\newcommand*{\s}[1]{(#1_n)_{n\in\N}}
\newcommand*{\sk}[1]{(#1_k)_{k\in\N}}

\newcommand*{\lin}[1]{\operatorname{span}\{ #1\}}
\newcommand*{\fbs}[1]{\mathcal{F}C_b^\infty(#1)}
\newcommand*{\fbk}[1]{\mathcal{F}C_b^{#1}(B_W)}
\newcommand*{\fbkn}[1]{\mathcal{F}C_b^{#1}(B_W,n)}

\newcommand*{\partx}[1]{\partial_{d_#1}}
\newcommand*{\party}[1]{\partial_{e_#1}}

\newcommand*{\partxsq}[1]{\partial^2_{d_#1}}

\newcommand*{\defeq}{\mathrel{\vcenter{\baselineskip0.5ex \lineskiplimit0pt
			\hbox{\scriptsize.}\hbox{\scriptsize.}}}%
	=}
\newcommand*{\eqdef}{=\mathrel{\vcenter{\baselineskip0.5ex \lineskiplimit0pt
			\hbox{\scriptsize.}\hbox{\scriptsize.}}}}

\providecommand{\li}[1]{\lim\limits_{#1\to \infty}}
\newcommand*{\id}{\text{Id}}

\newcommand*{\spdx}{(\cdot,\cdot)}
\newcommand*{\normx}{\|\cdot\|}
\DeclarePairedDelimiterX{\norm}[1]{\lVert}{\rVert}{#1}

\newcommand*{\acore}{\mathcal{D}}
\newcommand*{\sccs}{(T_t)_{t\geq 0}}

\newcommand*{\sccsz}{(T_t^0)_{t\geq 0}}
\newcommand*{\sccr}{(R_\alpha)_{\alpha> 0}}

\newcommand*{\dm}{\,\mathrm{d}\mu}

\newcommand*{\Lphic}{(L^{\Phi},\fbk{\infty})}

\DeclareMathOperator{\tr}{tr}

\newcommand*{\p}{\mathbb{P}}

\newcommand{\Prop}[2][]{\mathbb{P} \ifx\\#1\\\else_{#1}\fi\!\left( #2\right)}
\newcommand{\PropPalm}[2][]{\mathbb{P} \ifx\\#1\\\else^{#1}\fi\!\left( #2\right)}
\newcommand{\borel}[1][]{\mathscr{B} \ifx\\#1\\\else_{#1}\fi}

\usepackage{csquotes}
\usepackage[utf8]{inputenc}	
\usepackage[T1]{fontenc}


\usepackage{todonotes}	 
\usepackage{makecell}

\usepackage[automark,headsepline]{scrlayer-scrpage}	
\usepackage{lscape} 
\usepackage[toc,page]{appendix}
\usepackage{mathtools}	 
\usepackage{ mathrsfs}
\usepackage{dsfont} 

\usepackage[final]{microtype}

\usepackage{cleveref}
\usepackage{bbm}
\usepackage[framemethod=tikz]{mdframed}	 
\usepackage{pifont}
\usepackage{xcolor}

\usepackage{tabularx, booktabs, multirow, array} 
\usepackage{rotating} 
\usepackage{diagbox}
\usepackage{nicefrac}
\theoremstyle{plain}
\newtheorem{thm}{Theorem}[section]
\newtheorem{lem}[thm]{Lemma}
\newtheorem{prop}[thm]{Proposition}
\newtheorem{cor}[thm]{Corollary}
\theoremstyle{definition}
\newtheorem{exa}[thm]{Example}
\newtheorem{rem}[thm]{Remark}
\newtheorem{defn}[thm]{Definition}
\newtheorem{assum}[thm]{Assumption}

\newtheorem*{ass}{Assumption}
\newenvironment{cond}[1]{\begin{ass}[#1] \def\@currentlabelname{(\upshape\boldmath\textbf{#1}}}{\end{ass}}
\raggedbottom

\numberwithin{equation}{section}

\begin{document}

\title{The essential m-dissipativity for degenerate infinite dimensional stochastic Hamiltonian systems and applications}
\titlemark{Essential m-dissipativity for stochastic Hamiltonian systems}



\emsauthor{2}{
	\givenname{Benedikt}
	\surname{Eisenhuth}
	\mrid{}
	\orcid{}}{S.~Contributor}
\emsauthor*{3}{
	\givenname{Martin}
	\surname{Grothaus}
	\mrid{}
	\orcid{}}{S.~Else}

\Emsaffil{2}{
	\pretext{}
\department{Department of Mathematics}
\organisation{University of Kaiserslautern-Landau}
\address{Erwin-Schr{\"o}dinger-Stra{\ss}e 48}
\zip{67663}
\city{Kaiserslautern}
\country{Germany}
\posttext{}
\affemail{eisenhuth@mathematik.uni-kl.de}}

\Emsaffil{3}{
	\pretext{}
\department{Department of Mathematics}
\organisation{University of Kaiserslautern-Landau}
\address{Erwin-Schr{\"o}dinger-Stra{\ss}e 48}
\zip{67663}
\city{Kaiserslautern}
\country{Germany}
\posttext{}
\affemail{grothaus@mathematik.uni-kl.de}}


%
%


\classification[35K57, 35J70]{ 47D07, 37K45}

\keywords{degenerate stochastic Hamiltonian systems, essential m-dissipativity, hypocoercivity, multiplicative noise, stochastic reaction-diffusion equations}

\begin{abstract}
	We consider a degenerate infinite dimensional stochastic Hamiltonian system with multiplicative noise and establish the essential m-dissipativity on $L^2(\mu^{\Phi})$ of the corresponding Kolmogorov (backwards) operator. Here, $\Phi$ is the potential and $\mu^{\Phi}$ the invariant measure with density $e^{-\Phi}$ with respect to an infinite dimensional non-degenerate Gaussian measure. The main difficulty, besides the non-sectorality of the Kolmogorov operator, is the coverage of a large class of potentials. We include potentials that have neither a bounded nor a Lipschitz continuous gradient. The essential m-dissipativity is the starting point to establish the hypocoercivity of the strongly continuous contraction semigroup $(T_t)_{t\geq 0}$ generated by the Kolmogorov operator. By using the refined abstract Hilbert space hypocoercivity method of Grothaus and Stilgenbauer, originally introduced by Dolbeault, Mouhot and Schmeiser, we construct a $\mu^{\Phi}$-invariant Hunt process with weakly continuous paths and infinite lifetime, whose transition semigroup is associated with $(T_t)_{t\geq 0}$. This process provides a stochastically and analytically weak solution to the degenerate infinite dimensional stochastic Hamiltonian system with multiplicative noise. The hypocoercivity of $(T_t)_{t\geq 0}$ and the identification of $(T_t)_{t\geq 0}$ with the transition semigroup of the process leads to the exponential ergodicity. Finally, we apply our results to degenerate second order in time stochastic reaction-diffusion equations with multiplicative noise. A discussion of the class of applicable potentials and coefficients governing these equations completes our analysis.
\end{abstract}

\maketitle


\section{Introduction}\label{ch:Intro}
The objective of this article is to examine an infinite dimensional version of the degenerate stochastic Hamiltonian systems studied in \cite{dPL06,GS16, BG21-wp}, whereas we  allow multiplicative noise. Let $W\defeq U\times V$ be the Cartesian product of two infinite dimensional real separable Hilbert spaces $(U,\spdx_U)$ and $(V,\spdx_V)$, respectively.
Then, the degenerate infinite dimensional stochastic Hamiltonian system with multiplicative noise is 
described on $W$, by the following infinite dimensional nonlinear degenerate stochastic differential equation with multiplicative noise
\begin{equation}\label{eq:sde}
	\begin{aligned}
		\mathrm{d} X_t &=\,K_{21}Q_2^{-1}Y_t\,\mathrm{d}t \\
		\mathrm{d}Y_t &= \sum_{i=1}^\infty \party{i}K_{22}(Y_t)e_i\,\mathrm{d}t  - K_{22}(Y_t)Q_2^{-1}Y_t\,\mathrm{d}t-K_{12}Q_1^{-1}X_t\,\mathrm{d}t -K_{12}					D\Phi(X_t)\,\mathrm{d}t \\&\qquad+ \sqrt{2K_{22}(Y_t)}\,\mathrm{d}W_t.
	\end{aligned}
\end{equation}
The action of the associated Kolmogorov backwards operator $L^{\Phi}$, on the space of bounded smooth cylinder functions, denoted by $\fbs{B_W}$, is given as
\[
\begin{aligned}
	L^{\Phi}f(u,v)
	&\defeq \tr\left[K_{22}(v)\circ D_2^2f(u,v)\right]
	+ \sum_{i=1}^{\infty} (\party{i}K_{22}(v)D_2f(u,v),e_i)_V \\
	&\quad- (v,Q_2^{-1}K_{22}(v)D_2f(u,v))_V- (u,Q_1^{-1}K_{21}D_2f(u,v))_U\\
	&\quad-(D\Phi(u),K_{21}D_2f(u,v))_U+(v,Q_2^{-1}K_{12}D_1f(u,v))_V.
\end{aligned}
\]
Above, $K_{21}$ is a bounded linear operator from $V$ to $U$ and $K_{12}$ its adjoint. For a sufficiently smooth function $f$ we denote by $D_1f$ and $D_2f$ the gradient in the first and second component, respectively. The diffusion part is determined by the variable coefficient $K_{22}$, where  $K_{22}(v)$ is a bounded symmetric positive linear operator on $V$ for every $v\in V$. The stochastic force is governed by a cylindrical Wiener process $(W_t)_{t\geq 0}$ with values in $V$. Moreover, $D\Phi$ is the gradient of a potential $\Phi:U\rightarrow (-\infty,\infty]$ and $Q_1$, as well as $Q_2$, are the covariance operators of two centered non-degenerate Gaussian measures $\mu_1$ and $\mu_2$ on $U$ and $V$, respectively. The underlying Hilbert space is determined by the product measure $ \mu^{\Phi}\defeq e^{-\Phi}\,\mu_1\otimes\mu_2$ defined on the Borel $\sigma$-algebra $\borel(W)$. The partial derivatives $\partx{i}K_{22}$, $i\in\N$, are taken with respect to the orthonormal Basis $(e_i)_{i\in\N}$, diagonalizing the covariance operator $Q_2$. Regularity assumptions for $\Phi$ and suitable invariance properties for the coefficients ensure that $L^{\Phi}$ is well-defined on $\fbs{B_W}$.

The degeneracy of the Equation \eqref{eq:sde} corresponds to the degeneracy of $L^{\Phi}$ in the sense that the second order differential operator in the definition of $L^{\Phi}$ only acts in the second component.

Our examination of the degenerate infinite dimensional stochastic Hamiltonian system with multiplicative noise is focused on the essential m-dissipativity of the operator $(L^{\Phi},\fbs{B_W})$, which we show by using Assumption \nameref{ass:pot_app} in \Cref{theo:ess_diss_phi_Lfour}. The essential m-dissipativity of $(L^{\Phi},\fbs{B_W})$ is equivalent to the existence of a strongly continuous contraction semigroup $\sccs$ generated by the closure $(L^{\Phi},D(L^{\Phi}))$ of the operator $(L^{\Phi},\fbs{B_W})$. Consequently, for each $u_0\in D(L^{\Phi})$, the function $[0,\infty)\ni t\mapsto u(t)\defeq T_tu_0\in D(L^{\Phi})$ is the unique classical solution to the abstract Cauchy Problem
\begin{align*}
	\frac{\mathrm{d}}{\mathrm{d}t}u(t)=L^{\Phi}u(t),\quad u(0)=u_0,
\end{align*}
compare \cite[Theorem 3.1.12]{cauchy-problems}. We highlight that our approaches to establish essential m-dissipativity of $(L^{\Phi},\fbs{B_W})$ are applicable, if the variable diffusion coefficient $K_{22}$ and the potential $\Phi$ are not $C^{\infty}$-smooth. In this sense, our results complement those of \cite{BDPRS15}, where existence and uniqueness (for $C^{\infty}$-smooth coefficients) of solutions for a large class of highly degenerate Fokker-Planck-Kolmogorov equations for probability measures on infinite dimensional spaces has been established.

Under mild regularity assumption on the potential $\Phi$ and $K_{22}$, as well as reasonable block invariance properties of the coefficients, compare \Cref{sec:inf_langevin_general}, we use an integration by parts formula with respect to measures of type $\mu^{\Phi}$ to obtain dissipativity of $(L^{\Phi},\fbs{B_W})$ on $L^2(W;\mu^{\Phi})$. The essential m-dissipativity of $(L^{\Phi},\fbs{B_W})$ then follows, in view of the famous Lumer-Phillips theorem, if there is some $\lambda\in (0,\infty)$ such that
\begin{equation}\label{denserange}
	(\lambda-L^{\Phi})(\fbs{B_W})\quad\text{is dense in}\quad L^2(W;\mu^{\Phi}).
\end{equation}
The difficulty to establish this so-called dense range condition, is governed by the degeneracy of the operator, the infinite dimensionality of the problem and the regularity properties of the potential and the coefficients. 
In view of Assumption \nameref{ass:pot_app}, we approximate the potential $\Phi$ with a double sequence $(\Phi_n^m)_{n,m\in\N}$ and get  the existence of a constant $\lambda\in(0,\infty)$ independent of $m,n\in\N$, such that for each $g\in \fbs{B_W}$ there exists a function $f_{n,m}\in \mathcal{F}C_b^3(B_W)$ with
\begin{equation}\label{eq:g-Lg=f}
	\lambda f_{n,m}-L^{\Phi_n^m}f_{n,m}=g.
\end{equation}
We then provide an $L^4(W;\mu^{\Phi_n^m})$ first order regularity estimate for $f_{n,m}$, independent of $m,n\in\N$, which allows us to show \eqref{denserange}. The strategy to derive such $L^4(W;\mu^{\Phi_n^m})$ first order regularity estimates is inspired by the considerations
from \cite{DP00},\cite[Chapter 12.3]{dapratozabczyk} and \cite{dPL06}, where non-degenerate infinite dimensional and degenerate finite dimensional operators, both without variable diffusion coefficient, of this type have been studied. In the case of a potential with bounded gradient the essential m-dissipativity of the Kolmogorov operator corresponding to degenerate infinite dimensional Hamiltonian system, by means of an $L^2(W;\mu^{\Phi})$ first order regularity estimate, has been shown for additive noise in \cite{EG21} and for multiplicative noise in \cite{EG23}.
Other results, concerning the essential m-dissipativity of such degenerate Kolmogorov operators, are, to our knowledge, only available in finite dimensional situations, compare e.g.~\cite{dPL06,GS16, BG21-wp}.
However, concerning the essential m-dissipativity and even the essential self-adjointness of perturbed infinite dimensional Ornstein-Uhlenbeck operators, there are strong results, compare e.g.~\cite{DP00,dPL14, Lunardi15,LunardiOrnsteinInfinite,BigF22}. The essential m-dissipativity for generators associated to singular dissipative stochastic equations in Hilbert space was derived in \cite{DaRoe, Big22}. 

In \cite{Villani}, Villani developed hypocoercivity methods to provide and quantify convergence rates to equilibrium of non-coercive and, in this sense, degenerate diffusive equations. Inspired by the ideas of Villani and influenced by the methods from H\'erau in \cite{H05}, Dolbeault, Mouhot and Schmeiser developed an abstract hypocoercivity concept, compare \cite{DMS15}. They studied exponential convergence to equilibrium of non-coercive evolution equations in a general Hilbert space setting, by means of entropy methods. Although their results were fundamental and opened the door to studying a wide range of degenerate evolution equations, the authors failed to address domain issues that commonly arise when dealing with unbounded linear operators. The essential m-dissipativity of the generator governing the evolution equation on an appropriate core is crucial. Grothaus and Stilgenbauer's significant contribution, in \cite{GS14} and \cite{GS16}, was to incorporate these concepts into a Kolmogorov backwards setting while also taking domain issues into account.

It is the rigorous method of Grothaus and Stilgenbauer, in the following called the abstract hypocoercivity method, we use in \Cref{sec:hypo_appl} to establish hypocoercivity, with explicitly computable constants determining the exponential speed of convergence to equilibrium, for the semigroup corresponding to the degenerate  stochastic Hamiltonian system. We contribute by formulating assumptions on the coefficients and the potential, determining $L^{\Phi}$, under which the abstract hypocoercivity method is applicable. To check the sufficiency of these assumptions, we derive a general Poincar\'{e} inequality for measures of type $\mu^{\Phi}$ and a second order regularity estimate for infinite dimensional (perturbed) Ornstein-Uhlenbeck operators with possibly unbounded diffusion coefficient, see \Cref{poin_check} and \Cref{eq:regularity-estimate-two}, respectively. We cover situations in which $\Phi$ is not convex and the gradient of $\Phi$ merely exists in a suitable Sobolev space.
The results we present in this context are based on the already published articles \cite{EG21_Pr} and \cite{EG23}, where hypocoercivity for degenerate infinite dimensional stochastic Hamiltonian systems with multiplicative noise and potentials with bounded gradients has been established. 
Approaches to provide explicit exponential convergence rates for infinite dimensional degenerate dynamics can be found in \cite{Zimmer2017} and \cite{W17}.
Using coupling methods, the author of \cite{Zimmer2017} derived explicit contraction rates for degenerate and infinite dimensional diffusions in an $L^1$ Wasserstein distance. In \cite{W17}, the author established $L^2$-$L^4$ hypercontractivity (stronger notion than hypocoercivity) for stochastic Hamiltonian systems. The result is obtained by using of a dimension free Harnack inequality and coupling methods. 
However, both dynamics considered in \cite{Zimmer2017} and \cite{W17} are less general in terms of the allowed coefficients describing the dynamic and are limited to additive noise. Moreover, the assumptions on the nonlinearity in \cite{Zimmer2017} and \cite{W17} translate to Lipschitz continuity of $D\Phi$, which we do not need for our approach.

We further show that the strongly continuos contraction semigroup $\sccs$, generated by the Kolmogorov operator, is sub-Markovian and conservative. In the appendix \Cref{sec:process}, we explain how the analytic potential theoretic results, described by Beznea, Boboc and R\"{o}ckner in \cite{BBR06_Right_process}, guarantee existence of a right process whose transition semigroup is associated with $\sccs$. The process provides a martingale solution for the Kolmogorov operator $L^{\Phi}$ with respect to the equilibrium measure.

The identification of $\sccs$ and the transition semigroup enables us to derive $L^2$-exponential ergodicity of the process, provided $\sccs$ is hypocoercive. By equipping the state space $W$ with the weak topology and in presence of the assumptions described in \Cref{sec:process}, we are able to apply the abstract resolvent methods from \cite{BBR06} to construct a $\mu^{\Phi}$-invariant Hunt process $\mathbf{M}$ with weakly continuous paths, infinite lifetime and whose transition semigroup is associated with $\sccs$. The existence of a suitable core and a $\mu^{\Phi}$-nest of weakly compact sets is essential for this approach. We follow the lines of \cite{EG23} and \cite{EG21_Pr} and show that $\mathbf{M}$ provides a stochastically and analytically weak solution to \eqref{eq:sde}.

We apply the aforementioned results in the context of stochastic reaction-diffusion equations. Our analysis is based on \cite{dPL14}, where Lunardi and Da Prato studied maximal Sobolev regularity and m-dissipativity for second order elliptic partial differential equations in infinite dimensions to analyze stochastic reaction-diffusion equations. We translate the classic non-degenerate reaction-diffusion equations into our framework of degenerate infinite dimensional stochastic Hamiltonian systems. For both $U$ and $V$, in the context of equation \eqref{eq:sde}, we choose $L^2((0,1);\mathrm{d}\xi)$. In principle, we consider potentials of type
\[
\Phi:U\to (-\infty,\infty],\quad\text{with}\quad\Phi(u)\defeq \int_0^1\phi( u(\xi))\,\mathrm{d}\xi,\quad u\in U,
\]
where we assume that $\phi$ is continuously differentiable with at most polynomial growth and additional technical assumptions. We cover situations where the potential has unbounded and non Lipschitz gradient, compare \Cref{lem:appl_unb_pot_last}. Moreover, the operators $K_{22}$, $K_{21}$, $K_{12}$, $Q_1$ and $Q_2$ are determined by suitable powers of minus the second order derivative with Dirichlet boundary condition. The assumptions to obtain the essential m-dissipativity of the corresponding Kolmogorov operators, hypocoercivity of the semigroups and associated martingale respectively stochastically and analytically weak solutions with weakly continuous paths, are translated into inequalities in terms of the powers determining the coefficient operators.
This example highlights the strength of our results, as they are more general than the degenerate semi-linear infinite dimensional stochastic differential equations discussed in \cite{W17}.

The following enumeration summarizes the major achievements of this article and outlines our strategy. We emphasise that we are able to treat potentials $\Phi$ that have neither a bounded nor a Lipschitz continuous gradient.
\begin{enumerate}
	\item[•] We establish the essential m-dissipativity of $(L^{\Phi},\fbs{B_W})$ on $L^2(W;\mu^{\Phi})$. 
	\item[•] We apply the abstract hypocoercivity method to provide and quantify the exponential convergence rate to equilibrium of the strongly continuos contraction semigroup $\sccs$ generated by the closure of $(L^{\Phi},\fbs{B_W})$.
	\item[•] We give sufficient conditions, ensuring the existence of a $\mu^{\Phi}$-invariant Hunt process with infinite lifetime, weakly continuous paths, providing a stochastically and analytically weak solution to \eqref{eq:sde}. Hypocoercivity of $\sccs$ translates into $L^2$-exponential ergodicity of the processes.
	\item[•] We formulate degenerate second order in time  stochastic reaction-diffusion equations in the context of degenerate infinite dimensional stochastic Hamiltonian systems. They are analyzed based on the previous results.
\end{enumerate}

\section{Preliminaries}

Before we rigorously introduce the Kolmogorov operator associated to the degenerate infinite dimensional stochastic Hamiltonian system we collect some results we need for our analysis.
Let $X$ be an infinite dimensional real separable Hilbert space with inner product $\spdx_X$. Further, assume that $Q$ is an injective linear operator on $X$ in the space of symmetric non-negative operators with finite trace, in the following denoted by$\loppt{X}$. By \cite[{Theorem 1.12}]{dP06} we know that there exists a unique probability measure denoted by $N(0,Q)$ on $(X,\borel(X))$ such that its characteristic function is given via
\[
X\ni x \mapsto e^{-\frac{1}{2}(Qx,x)_X}\in\Co.
\]This measure is called the centred infinite dimensional non-degenerate Gaussian measure with covariance operator $Q\in\loppt{X}$.  For a detailed introduction into the concept of infinite dimensional Gaussian measures, we refer to \cite[{Chapter 1}]{dP06} or \cite[Section 2.1 ]{PR07}. 

For the rest of this section we fix a centred non-degenerate infinite dimensional Gaussian measure $\mu\defeq N(0,Q)$. Since $Q$ is symmetric and of trace class, there is an orthonormal basis $B_X=\s{e}$ of $X$ consisting of eigenvectors of $Q$ with corresponding positive eigenvalues $\s{\lambda}\in\ell^1(\N)$. Without loss of generality, we assume that the eigenvalues are decreasing to zero.
In this situation the following definition is useful.
\begin{defn}\label{def:finitely_based}
	For each $n\in\N$, define $X_n\defeq\lin{e_1,\dots,e_n}$ and denote the orthogonal projection from $X$ to $X_n$ by $P^X_n$, with the	corresponding coordinate map $p^X_n:X\to\R^n$. This means for all $x\in X$
	\[
	P^X_nx\defeq \sum_{k=1}^n (x,e_k)_X e_k\in X_n	\quad\text{ and }\quad
	p^X_nx \defeq \left((x,e_1)_X,\dots,(x,e_n)_X\right)\in \R^n.
	\]	
	By $\overline{p}^X_n:\R^n\to X_n$ we denote the canonical embedding of $\R^n$ into $X_n$, i.e.
	\[
	\overline{p}^X_ny\defeq \sum_{k=1}^n y_k e_k\in X_n,\quad y\in\R^n.
	\]
	If the underlying real separable Hilbert space and the corresponding orthonormal basis is clear from the context, we also write $P_n,p_n$ and $\overline{p}_n$ to avoid an overload of notation.
	Let $k\in\N\cup\lbrace\infty\rbrace$ and $a\in \left\{b,c\right\}$. Then we define for each $n\in\N$ spaces of cylinder functions by
	\[
	\begin{aligned}
		\mathcal{F}C_a^k(B_X,n)
		&	\defeq \{ \varphi\circ p^X_n:X\to\R \mid
		\text{ for some } \varphi\in C_a^k(\R^n) \}\quad\text{and} \\
		\mathcal{F}C(B_X)&\defeq \bigcup_{n\in\N} \mathcal{F}C(B_X,n). 
	\end{aligned}
	\]
\end{defn}

Some of the results below are closely related to \cite[Chapter 9 and 10]{dapratozabczyk}, \cite[Chapter 10 and 11]{PR07}, as well as \cite[Section 2]{dPL14} and \cite[Section 2]{Lunardi15}. Note that the last mentioned reference also deals with (Gaussian) Sobolev spaces on convex subsets of real separable Hilbert spaces.

First we state Fernique's theorem, which tells us that infinite dimensional Gaussian measures have exponential tails. We also formulate an important result about moments of Gaussian measures.

\begin{prop}{\cite[Proposition 1.13]{dP06}}\label{theo:fernique} Given $s\in\R$. It holds 
	\[
	\int_X e^{s \norm{x}_X^2}\,\,\mu(\mathrm{d}x)=\begin{cases}
		\left(\prod_{k=1}^{\infty}(1-2s\lambda_k)\right)^{-\frac{1}{2}}&\quad\text{for}\quad s<\frac{1}{2\lambda_1}\\
		\infty&\quad\text{else}.
	\end{cases}
	\]
\end{prop}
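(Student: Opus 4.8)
The plan is to diagonalise and reduce the infinite-dimensional integral to a countable product of elementary one-dimensional Gaussian integrals. Write $x_k\defeq(x,e_k)_X$. Since $B_X=\s{e}$ is an orthonormal basis of $X$, Parseval's identity gives $\norm{x}_X^2=\sum_{k=1}^\infty x_k^2$, so the cylinder functions $x\mapsto\sum_{k=1}^n x_k^2$ increase pointwise to $\norm{x}_X^2$. For $s\ge 0$ this forces $e^{s\sum_{k=1}^n x_k^2}\uparrow e^{s\norm{x}_X^2}$, and monotone convergence yields
\[
\int_X e^{s\norm{x}_X^2}\,\mu(\mathrm{d}x)=\lim_{n\to\infty}\int_X e^{s\sum_{k=1}^n x_k^2}\,\mu(\mathrm{d}x),
\]
while for $s<0$ the integrands are bounded by $1$ and the same identity follows by dominated convergence. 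Hence it suffices to evaluate the right-hand side for every $s\in\R$.

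Next I would identify the image measure $\mu\circ(p^X_n)^{-1}$ on $\R^n$. Using the defining characteristic function of $\mu=N(0,Q)$ and the change-of-variables formula one computes, for $t\in\R^n$,
\[
\int_{\R^n}e^{\cu\langle t,y\rangle}\,\mu\circ(p^X_n)^{-1}(\mathrm{d}y)=\int_X e^{\cu(\overline{p}^X_nt,x)_X}\,\mu(\mathrm{d}x)=e^{-\frac12(Q\overline{p}^X_nt,\overline{p}^X_nt)_X}=e^{-\frac12\sum_{k=1}^n\lambda_k t_k^2},
\]
so $\mu\circ(p^X_n)^{-1}=N(0,\diag(\lambda_1,\dots,\lambda_n))$ is the product of centred one-dimensional Gaussians with variances $\lambda_1,\dots,\lambda_n$. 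Therefore
\[
\int_X e^{s\sum_{k=1}^n x_k^2}\,\mu(\mathrm{d}x)=\prod_{k=1}^n\int_\R e^{sy^2}\,N(0,\lambda_k)(\mathrm{d}y),
\]
and completing the square in the exponent shows that each factor equals $(1-2s\lambda_k)^{-1/2}$ if $s<\tfrac1{2\lambda_k}$ and equals $+\infty$ otherwise.

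It remains to pass to the limit and to split into cases, using that $\lambda_1=\max_k\lambda_k$. If $s<\tfrac1{2\lambda_1}$, then $s<\tfrac1{2\lambda_k}$ for every $k$, every factor is finite, and $\int_X e^{s\norm{x}_X^2}\,\mu(\mathrm{d}x)=\lim_{n\to\infty}\prod_{k=1}^n(1-2s\lambda_k)^{-1/2}=\prod_{k=1}^\infty(1-2s\lambda_k)^{-1/2}$; this infinite product converges to a finite, strictly positive value because $\abs{\log(1-2s\lambda_k)}\le C\lambda_k$ for $k$ large (as $\lambda_k\to 0$) while $\sum_k\lambda_k<\infty$ since $Q\in\loppt{X}$. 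If instead $s\ge\tfrac1{2\lambda_1}$ — in particular $s>0$ — then already the first factor is infinite, and since $e^{s\norm{x}_X^2}\ge e^{sx_1^2}$ we get $\int_X e^{s\norm{x}_X^2}\,\mu(\mathrm{d}x)\ge\int_\R e^{sy^2}\,N(0,\lambda_1)(\mathrm{d}y)=\infty$, which completes the proof.

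I do not expect a genuine obstacle here: the computation is essentially the finite-dimensional Gaussian Laplace transform pushed through a monotone-convergence argument. The only two points deserving care are the identification of $\mu\circ(p^X_n)^{-1}$ as a finite-dimensional Gaussian — which is exactly where the characteristic-function definition of $N(0,Q)$ enters — and the convergence of the infinite product for $s<\tfrac1{2\lambda_1}$, which is precisely where the trace-class hypothesis $\s{\lambda}\in\ell^1(\N)$ is needed.
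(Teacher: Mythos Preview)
The paper does not supply its own proof of this proposition; it is simply quoted from \cite[Proposition~1.13]{dP06}. Your argument is correct and is exactly the standard one: diagonalise via the eigenbasis of $Q$, identify the finite-dimensional marginals as product Gaussians through the characteristic function, reduce to the elementary one-dimensional identity $\int_\R e^{sy^2}\,N(0,\lambda)(\mathrm{d}y)=(1-2s\lambda)^{-1/2}$ for $s<\tfrac{1}{2\lambda}$, and pass to the limit by monotone (respectively dominated) convergence, with the trace-class condition ensuring convergence of the infinite product.
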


\begin{lem}{\cite[Lemma 2.2]{EG21}}\label{lem:Gaussianmoments} For $l_1,l_2,l_3,l_4\in X$ set $q_{ij}=(Ql_i,l_j)_X$, $i,j\in \lbrace1,2,3,4\rbrace$. Then it holds
	\begin{align*}
		&\int_X (x,l_1)_X\,\mu(\mathrm{d}x)=0,\quad \int_X (x,l_1)_X(x,l_2)_X\,\mu(\mathrm{d}x)=q_{12},\\
		&\int_X (x,l_1)_X(x,l_2)_X(x,l_3)_X\,\mu(\mathrm{d}x)=0\quad \text{and} \\&\int_X (x,l_1)_X(x,l_2)_X(x,l_3)_X(x,l_4)_X\,\mu(\mathrm{d}x)=q_{12}q_{34}+q_{13}q_{24}+q_{14}q_{23}.
	\end{align*}
	Moreover, for all $s\in [0,\infty)$, $X\ni x\mapsto\|x\|_X^s\in\R$ is $\mu$-integrable and particularly 
	\[
	\int_X \|x\|_X^2\,\mathrm{d}\mu
	= \int_X \sum_{n\in\N} (x,e_n)_X^2 \,\mathrm{d}\mu
	= \sum_{n\in\N} (Qe_n,e_n)_X
	= \sum_{n\in\N} \lambda_n <\infty
	\]
	due to monotone convergence and the fact that $Q$ is trace class.
\end{lem}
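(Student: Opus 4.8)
The plan is to reduce all four moment identities to a single finite-dimensional Gaussian computation by first determining the joint law of the linear functionals $x\mapsto(x,l_j)_X$, $j=1,\dots,4$. Fixing $l_1,\dots,l_4\in X$, I would introduce the (continuous, hence Borel measurable) map $T\colon X\to\R^4$, $Tx\defeq\big((x,l_1)_X,\dots,(x,l_4)_X\big)$, and compute the characteristic function of the pushforward probability measure $\mu\circ T^{-1}$ on $\R^4$. Using $\sum_{j=1}^4 t_j(x,l_j)_X=\big(x,\sum_{j=1}^4 t_jl_j\big)_X$ together with the defining characteristic functional $\int_X e^{\cu(x,y)_X}\,\mu(\mathrm{d}x)=e^{-\frac{1}{2}(Qy,y)_X}$ of $\mu=N(0,Q)$, one obtains
\[
\int_{\R^4}e^{\cu(z,t)_{\R^4}}\,(\mu\circ T^{-1})(\mathrm{d}z)=\exp\!\bigg(-\tfrac12\Big(Q\sum_{j=1}^4 t_jl_j,\sum_{k=1}^4 t_kl_k\Big)_X\bigg)=\exp\!\bigg(-\tfrac12\sum_{j,k=1}^4 q_{jk}t_jt_k\bigg),
\]
so, since a probability measure on $\R^4$ is uniquely determined by its characteristic function, $\mu\circ T^{-1}$ is the centred Gaussian measure $N\big(0,(q_{jk})_{j,k=1}^4\big)$ with covariance matrix $(q_{jk})$ (possibly degenerate, which does not affect anything below).

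Next I would verify that all the integrals in question are absolutely convergent, which simultaneously proves the asserted $\mu$-integrability of $x\mapsto\|x\|_X^s$: for every $s\in[0,\infty)$ and every $\eps>0$ there is a constant $C_{s,\eps}\in(0,\infty)$ with $r^s\le C_{s,\eps}e^{\eps r^2}$ for all $r\ge0$, so choosing $\eps\in\big(0,\tfrac1{2\lambda_1}\big)$ and applying \Cref{theo:fernique} gives $\int_X\|x\|_X^s\,\mu(\mathrm{d}x)\le C_{s,\eps}\int_X e^{\eps\|x\|_X^2}\,\mu(\mathrm{d}x)<\infty$; in particular, by the Cauchy--Schwarz inequality $|(x,l_1)_X(x,l_2)_X(x,l_3)_X(x,l_4)_X|\le\|l_1\|_X\|l_2\|_X\|l_3\|_X\|l_4\|_X\,\|x\|_X^4$ is $\mu$-integrable. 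Granted this, the first four identities follow from the classical moment formulas for a centred Gaussian vector $Z\sim N\big(0,(q_{jk})\big)$ on $\R^4$: the first and third moments vanish by the invariance of the measure under $z\mapsto-z$, the second moment is the covariance entry $\int_{\R^4}z_1z_2\,N\big(0,(q_{jk})\big)(\mathrm{d}z)=q_{12}$, and the fourth moment is the Isserlis--Wick expression $q_{12}q_{34}+q_{13}q_{24}+q_{14}q_{23}$; transporting these back through $T$ yields exactly the stated integrals over $X$.

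Finally, for the explicit value of $\int_X\|x\|_X^2\,\mathrm{d}\mu$ I would expand $\|x\|_X^2=\sum_{n\in\N}(x,e_n)_X^2$ in the orthonormal basis $B_X=\s{e}$, interchange summation and integration by monotone convergence (every summand is nonnegative), and use that each $e_n$ is an eigenvector of $Q$, so that by the second identity just established $\int_X(x,e_n)_X^2\,\mu(\mathrm{d}x)=(Qe_n,e_n)_X=\lambda_n$; summing gives $\sum_{n\in\N}\lambda_n=\tr Q<\infty$ since $Q\in\loppt{X}$. There is no serious obstacle in this lemma; the subtlest point is the passage from the infinite-dimensional Gaussian $\mu$ to the four-dimensional vector $Tx$, for which the characteristic-function argument is the clean tool, Fernique's theorem supplying the a priori finiteness of all moments involved.
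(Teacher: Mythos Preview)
Your proof is correct. The paper does not supply its own argument for this lemma: it simply cites \cite[Lemma~2.2]{EG21} for the moment identities, and the justification for the final displayed equation (monotone convergence plus $Q$ trace class) is already embedded in the statement itself, which you reproduce. Your route via the pushforward $\mu\circ T^{-1}=N\big(0,(q_{jk})\big)$ and the Isserlis--Wick formula, together with Fernique's theorem for the a~priori integrability of $\|x\|_X^s$, is the standard clean way to establish these facts and matches what one would expect the cited reference to do.
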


Before we introduce several Sobolev spaces with respect to the infinite dimensional non-degenerate Gaussian measure we quickly recall some concepts of G\^{a}teaux and  Fr\'echet differentiability as discussed in \cite[Chapter 1]{ambrosetti1995primer}. In order to do that we fix a real normed vector space $(Y,\normx_Y)$ and an open set $U\subseteq X$ and a function $f:U\to Y$.

Below, we introduce $n$-times G\^{a}teaux and Fr\'echet differentiable functions.
\begin{defn}\label{def:der_higher_order}
	Let $f :U\to Y$ be G\^{a}teaux (Fr\'echet) differentiable.  If $Df \;(df):U\to \lop{X;Y}$ is G\^{a}teaux (Fr\'echet) differentiable, we call $f$ two times G\^{a}teaux (Fr\'echet) differentiable and denote the second order G\^{a}teaux (Fr\'echet) derivative by
	\[
	D^2f\; (d^2f):U\to \lop{X;\lop{X;Y}}.
	\]
	Inductively, this construction generalizes to higher order G\^{a}teaux (Fr\'echet) derivatives. The space of (bounded) $n$-times Fr\'echet differentiable functions, $n\in\N$, from $U$ to $Y$ with continuous (and bounded) derivatives up to order $n$, is denoted by $C^n(U;Y)$ ($C_b^n(U;Y)$). If $Y=\R$ we just write $C^n(U)$ ($C_b^n(U)$).  Since for $f\in C^1(U;Y)$ the G\^{a}teaux and Fr\'echet derivative coincide, we sometimes just call $f$ continuously differentiable and $Df$ its derivative. 
\end{defn}
\begin{rem}\label{rem_der_cyl_X}
	Assume that $f:U\rightarrow \R$ is G\^{a}teaux differentiable. For $u\in U$
	the Riesz representation theorem allows us to identify $Df(u)\in \lop{X;\R}$ with the gradient of $\nabla f(u)\in X$, i.e. with the unique element such that
	\[
	Df(u)(v)=(\nabla f(u),v)_X\quad\text{for all}\quad v\in X. 
	\]
	Analogously, for a two times G\^{a}teaux differentiable function $f:U\rightarrow \R$, we identify $D^2f(u)\in \lop{X;\lop{X;\R}}$ with the unique element $\nabla^2 f(u)\in \lop{X}$ such that
	\[
	D^2f(u)(v)(w)=(\nabla^2 f(u)(v),w)_X\quad\text{for all}\quad v,w\in X. 
	\]
	It holds that $Df(x)=\sum_{i\in\N}\party{i}f(x) e_i$ for all G\^{a}teaux differentiable functions $f:X\to\R$ and $x\in X$. Let $n\in\N$ be given. If $f=\varphi\circ p_n$ for some $\varphi\in C_b^1(\R^n)$, then the chain rule implies $Df(x)=\sum_{i=1}^n \partial_i\varphi(p_n(x))e_i\in 	X_n$ for all $x\in X$.
\end{rem}

\begin{thm}{\cite[Theorem 2.9]{EG21}}\label{theorem:Gaussian_Sobolev_construction} Let $p\in (1,\infty)$ be given. Moreover, let $(A,D(A))$ with $\lin{e_1,e_2,...}\subseteq D(A)$ and $\lin{e_1,e_2,...}\subseteq D(A^*)$ be a linear operators on $X$. Assume that for each $n\in\N$ there is some $m\in\N$ with 
	\[
	A^*(\lin{e_1,...,e_n})\subseteq \lin{e_1,...,e_m}.
	\]Then the operator
	\[
	\begin{aligned}
		AD:\mathcal{F}C_b^1(B_X)&\rightarrow L^p(X;\mu;X)\quad\text{and}\\
	\end{aligned}
	\]is closable in $L^p(X;\mu)$. We denote by $W_{A}^{1,p}(X;\mu)$ the domain of the closure. If $A=\text{Id}$, we simply write $W^{1,p}(X;\mu)$. We equip this space with the corresponding Graph norm. Here and in the following, $L^p(X;\mu;X)$ is defined in the Bochner-Lebesgue sense.
\end{thm}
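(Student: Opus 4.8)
The plan is to prove closability in $L^p(X;\mu)$ by the standard argument for Gaussian Sobolev spaces, reducing the infinite-dimensional case to finitely based functions via the integration-by-parts formula for $\mu$. First I would recall that the operator $AD$ sends a cylinder function $f=\varphi\circ p_n$ to $\sum_i (A^*e_i, \nabla f)_X e_i$ — or more precisely, since $\nabla f(x)=\sum_{j=1}^n\partial_j\varphi(p_nx)e_j\in X_n$ lies in a finite-dimensional space, $ADf(x)=A(\nabla f(x))=\sum_{j=1}^n\partial_j\varphi(p_nx)Ae_j$, which is a genuine element of $X$ because $e_j\in D(A)$; this shows $AD$ is well-defined as a map into $L^p(X;\mu;X)$ (using that $\varphi\in C_b^1$, so the derivatives are bounded). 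Closability means: if $f_k\in\fbk{1}$ with $f_k\to 0$ in $L^p(X;\mu)$ and $ADf_k\to G$ in $L^p(X;\mu;X)$, then $G=0$.

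The key step is the integration-by-parts formula. For the Gaussian measure $\mu=N(0,Q)$ with eigenbasis $(e_i)$ and eigenvalues $(\lambda_i)$, one has for $f\in\fbk{1}$ and any $h\in\lin{e_1,e_2,\dots}$
\[
\int_X \partial_h f\,\psi\dm = -\int_X f\,\partial_h\psi\dm + \int_X f\,\psi\,\frac{(x,h)_X}{\lambda_{i}}\dm
\]
when $h=e_i$, and more generally a linear combination thereof; here $\psi$ should itself be a bounded $C^1$ cylinder function so that all terms are integrable (Fernique's theorem, \Cref{theo:fernique}, and \Cref{lem:Gaussianmoments} guarantee the linear factor $(x,h)_X$ is in every $L^q(\mu)$). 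To exploit the hypothesis on $A$, I would test $G$ against vector fields of the form $x\mapsto \psi(x)e_i$ with $\psi\in\fbk{1}$: then
\[
\int_X (G(x),e_i)_X\,\psi(x)\dm = \lim_k \int_X (ADf_k(x),e_i)_X\,\psi(x)\dm = \lim_k \int_X (\nabla f_k(x), A^*e_i)_X\,\psi(x)\dm.
\]
The assumption that $A^*e_i\in\lin{e_1,\dots,e_m}$ for a suitable $m$ is exactly what makes $(\nabla f_k, A^*e_i)_X=\partial_{A^*e_i}f_k$ a finite linear combination of partial derivatives $\partial_{e_j}f_k$, so the integration-by-parts formula applies directly and produces
\[
\lim_k\int_X (\nabla f_k,A^*e_i)_X\,\psi\dm = -\lim_k\int_X f_k\left(\partial_{A^*e_i}\psi - \psi\, r_i\right)\dm = 0,
\]
where $r_i$ is the relevant linear-in-$x$ logarithmic-derivative factor, which lies in $L^q(\mu)$ for the conjugate exponent $q$; since $f_k\to 0$ in $L^p$ and the bracket is in $L^q$, the limit vanishes. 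Hence $\int_X (G(x),e_i)_X\psi(x)\dm=0$ for all $i$ and all $\psi$ in a set dense in $L^q(X;\mu)$, which forces $(G(\cdot),e_i)_X=0$ $\mu$-a.e. for every $i$, and therefore $G=0$ in $L^p(X;\mu;X)$. This establishes closability; the domain of the closure is then defined to be $W^{1,p}_A(X;\mu)$, equipped with the graph norm, and the $A=\id$ case is the special instance where $A^*e_i=e_i$.

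The main obstacle is the bookkeeping of integrability in the integration-by-parts step: one must ensure that every function appearing — $f_k$, $\psi$, the derivatives $\partial_{A^*e_i}\psi$, and the linear multipliers coming from the Gaussian density — lies in the appropriate $L^r(\mu)$ so that all integrals converge and Hölder's inequality applies uniformly in $k$. This is where Fernique's theorem and the Gaussian moment bounds of \Cref{lem:Gaussianmoments} do the work: the multiplier $r_i$ grows only linearly in $\|x\|_X$, hence sits in every $L^q(\mu)$, $q<\infty$, and $\psi$ being a bounded $C^1$ cylinder function keeps $\partial_{A^*e_i}\psi$ bounded. A secondary point requiring care is that the test vector fields $x\mapsto\psi(x)e_i$ must be rich enough to separate elements of $L^p(X;\mu;X)$; this follows because the $e_i$ span a dense subspace of $X$ and cylinder functions are dense in $L^q(X;\mu)$, so the pairing determines $G$ coordinatewise. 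Modulo these routine-but-delicate estimates, the proof is a direct application of the Gaussian integration-by-parts formula combined with the finite-dimensional invariance hypothesis on $A^*$.
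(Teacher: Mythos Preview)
The paper does not supply its own proof of this theorem: the statement is quoted verbatim from \cite[Theorem~2.9]{EG21} and no argument is given in the text. There is therefore nothing to compare your proposal against in this paper.

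That said, your argument is correct and is the standard route to closability of gradient-type operators with respect to a Gaussian measure. The hypothesis that $A^*(\lin{e_1,\dots,e_n})\subseteq\lin{e_1,\dots,e_m}$ is used exactly where you use it: it makes $(ADf_k,e_i)_X=\partial_{A^*e_i}f_k$ a finite linear combination of directional derivatives $\party{j}f_k$, so the Gaussian integration-by-parts formula applies termwise. The resulting logarithmic-derivative factor is $r_i(x)=(x,Q^{-1}A^*e_i)_X$, which is linear in $x$ and hence lies in every $L^q(\mu)$ by \Cref{lem:Gaussianmoments}; together with boundedness of $\psi$ and $\partial_{A^*e_i}\psi$ this justifies the H\"older step and passage to the limit. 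Density of cylinder functions in $L^q(X;\mu)$ and totality of $(e_i)_{i\in\N}$ in $X$ then force $G=0$ as you say. Your identification of the two places needing care (integrability bookkeeping and sufficiency of the test fields) is accurate, and both are routine.
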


\Cref{theorem:Gaussian_Sobolev_construction} is an extension of \cite[Lemma 2.3]{dPL14}, where only the case $A=Q^{\theta}$ for $\theta\in \{-\frac12,0,\frac12\}$ is considered. Moreover, note that for $A=Q^{\frac 12}$ the corresponding Sobolev space coincides with the usual Sobolev space of Malliavin calculus.

By \Cref{theorem:Gaussian_Sobolev_construction}, it is reasonable to consider the infinite dimensional Gaussian Sobolev space $W_{Q^{\theta}}^{1,p}(X;\mu)$ for all $\theta\in\R$.
Moreover, for $f\in W^{1,p}_{Q^{\theta}}(X;\mu)$ and $n\in\N$ we set
\[
\party{i} f\defeq(Q^{\theta}Df,e_i)_X\frac{1}{\lambda_i^{\theta}}\in L^p(X;\mu)\quad \text{and}\quad P_nDf\defeq \sum_{i=1}^n\party{i}f e_i\in L^p(X;\mu).
\]
Note that $\party{i} f_m$ converges to $\party{i} f$ in $L^p(X;\mu)$ if $(f_m)_{m\in\N}\subseteq \mathcal{F}C_b^1(B_X)$ is a sequence converging to $f$ in $ W^{1,p}_{Q^{\theta}}(X;\mu)$.

Next, we state the integration by parts formula for infinite dimensional Gaussian measures. For $\theta\in  \{-\frac12,0,\frac12\}$ the statement can be found in \cite{dPL14}. The proof for general $\theta\in \R$ works similarly and is therefore omitted below.
\begin{prop}\label{lem:inf-dim-ibp-fbs}
	Let $\theta\in \R$, $p,q\in (1,\infty)$ with $\frac{1}{p}+\frac{1}{q}\leq 1$ and $f\in W^{1,p}_{Q^{\theta}}(X;\mu)$, $g\in W^{1,q}_{Q^{\theta}}(X;\mu)$.
	Then
	\[
	\int_X \party{i}f g\,\mathrm{d}\mu= -\int_X f\party{i}g\,\mathrm{d}\mu+ \int_X (x,Q^{-1}e_i)_X fg\,\mathrm{d}\mu.
	\]
\end{prop}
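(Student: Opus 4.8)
The plan is to reduce the general case to the finite-dimensional Gaussian integration by parts formula by approximation, exploiting the density of cylinder functions in the Sobolev spaces $W^{1,p}_{Q^\theta}(X;\mu)$ and $W^{1,q}_{Q^\theta}(X;\mu)$. First I would establish the formula for $f,g\in\mathcal{F}C_b^1(B_X)$. Writing $f=\varphi\circ p_n$ and $g=\psi\circ p_n$ for a common $n$ (possible after enlarging $n$) with $\varphi,\psi\in C_b^1(\R^n)$, the key observation from \Cref{rem_der_cyl_X} is that $\partial_{e_i}f = \partial_i\varphi(p_n(\cdot))$ depends only on the first $n$ coordinates; since $\mu$ disintegrates as a product of one-dimensional centred Gaussians $N(0,\lambda_k)$ along the basis $B_X$, the claim follows from the classical one-dimensional integration by parts identity $\int_\R h'(t)\,N(0,\lambda_i)(\mathrm{d}t) = \lambda_i^{-1}\int_\R t\,h(t)\,N(0,\lambda_i)(\mathrm{d}t)$ applied in the $i$-th coordinate, together with $(x,Q^{-1}e_i)_X = \lambda_i^{-1}(x,e_i)_X$. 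Fubini's theorem (justified by boundedness of $\varphi,\psi$ and their derivatives, plus \Cref{lem:Gaussianmoments} which gives integrability of the weight $x\mapsto(x,Q^{-1}e_i)_X$) lets one integrate out the remaining coordinates.

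Next I would pass to the limit. Given $f\in W^{1,p}_{Q^\theta}(X;\mu)$ and $g\in W^{1,q}_{Q^\theta}(X;\mu)$, pick sequences $(f_m),(g_m)\subseteq\mathcal{F}C_b^1(B_X)$ with $f_m\to f$ in $W^{1,p}_{Q^\theta}$ and $g_m\to g$ in $W^{1,q}_{Q^\theta}$; in particular $\partial_{e_i}f_m\to\partial_{e_i}f$ in $L^p(X;\mu)$ and $\partial_{e_i}g_m\to\partial_{e_i}g$ in $L^q(X;\mu)$, as recorded in the excerpt. For the two terms $\int_X\partial_{e_i}f_m\,g_m\dm$ and $\int_X f_m\,\partial_{e_i}g_m\dm$, convergence follows from Hölder's inequality using $\tfrac1p+\tfrac1q\le1$: e.g. $\int_X(\partial_{e_i}f_m\,g_m - \partial_{e_i}f\,g)\dm$ is controlled by $\pnorm[p]{\partial_{e_i}f_m-\partial_{e_i}f}\pnorm[q]{g_m} + \pnorm[p]{\partial_{e_i}f}\pnorm[q]{g_m-g}$, and $\pnorm[q]{g_m}$ stays bounded. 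For the boundary term $\int_X(x,Q^{-1}e_i)_X f_m g_m\dm$, I would introduce the exponent $r$ with $\tfrac1p+\tfrac1q+\tfrac1r=1$ (allowing $r=\infty$ when $\tfrac1p+\tfrac1q=1$), note $x\mapsto(x,Q^{-1}e_i)_X = \lambda_i^{-1}(x,e_i)_X\in L^r(X;\mu)$ for every $r<\infty$ by Fernique's theorem (\Cref{theo:fernique}) — indeed this function has Gaussian tails — and again apply Hölder (generalised three-factor version) to get $f_m g_m\to fg$ paired against it. This yields the identity for general $f,g$.

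The main obstacle is the integrability bookkeeping at the limiting step: one must be careful that the boundary weight $(x,Q^{-1}e_i)_X$, although not bounded, lies in every $L^r(X;\mu)$, so that the product $f_m g_m$ — which only converges in $L^{s}(X;\mu)$ for the conjugate exponent $s$ determined by $\tfrac1p+\tfrac1q$ — can be paired with it. This is precisely where the hypothesis $\tfrac1p+\tfrac1q\le1$ (rather than $=1$) buys the needed slack, and where \Cref{lem:Gaussianmoments}/\Cref{theo:fernique} are essential. A secondary technical point, already flagged in the excerpt, is that the $\theta$-dependence is harmless: the $Q^{-1}e_i$ weight is the same for all $\theta$, and the approximation property of cylinder functions holds in $W^{1,p}_{Q^\theta}$ by definition of that space as the closure domain in \Cref{theorem:Gaussian_Sobolev_construction}, so the argument is genuinely uniform in $\theta$ and the reduction to the known cases $\theta\in\{-\tfrac12,0,\tfrac12\}$ of \cite{dPL14} requires no new idea — just the one-dimensional Gaussian IBP in each coordinate.
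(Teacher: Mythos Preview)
Your approach is correct and is precisely the standard argument: the paper omits the proof entirely, citing \cite{dPL14} for the cases $\theta\in\{-\tfrac12,0,\tfrac12\}$ and remarking that the general $\theta$ works the same way. Your reduction to cylinder functions via one-dimensional Gaussian integration by parts followed by a density-plus-H\"older limit is exactly what underlies that reference; the only wrinkle worth flagging is the borderline case $\tfrac1p+\tfrac1q=1$, where the weight $(x,Q^{-1}e_i)_X$ is not in $L^\infty$ and your three-factor H\"older does not apply directly --- but this is a cosmetic issue (handle it by first approximating $f$ with $g$ a bounded cylinder function, then approximating $g$), not a gap in the strategy.
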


The remark below, helps to to determine if the pointwise limit of a $\mu$-a.e.~convergent sequence in $W_{Q^{\theta}}^{1,p}(X;\mu)$,  $\theta\in [0,\infty)$ and $p\in (1,\infty)$, is again in $W_{Q^{\theta}}^{1,p}(X;\mu)$.
\begin{rem}\label{rem:Banach-Saks}
	%
	Let $\theta\in [0,\infty)$ and $p\in (1,\infty)$ be given. Similar to \cite[Lemma 5.4.4]{Bogachev}, we can establish that $W_{Q^{\theta}}^{1,p}(X;\mu)$ has the Banach-Saks property and for each bounded sequence $(f_n)_{n\in\N}$ in $W_{Q^{\theta}}^{1,p}(X;\mu)$ with $\lim_{n\to\infty}f_n=f$, $\mu$-a.e., it holds $f\in W_{Q^{\theta}}^{1,p}(X;\mu)$.
\end{rem}
Next, we recall the Moreau-Yosida approximation, which provides a useful approximation scheme. Before, we quickly state some results about subdifferential functions. Suppose $\Phi:X\to\R\cup\left\{\infty\right\}$ is convex, bounded from below, lower semicontinuous and not identically to $\infty$, then for each $x\in X$, the set
\[\partial \Phi(x)\defeq \{y\in X\mid \text{for all}\; z\in X\;\text{it holds}\; (z-x,y)_X+\Phi(x)\leq \Phi(z)\},\]
denotes the subdifferential of $\Phi$ in $x$.
For each $x\in X$, where $\Phi(x)=\infty$ we set $D_0\Phi(x)\defeq \infty$. For all $x\in X$ with $\Phi(x)\neq \infty$ the set $\partial \Phi(x)$ is closed and convex, compare \cite[Proposition 16.4]{yoshida}. In particular, for such $x$ it is reasonable to define $D_0\Phi(x)$ as the element in $\partial \Phi(x)$ with minimal norm if $\partial \Phi(x)\neq \emptyset$ and $\infty$ otherwise. 

\begin{exa}\label{ex:yoshida_one}Suppose $\Phi:X\to\R\cup\left\{\infty\right\}$ is not identical to $\infty$, convex, bounded from below and lower semicontinuous.
	For such functions, the so-called Moreau-Yosida approximation $\Phi_{t}$, $t>0$, is defined by
	\begin{align*}
		\Phi_{t}:X\rightarrow \R,\quad	\Phi_{t}(y)=\inf_{x\in X}\left\{ \Phi(x)+\frac{\norm{y-x}_X^2}{2t} \right\}.
	\end{align*}One can show that for all ${t>0}$, $\Phi_{t}$ is convex and Fr\'{e}chet differentiable with
	\begin{enumerate}
		\item[(i)] $-\infty<\inf_{y\in X}\Phi(y)\leq\Phi_{t}(x)\leq \Phi(x)$ for all $x\in X$.
		\item[(ii)] $\lim_{t\rightarrow 0}\Phi_{t}(x)=\Phi(x)$ for all $x\in X$.
		\item[(iii)] $D\Phi_{t}$ is Lipschitz continuous and for all $x\in X$ with $\partial\Phi(x)\neq\emptyset$, $\norm{D\Phi_t(x)}_X$ converges monotonically to $\norm{D_0\Phi(x)}_X$ with
		\[\norm{D\Phi_t(x)-D_0\Phi(x)}_X^2\leq \norm{D_0\Phi(x)}_X^2-\norm{D\Phi_t(x)}_X^2. \]
	\end{enumerate}
	A proof of these statements, except the convergence result of in Item (iii), is given in \cite{yoshida}. The statement in Item (iii), is shown in \cite[Chapter 2]{brzis1973oprateurs}.
\end{exa}

We continue the analysis of Sobolev spaces with respect to infinite dimensional Gaussian measures with a very useful approximation result. 
\begin{lem}{\cite[Lemma 2.2]{Lunardi15}}\label{ex:yoshida_two} Suppose $\Phi:X\to(-\infty,\infty]$ is as in \Cref{ex:yoshida_one}, i.e.~convex, bounded from below, lower-semicontinuous and not identically to $\infty$. If $ x\mapsto \norm{D_0\Phi}_X\in L^{p_1}(X;\mu)$ for some $p_1\in (1,\infty)$, then for each $1\leq p_0<p_1$
	\begin{enumerate}
		\item[(i)] $D\Phi=D_0\Phi$, $\mu$-a.e..
		\item[(ii)]$\Phi\in W_{Q^{\theta}}^{1,p_0}(X;\mu)$  and  $\lim_{t\rightarrow 0}\Phi_{t}=\Phi$ in $W_{Q^{\theta}}^{1,p_0}(X;\mu)$ for all $\theta\in [0,\infty)$.
	\end{enumerate}
\end{lem}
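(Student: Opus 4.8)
\emph{Proof proposal.} The plan is to run everything through the Moreau--Yosida approximation $(\Phi_t)_{t>0}$ of \Cref{ex:yoshida_one} and to pass to the limit $t\downarrow 0$ in the graph norm of $W^{1,p_0}_{Q^\theta}(X;\mu)$ by dominated convergence, exploiting the room left by the strict inequality $p_0<p_1$.

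First I would settle integrability. Since $x\mapsto\|D_0\Phi(x)\|_X\in L^{p_1}(X;\mu)$, the subdifferential $\partial\Phi(x)$ is non-empty for $\mu$-a.e.\ $x$; fix such a point $x_0$ and set $y_0\defeq D_0\Phi(x_0)$. Convexity gives the affine lower bound $\Phi(x)\ge\Phi(x_0)+(y_0,x-x_0)_X$, and testing $D_0\Phi(x)\in\partial\Phi(x)$ against $x_0$ gives the upper bound $\Phi(x)\le\Phi(x_0)+\|x-x_0\|_X\,\|D_0\Phi(x)\|_X$ for $\mu$-a.e.\ $x$. Because $p_0<p_1$, Hölder's inequality with exponents $p_1/p_0$ and its conjugate, together with $\|x-x_0\|_X\in L^r(X;\mu)$ for all $r\in[1,\infty)$ (\Cref{theo:fernique}, \Cref{lem:Gaussianmoments}), shows $\Phi\in L^{p_0}(X;\mu)$; of course $\|D_0\Phi\|_X\in L^{p_0}(X;\mu)$ as well. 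Completing the square in the infimum defining $\Phi_t$ yields $\Phi_t(x)\ge\Phi(x_0)+(y_0,x-x_0)_X-\frac{t}{2}\|y_0\|_X^2$, so together with $\Phi_t\le\Phi$ the family $(\Phi_t)_{0<t\le 1}$ is dominated in absolute value by a single $L^{p_0}(X;\mu)$-function.

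Next I would verify that $\Phi_t\in W^{1,p_0}_{Q^\theta}(X;\mu)$ for every $t>0$, with weak gradient equal to the Fréchet gradient $D\Phi_t$: here one uses that $\Phi_t\in C^1(X)$ has at most quadratic growth and that $D\Phi_t$ is Lipschitz, hence of at most linear growth, so that standard finite-dimensional mollification and truncation produces an approximating sequence in $\mathcal{F}C_b^1(B_X)$; for $\theta\ge 0$ the operator $Q^\theta$ is bounded with $Q^\theta(\lin{e_1,\dots,e_n})=\lin{e_1,\dots,e_n}$, so \Cref{theorem:Gaussian_Sobolev_construction} applies. I expect this membership to be the main technical point, and the restriction $\theta\in[0,\infty)$ is precisely what makes $Q^\theta D\Phi_t$ integrable.

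Finally I would take an arbitrary sequence $t_n\downarrow 0$ and pass to the limit. By \Cref{ex:yoshida_one}(ii), $\Phi_{t_n}\to\Phi$ pointwise on $X$, and by \Cref{ex:yoshida_one}(iii), for $\mu$-a.e.\ $x$ one has $\|D\Phi_{t_n}(x)-D_0\Phi(x)\|_X^2\le\|D_0\Phi(x)\|_X^2-\|D\Phi_{t_n}(x)\|_X^2\to 0$, so $D\Phi_{t_n}\to D_0\Phi$ $\mu$-a.e., with the uniform bound $\|Q^\theta D\Phi_{t_n}\|_X\le\lambda_1^\theta\|D_0\Phi\|_X\in L^{p_0}(X;\mu)$. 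Dominated convergence (first component controlled by the bound from the second paragraph, the gradient by $\lambda_1^\theta\|D_0\Phi\|_X$) then gives $(\Phi_{t_n},Q^\theta D\Phi_{t_n})\to(\Phi,Q^\theta D_0\Phi)$ in $L^{p_0}(X;\mu)\times L^{p_0}(X;\mu;X)$. Since each pair lies in the graph of the closed operator of \Cref{theorem:Gaussian_Sobolev_construction}, so does the limit: this simultaneously gives $\Phi\in W^{1,p_0}_{Q^\theta}(X;\mu)$, the identity $Q^\theta D\Phi=Q^\theta D_0\Phi$ and hence $D\Phi=D_0\Phi$ $\mu$-a.e.\ by injectivity of $Q$ (statement (i)), and $\Phi_{t_n}\to\Phi$ in the graph norm; as $t_n\downarrow0$ was arbitrary, $\lim_{t\to 0}\Phi_t=\Phi$ in $W^{1,p_0}_{Q^\theta}(X;\mu)$ (statement (ii)). Alternatively, $\Phi\in W^{1,p_0}_{Q^\theta}(X;\mu)$ can be obtained more softly from the uniform $W^{1,p_0}_{Q^\theta}$-bound on $(\Phi_{t_n})_n$ together with the Banach--Saks-type \Cref{rem:Banach-Saks}, which also covers the range $\theta\in[0,\infty)$.
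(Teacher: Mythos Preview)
The paper does not supply its own proof of this lemma; it is quoted from \cite[Lemma~2.2]{Lunardi15} without argument. Your proposal is correct and is precisely the standard route one expects (and which the cited reference follows in spirit): bound $|\Phi|$ and $|\Phi_t|$ uniformly by an $L^{p_0}$-majorant via the subgradient inequality, use the Moreau--Yosida properties from \Cref{ex:yoshida_one} to get pointwise convergence of $(\Phi_t,D\Phi_t)$ to $(\Phi,D_0\Phi)$ with domination by $\|D_0\Phi\|_X\in L^{p_1}\subseteq L^{p_0}$, and close up via the closed-graph property of the Sobolev operator. The one step you flag as ``main technical point'', namely $\Phi_t\in W^{1,p_0}_{Q^\theta}(X;\mu)$, is indeed the only place requiring outside input; the paper itself invokes exactly this fact elsewhere (see the proof of \Cref{lem:inf-dim-regularity-estimates}, which appeals to \cite[Proposition~10.11]{dP06} for $C^1$ functions with Lipschitz gradient), so your sketch is consistent with the ambient toolkit.
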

%

In the last part of this section, we generalize some of the above constructions and results to the case where the infinite dimensional Gaussian measure is additionally equipped with a density.
Without further mentioning we consider potential functions $\Phi: X\rightarrow (-\infty,\infty]$, as described in the subsequent definition.
\begin{defn}\label{defn:Phi_gen_def}
	Suppose $\Phi: X\rightarrow (-\infty,\infty]$ is measurable, bounded from below and such that $\int_Xe^{-\Phi}\,\mathrm{d}\mu>0$. For such 		$\Phi$, we consider the measure $\mu^{\Phi}\defeq \frac{1}{\int_Xe^{-\Phi}\,\mathrm{d}\mu}e^{-\Phi}\mu$ and set $\mu^0\defeq \mu$, as 		well as
	\[
	\mu^{\Phi}(f)\defeq \int_Xf\,\mathrm{d}\mu^{\Phi}\quad\text{for},\quad f\in L^1(X;\mu^{\Phi}).
	\]
\end{defn}

\begin{lem}{\cite[Lemma 2.2]{dPL14}}\label{lem:dense_mu_one_with_pot} Suppose $p\in[1,\infty)$. Then, $L^p(X;\mu)\subseteq L^p(X;\mu^{\Phi})$ and the space of smooth cylinder functions $\mathcal{F}C_b^{\infty}(B_X)$ is dense in $L^p(X;\mu^{\Phi})$.
\end{lem}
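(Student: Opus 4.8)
The plan is to treat the two assertions separately, the first being elementary and the second a transfer-of-density argument across the bounded-density change of measure $\mu\mapsto\mu^{\Phi}$.

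\textbf{The inclusion.} Recall from \Cref{defn:Phi_gen_def} that $\Phi$ is bounded from below, say $\Phi\ge -C$ for some $C\in[0,\infty)$, and that $\int_X e^{-\Phi}\,\mathrm{d}\mu>0$. Since moreover $e^{-\Phi}\le e^{C}$, the normalising constant is also finite, so the density of $\mu^{\Phi}$ with respect to $\mu$ obeys
\[
\frac{1}{\int_X e^{-\Phi}\,\mathrm{d}\mu}\,e^{-\Phi}\le \frac{e^{C}}{\int_X e^{-\Phi}\,\mathrm{d}\mu}\defeq M<\infty .
\]
Hence for every $f\in L^p(X;\mu)$ one has $\int_X|f|^p\,\mathrm{d}\mu^{\Phi}\le M\int_X|f|^p\,\mathrm{d}\mu<\infty$, which gives $L^p(X;\mu)\subseteq L^p(X;\mu^{\Phi})$ together with the continuous-embedding estimate $\|g\|_{L^p(X;\mu^{\Phi})}\le M^{1/p}\|g\|_{L^p(X;\mu)}$ for all $g\in L^p(X;\mu)$. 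This last estimate is precisely the device that pushes any $L^p(\mu)$-approximation forward to an $L^p(\mu^{\Phi})$-approximation.

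\textbf{Density.} Let $h\in L^p(X;\mu^{\Phi})$. First I would reduce to bounded $h$: the truncations $h_R\defeq (-R)\vee(h\wedge R)$ converge to $h$ in $L^p(X;\mu^{\Phi})$ as $R\to\infty$ by dominated convergence, with dominating function $|h|^p\in L^1(X;\mu^{\Phi})$, so it suffices to approximate a bounded measurable function $h_R$. Since $\mu$ is a probability measure, every bounded measurable function lies in $L^p(X;\mu)$, so in particular $h_R\in L^p(X;\mu)$. Now I would invoke the classical fact that $\mathcal{F}C_b^{\infty}(B_X)$ is dense in $L^p(X;\mu)$ for $p\in[1,\infty)$: bounded measurable functions are approximated in $L^p(\mu)$ by bounded continuous ones (Radon regularity of the Gaussian measure), these by their conditional expectations with respect to $\sigma(P_n)$, which converge in $L^p(\mu)$ by the martingale convergence theorem and are cylinder functions built on $\mathrm{span}\{e_1,\dots,e_n\}$, and finally these (possibly only $C_b$) cylinder functions are smoothed by mollification in the $n$ coordinates; see e.g. \cite{dapratozabczyk} or \cite[Chapter 1]{dP06}. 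Thus there are $\psi_n\in\mathcal{F}C_b^{\infty}(B_X)$ with $\psi_n\to h_R$ in $L^p(X;\mu)$, and the embedding estimate yields $\|h_R-\psi_n\|_{L^p(X;\mu^{\Phi})}\le M^{1/p}\|h_R-\psi_n\|_{L^p(X;\mu)}\to 0$. A triangle-inequality argument combining the two reductions completes the proof.

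\textbf{Main obstacle.} The truncation, the dominated convergence step, and the bound $M$ are entirely routine; the only substantive ingredient is the density of $\mathcal{F}C_b^{\infty}(B_X)$ in $L^p(X;\mu)$, which I would cite rather than reprove. If one prefers a self-contained argument, the point requiring a little care is that the approximating cylinder functions be based on the fixed orthonormal basis $B_X=(e_n)_{n\in\N}$ diagonalising $Q$; this is automatic in the conditional-expectation construction because it uses exactly the projections $P_n$ onto $\mathrm{span}\{e_1,\dots,e_n\}$, and the subsequent mollification keeps the functions in $C_b^{\infty}(\R^n)$ (convolution with a probability kernel does not increase the sup-norm, so one stays within $L^p$).
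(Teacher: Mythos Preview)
Your argument is correct. Note that the paper does not actually supply a proof of this lemma but simply cites \cite[Lemma 2.2]{dPL14}; your bounded-density transfer (inclusion via $e^{-\Phi}\le e^{C}$, then truncation plus the classical density of $\mathcal{F}C_b^{\infty}(B_X)$ in $L^p(X;\mu)$) is exactly the standard route and is what one finds in the cited reference.
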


As in \cite[Chapter~2.2]{dPL14}, we can extend the integration by parts formula for measures of type $\mu^{\Phi}$ and draw some consequences summarized in the proposition below.
\begin{lem}\label{lem:intbp_classic_pot}
	Let $\theta\in \R$ and $\Phi\in  W^{1,2}_{Q^{\theta}}(X;\mu)$. Then, for $f,g\in \mathcal{F}C_b^1(B_X)$ and $i\in\N$, it holds the integration by parts formula
	\begin{equation}\label{eq:intBP_general}
		\int_X \party{i}fg\,\mu^{\Phi}=-\int_X f\party{i}g\,\mathrm{d}\mu^{\Phi}+\int_X (x,Q_1^{-1}e_i)_X fg\,\mathrm{d}\mu^{\Phi}+\int_X\party{i}\Phi fg\,\mathrm{d}\mu^{\Phi}.
	\end{equation}
\end{lem}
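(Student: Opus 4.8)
The plan is to reduce the desired formula for $\mu^{\Phi}$ to the Gaussian integration by parts formula of \Cref{lem:inf-dim-ibp-fbs} (with $\theta$ and $p,q$ chosen suitably) by unwinding the definition $\mu^{\Phi}=\frac{1}{Z}e^{-\Phi}\mu$ and applying the product rule to $e^{-\Phi}$. Explicitly, fix $f,g\in\fbs{B_X}$ and $i\in\N$. Since $f,g$ are bounded smooth cylinder functions, the product $fg$ is again in $\fbs{B_X}$, and $\party{i}(fg)=(\party{i}f)g+f\party{i}g$. The heuristic computation is
\[
\int_X \party{i}f\,g\,e^{-\Phi}\,\mathrm{d}\mu
= \int_X \party{i}\big(fg\big)\,e^{-\Phi}\,\mathrm{d}\mu
- \int_X f\,\party{i}g\,e^{-\Phi}\,\mathrm{d}\mu,
\]
and then one wants to integrate the first term by parts, moving $\party{i}$ off $fg$ and onto $e^{-\Phi}$, picking up the Gaussian divergence term $(x,Q^{-1}e_i)_X$ and the term $\party{i}(e^{-\Phi})=-\party{i}\Phi\,e^{-\Phi}$; dividing by the normalising constant $Z=\int_X e^{-\Phi}\,\mathrm{d}\mu$ then yields \eqref{eq:intBP_general}. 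Note there is a typographical inconsistency in the statement: the subscript should read $Q^{-1}$, matching the $Q$ fixed at the start of the section, rather than $Q_1^{-1}$; I will write $Q^{-1}$.

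The technical work is in justifying this computation rigorously, i.e.\ producing a function to which \Cref{lem:inf-dim-ibp-fbs} genuinely applies, since $fg\in\fbs{B_X}\subseteq W^{1,p}_{Q^{\theta}}(X;\mu)$ for every $p$ but $e^{-\Phi}$ need not be smooth. The key step is to show that $e^{-\Phi}\in W^{1,q}_{Q^{\theta}}(X;\mu)$ for some $q>1$ with $\party{i}(e^{-\Phi})=-(\party{i}\Phi)e^{-\Phi}$, and that the relevant products are integrable. This follows from the hypothesis $\Phi\in W^{1,2}_{Q^{\theta}}(X;\mu)$ together with the fact that $\Phi$ is bounded from below, hence $e^{-\Phi}$ is bounded above by $e^{-\inf\Phi}<\infty$: one approximates $\Phi$ in $W^{1,2}_{Q^{\theta}}(X;\mu)$ by cylinder functions $\Phi_k\in\fbs{B_X}$, applies the ordinary chain rule to get $\party{i}(e^{-\Phi_k})=-(\party{i}\Phi_k)e^{-\Phi_k}$, and passes to the limit — but care is needed because $e^{-\Phi_k}$ need not be bounded uniformly in $k$. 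One fixes this by a standard truncation: replace $\Phi_k$ by $\Phi_k\vee(-N)$ (or use $\Phi_k^{+}$ together with the lower bound on $\Phi$), so that the exponentials are uniformly bounded, extract an $L^2$-convergent subsequence of $\party{i}\Phi_k$, use dominated convergence to identify the limit of $(\party{i}\Phi_k)e^{-\Phi_k}$, and invoke the closability of the operator $Q^{\theta}D$ from \Cref{theorem:Gaussian_Sobolev_construction}. The integrability of $\party{i}\Phi\cdot fg\cdot e^{-\Phi}$, $(x,Q^{-1}e_i)_X fg\,e^{-\Phi}$ and the rest is then immediate: $f,g$ and $e^{-\Phi}$ are bounded, $\party{i}\Phi\in L^2(X;\mu)$, and $x\mapsto(x,Q^{-1}e_i)_X$ is Gaussian hence in every $L^p(X;\mu)$, so in particular everything lies in $L^1(X;\mu)$; choosing for instance $\theta$-compatible exponents $p$ large and $q$ close to $1$ in \Cref{lem:inf-dim-ibp-fbs} with $\frac1p+\frac1q\le 1$ is harmless since $fg$ is bounded with bounded derivatives.

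I expect the main obstacle to be precisely this uniform-integrability bookkeeping in the limiting argument for $e^{-\Phi}$ — ensuring that the approximants have uniformly bounded exponentials so that dominated convergence and closability can be combined cleanly, and that the product $(\party{i}\Phi)\,e^{-\Phi}$ is correctly identified as the weak partial derivative of $e^{-\Phi}$. Once $e^{-\Phi}\in W^{1,q}_{Q^{\theta}}(X;\mu)$ with the expected derivative is in hand, the formula \eqref{eq:intBP_general} is just \Cref{lem:inf-dim-ibp-fbs} applied to $fg\in W^{1,p}_{Q^{\theta}}(X;\mu)$ and $e^{-\Phi}\in W^{1,q}_{Q^{\theta}}(X;\mu)$, expanded via $\party{i}(fg)=(\party{i}f)g+f\party{i}g$ and $\party{i}(e^{-\Phi})=-(\party{i}\Phi)e^{-\Phi}$, followed by division by the positive constant $\int_X e^{-\Phi}\,\mathrm{d}\mu$. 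This mirrors the argument in \cite[Chapter~2.2]{dPL14}, which I would cite for the routine parts.
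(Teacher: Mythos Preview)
The paper does not supply its own proof of this lemma: it merely introduces the result with the sentence ``As in \cite[Chapter~2.2]{dPL14}, we can extend the integration by parts formula for measures of type $\mu^{\Phi}$'' and then states the formula. Your approach---show $e^{-\Phi}\in W^{1,2}_{Q^{\theta}}(X;\mu)$ with $Q^{\theta}D(e^{-\Phi})=-e^{-\Phi}Q^{\theta}D\Phi$, then apply \Cref{lem:inf-dim-ibp-fbs} to the pair $(fg,e^{-\Phi})$ with $p=q=2$---is exactly the standard route and is what the cited reference does; you even cite \cite{dPL14} yourself. Your identification of the typo $Q_1^{-1}\to Q^{-1}$ is also correct.

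One small sharpening: rather than the somewhat vague ``replace $\Phi_k$ by $\Phi_k\vee(-N)$'', which destroys smoothness, it is cleaner to compose with a fixed smooth function $\psi\in C^{\infty}(\R)$ satisfying $\psi(t)=t$ for $t\geq c$ (where $c=\inf\Phi$), $\psi\geq c-1$ everywhere, and $|\psi'|\leq 1$. Then $\tilde\Phi_k\defeq\psi\circ\Phi_k\in\mathcal{F}C_b^{1}(B_X)$ is uniformly bounded below, $e^{-\tilde\Phi_k}$ is uniformly bounded by $e^{-(c-1)}$, and since $\Phi\geq c$ one has $\psi(\Phi)=\Phi$ and $\psi'(\Phi)=1$ $\mu$-a.e., so along an a.e.\ convergent subsequence the chain-rule computation
\[
Q^{\theta}D(e^{-\tilde\Phi_k})=-\psi'(\Phi_k)\,e^{-\tilde\Phi_k}\,Q^{\theta}D\Phi_k \longrightarrow -e^{-\Phi}\,Q^{\theta}D\Phi
\]
in $L^2(X;\mu;X)$ follows by dominated convergence, and closability of $Q^{\theta}D$ finishes the job. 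With that adjustment your proof is complete and matches the intended argument.
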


\begin{prop}{\cite[Lemma 2.3 and Remark 2.6]{dPL14}}\label{prop:IBP_PHI_most_gen} Let $\theta\in \R$ and $p\in[2,\infty)$ be given. Further, assume that $\Phi\in W^{1,2}_{Q^{\theta}}(X;\mu)$.  Then the following statements hold true.
	\begin{enumerate}
		\item[(i)]  The operator\[
		\begin{aligned}
			Q^{\theta}D:\mathcal{F}C_b^1(B_X)\rightarrow L^p(X;\mu^{\Phi};X)
		\end{aligned}
		\]is closable in $L^p(X;\mu^{\Phi})$. Therefore, it is reasonable to consider the Sobolev space $W^{1,p}_{Q^{\theta}}(X;\mu^{\Phi})$. Again we use the abbreviation $W^{1,p}(X;\mu^{\Phi})\defeq W^{1,p}_{\text{Id}}(X;\mu^{\Phi})$.
		
		\item[(ii)]It holds $W^{1,p}_{Q^{\theta}}(X;\mu)\subseteq W^{1,p}_{Q^{\theta}}(X;\mu^{\Phi})$.
		\item[(iii)]For each $q\in [2,\infty)$ with $\frac{1}{p}+\frac{1}{q}\leq \frac{1}{2}$ the integration by parts formula \eqref{eq:intBP_general} is valid for $f\in W^{1,p}_{Q^{\theta}}(X;\mu^{\Phi})$ and $g\in W^{1,q}_{Q^{\theta}}(X;\mu^{\Phi})$.
	\end{enumerate}
\end{prop}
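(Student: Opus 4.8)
The three assertions follow from the by-now-standard approximation scheme, built on the cylinder integration by parts formula \eqref{eq:intBP_general} from \Cref{lem:intbp_classic_pot}, the continuous embedding $L^s(X;\mu)\hookrightarrow L^s(X;\mu^{\Phi})$ for every $s\in[1,\infty)$ (valid because $\Phi$ is bounded below, hence $e^{-\Phi}$ is bounded, so $\norm{h}_{L^s(X;\mu^{\Phi})}^s\leq \norm{e^{-\Phi}}_\infty(\int_Xe^{-\Phi}\,\mathrm{d}\mu)^{-1}\norm{h}_{L^s(X;\mu)}^s$), and the density of $\mathcal{F}C_b^\infty(B_X)$ in $L^s(X;\mu^{\Phi})$ from \Cref{lem:dense_mu_one_with_pot}. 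I would first record two integrability facts used repeatedly: since $\Phi\in W^{1,2}_{Q^\theta}(X;\mu)$, for each $i\in\N$ one has $\party{i}\Phi=\lambda_i^{-\theta}(Q^\theta D\Phi,e_i)_X\in L^2(X;\mu)\subseteq L^2(X;\mu^{\Phi})$; and $x\mapsto (x,Q^{-1}e_i)_X=\lambda_i^{-1}(x,e_i)_X$ is a Gaussian linear functional, hence lies in $L^s(X;\mu)\subseteq L^s(X;\mu^{\Phi})$ for every $s<\infty$ by \Cref{lem:Gaussianmoments}. Part (ii) is then immediate: given $f\in W^{1,p}_{Q^\theta}(X;\mu)$ with cylinder approximants $f_k\to f$ in $L^p(X;\mu)$ and $Q^\theta Df_k\to Q^\theta Df$ in $L^p(X;\mu;X)$, the embedding upgrades both convergences to $L^p(X;\mu^{\Phi})$, respectively $L^p(X;\mu^{\Phi};X)$, so $f\in W^{1,p}_{Q^\theta}(X;\mu^{\Phi})$ with the same gradient.

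For the closability in part (i), let $(f_k)\subseteq\mathcal{F}C_b^1(B_X)$ satisfy $f_k\to0$ in $L^p(X;\mu^{\Phi})$ and $Q^\theta Df_k\to G$ in $L^p(X;\mu^{\Phi};X)$; I must show $G=0$. Fix $i\in\N$ and test \eqref{eq:intBP_general} against an arbitrary $g\in\mathcal{F}C_b^\infty(B_X)\subseteq\mathcal{F}C_b^1(B_X)$:
\[
\int_X \party{i}f_k\,g\,\mathrm{d}\mu^{\Phi}
=-\int_X f_k\party{i}g\,\mathrm{d}\mu^{\Phi}
+\int_X (x,Q^{-1}e_i)_X f_kg\,\mathrm{d}\mu^{\Phi}
+\int_X\party{i}\Phi\, f_kg\,\mathrm{d}\mu^{\Phi}.
\]
On the left, $\party{i}f_k=\lambda_i^{-\theta}(Q^\theta Df_k,e_i)_X\to\lambda_i^{-\theta}(G,e_i)_X$ in $L^p(X;\mu^{\Phi})$ and $g$ is bounded, so the left side tends to $\lambda_i^{-\theta}\int_X(G,e_i)_X\,g\,\mathrm{d}\mu^{\Phi}$. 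On the right, $f_k\to0$ in $L^p(X;\mu^{\Phi})$, while the three companion factors $\party{i}g$, $(x,Q^{-1}e_i)_X\,g$ and $\party{i}\Phi\,g$ all lie in $L^{p'}(X;\mu^{\Phi})$ — the first since $g\in\mathcal{F}C_b^1$, the second by the Gaussian integrability above, the third because $\party{i}\Phi\in L^2(X;\mu^{\Phi})$, $g$ is bounded and $p\geq2$ forces $1/p+1/2\leq1$ — so every right-hand term vanishes in the limit by Hölder's inequality. Hence $\int_X(G,e_i)_X\,g\,\mathrm{d}\mu^{\Phi}=0$ for all $g\in\mathcal{F}C_b^\infty(B_X)$, and density of $\mathcal{F}C_b^\infty(B_X)$ in $L^{p'}(X;\mu^{\Phi})$ yields $(G,e_i)_X=0$ $\mu^{\Phi}$-a.e.\ for each $i$, i.e.\ $G=0$; this makes $W^{1,p}_{Q^\theta}(X;\mu^{\Phi})$ well defined.

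For part (iii), take $f\in W^{1,p}_{Q^\theta}(X;\mu^{\Phi})$ and $g\in W^{1,q}_{Q^\theta}(X;\mu^{\Phi})$ with $\tfrac1p+\tfrac1q\leq\tfrac12$, with cylinder approximants $f_k\to f$ in $W^{1,p}_{Q^\theta}(X;\mu^{\Phi})$ and $g_l\to g$ in $W^{1,q}_{Q^\theta}(X;\mu^{\Phi})$. Writing \eqref{eq:intBP_general} for $(f_k,g_l)$, I would first let $l\to\infty$ for fixed $k$ — legitimate because $f_k$ is bounded, so $f_kg_l$, $f_k\party{i}g_l$, $(x,Q^{-1}e_i)_Xf_kg_l$ and $\party{i}\Phi\,f_kg_l$ converge in $L^1(X;\mu^{\Phi})$ (using $q\geq2$ for the last) — obtaining the formula for $(f_k,g)$, and then let $k\to\infty$. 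In this last passage the Hölder bookkeeping is: the left side $\party{i}f_k\,g$ and the term $f_k\,\party{i}g$ are of the type $L^p\times L^q$ with $\tfrac1p+\tfrac1q\leq1$; the term $(x,Q^{-1}e_i)_Xf_kg$ absorbs the Gaussian factor into some $L^s$ with $s$ so large that $\tfrac1p+\tfrac1q+\tfrac1s\leq1$, possible since $\tfrac1p+\tfrac1q<1$; and the term $\party{i}\Phi\,f_kg$ uses $\party{i}\Phi\in L^2$, $g\in L^q$, $f_k\to f$ in $L^p$, and exactly the hypothesis $\tfrac1p+\tfrac1q\leq\tfrac12$, i.e.\ $\tfrac12+\tfrac1q+\tfrac1p\leq1$. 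Passing to the limit gives \eqref{eq:intBP_general} for $(f,g)$.

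The only genuine subtlety, and the single place where the precise shape of the hypotheses is used, is this integrability bookkeeping: the density contributes a factor $\party{i}\Phi$ which, under the standing assumption $\Phi\in W^{1,2}_{Q^\theta}(X;\mu)$, is only square-integrable; this is precisely what forces $p\geq2$ in the closability step and the condition $\tfrac1p+\tfrac1q\leq\tfrac12$ for the extended integration by parts, in place of the Gaussian-case condition $\tfrac1p+\tfrac1q\leq1$ of \Cref{lem:inf-dim-ibp-fbs}. Everything else is a routine density-and-Hölder argument, requiring no compactness and no functional-analytic input beyond \Cref{theorem:Gaussian_Sobolev_construction} and \Cref{lem:dense_mu_one_with_pot}.
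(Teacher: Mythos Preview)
Your argument is correct and is precisely the standard density-plus-H\"older scheme that underlies \cite[Lemma~2.3 and Remark~2.6]{dPL14}; the paper itself gives no proof and simply cites that reference, so there is nothing to compare against beyond noting that you have reproduced the expected reasoning. The integrability bookkeeping you highlight---$\party{i}\Phi\in L^2$ forcing $p\geq 2$ in (i) and $\tfrac1p+\tfrac1q\leq\tfrac12$ in (iii)---is exactly the point of the result.
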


Next, we introduce an important class of Potential functions $\Phi$. This class plays an important role in \Cref{ch:example_Lfour_ess_diss}. For that, assume \[(X,\spdx_X)=(L^2((0,1);\mathrm{d}\xi),\spdx_{L^2(\mathrm{d}\xi)}),\]where $\mathrm{d}\xi$ denotes the classical Lebesgue measure on $((0,1),\borel(0,1))$. In addition, we fix a continuous differentiable function $\phi:\R\rightarrow \R$, which is bounded from below and such that its derivative grows at most of order $b\in [0,\infty)$, i.e.~there exists $a\in (0,\infty)$ such that
\[\abs{\phi'(x)}\leq  a(1+\abs{x}^{b})\quad \text{for all}\quad x\in\R.\]
Using the mean value theorem, it is easy to check that there exists some $\tilde{a}\in (0,\infty)$ such that
\[\abs{\phi(x)}\leq  \tilde{a}(1+\abs{x}^{b+1})\quad \text{for all}\quad x\in\R.\]Therefore, $\phi$ grows at most of order $b+1$. For such $\phi$ it is reasonable to define \[\Phi:X\rightarrow (-\infty,\infty],\; x\mapsto \Phi(x)\defeq \begin{cases}
	\int_0^1\phi( x(\xi))\,\mathrm{d}\xi & \;\text{if}\; x\in L^{b+1}((0,1);\mathrm{d}\xi)\\
	\infty & \;\text{else}.
\end{cases}\] 
\begin{rem}\label{rem:pot_lower_semi_cont}
	Suppose $(x_n)_{n\in\N}$ is a sequence in $X$ converging to some element $x\in X$. Since $\phi$ is bounded from below, the same applies to $\Phi$. Hence, $\inf_{n\in\N}\Phi(x_n)\in (-\infty,\infty]$ and there exists a subsequence $(x_{n_k})_{k\in\N}$, such that $\lim_{k\to\infty}\Phi(x_{n_k})=\inf_{n\in\N}\Phi(x_n)$. We also find a subsubsequence $(x_{n_{k_i}})_{i\in\N}$ converging to $x$ pointwisely $\mathrm{d}\xi$-a.e.. Suppose $\Phi(x)\neq \infty$. Then using Fatous lemma and the continuity of $\phi$, we can conclude
	\[\begin{aligned}
		\liminf_{n\to\infty}\Phi(x_n)\geq \inf_{n\in\N}\Phi(x_n)= \liminf_{i\to\infty}\Phi(x_{n_{k_i}})\geq \int_0^1 \liminf_{i\to\infty}\phi(x_{n_{k_i}}(\xi))\;\mathrm{d}\xi=\Phi(x).
	\end{aligned}\]
	If $\Phi(x)=\infty$ also $\liminf_{n\to\infty}\Phi(x_n)=\infty$. In summary, $\Phi$ is lower semicontinuous.
\end{rem}
It is well known that $B_X=(e_k)_{k\in\N}=(\sqrt{2}\sin(k\pi\cdot))_{k\in\N}$ is an orthonormal basis of $X$. Recall the corresponding orthogonal projection $P_n$ and define
\[\Phi_n\defeq \Phi\circ P_n:X\rightarrow (-\infty,\infty).\]

\begin{prop}\label{prop:pot_approx_wit_conv}
	For each $p\in [1,\infty)$, we have $\lim_{n\to\infty}\Phi_n=\Phi$ in $L^{p}(X;\mu)$. If $p>1$ it holds, 
	\[\begin{aligned}
		\Phi\in W^{1,p}(X;\mu)\quad\text{with}\quad D\Phi(x)=\phi'\circ x\quad\text{for $\mu$-a.e.}\quad x\in X.
	\end{aligned}\]
\end{prop}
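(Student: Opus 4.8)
The plan is to prove the two assertions separately, exploiting the product structure $\phi(x(\xi))$ heavily so that everything reduces to one-dimensional estimates controlled by Gaussian moments of $\|x\|_X$, which are finite by \Cref{theo:fernique} and \Cref{lem:Gaussianmoments}.

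\textbf{Step 1: $L^p$-convergence of $\Phi_n\to\Phi$.} First I would show $\Phi\in L^p(X;\mu)$. Since $\mu$ is supported (as a measure) on $X=L^2$, but $\phi$ is only evaluated pointwise, the key point is that $\mu$-a.e.\ $x$ lies in $L^{b+1}((0,1);\mathrm{d}\xi)$; this follows because $\int_X\int_0^1|x(\xi)|^{b+1}\,\mathrm{d}\xi\,\mathrm{d}\mu = \int_0^1\int_X|x(\xi)|^{b+1}\,\mathrm{d}\mu\,\mathrm{d}\xi$ and for fixed $\xi$ the map $x\mapsto x(\xi)$ is (a.e.) a centred Gaussian random variable with variance bounded uniformly in $\xi$ (one checks $\mathrm{Var}(x(\xi)) = \sum_k \lambda_k e_k(\xi)^2 \le 2\sum_k\lambda_k$ using $|e_k|\le\sqrt2$), so all its moments are finite and bounded in $\xi$. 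Combined with the growth bound $|\phi(x)|\le\tilde a(1+|x|^{b+1})$, Fubini gives $\Phi\in L^p(X;\mu)$ for every $p\in[1,\infty)$, with an analogous bound for each $\Phi_n = \Phi\circ P_n$ (here $P_nx\to x$ in $X=L^2$, hence along a subsequence $\mathrm{d}\xi$-a.e., but one must be slightly careful — the cleanest route is to note $\|P_nx\|_{L^{b+1}}$ need not be controlled by $\|x\|_{L^{b+1}}$ in infinite dimensions, so instead bound $\int_X|\Phi_n|^p\,\mathrm{d}\mu$ directly via the same Fubini argument, using that $P_nx(\xi)$ is again Gaussian with variance $\le 2\sum_k\lambda_k$). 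Then $\lim_n\Phi_n=\Phi$ in $L^p$ follows from the generalized dominated convergence theorem (Vitali / Pratt): $\Phi_n\to\Phi$ $\mu$-a.e.\ along a subsequence, $|\Phi_n|^p$ is dominated in the Vitali sense by a uniformly integrable family (the $L^{p'}$-bound for any $p'>p$ gives uniform integrability), and since the limit is unique the whole sequence converges.

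\textbf{Step 2: $\Phi\in W^{1,p}(X;\mu)$ with $D\Phi = \phi'\circ x$.} Fix $p>1$ and pick $p'\in(p,\infty)$. The natural candidate derivative is $G(x)\defeq \phi'\circ x\in X$; I would first verify $G\in L^p(X;\mu;X)$, i.e.\ $\int_X\big(\int_0^1|\phi'(x(\xi))|^2\,\mathrm{d}\xi\big)^{p/2}\,\mathrm{d}\mu<\infty$, which again follows from $|\phi'(x)|\le a(1+|x|^b)$, Minkowski's integral inequality, and the uniform-in-$\xi$ Gaussian moment bounds. To identify $G$ as the closure-derivative of $\Phi$, I would produce an approximating sequence in $\fbk{1}$. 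One clean choice: mollify $\phi$ to get $\phi_\eps\in C^1(\R)$ with $\phi_\eps\to\phi$, $\phi_\eps'\to\phi'$ locally uniformly and with the same polynomial growth bounds uniformly in $\eps$, and set $\Phi_{n,\eps}\defeq (x\mapsto\int_0^1\phi_\eps((P_nx)(\xi))\,\mathrm{d}\xi)$. Each $\Phi_{n,\eps}$ is easily seen to lie in $\fbk{1}$ (it is a $C_b^1$ function of $p_n^X x$, after a further truncation/cutoff in the radial variable which is harmless in the limit) with $D\Phi_{n,\eps}(x) = P_n(\phi_\eps'\circ P_n x)$. Letting first $n\to\infty$ and then $\eps\to0$, and using Step 1's argument (uniform integrability from the $p'$-bound plus a.e.\ convergence along subsequences) for both $\Phi_{n,\eps}\to\Phi$ in $L^p$ and $D\Phi_{n,\eps}\to G$ in $L^p(X;\mu;X)$, closability of $D$ on $\fbk{1}$ (\Cref{theorem:Gaussian_Sobolev_construction}) forces $\Phi\in W^{1,p}(X;\mu)$ and $D\Phi=G=\phi'\circ x$, $\mu$-a.e.

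\textbf{Main obstacle.} The technical heart — and the step most likely to need care — is handling the projections $P_n$ inside the nonlinearity: $\|P_nx\|_{L^{q}}$ is \emph{not} dominated by $\|x\|_{L^q}$ for $q\ne2$, so one cannot simply dominate $\Phi_n$ by a fixed integrable function of $\|x\|_X$. The fix is to keep everything at the level of the $\xi$-integrand: for each fixed $\xi\in(0,1)$, $(P_nx)(\xi)=\sum_{k=1}^n(x,e_k)_X e_k(\xi)$ is a centred Gaussian with variance $\sum_{k=1}^n\lambda_k e_k(\xi)^2\le 2\sum_k\lambda_k<\infty$ \emph{uniformly in $n$ and $\xi$}, so $\int_X|\phi(P_nx)(\xi))|^{p}\,\mathrm{d}\mu$ and $\int_X|\phi'((P_nx)(\xi))|^{p}\,\mathrm{d}\mu$ are bounded by a constant independent of $n$ and $\xi$; integrating in $\xi$ and applying Fubini then yields the required uniform $L^{p'}$-bounds on $\Phi_n$ and $D\Phi_{n}$, which is exactly what feeds the uniform-integrability / closability argument. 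A secondary point to get right is the a.e.\ pointwise convergence $\phi((P_nx)(\xi))\to\phi(x(\xi))$ for $\mu$-a.e.\ $x$ and $\mathrm{d}\xi$-a.e.\ $\xi$: this comes from $P_nx\to x$ in $L^2((0,1))$, hence (for $\mu$-a.e.\ fixed $x$) along a subsequence $\mathrm{d}\xi$-a.e., together with continuity of $\phi$; uniqueness of the $L^p$-limit then upgrades subsequential convergence to full convergence, so no diagonal subsequence needs to be tracked in the final statement.
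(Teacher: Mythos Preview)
Your approach is correct and reaches the same conclusion, but it takes a genuinely different route from the paper in two places.

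For the $L^p$ convergence (Step 1), the paper does not use Vitali/uniform integrability. Instead it applies the mean value theorem pointwise in $\xi$ to get
\[
|\Phi_n(x)-\Phi(x)|^p\le \int_0^1 a^p\bigl(1+(|P_nx(\xi)|+|x(\xi)|)^b\bigr)^p|P_nx(\xi)-x(\xi)|^p\,\mathrm{d}\xi,
\]
then splits by H\"older in $\xi$ and reduces everything to $\int_X\int_0^1|P_nx(\xi)-x(\xi)|^{2p}\,\mathrm{d}\xi\,\mathrm{d}\mu\to 0$, which is precisely the content of \cite[Lemma~5.1]{dPL14}. That lemma encodes exactly the observation you isolate as the ``main obstacle'' (that $(P_nx)(\xi)$ is Gaussian with variance $\le 2\,\mathrm{tr}(Q)$ uniformly in $n,\xi$), so the core technical input is the same; the paper's route is just more quantitative and avoids the subsequence bookkeeping. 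Your ``$\Phi_n\to\Phi$ $\mu$-a.e.\ along a subsequence'' is a bit loose as written---the clean version is to run Vitali on the product space $X\times(0,1)$ with respect to $\mu\otimes\mathrm{d}\xi$ and then use Jensen to pass to $\Phi_n$.

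For the Sobolev membership (Step 2), the paper does \emph{not} approximate by cylinder functions in $\mathcal{F}C_b^1(B_X)$. Instead it notes $\Phi_n\in W^{1,2}(X;\mu)$ directly (via \cite[Section~10.1.1]{PR07}, since $\Phi_n$ is $C^1$ in finitely many coordinates with polynomially bounded derivative), shows $(\Phi_n)$ is bounded in $W^{1,p}(X;\mu)$, and then invokes the Banach--Saks property of $W^{1,p}(X;\mu)$ (\Cref{rem:Banach-Saks}) to extract Ces\`aro-convergent subsequences and identify the limit. The identification $D\Phi=\phi'\circ x$ is done componentwise, showing $\partial_{e_j}\Phi_n\to(\phi'(\cdot),e_j)_X$ in $L^p$ along subsequences via dominated convergence. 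Your route---cutoff in the radial variable to land in $\mathcal{F}C_b^1$, then use closability of $D$ directly---is more elementary in that it avoids Banach--Saks entirely, at the cost of one extra approximation layer. Note that the mollification $\phi_\eps$ is unnecessary here: $\phi$ is already $C^1$, so $\Phi_n$ is already $C^1$ as a function of $p_n^Xx$; only the radial cutoff is needed to get boundedness.
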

\begin{proof}
	By \cite[Lemma 5.1]{dPL14} and the growth condition on $\phi$, it holds $\Phi_n\in L^p(X;\mu)$ . Moreover, $\Phi_n\in C^1(X;\R)$ with $D\Phi_n(x)=\phi'( P_nx)\in  L^p((0,1);\mathrm{d}\xi)$, as it is the composition of the smooth function $X\ni x\mapsto P_nx\in C^0([0,1];\R)$ and the $C^1(C^0([0,1];\R);\R)$ function $ C^0([0,1];\R)\ni y\mapsto \int_0^1\phi(y(\xi))\;\mathrm{d}\xi$. An application of \cite[Section 10.1.1]{PR07} shows that $\Phi_n\in W^{1,2}(X;\mu)$.
	
	Using the Hölder inequality, the mean value theorem and \cite[Lemma 5.1]{dPL14}, we find ${A}\in (0,\infty)$ such that
	\[
	\begin{aligned}
		&\quad\int_X \abs{\Phi_n-\Phi}^{p}\;\mathrm{d}\mu\\
		&\leq \int_X \int_0^1\left(\phi(P_nx(\xi))-\phi(x(\xi)\right)^p\,\mathrm{d}\xi \;\mu(\mathrm{d}x)\\
		&\leq {a}^p\int_X\int_0^1 \left(1+\left(\abs{P_nx(\xi)}+\abs{x(\xi)}\right)^{b}\right)^{p}\abs{P_nx(\xi)-x(\xi)}^p\,\mathrm{d}\xi\;\mu(\mathrm{d}x)\\
		&\leq {a}^p\int_X\norm[\big]{\left(1+\left(\abs{P_nx}+\abs{x}\right)^{b}\right)^{p}}_{X}\norm[\big]{\left(P_nx-x\right)^{p}}_{X}\;\mu(\mathrm{d}x)\\
		&\leq  {A}\left(\int_X\norm[\big]{\left(P_nx-x\right)^{p}}^2_{X} \;\mu(\mathrm{d}x)\right)^{\frac{1}{2}}\\
		&={A}\left(\int_X\int_0^1\abs[\big]{\left(P_nx(\xi)-x(\xi)\right)}^{2p} \;\mathrm{d}\xi\;\mu(\mathrm{d}x)\right)^{\frac{1}{2}}.
	\end{aligned}
	\]Therefore, by \cite[Lemma 5.1]{dPL14} we have $\lim_{n\to\infty}\Phi_n=\Phi$ in $L^{p}(X;\mu)$. 
	
	Observe that for each $j\in\N$ with $j\leq n$, we can estimate 
	\[\begin{aligned}
		&\quad\int_X\abs{\party{j}\Phi_n(x)-(\phi'(x),e_j)_X}^p\;\mu(\mathrm{d}x)=\int_X\abs{(\phi'(P_nx)-\phi'(x),e_j)_X}^p\;\mu(\mathrm{d}x)\\
		&\leq \int_X\int_0^1\abs{\phi'(P_nx(\xi))-\phi'(x(\xi))e_j(\xi)}^p\;\mathrm{d}\xi\;\mu(\mathrm{d}x).
	\end{aligned}\]
	By \cite[Lemma 5.1]{dPL14}, we know that 
	\[
	(x,\xi)\mapsto \left(\sqrt{2}a \left(2+\abs{P_nx(\xi)}^b+\abs{x(\xi)}^b\right)\right)^p
	\] converges in $L^1(X\times (0,1);\mu\otimes\mathrm{d}\xi)$ as $n\to\infty$. Therefore, \cite[Theorem IV.9]{br} provides a function $g\in L^1(X\times (0,1);\mu\otimes\mathrm{d}\xi)$ such that for some subsequence, for $\mu\otimes \mathrm{d}\xi$-a.e.~$(x,\xi)\in X\times(0,1)$ and for all $k\in\N$
	
	\[
	\begin{aligned}
		\abs{\phi'(P_{n_k}x(\xi))-\phi'(x(\xi))e_j(\xi)}^p
		\leq \left(\sqrt{2}a \left(2+\abs{P_{n_k}x(\xi)}^b+\abs{x(\xi)}^b\right)\right)^p
		\leq g(x,\xi).
	\end{aligned}
	\]
	Since for a subsequence $\lim_{i\to\infty}P_{n_{k_i}}x(\xi)=x(\xi)$ for $\mu\otimes\mathrm{d}\xi$-a.e.~$(x,\xi)\in X\times (0,1)$ and $\phi'$ is continuous,
	we can apply the theorem of dominated convergence to show that there exists a subsequence $(\Phi_{n(j)_k})_{k\in \N}$ of $(\Phi_{n})_{n\in \N}$ such that $(\party{j}\Phi_{n(j)_k})_{k\in \N}$ converges pointwisely $\mu$-a.e.~and in $L^p(X;\mu)$ to $(\phi'(\cdot),e_j)_X$. 
	
	To continue, we first establish that $(D\Phi_{n(j)_k})_{k\in \N}$ is bounded in $L^p(X;\mu;X)$. This follows since 
	\[\begin{aligned}
		\int_X\norm{D\Phi_n(x)}_X^p\;\mu(\mathrm{d}x)&=\int_X\left(\int_0^1\abs{\phi'(P_nx)}^2\;\mathrm{d}\xi\right)^{\frac{p}{2}}\;\mu(\mathrm{d}x)\\
		&\leq \int_X\int_0^1\left({a \left(1+\abs{P_{n}x(\xi)}^b\right)}\right)^p\;\mathrm{d}\xi\;\mu(\mathrm{d}x)
	\end{aligned}\]
	and the right-hand side is bounded independent of $n\in\N$ by \cite[Lemma 5.1]{dPL14}. Since the sequence $(\Phi_n)_{n\in\N}\subseteq W^{1,2}(X;\mu)$ is bounded in $L^p(X;\mu),$ we get the boundedness of $(\Phi_{{n(j)}_k})_{k\in \N}$ in $W^{1,p}(X;\mu)$. As $W^{1,p}(X;\mu)$ has the Banach-Saks property for every $p\in(1,\infty)$, see \Cref{rem:Banach-Saks}, we know that there exits a subsequence  $(\Phi_{{n(j)}_{k_i}})_{i\in \N}$ of $(\Phi_{{n(j)}_k})_{k\in \N}$ and $\Psi\in W^{1,p}(X;\mu)$ such that 
	\[
	\lim_{N\to\infty}\frac{1}{N}\sum_{i=1}^N\Phi_{{n(j)}_{k_i}}=\Psi\quad\text{in}\quad W^{1,p}(X;\mu).
	\]Using $\lim_{n\to\infty}\Phi_n=\Phi$ in $L^{p}(X;\mu)$, we see\[ \Psi=\lim_{N\to\infty}\frac{1}{N}\sum_{k=1}^N\Phi_{{n(j)}_{k_i}}=\Phi\quad\text{in}\quad L^{p}(X;\mu)\]and therefore $\Psi$ does not depend on the subsequence we were starting with. In particular, the above argumentation shows that $\Phi\in W^{1,p}(X;\mu)$ with $D\Phi=D\Psi$ in $L^p(X;\mu;X)$.

	Moreover, we know that there is a subsequence $(N_m)_{m\in\N}$ such that for $\mu$-a.e. $x\in X$
	\[\lim_{m\to\infty}\frac{1}{N_m}\sum_{i=1}^{N_m}D\Phi_{{n(j)}_{k_i}}(x)=D\Phi(x).\]
	We finally conclude that for $\mu$-a.e.~$x\in X$ and for all $j\in\N$
	\[
	\begin{aligned}
		(D\Phi(x),e_j)_X=\lim_{m\to\infty}\frac{1}{N_m}\sum_{i=1}^{N_m}\party{j}\Phi_{{n(j)}_{k_i}}(x)=(\phi'(x),e_j)_X
	\end{aligned}
	\]and therefore
	\[D\Phi(x)=\sum_{j=1}^{\infty}(\phi'(x),e_j)_Xe_j=\phi'(x)\quad \text{for $\mu$-a.e.~$x\in X$},\]
	as $(e_j)_{j\in\N}$ is an orthonormal basis of $X$.
\end{proof}


\begin{rem}\label{rem:pot_approx_alter}Suppose $\phi$ and $\Phi$ are as described above. There are two more natural situations in which we derive similar results as in \Cref{prop:pot_approx_wit_conv}. 
	\begin{enumerate}[(i)]
		\item Assume that $\phi$ and therefore also $\Phi$ is convex. Then, the Moreau-Yosida approximation $(\Phi_t)_{t> 0}$ from \Cref{ex:yoshida_two} converges to $\Phi$ in $W^{1,p}(X;\mu)$ for all $p\in [1,\infty)$ and $D\Phi(x)=\phi'(x)$ for all $x\in L^{2b}(X;\mu)$, compare \cite[Proposition 5.1]{dPL14}. We do not give the proof here, but it relies on \Cref{ex:yoshida_two} and the facts that for each $x\in L^{2b}(X;\mu)$ we have $\partial\Phi(x)=\{\phi'(x)\}$, as well as $x\mapsto \norm{\phi'(x)}_X\in L^p(X;\mu)$ for all $p\in [1,\infty)$. 
		\item Assume that $\phi''$ exists, is continuous and grows at most of order $\tilde{b}\in [0,\infty)$. Then, using similar arguments as in the beginning of the proof of \Cref{prop:pot_approx_wit_conv}, we find $\tilde{A}\in (0,\infty)$, such that for all $p\in [1,\infty)$
		\[
		\begin{aligned}
			\int_X \norm[\big]{D\Phi_n(x)-\phi'(x)}_X^{p}\;\mu(\mathrm{d}x)&\leq \int_X \int_0^1\left(\phi'(P_nx(\xi))-\phi'(x(\xi)\right)^p\,\mathrm{d}\xi \;\mu(\mathrm{d}x)\\
			&\leq  \tilde{A} \left(\int_X\norm[\big]{\left(P_nx-x\right)^{p}}^2_{X} \;\mu(\mathrm{d}x)\right)^{\frac{1}{2}}.
		\end{aligned}
		\]
		Therefore, by \cite[Lemma 5.1]{dPL14}, we know that the sequence $(D\Phi_n)_{n\in\N}$ converges to $\phi'(\cdot)$ in $L^p(X;\mu;X)$. As $\lim_{n\to\infty}\Phi_n=\Phi$ in $L^p((0,1);\mathrm{d}\xi)$, we get $\lim_{n\to\infty}\Phi_n=\Phi$ in $W^{1,p}(X;\mu)$ with $D\Phi(x)=\phi'(x)$ for $\mu$-a.e.~$x\in X$.
	\end{enumerate}
	Both, the approximation from \Cref{prop:pot_approx_wit_conv} and the one from \Cref{rem:pot_approx_alter}, play an important role for our applications. Note that the first one does not demand the convexity of $\phi$, while the second yields one with Lipschitz continuous derivatives.
\end{rem}

Below, we derive a generalized Poincar\'e inequality designed for the application in \Cref{sec:hypo_appl}.

\begin{lem}\label{poin_check}Suppose $\Phi: X\rightarrow  (-\infty,\infty]$ is convex, bounded from below, lower semicontinuous and not identically to $\infty$. Then, for all $f\in \fbs{B_X}$ it holds
	\begin{align*}
		\lambda_1	\int_X(QD f,D f)_X\mathrm{d}\mu^{\Phi}\geq \int_X (f-\mu^{\Phi}(f))^2 \mathrm{d}\mu^{\Phi}.
	\end{align*}
	\begin{proof}
		The idea of the proof is to approximate $\Phi$. Afterwards, we apply the Poincar\'e inequality from \cite[Proposition 4.5]{AFP21}. 
		
		Denote by $(\Phi_{\alpha})_{\alpha >0}$ the Moreau-Yosida approximation of $\Phi$.
		By \Cref{ex:yoshida_one}, we know that $\Phi_{\alpha}$ is convex and differentiable with Lipschitz continuous derivative. Furthermore, for all $x\in X$, $\lim_{\alpha\rightarrow 0}\Phi_{\alpha}(x)=\Phi(x)$.
		To apply the Poincar\'e inequality from \cite[Proposition 4.5]{AFP21} $\Phi_{\alpha}$ is not regular enough. Therefore, let $\beta>0$ and define the function $\Phi_{\alpha,\beta}\defeq S_{\beta}\Phi_{\alpha}$, where $(S_{\beta})_{\beta\geq 0}$ is the Ornstein-Uhlenbeck semigroup considered in \cite[Chapter 11.6]{dPL06}. In formulas
		\[
		\Phi_{\alpha,\beta}(x) =\int_X \Phi_{\alpha}(e^{\beta B}x+\sqrt{\id-e^{2\beta B}}y)\,N(0,-B^{-1})(\mathrm{d}y).\]
		As discussed in \cite[Chapter 11.6]{dPL06}, it holds for every $\alpha,\beta\in (0,\infty)$
		\begin{enumerate}
			\item[(i)] $\Phi_{\alpha,\beta}$ is convex and has derivatives of all orders.
			\item[(ii)] $D\Phi_{\alpha,\beta}$ is Lipschitz continuous (with Lipschitz constant independent of $\beta$) and has bounded derivatives of all orders.
		\end{enumerate}
		In particular, \cite[Proposition 4.5]{AFP21} is applicable. We get for all $f\in \fbs{B_X}$
		\begin{align}\label{alp_bet}
			\lambda_1\int_X(QD f,D f)_X\mathrm{d}\mu^{\Phi_{\alpha,\beta}}\geq \int_X (f-\mu^{\Phi_{\alpha,\beta}}(f))^2 \mathrm{d}\mu^{\Phi_{\alpha,\beta}}.
		\end{align}
		Since the derivative of $\Phi_{\alpha}$ is Lipschitz continuous, one can show that $\Phi_{\alpha}$ has at most quadratic growth. Hence, there exits a constant $c\in (0,\infty)$ such that for all $x,y\in X$
		\[
		\begin{aligned}
			\abs{\Phi_{\alpha}(e^{\beta B}x+\sqrt{\id-e^{2\beta B}}y)}&\leq c\left(1+\norm{e^{\beta B}x+\sqrt{\id-e^{2\beta B}}y)}^2_X\right)\\
			&\leq 2c\left(1+\norm{e^{\beta B}x}_X^2+\norm{\id-e^{2\beta B}}_{\mathcal{L}(X)}\norm{y}_X^2\right)\\
			&\leq 2c\left(1+\norm{x}_X^2+2\norm{y}_X^2\right).
		\end{aligned}
		\]
		Above, we also used that $\norm{e^{\beta B}}_{\mathcal{L(X)}}\leq 1$ for all $\beta\in [0,\infty)$.
		Since, for all $x,y\in X$\[\lim_{\beta\to 0}\Phi_{\alpha}(e^{\beta B}x+\sqrt{\id-e^{2\beta B}}y)=\Phi_{\alpha}(x),\]  we obtain $\lim_{\beta\rightarrow 0}\Phi_{\alpha,\beta}(x)=\Phi_{\alpha}(x)$ by the theorem of dominated convergence. This yields $\lim_{\alpha\rightarrow 0}\lim_{\beta\rightarrow 0}\Phi_{\alpha}(x)=\Phi(x)$ for all $x\in X$. 
		As $-\infty<\inf_{\bar{x}\in X}\Phi(\bar{x})\leq\Phi_{\alpha}(x)\leq \Phi(x)$ for all $x\in X$, it is easy to see that $-\infty<\inf_{\bar{x}\in X}\Phi(\bar{x})\leq \Phi_{\alpha,\beta}(x)$ for all $x\in X$. In particular $e^{-\Phi_{\alpha}}$ and $e^{-\Phi_{\alpha,\beta}}$ are bounded independent of $\alpha,\beta$. 
		An iterative application of the theorem of dominated convergence  shows that 
		\begin{align*}
			\lim_{\alpha\rightarrow 0}\lim_{\beta\rightarrow 0}\mu^{\Phi_{\alpha,\beta}}(g)=\mu^{\Phi}(g)\quad\text{for all}\quad g\in L^1(X;\mu).
		\end{align*}Consequently, taking the limits $\beta\rightarrow 0$ and $\alpha\rightarrow 0$ in Inequality \eqref{alp_bet} yields the claim.
	\end{proof}
\end{lem}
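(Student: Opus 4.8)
The plan is to deduce the inequality from the Poincar\'e inequality of \cite[Proposition 4.5]{AFP21}, which is available for convex potentials that are smooth with bounded, Lipschitz continuous derivatives, and then to recover the general convex, lower semicontinuous $\Phi$ by approximation. Since such a $\Phi$ may have no differentiability properties at all, the heart of the argument is to build an approximating family $\Phi_{\alpha,\beta}$, $\alpha,\beta>0$, that is regular enough for \cite[Proposition 4.5]{AFP21} while still converging to $\Phi$ in a manner that lets the inequality pass to the limit.

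First I would regularize $\Phi$ by its Moreau--Yosida approximation $\Phi_\alpha$ from \Cref{ex:yoshida_one}: each $\Phi_\alpha$ is convex and Fr\'echet differentiable with Lipschitz continuous gradient, satisfies $\inf_X\Phi\le\Phi_\alpha\le\Phi$, and $\Phi_\alpha(x)\to\Phi(x)$ for every $x\in X$ as $\alpha\to 0$. A Lipschitz gradient forces $\Phi_\alpha$ to grow at most quadratically, which supplies the integrable majorant needed later. As $\Phi_\alpha$ is still not smooth enough, I would mollify once more with the Ornstein--Uhlenbeck semigroup $(S_\beta)_{\beta\ge 0}$ of \cite[Chapter 11.6]{dPL06}, setting $\Phi_{\alpha,\beta}\defeq S_\beta\Phi_\alpha$. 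Convexity survives this step because $x\mapsto e^{\beta B}x+\sqrt{\id-e^{2\beta B}}\,y$ is affine in $x$ and $S_\beta$ is an average, and, as recorded in \cite[Chapter 11.6]{dPL06}, $\Phi_{\alpha,\beta}$ has bounded derivatives of all orders and a Lipschitz continuous gradient whose Lipschitz constant does not depend on $\beta$; moreover $\inf_X\Phi\le\Phi_{\alpha,\beta}$, so $e^{-\Phi_{\alpha,\beta}}$ is uniformly bounded, and $\Phi_{\alpha,\beta}$ still grows at most quadratically. Hence \cite[Proposition 4.5]{AFP21} applies and gives, for every $f\in\fbs{B_X}$,
\[
\lambda_1\int_X (QDf,Df)_X\,\mathrm{d}\mu^{\Phi_{\alpha,\beta}}\ge\int_X\bigl(f-\mu^{\Phi_{\alpha,\beta}}(f)\bigr)^2\,\mathrm{d}\mu^{\Phi_{\alpha,\beta}}.
\]

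It then remains to let $\beta\to 0$ and afterwards $\alpha\to 0$. Using the explicit integral representation of $\Phi_{\alpha,\beta}$, the quadratic bound on $\Phi_\alpha$, the contraction estimate $\|e^{\beta B}\|_{\mathcal{L}(X)}\le 1$, and the $\mu$-integrability of $x\mapsto\|x\|_X^2$ guaranteed by \Cref{lem:Gaussianmoments} (equivalently Fernique's theorem, \Cref{theo:fernique}), dominated convergence yields $\Phi_{\alpha,\beta}(x)\to\Phi_\alpha(x)$ pointwise as $\beta\to 0$, and hence $\lim_{\alpha\to 0}\lim_{\beta\to 0}\Phi_{\alpha,\beta}(x)=\Phi(x)$ for all $x\in X$. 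Because all the $\Phi_{\alpha,\beta}$ are bounded below by $\inf_X\Phi$, the densities $e^{-\Phi_{\alpha,\beta}}$ are dominated uniformly, while $\int_X e^{-\Phi_{\alpha,\beta}}\,\mathrm{d}\mu\ge\int_X e^{-\Phi}\,\mathrm{d}\mu>0$, so an iterated application of dominated convergence gives $\mu^{\Phi_{\alpha,\beta}}(g)\to\mu^{\Phi}(g)$ for every $g\in L^1(X;\mu)\subseteq L^1(X;\mu^{\Phi})$, the inclusion being \Cref{lem:dense_mu_one_with_pot}. Since $f\in\fbs{B_X}$ is bounded and depends on finitely many coordinates, both $(QDf,Df)_X$ and $f^2$ lie in $L^1(X;\mu)$; writing $\int_X(f-\mu^{\Phi_{\alpha,\beta}}(f))^2\,\mathrm{d}\mu^{\Phi_{\alpha,\beta}}=\mu^{\Phi_{\alpha,\beta}}(f^2)-\mu^{\Phi_{\alpha,\beta}}(f)^2$ and passing to the limit on both sides of the displayed inequality yields precisely the claim.

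I expect the main obstacle to be the bookkeeping in the double limit: one must exhibit a single majorant that is $\mu$-integrable (respectively $N(0,-B^{-1})$-integrable) uniformly in $\beta$, and then in $\alpha$, for the family $\Phi_{\alpha,\beta}$ --- this is exactly where the at-most-quadratic growth of the Moreau--Yosida approximations and Gaussian integrability enter --- and one must check carefully that convexity together with the $\beta$-uniform Lipschitz bound on $D\Phi_{\alpha,\beta}$ genuinely survive both smoothing steps, so that \cite[Proposition 4.5]{AFP21} is legitimately applicable at the regularized level.
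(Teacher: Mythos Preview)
Your proposal is correct and follows essentially the same route as the paper: Moreau--Yosida regularization $\Phi_\alpha$, a further smoothing by the Ornstein--Uhlenbeck semigroup to obtain $\Phi_{\alpha,\beta}=S_\beta\Phi_\alpha$, application of \cite[Proposition 4.5]{AFP21} at the regular level, and a two-step passage to the limit via dominated convergence exploiting the quadratic growth bound and the uniform lower bound $\inf_X\Phi$. Your write-up even makes a couple of points more explicit than the paper does (the positivity of the normalizing constant and the variance identity for the right-hand side), so nothing needs to be added.
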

The subsequent lemma shows that the Poincar\'e inequality is stable under additive perturbations with bounded oscillation. Consequently, we can also consider potentials which are not necessary convex.
\begin{lem}{\cite[Lemma 2.11]{EG23}}\label{lem:inf-dim-poincare}
	Suppose $\Phi=\Phi_1+\Phi_2$, where $\Phi_1:X\rightarrow (-\infty,\infty]$ is as in \Cref{poin_check} and the function $\Phi_2:X\rightarrow \R$ is measurable with $\norm{\Phi_2}_{osc}\defeq\sup_{x\in X}\Phi_2(x)-\inf_{x\in X}\Phi_2(x)<\infty$. Then 
	\[
	\lambda_1e^{\norm{\Phi_2}_{osc}}\int_X (QDf,Df)_X\,\mathrm{d}\mu^{\Phi}
	\geq \int_X \left(f-\mu^{\Phi}(f) \right)^2\,\mathrm{d}\mu^{\Phi}\quad\text{for all}\quad f\in\fbs{B_X}.
	\]
\end{lem}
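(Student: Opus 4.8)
The plan is to run the Holley--Stroock perturbation argument, taking \Cref{poin_check} as the unperturbed input. First I would record the elementary facts about the perturbation. Since $\Phi_2$ is real-valued and $\norm{\Phi_2}_{osc}<\infty$, the function $\Phi_2$ is in fact bounded, so $m\defeq\inf_{x\in X}\Phi_2(x)$ and $M\defeq\sup_{x\in X}\Phi_2(x)$ are finite with $M-m=\norm{\Phi_2}_{osc}$, and $e^{-M}\leq e^{-\Phi_2}\leq e^{-m}$ pointwise on $X$. Next, $\mu^{\Phi_1}$ is a well-defined probability measure (this is already implicitly used in \Cref{poin_check}): $e^{-\Phi_1}$ is bounded because $\Phi_1$ is bounded from below, and $\int_X e^{-\Phi_1}\,\mathrm{d}\mu>0$ by the standing convention on potential functions (\Cref{defn:Phi_gen_def}); the two-sided bound on $e^{-\Phi_2}$ then also yields $\int_X e^{-\Phi}\,\mathrm{d}\mu\geq e^{-M}\int_X e^{-\Phi_1}\,\mathrm{d}\mu>0$, so $\mu^{\Phi}$ from \Cref{defn:Phi_gen_def} makes sense. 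Writing $Z\defeq\big(\int_X e^{-\Phi_1}\,\mathrm{d}\mu\big)\big/\big(\int_X e^{-\Phi}\,\mathrm{d}\mu\big)\in(0,\infty)$, the two measures are related by $\mathrm{d}\mu^{\Phi}=Z\,e^{-\Phi_2}\,\mathrm{d}\mu^{\Phi_1}$.

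Next I would apply \Cref{poin_check} to the convex potential $\Phi_1$: for every $f\in\fbs{B_X}$,
\[
\lambda_1\int_X (QDf,Df)_X\,\mathrm{d}\mu^{\Phi_1}\geq \int_X\big(f-\mu^{\Phi_1}(f)\big)^2\,\mathrm{d}\mu^{\Phi_1},
\]
and transfer both sides to $\mu^{\Phi}$. Since $Q$ is non-negative, $(QDf,Df)_X\geq 0$, hence
\[
\int_X (QDf,Df)_X\,\mathrm{d}\mu^{\Phi}=Z\int_X (QDf,Df)_X\,e^{-\Phi_2}\,\mathrm{d}\mu^{\Phi_1}\geq Z\,e^{-M}\int_X (QDf,Df)_X\,\mathrm{d}\mu^{\Phi_1}.
\]
For the variance I would use the variational identity $\int_X(f-\nu(f))^2\,\mathrm{d}\nu=\inf_{c\in\R}\int_X(f-c)^2\,\mathrm{d}\nu$, valid for any probability measure $\nu$ and any bounded cylinder function $f$: for every fixed $c\in\R$,
\[
\int_X(f-c)^2\,\mathrm{d}\mu^{\Phi}=Z\int_X(f-c)^2\,e^{-\Phi_2}\,\mathrm{d}\mu^{\Phi_1}\leq Z\,e^{-m}\int_X(f-c)^2\,\mathrm{d}\mu^{\Phi_1},
\]
and passing to the infimum over $c$ gives $\int_X(f-\mu^{\Phi}(f))^2\,\mathrm{d}\mu^{\Phi}\leq Z\,e^{-m}\int_X(f-\mu^{\Phi_1}(f))^2\,\mathrm{d}\mu^{\Phi_1}$.

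Finally I would chain the three estimates — the variance transfer, the Poincaré inequality for $\mu^{\Phi_1}$, and the Dirichlet-form transfer:
\[
\int_X\big(f-\mu^{\Phi}(f)\big)^2\,\mathrm{d}\mu^{\Phi}\leq Z\,e^{-m}\lambda_1\int_X (QDf,Df)_X\,\mathrm{d}\mu^{\Phi_1}\leq \lambda_1\,e^{M-m}\int_X (QDf,Df)_X\,\mathrm{d}\mu^{\Phi},
\]
using in the last step that $Z\,e^{-m}\cdot\tfrac{1}{Z\,e^{-M}}=e^{M-m}=e^{\norm{\Phi_2}_{osc}}$; in particular the normalization constant $Z$ cancels, which is the crux of the argument. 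This is exactly the claimed inequality, and since both \Cref{poin_check} and the target are stated on the same class $\fbs{B_X}$, no closure or density step is needed. I do not expect a genuine obstacle here — the computation is the classical Holley--Stroock perturbation principle — and the only two points deserving a word of care are the well-definedness of $\mu^{\Phi_1}$ (so that \Cref{poin_check} is applicable) and the use of the variational formula for the variance, which lets the centering constant $c$ be chosen independently of the reference measure; this is precisely what makes the oscillation seminorm $\norm{\Phi_2}_{osc}$, rather than a two-sided pointwise bound on $\Phi_2$ itself, sufficient.
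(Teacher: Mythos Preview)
Your argument is correct: this is exactly the Holley--Stroock perturbation principle, and every step checks out, including the well-definedness of $\mu^{\Phi_1}$ and $\mu^{\Phi}$, the use of the variational characterization of the variance to pass between the two probability measures, and the cancellation of the normalization constant $Z$. The paper itself does not give a proof but simply cites \cite[Lemma~2.11]{EG23}; your proposal reproduces the standard argument that underlies that citation, so there is nothing to compare beyond noting that your write-up is self-contained where the paper is not.
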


\begin{rem}
	Note that the Poincar\'e inequality from \Cref{poin_check} above is valid without assuming that $\Phi\in W^{1,2}(X;\mu)$. 
\end{rem}

\section{The Kolmogorov operator}\label{sec:inf_langevin_general}
Let $(U,\spdx_U)$ and $(V,\spdx_V)$ be two real separable Hilbert spaces. Moreover, we fix two centered non-degenerate Gaussian measures $\mu_1$ and $\mu_2$ on $(U,\borel(U))$ and $(V,\borel(V))$, respectively.
Let $Q_i$ denote the covariance operator of $\mu_i$, $i=1,2$ with corresponding basis of eigenvectors $B_U=(d_k)_{k\in\N}$ and $B_V=(e_k)_{k\in\N}$ and positive eigenvalues $(\lambda_{1,k})_{k\in\N}$ and $(\lambda_{2,k})_{k\in\N}$, respectively. Without loss of generality, we assume that $(\lambda_{1,k})_{k\in\N}$ and $(\lambda_{2,k})_{k\in\N}$ are decreasing to zero. The corresponding projections to the induced subspaces, coordinate maps and embeddings are denoted by $P_n^U$, $p_n^U$, $\overline{p}_n^U$ and $P_n^V$, $p_n^V$, $\overline{p}_n^V$, respectively.

In addition, we fix a potential $\Phi: U\rightarrow  (-\infty,\infty]$ and assume for the rest of this section the following assumption.

\begin{assum}\label{ass:Phi_general}
	$\Phi:U\rightarrow (-\infty,\infty]$ is bounded from below by zero and there is $\theta\in [0,\infty)$ such that $\Phi\in W_{Q_1^{\theta}}^{1,2}(U;\mu_1)$. $\Phi$ is normalized, i.e.~$\int_U e^{-\Phi}\,\mathrm{d}\mu_1=1$.
\end{assum}

All results below are also valid if we replace bounded from below by zero with bounded from below. Through the application of a suitable scaling, $\int_U e^{-\Phi}\,\mathrm{d}\mu_1=1$ holds without loss of generality. 

\begin{defn}\label{def:core}
	Set $W\defeq U\times V$ and denote by $\spdx_W$ the canonical inner product on $W$ defined by
	\[
	\left((u_1,v_1),(u_2,v_2)\right)_W\defeq (u_1,u_2)_U+(v_1,v_2)_V,\quad \text{for all} \quad(u_1,v_1),(u_2,v_2)\in W.
	\] Then, $(W,\spdx_W)$ is a real separable Hilbert space. Furthermore, we define the measure $\mu_1^{\Phi}\defeq e^{-\Phi}\mu_1$ on $\borel(U)$ and recall the measure \[\mu^{\Phi}\defeq \mu_1^{\Phi}\otimes\mu_2\]on the Borel $\sigma$-algebra $\borel(W)=\borel(U)\otimes\borel(V)$.
	We set $\mu\defeq\mu^0=\mu_1\otimes\mu_2$. Due to \cite[Theorem~1.12]{dP06}, $\mu$ is a centered Gaussian measure with covariance operator $Q$ defined by 					\[Q:W\to W, \quad(u,v)\mapsto(Q_1u,Q_2v).\]
	Let $B_W$ be an ordered enumeration of the set
	\[
	\{ (d_n,0)\mid n\in\N \} \cup \{ (0,e_n)\mid n\in\N \} \subseteq W.
	\]
	Then, $B_W$ is an orthonormal basis of eigenvectors of $Q$. For each $n\in\N$, $k\in\N \cup \lbrace\infty\rbrace$ and $a\in \{c,b\}$ we define the spaces of finitely based cylinder functions with respect to $B_W$ by
	\[
	\begin{aligned}
		\mathcal{F}C_a^k(B_W,n)&\defeq \left\{f= \varphi\circ (p_n^U,p_n^V) \;\text{for some}\;\varphi\in C_a^k(\R^n\times\R^n)\right\}\quad\text{and}\\
		\mathcal{F}C_a^k(B_W)&\defeq \bigcup_{n\in\N}	\mathcal{F}C(B_W,n).
	\end{aligned}
	\]
	Further, we introduce $\mu^n\defeq \mu_1^n\otimes\mu_2^n$ on $\borel(\R^n\times\R^n)$, with $\mu_i^n$ being a centered Gaussian measure on $\borel(\R^n)$ with diagonal covariance matrix $Q_{i,n}\defeq \text{diag}(\lambda_{i,1}\dots,\lambda_{i,n})$.
\end{defn}

\begin{defn}\label{def:grad_comp}
	Let $n\in\N$ and $\varphi\in C^1(\R^n\times\R^n)$ be given. By $\partial_{i,1}\varphi$ and $\partial_{i,2}\varphi$, $1\leq i\leq n$, we denote the $i$-th partial derivative of $\varphi$ in the first and second component, respectively. We generalize this notation to gradients, e.g.~$D_1\varphi$ denotes the gradient of $\varphi$ with respect to the first component. 
	
	For G\^{a}teaux differentiable $f:W\to\R$ and all $w=(u,v)\in W$ we define
	\begin{align*}
		D_1f(w)&\defeq \sum_{n\in\N} (Df(w),(d_n,0))_W d_n\in U,\quad  \partx{i} f(w) \defeq (D_1f(w),d_i)_U\quad\text{and}\\
		D_2f(w)&\defeq \sum_{n\in\N} (Df(w),(0,e_n))_W e_n\in V,\quad\party{i} f(w) \defeq (D_2f(w),e_i)_V.
	\end{align*}
	In this manner we also define higher order (partial) derivatives.
\end{defn}

\begin{rem}\label{rem:fcb_dense_prod_space}
		Let $n\in\N$ and $f=\varphi\circ (p^U_n,p_n^V)\in \mathcal{F}C_b^1(B_W)$. Similar to \Cref{rem_der_cyl_X}, we compute for all $(u,v)\in W$ \[D_1f(u,v)=\sum_{n\in\N}\partial_{i,1}\varphi(p^U_nu,p^V_nv) d_i\quad\text{and}\quad D_2f(u,v)=\sum_{n\in\N}\partial_{i,2}\varphi(p^U_nu,p^V_nv)e_i.\] 
	
\end{rem}

In the next definition, we fix the coefficient operators determining the Kolmogorov operator. We directly include the invariance and growth conditions needed for our further considerations.
\begin{defn}\label{def:inf-dim-operators}
	We fix $K_{12}\in\lop{U;V}$ and set $K_{21}\defeq K_{12}^*\in\lop{V;U}$.
	Moreover, suppose $K_{22}:V\to\mathcal{L}_{>0}^{+}(V)$ and $ v\mapsto K_{22}(v)e_i\in C^1(V;V)$ for all $i\in\N$.
	Further, assume that there is a strictly increasing sequence $\sk{m}$ in $\N$ such that for each $k\in\N$ and $v\in V$, it holds that	
	\[
	\begin{aligned}
		K_{12}(U_{m_k})&\subseteq V_{m_k},\; K_{21}(V_{m_k})\subseteq U_{m_k},\; K_{22}(v)(V_{m_k})\subseteq V_{m_k}\;\text{and}\\				K_{22}(v)|_{V_{m_k}}&= K_{22}(P_{m_k}^Vv)|_{V_{m_k}}.
	\end{aligned}
	\]
	Moreover, suppose that for each $k\in\N$, there is a constant $M_k\in(0,\infty)$ such that
	\[
	\begin{aligned}
		\sup_{v\in V_{m_k}} \norm{K_{22}(v)}_{\lop{V_{m_k}}} &\leq M_k 
		\quad\text{ and }\\
		\norm{\party{i}K_{22}(v)}_{\lop{V_{m_k}}} &\leq M_k(1+\norm{v}_{V_{m_k}})
		\quad\text{ for all }v\in V_{m_k}, 1\leq i\leq m_k.
	\end{aligned}
	\]	
	Above, for each $v\in V_{m_k}$ the bounded linear operator $\party{i}K_{22}(v):V_{m_k}\to V_{m_k}$ is defined by $\party{i}K_{22}(v)\tilde{v}\defeq \sum_{j=1}^{m_k}\party{i}K_{22}(v)e_j (\tilde{v},e_j)_V$, $\tilde{v}\in V_{m_k}$.
	
	In the following, we set $m^K(n)\defeq \min_{k\in\N} \{m_k: m_k\geq n \}$.
\end{defn}
Roughly speaking, the invariance properties $K_{12},K_{21}$ and $K_{22}$ imply that they have a block invariance structure, where the size of the blocks is described by the increasing sequence $\sk{m}$.

\begin{rem}\label{rem:too-diagonal}
	Assume that $K_{22}(v)$ leaves $V_n$ invariant for all $n\in\N$ and $v\in V$. Using the strengthened invariance properties of $K_{22}$,
	it follows quickly that $K_{22}(v)$ is diagonal, i.e.~$K_{22}(v)e_i=\lambda_{22,i}(v)e_i$ for some positive continuous differentiable $\lambda_{22,i}:V\to\R$. In this situation it is particularly easy to check if $K_{22}$ fullfils the properties stated in \Cref{def:inf-dim-operators}.
\end{rem}

\begin{rem}\label{rem:dphi_finite}
	Suppose $f=\varphi\circ (p_n^U,p_n^V)\in\fbs{B_W,n}$ and by trivially extending $\varphi$ if necessary that $m^K(n)=n$.
	Then, by invariance properties of the coefficients, we compute
	\[
	\begin{aligned}
		(Q_1^{\theta}D\Phi,Q_1^{-\theta}K_{21}D_2f)_U&=\sum_{i=1}^n \lambda_{1,i}^{-\theta}(Q_1^{\theta}D\Phi,d_i)_U(d_i,K_{21}D_2f)_U\\&=\sum_{i=1}^n \partx{i}\Phi(d_i,K_{21}D_2f)_U.
	\end{aligned}
	\]Therefore, the interpretation of $(D\Phi,K_{21}D_2f)_U$ as $(Q_1^{\theta}D\Phi,Q_1^{-\theta}K_{21}D_2f)_U$ is reasonable even though we do not know if $\Phi\in W^{1,2}(U;\mu_1)$. For the following consideration we define for all $f\in \fbs{B_W}$ \[(D\Phi,K_{21}D_2f)_U\defeq(Q_1^{\theta}D\Phi,Q_1^{-\theta}K_{21}D_2f)_U\quad\text{ for all}\quad f\in \fbs{B_W}.\]  
	
\end{rem}
We are now able to define the Kolmogorov operator, associated to the degenerate infinite dimensional stochastic Hamiltonian system, on $\fbs{B_W}$.

\begin{defn}\label{def:inf_Lang_op}
	The differential operators $(S,\fbs{B_W})$ and $(A^{\Phi},\fbs{B_W})$ are defined on $L^2(W;\mu^{\Phi})$ by
	\[
	\begin{aligned}
		Sf(u,v)
		\defeq \tr\left[K_{22}(v)\circ D_2^2f(u,v)\right]
		&+ \sum_{j=1}^{\infty} (\party{j}K_{22}(v)D_2f(u,v),e_j)_V \\
		&- (v,Q_2^{-1}K_{22}(v)D_2f(u,v))_V
	\end{aligned}
	\]
	and
	\[
	\begin{aligned}
		A^{\Phi}f(u,v)
		\defeq &(u,Q_1^{-1}K_{21}D_2f(u,v))_U+(D\Phi(u),K_{21}D_2f(u,v))_U\\
		&-(v,Q_2^{-1}K_{12}D_1f(u,v))_V,
	\end{aligned}
	\]
	respectively, for all $(u,v)\in W$.
	The Kolmogorov operator denoted by $(L^{\Phi},\fbs{B_W})$ is defined via \[L^{\Phi}\defeq S-A^{\Phi}.\] For notational convenience we set $A\defeq A^0$ and $L\defeq L^0$.
\end{defn}
\begin{rem}\label{rem:inf-dim-op-well-defined}
	The invariance assumptions made on $K_{12}$, $K_{21}$ and $K_{22}$ ensure that $S$ and $A^{\Phi}$ and therefore also $L^{\Phi}$ are well-defined on $\fbs{B_W}$. Indeed, let $n\in\N$ and suppose $f=\varphi\circ (p_n^U,p_n^V)\in\fbs{B_W,n}$. By trivially extending $\varphi$ if necessary, we can assume $m^K(n)=n$. Then, for all $(u,v)\in W$ we get by \Cref{rem:fcb_dense_prod_space} Item (i)
	\[
	\begin{aligned}
		Q_1^{-1}K_{21}D_2f(u,v)\in U_n,\quad\text{and}\quad Q_2^{-1}K_{12}D_1f(u,v),\;Q_2^{-1}K_{22}(v)D_2f(u,v)\in V_n.
	\end{aligned}
	\]
	Moreover, these maps are uniformly bounded in $(u,v)\in W$ due to uniform boundedness of 	$K_{22}:V_{n}\rightarrow \mathcal{L}(V_{n})$ and the fact that all derivatives of $f$ are bounded. 
	By the observation that all sums appearing in the definition of $Sf$ and $A^{\Phi}f$ are finite, the fact that $\partx{i}\Phi\in L^2(U;\mu_1^{\Phi})$, as well as $\normx_U,\normx_V\in L^2(W;\mu^{\Phi})$, it follows that 
	\[
	\begin{aligned}
		Sf(u,v)&=Sf(P_n^U u, P_n^V v),\\ A^{\Phi}f(u,v)&=Af(P_n^U u, P_n^V) +(D\Phi(u),K_{21}D_2f(P_n^U u, P_n^V))_U
	\end{aligned}
	\]
	and $Sf,\,A^{\Phi}f\in L^2(W;\mu^{\Phi})$. Therefore, $Lf\in L^2(W;\mu^{\Phi})$ is finitely based and we have
	\[
	\begin{aligned}
		L^{\Phi}f(u,v)&=Sf(P_n^U u, P_n^V v)-Af(P_n^U u, P_n^V)-(D\Phi(u),K_{21}D_2f(P_n^U u, P_n^V))_U\\
		&=Lf(P_n^U u, P_n^V)-(D\Phi(u),K_{21}D_2f(P_n^U u, P_n^V))_U.
	\end{aligned}
	\]It is also possible to consider $(L^{\Phi},\fbs{B_W})$ on $L^p(W;\mu^{\Phi})$ for $p\in [1,2]$.
\end{rem}

As the abbreviation should suggest, we show below, among other things, that $S$ is symmetric and $A^{\Phi}$ is antisymmetric.
\begin{lem}{\cite[Lemma 3.5]{EG23}}\label{lem:inf-dim-op-decomp} The linear operator 
	$(S,\fbs{B_W})$ is symmetric and negative semi-definite, whereas $(A^{\Phi},\fbs{B_W})$ is antisymmetric on $L^2(W;\mu^{\Phi})$. Therefore, $(L^{\Phi},\fbs{B_W})$ is dissipative on $L^2(W;\mu^{\Phi})$.
	
	Denote by $(S,D(S))$, $(A^{\Phi},D(A^{\Phi}))$ and $(L^{\Phi},D(L^{\Phi}))$ the closures of the respective operators.
	Then, for all $f,g\in\fbs{B_W}$ it holds
	\[
	\begin{aligned}\label{lem:intbp}
		-\int_W L^{\Phi}fg\;\mathrm{d}\mu^{\Phi}=  \int_W (D_2 f, K_{22} D_2g)_V - (D_1 f, K_{21}D_2 g)_U + (D_2 f, K_{12}D_1 g)_V \,\mathrm{d}			\mu^{\Phi}
	\end{aligned}
	\]and the measure $\mu^{\Phi}$ is invariant for $(L^{\Phi},\fbs{B_W})$ in the sense that $\int_W L^{\Phi}f\;\mathrm{d}\mu^{\Phi}=0$ for all $f\in \fbs{B_W}$.
\end{lem}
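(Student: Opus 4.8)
The plan is to derive the whole statement from two integration by parts identities, obtained by reducing to finite-dimensional Gaussian computations. First I would fix $f,g\in\fbs{B_W}$ and, using \Cref{rem:inf-dim-op-well-defined}, trivially extend the representing functions so that $f=\varphi\circ(p_n^U,p_n^V)$ and $g=\psi\circ(p_n^U,p_n^V)$ with $m^K(n)=n$; then $K_{21}D_2f$, $K_{12}D_1f$, $K_{22}(v)D_2f$ take values in $U_n$, resp.\ $V_n$, all the sums occurring in $Sf$ and $A^{\Phi}f$ are finite, and the functions $v\mapsto a_{ij}(v)\defeq(K_{22}(v)e_j,e_i)_V$ depend only on $p_n^Vv$, are bounded by $M_n$, and have at most linearly growing $V$-gradient by \Cref{def:inf-dim-operators}.

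For $S$ I would integrate $\int_W Sf\,g\,\mathrm{d}\mu^{\Phi}$ over the $V$-variable first; since $\mu^{\Phi}=\mu_1^{\Phi}\otimes\mu_2$ and the $v$-dependence is finitely based, this is a classical finite-dimensional Gaussian integration by parts on $\R^n$ (valid for $C^1$ functions with polynomially growing derivatives, equivalently \Cref{lem:inf-dim-ibp-fbs} applied to the relevant integrands $a_{ij}(v)\,\party{j}f\,g$, which lie in suitable Gaussian Sobolev spaces thanks to the growth bound on $\party{i}K_{22}$ and Fernique's theorem, \Cref{theo:fernique}). In each term $\int\party{i}(\party{j}f)\,[a_{ij}(v)g]\,\mathrm{d}\mu_2$ the contribution from differentiating $a_{ij}$ cancels the drift $\sum_j(\party{j}K_{22}(v)D_2f,e_j)_V$ in $Sf$ (using $a_{ij}=a_{ji}$), and the Gaussian boundary term $\int(v,Q_2^{-1}e_i)_V(\cdots)\,\mathrm{d}\mu_2$ cancels the Ornstein-Uhlenbeck term $-(v,Q_2^{-1}K_{22}(v)D_2f)_V$ in $Sf$; what survives is
\[
\int_W Sf\,g\,\mathrm{d}\mu^{\Phi}=-\int_W(K_{22}(v)D_2f,D_2g)_V\,\mathrm{d}\mu^{\Phi},
\]
which is symmetric in $f,g$ and, for $f=g$, is $\leq 0$ since $K_{22}(v)\in\mathcal{L}_{>0}^+(V)$; this gives symmetry and negative semi-definiteness of $(S,\fbs{B_W})$.

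For $A^{\Phi}$ I would write $A^{\Phi}f=A_1f+A_2f$ with $A_1f=(u,Q_1^{-1}K_{21}D_2f)_U+(D\Phi(u),K_{21}D_2f)_U$ and $A_2f=-(v,Q_2^{-1}K_{12}D_1f)_V$, set $\psi_i\defeq(K_{21}D_2f,d_i)_U$ and $\chi_k\defeq(K_{12}D_1f,e_k)_V$ (bounded cylinder functions), integrate $\int_W A_1f\,g\,\mathrm{d}\mu^{\Phi}$ over $U$ first using \Cref{lem:intbp_classic_pot} applied to $\psi_i\,g$ (with $\Phi$ as in \Cref{ass:Phi_general} and the $Q_1^{\theta}$-reading of $D\Phi$ from \Cref{rem:dphi_finite}), and integrate $\int_W A_2f\,g\,\mathrm{d}\mu^{\Phi}$ over $V$ first using \Cref{lem:inf-dim-ibp-fbs} applied to $\chi_k\,g$. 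This yields
\[
\int_W A_1f\,g\,\mathrm{d}\mu^{\Phi}=\int_W\Big(T_f\,g+\sum_i\psi_i\,\partx{i}g\Big)\mathrm{d}\mu^{\Phi},\qquad \int_W A_2f\,g\,\mathrm{d}\mu^{\Phi}=-\int_W\Big(T_f\,g+\sum_k\chi_k\,\party{k}g\Big)\mathrm{d}\mu^{\Phi},
\]
where both divergence terms $\sum_i\partx{i}\psi_i$ and $\sum_k\party{k}\chi_k$ equal $T_f\defeq\sum_{i,k}(\partx{i}\party{k}f)(d_i,K_{21}e_k)_U$ because mixed partials commute and $K_{12}^{*}=K_{21}$. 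Adding and using $\sum_i\psi_i\,\partx{i}g=(D_2f,K_{12}D_1g)_V$ and $\sum_k\chi_k\,\party{k}g=(D_1f,K_{21}D_2g)_U$ gives
\[
\int_W A^{\Phi}f\,g\,\mathrm{d}\mu^{\Phi}=\int_W\big[(D_2f,K_{12}D_1g)_V-(D_1f,K_{21}D_2g)_U\big]\,\mathrm{d}\mu^{\Phi},
\]
which is antisymmetric in $f,g$ (again by $K_{12}^{*}=K_{21}$), so in particular $\int_W A^{\Phi}f\,f\,\mathrm{d}\mu^{\Phi}=0$.

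Finally I would subtract the two displays to read off the claimed integration by parts formula for $L^{\Phi}=S-A^{\Phi}$, take $g=f$ to obtain $\int_W L^{\Phi}f\,f\,\mathrm{d}\mu^{\Phi}=-\int_W(K_{22}(v)D_2f,D_2f)_V\,\mathrm{d}\mu^{\Phi}\leq 0$ (dissipativity), note that closability of $S$, $A^{\Phi}$, $L^{\Phi}$ follows since densely defined symmetric, antisymmetric, respectively dissipative operators on a Hilbert space are closable, and insert the constant function $g\equiv1\in\fbs{B_W}$ into the integration by parts formula (so that $D_1g=D_2g=0$) to get $\int_W L^{\Phi}f\,\mathrm{d}\mu^{\Phi}=0$, the invariance of $\mu^{\Phi}$. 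I expect the delicate part to be the $S$-computation: correctly matching the $a_{ij}$-derivative term with the $\party{j}K_{22}$-drift and the Gaussian boundary term with the $Q_2^{-1}K_{22}$-term, together with the Fubini and Sobolev-regularity bookkeeping (using the growth bound on $\party{i}K_{22}$, $\partx{i}\Phi\in L^2(U;\mu_1^{\Phi})$, boundedness of the derivatives of $f,g$, and \Cref{theo:fernique}) that legitimizes applying \Cref{lem:inf-dim-ibp-fbs} and \Cref{lem:intbp_classic_pot} term by term and then resumming the finitely many contributions.
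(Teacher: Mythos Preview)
Your proposal is correct. The paper does not supply its own proof of this lemma; it cites \cite[Lemma~3.5]{EG23} and states the result without argument. Your approach---reducing to finitely-based functions via \Cref{rem:inf-dim-op-well-defined}, applying the Gaussian integration by parts (\Cref{lem:inf-dim-ibp-fbs}) in the $V$-variable for $S$ and the weighted integration by parts (\Cref{lem:intbp_classic_pot}) in the $U$-variable for the $A_1$-part of $A^{\Phi}$, with the mixed-derivative divergence terms $T_f$ cancelling---is exactly the standard computation and is what one expects the proof in \cite{EG23} to be.
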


We explicitly calculate the Carr\'e du champ operator $\Gamma$ of $(L^{\Phi},\fbs{B_W})$ and verify that the Kolmogorov operator is an abstract diffusion operator, below. Hence, if the operator $(L^{\Phi},\fbs{B_W})$ generates a strongly continuous semigroup, we immediately know by \cite[Lemma 1.8 and Lemma 1.9]{Eberle1999} that it is a Dirichlet operator in the sense of \cite[Capter I.4]{MaRockner} and consequently generated by a sub-Markovian semigroup.

\begin{lem}\label{cor:invariance_ASP} 
	The measure $\mu^{\Phi}$ is invariant for the symmetric operator $(S,\fbs{B_W})$ and the antisymmetric operator $(A^{\Phi},\fbs{B_W})$, therefore also for $(L^{\Phi},\fbs{B_W})$. Moreover, the Kolmogorov operator $(L^{\Phi},\fbs{B_W})$ is an abstract diffusion operator on $L^p(W;\mu^{\Phi})$ for all $p\in [1,2]$ and the corresponding Carr\'e du champ operator $\Gamma$ is given by
	\begin{equation}\label{eq:CDC}
		\Gamma(f,g)=\left(K_{22}D_2f,D_2g\right)_V\quad\text{for all}\quad f,g\in \fbs{B_W}.
	\end{equation}
\end{lem}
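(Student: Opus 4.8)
The plan is to verify the three claims in turn: invariance of $\mu^\Phi$ for $S$ and $A^\Phi$, the diffusion property, and the explicit formula for $\Gamma$. The invariance of $\mu^\Phi$ for $(L^\Phi,\fbs{B_W})$ was already recorded in \Cref{lem:inf-dim-op-decomp}, but here we want it separately for the symmetric and antisymmetric parts. For $S$: given $f=\varphi\circ(p_n^U,p_n^V)\in\fbs{B_W,n}$ with (after trivial extension) $m^K(n)=n$, we must show $\int_W Sf\,\mathrm d\mu^\Phi=0$. Since $Sf$ is finitely based and only involves the $V$-variable in its differential structure, this reduces to the statement that $\mu_2$ is infinitesimally invariant for the finite-dimensional generator $\tr[K_{22}\circ D_2^2\,\cdot\,]+\sum_j(\party jK_{22}D_2\,\cdot\,,e_j)_V-(v,Q_2^{-1}K_{22}D_2\,\cdot\,)_V$ on $\mathbb R^n$; this is the classical fact that a (possibly variable-coefficient, non-degenerate) Ornstein--Uhlenbeck-type operator whose drift is $\operatorname{div}$ of the diffusion minus $Q_2^{-1}v$ applied to the diffusion annihilates integrals against $\mu_2$. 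Concretely, apply the finite-dimensional integration-by-parts formula (\Cref{lem:inf-dim-ibp-fbs} in finite dimensions, i.e.\ the Gaussian integration-by-parts on $\mathbb R^n$) to $\int_{\mathbb R^n}\party j\bigl(K_{22}(v)D_2f\bigr)_j\,\mathrm d\mu_2^n$ and watch the boundary/weight terms cancel against the $-(v,Q_2^{-1}K_{22}D_2f)_V$ contribution. For $A^\Phi$: antisymmetry of $(A^\Phi,\fbs{B_W})$ is \Cref{lem:inf-dim-op-decomp}, and antisymmetry applied to $f$ and the constant function $1$ gives $\int_W A^\Phi f\,\mathrm d\mu^\Phi=-\int_W f\,A^\Phi 1\,\mathrm d\mu^\Phi=0$ since $A^\Phi 1=0$ (indeed $D_1 1=D_2 1=0$). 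Hence $\mu^\Phi$ is invariant for $L^\Phi=S-A^\Phi$ as well.

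For the diffusion-operator property and the formula for $\Gamma$, recall that the carré du champ is $\Gamma(f,g)=\tfrac12\bigl(L^\Phi(fg)-fL^\Phi g-gL^\Phi f\bigr)$, and that being an abstract diffusion operator means $\Gamma(f,f)\ge 0$ together with the chain-rule identity $L^\Phi(\psi\circ f)=\psi'(f)L^\Phi f+\psi''(f)\Gamma(f,f)$ (equivalently, a second-order Leibniz rule). First I would compute $\Gamma$ directly from \Cref{def:inf_Lang_op}. Both $S$ and $A^\Phi$ contribute to $L^\Phi(fg)$; the antisymmetric part $A^\Phi$ is a first-order (derivation) operator, so it satisfies the exact Leibniz rule $A^\Phi(fg)=fA^\Phi g+gA^\Phi f$ and contributes nothing to $\Gamma$. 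For $S$, the first-order terms $\sum_j(\party jK_{22}D_2\,\cdot\,,e_j)_V$ and $-(v,Q_2^{-1}K_{22}D_2\,\cdot\,)_V$ are again derivations and cancel in $\Gamma$; only the genuinely second-order term $\tr[K_{22}(v)\circ D_2^2\,\cdot\,]$ survives, and a short computation using $D_2^2(fg)=fD_2^2g+gD_2^2f+D_2f\otimes D_2g+D_2g\otimes D_2f$ together with symmetry of $K_{22}(v)$ gives $\Gamma(f,g)=(K_{22}(v)D_2f,D_2g)_V$, which is \eqref{eq:CDC}. Non-negativity $\Gamma(f,f)=(K_{22}(v)D_2f,D_2f)_V\ge 0$ is immediate because $K_{22}(v)\in\mathcal L_{>0}^+(V)$.

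It then remains to check the chain rule. I would show that for $\psi\in C_b^\infty(\mathbb R)$ (or polynomial) and $f\in\fbs{B_W}$ one has $\psi\circ f\in\fbs{B_W}$ and $L^\Phi(\psi\circ f)=\psi'(f)L^\Phi f+\psi''(f)\Gamma(f,f)$; this follows term by term from the ordinary chain rule for $D_1,D_2,D_2^2$ applied inside each summand of $Sf$ and $A^\Phi f$, exactly as in \cite[Lemma 1.8, Lemma 1.9]{Eberle1999} or the finite-dimensional computation — the only second-order contribution comes from $\tr[K_{22}D_2^2(\psi\circ f)]=\psi'(f)\tr[K_{22}D_2^2f]+\psi''(f)(K_{22}D_2f,D_2f)_V$. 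Finally, the claim that this works on $L^p(W;\mu^\Phi)$ for all $p\in[1,2]$ is just the remark (already made in \Cref{rem:inf-dim-op-well-defined}) that $\fbs{B_W}\subseteq L^p(W;\mu^\Phi)$ and $L^\Phi f\in L^p(W;\mu^\Phi)$ for such $p$, together with \Cref{lem:dense_mu_one_with_pot} giving density of the domain. The main obstacle is the first claim — verifying infinitesimal invariance of $\mu_2$ for the variable-coefficient, non-self-adjoint second-order part $S$ — where one must be careful that the divergence-form drift $\sum_j\party jK_{22}(v)e_j$ is precisely what is needed to make the Gaussian integration-by-parts terms telescope; the rest is bookkeeping with the chain rule and the Leibniz rule for derivations.
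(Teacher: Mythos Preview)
Your proposal is correct and follows essentially the same route as the paper: product rule to compute $\Gamma$, the chain rule to verify the diffusion property, and (anti)symmetry together with vanishing on constants for invariance. The only simplification you overlook is that invariance for $S$ follows by the same one-line argument you give for $A^\Phi$ --- symmetry of $S$ (already recorded in \Cref{lem:inf-dim-op-decomp}) plus $S1=0$ gives $\int_W Sf\,\mathrm d\mu^\Phi=(Sf,1)_{L^2(\mu^\Phi)}=(f,S1)_{L^2(\mu^\Phi)}=0$, so the direct finite-dimensional integration-by-parts computation is unnecessary; note also that the paper (following Eberle's definition) states the multivariate chain rule $L^\Phi\varphi(f_1,\ldots,f_m)=\sum_k\partial_k\varphi\,L^\Phi f_k+\sum_{k,l}\partial_k\partial_l\varphi\,\Gamma(f_k,f_l)$ rather than just the univariate version, though the argument is of course the same.
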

\begin{proof}
	As mentioned at the end of \Cref{rem:inf-dim-op-well-defined}, it is possible to consider the operator $(L^{\Phi},\fbs{B_W})$ on $L^p(W;\mu^{\Phi})$, for all $p\in [1,2]$.
	The first part of the statement directly follows by \Cref{lem:inf-dim-op-decomp}. 
	
	To calculate the Carr\'e du champ operator, let $f,g\in \fbs{B_W}$ be given. Obviously, their product $fg$ is in $\fbs{B_W}$ and by the classical product rule for differentiable functions, we obtain (note that all appearing infinite sums below are finite)
	\[
	\begin{aligned}
		L^{\Phi}(fg)&=\sum_{i,j=1}^{\infty} (K_{22}e_i,e_j)_V\party{i}\party{j}(fg)
		+ \sum_{j=1}^{\infty} (\party{j}K_{22}D_2(fg),e_j)_V \\
		&\quad- (v,Q_2^{-1}K_{22}D_2(fg))_V -(u,Q_1^{-1}K_{21}D_2(fg))_U\\
		&\quad-(D\Phi,K_{21}D_2(fg))_U+(v,Q_2^{-1}K_{12}D_1(fg))_V\\
		&=\sum_{i,j=1}^{\infty} (K_{22}e_i,e_j)_V\party{i}f\party{j}g+\sum_{i,j=1}^{\infty} (K_{22}e_i,e_j)_V\party{i}g\party{j}f+fL^{\Phi}g+gL^{\Phi}f\\
		&=2\left(K_{22}D_2f,D_2g\right)_V+fL^{\Phi}g+gL^{\Phi}f.
	\end{aligned}
	\]Therefore, \eqref{eq:CDC} holds and since $K_{22}(v)\in \lopp{V}$ for all $v\in V$, we conclude $\Gamma(f,f)\geq 0$. Let $m,n\in\N$, $f_1,\dots,f_m\in \fbs{B_W}$ and $\varphi\in C^{\infty}(\R^m)$ with $\varphi(0)=0$ be given. Since the composition of a $C^{\infty}(\R^m)$ function with a vector $(\psi_1,\dots,\psi_m)\in (C_b^{\infty}(\R^n))^m$ is in $C_b^{\infty}(\R^n)$, we obtain $\varphi(f_1,\dots,f_m)\in \fbs{B_W}$. Finally, 
	\[
	\begin{aligned}
		&\quad	L^{\Phi}\varphi(f_1,\dots,f_m)\\
		&=\sum_{k=1}^m\partial_k\varphi(f_1,\dots,f_m)L(f_k)+\sum_{k,l=1}^m \partial_l\partial_k\varphi(f_1,\dots,f_m)\left(K_{22}D_2f_k,D_2f_l\right)_V,
	\end{aligned}
	\]follows similar as above,  by the classical chain rule. All together we have verified that $(L^{\Phi},\fbs{B_W})$ is an abstract diffusion operator on $L^p(W;\mu^{\Phi})$ in the sense of \cite{Eberle1999}.
\end{proof}

We conclude this section by an first order $L^2(W;\mu^{\Phi})$ regularity result.

\begin{lem}{\cite[Lemma 4.7]{EG23}}\label{lem:reg_K_22}
	For $f\in\fbs{B_W}$ and $\lambda \in (0,\infty)$, set \[g\defeq\lambda f-L^{\Phi}f.\] Then, the following inequalities are valid
	\begin{equation}\label{varepsilonone}
		\int_W\norm{K_{22}^{\frac{1}{2}}D_2f}^2_V\dm^{\Phi}\leq \frac{1}{2} \int_Wf^2+(Lf)^2\dm^{\Phi}\quad\text{and}
	\end{equation}
	\begin{equation}\label{varepsilonn2}
		\int_W\norm{K_{22}^{\frac{1}{2}}D_2f}^2_V\dm^{\Phi}\leq \frac{1}{4\lambda} \int_Wg^2\dm^{\Phi}.
	\end{equation}
	
\end{lem}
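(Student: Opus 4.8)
The plan is to reduce both inequalities to a single energy identity obtained from \Cref{lem:inf-dim-op-decomp}, after which only elementary applications of Young's inequality remain. First I would record that, since $K_{22}(v)$ is a symmetric positive operator on $V$ for every $v\in V$, it has a bounded square root and $(D_2 f, K_{22}(v) D_2 f)_V = \norm{K_{22}(v)^{1/2} D_2 f}_V^2$ pointwise in $(u,v)\in W$; in particular the left-hand sides of \eqref{varepsilonone} and \eqref{varepsilonn2} both equal $\int_W (D_2 f, K_{22} D_2 f)_V \dm^{\Phi}$.

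Next I would apply the identity of \Cref{lem:inf-dim-op-decomp} with its second test function equal to $f$ itself. The two first-order cross terms cancel: since $K_{12}=K_{21}^{*}$ one has $(D_2 f, K_{12} D_1 f)_V = (K_{12} D_1 f, D_2 f)_V = (D_1 f, K_{21} D_2 f)_U$, so that $-(D_1 f, K_{21} D_2 f)_U + (D_2 f, K_{12} D_1 f)_V = 0$. What survives is the energy identity
\[
\int_W \norm{K_{22}^{1/2} D_2 f}_V^2 \dm^{\Phi} = -\int_W L^{\Phi} f\cdot f \dm^{\Phi},
\]
which is really the only substantive point; everything else is a soft manipulation in $L^2(W;\mu^{\Phi})$, and by \Cref{rem:inf-dim-op-well-defined} all the integrands involved genuinely lie in the relevant $L^2$-spaces for $f\in\fbs{B_W}$.

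From this identity, \eqref{varepsilonone} is immediate: estimating $-\int_W L^{\Phi} f\cdot f \dm^{\Phi} \le \int_W \abs{L^{\Phi}f}\,\abs{f} \dm^{\Phi}$ and then using $\abs{ab}\le\tfrac12(a^2+b^2)$ pointwise gives the claimed bound $\tfrac12\int_W (L^{\Phi}f)^2 + f^2 \dm^{\Phi}$. For \eqref{varepsilonn2} I would multiply the relation $g = \lambda f - L^{\Phi} f$ by $f$, integrate against $\mu^{\Phi}$, and insert the energy identity to obtain $\int_W g f \dm^{\Phi} = \lambda \int_W f^2 \dm^{\Phi} + \int_W \norm{K_{22}^{1/2} D_2 f}_V^2 \dm^{\Phi}$. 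Both summands on the right are nonnegative; bounding the left-hand side by Cauchy--Schwarz and then by the weighted Young inequality $\norm{g}_{L^2(\mu^{\Phi})}\norm{f}_{L^2(\mu^{\Phi})} \le \tfrac{1}{4\lambda}\norm{g}_{L^2(\mu^{\Phi})}^2 + \lambda\norm{f}_{L^2(\mu^{\Phi})}^2$ and cancelling the common term $\lambda\norm{f}_{L^2(\mu^{\Phi})}^2$ yields $\int_W \norm{K_{22}^{1/2}D_2 f}_V^2 \dm^{\Phi} \le \tfrac{1}{4\lambda}\int_W g^2 \dm^{\Phi}$.

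I do not anticipate a genuine obstacle: the argument is short once the cross-term cancellation (which uses nothing beyond $K_{12}=K_{21}^{*}$ and is essentially already encoded in \Cref{lem:inf-dim-op-decomp}) is in place. The only points requiring a little care are the choice of the splitting $\tfrac{1}{2\lambda}$ versus $2\lambda$ in Young's inequality, which is what produces the sharp constant $\tfrac{1}{4\lambda}$ in \eqref{varepsilonn2} instead of a weaker $\tfrac{1}{2\lambda}$ or $\tfrac{1}{\lambda}$, and the bookkeeping that guarantees $f$, $L^{\Phi}f$, $g$ and $\norm{K_{22}^{1/2}D_2 f}_V$ all belong to $L^2(W;\mu^{\Phi})$.
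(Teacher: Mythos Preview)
Your argument is correct and is exactly the standard route: the energy identity $\int_W\norm{K_{22}^{1/2}D_2f}_V^2\,\mathrm{d}\mu^{\Phi}=-\int_W L^{\Phi}f\cdot f\,\mathrm{d}\mu^{\Phi}$ follows from \Cref{lem:inf-dim-op-decomp} by the cross-term cancellation you describe, and both inequalities then drop out via Young's inequality with the appropriate weights. The paper does not supply its own proof here but simply cites \cite[Lemma~4.7]{EG23}; your derivation is the natural one and matches what that reference does. One small remark: the paper's display \eqref{varepsilonone} writes $(Lf)^2$ rather than $(L^{\Phi}f)^2$, which appears to be a typographical slip, since the identity you (correctly) use involves $L^{\Phi}$ and the companion inequality \eqref{varepsilonn2} is built from $g=\lambda f-L^{\Phi}f$.
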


\section{Essential m-dissipativity}\label{subsec:unb_grad}
In this section to overall goal is to establish essential m-dissipativity of the Kolmogorov operator, associated to the degenerate infinite dimensional stochastic Hamiltonian system, using the strategy described in the introduction. We start by stating the main assumption.

\begin{cond}{App($\Phi$)}\label{ass:pot_app} There exists a sequence $(\Phi_n^m)_{n,m\in \N}$ such that,
	\begin{enumerate}[label=Class \Alph*: , wide=0pt, leftmargin=\widthof{\textbf{Class B:\hspace*{\labelsep}}}%
		]
		\item[(App($\Phi 1$))]for each fixed $m\in\N$, $\Phi_n^m(u)=\Phi_n^m\circ P_n^Uu$ for all $n\in\N$ and $u\in U$.
		\item[(App($\Phi 2$))]$ v\mapsto K_{22}(v)e_i\in C^2_b(V;{V})$ for all $i\in\N$. Further, for all $m,n\in\N$, $\Phi_n^m\in C^3(U;\R)$ and $D\Phi_n^m$ has bounded derivatives up to the second order. 
		\item[(App($\Phi 3$))] there exists $\lambda\in (0,\infty)$ such that for every $n,m\in\N$ and $g\in \fbs{B_W}$, there is a solution $f_{n,m}\in \fbk{3}$ of Equation \eqref{eq:g-Lg=f} with $\norm{f_{n,m}}_{\infty}\leq \frac{1}{\lambda}\norm{g}_{\infty}$
		\item[(App($\Phi 4$))]there are $\alpha,\beta,\gamma\in [0,\infty)$ and  
		$\kappa\in (1,\infty)$, all independent of $m,n\in\N$ such that

		\begin{equation}
			\begin{aligned}\label{ineq:lfourone}
				&\int_W \norm[\big]{\sum_{i=1}^{\infty}\lambda_{1,i}^{\alpha}K_{22}^{-\frac{1}{2}} K_{12}D \partxsq{i}\Phi_n^m}_V^2+
				\norm[\big]{\sum_{i=1}^{\infty} K_{22}^{-\frac{1}{2}} K_{12}D\partx{i}\Phi_n^m\, (u,Q_1^{\alpha-1}e_i)_U}_V^2 \\
				&\quad + \norm[\big]{\sum_{i=1}^{\infty}K_{22}^{-\frac{1}{2}} K_{12}D\partx{i}\Phi_n^m\,\lambda_{1,i}^{\alpha}\partx{i}\Phi_n^m}_V^2\;\mathrm{d}\mu^{\Phi_n^m}\leq \kappa^2
			\end{aligned}	
		\end{equation}
		
		\begin{equation}\label{ineq:lfourtwo}
			\sum_{i=1}^{\infty}\int_W\norm[\big]{\lambda_{1,i}^{\frac{\alpha}{2}}K_{22}^{-\frac{1}{2}} K_{12}D \partx{i}\Phi_n^m}_U^2\;\mathrm{d}\mu^{\Phi_n^m}\leq \kappa.
		\end{equation}
		Additionally, it holds for all $f \in \fbs{B_W}$
		\begin{equation}
			\label{ineq:lfourthree}
			\norm[\big]{Q_1^{\frac{\alpha}{2}-1}K_{21}D_2f}_V\leq \kappa\norm[\big]{K_{22}^{\frac{1}{2}} D_2f}_V
		\end{equation}
		\begin{equation}
			\label{ineq:lfourfour}
			\norm[\big]{Q_2^{\frac{\beta}{2}-1}K_{22}D_2f}_V+\left(\sum_{i=1}^{\infty}\norm[\big]{\lambda_{2,i}^{\frac{\beta}{2}}K_{22}^{-\frac{1}{2}}\party{i}K_{22}D_2f}_V^2\right)^{\frac{1}{2}}\leq \kappa\norm[\big]{K_{22}^{\frac{1}{2}} D_2f}_V			
		\end{equation}
		\begin{equation}
			\label{ineq:lfourfive}
			\norm[\big]{Q_2^{\frac{\beta}{2}-1}K_{12}D_1f}_V\leq \kappa\norm[\big]{Q_1^{\frac{\alpha}{2}} D_1f}_U
		\end{equation}		
		\begin{equation}
			\label{ineq:lfoursix}
			\left(\sum_{i=1}^{\infty}\left(\int_W\lambda_{2,i}^{4\gamma}(\party{i}^2f)^2\;\mathrm{d}\mu^{\Phi_n^m}\right)^{\frac{1}{2}}\right)^2\leq \kappa\int_W \sum_{i=1}^{\infty}\norm[\big]{\lambda_{2,i}^{\frac{\beta}{2}}K_{22}^{\frac{1}{2}}D_2\party{i}f}^2_V \;\mathrm{d}\mu^{\Phi_n^m}
		\end{equation}
		\begin{equation}
			\label{ineq:lfourseven}
			\sum_{i=1}^{\infty}  \lambda_{2,i}^{2\gamma-1} \leq 	\kappa^{\frac{1}{2}}.
		\end{equation}
		
		\item[(App($\Phi 5$))]there is some constant $p^*\in (4,\infty)$ such that	\begin{equation}\label{eq:conv_phi_m_n_p}
			\li{n}\li{m} \int_U \norm[\big]{Q_2^{-\gamma}K_{12}(D\Phi_n^m-P_nD\Phi)}_V^{p^*}\;\mathrm{d}\mu^{\Phi_n^m}_1=0.
		\end{equation}
		
		For $q^*\defeq \frac{2p^*}{p^*-4}$ there are constants $c_1,c_2\in \R$ and $c_3<\frac{1}{2\lambda_{1,1}}$ such that for all $m,n\in\N$
		\begin{align}\label{eq:phig4_one}
			c_1&\leq \Phi_n^m(u)\quad \text{and}\\\label{eq:phig4_two} (q^*-1)\Phi_n^m(u)&\leq  c_2+c_3\norm{u}_U^2  +q^*\Phi(u)\quad\text{for all}\quad u\in U.
		\end{align}
	\end{enumerate}
\end{cond}

For the next three lemmas, we consider Assumption \nameref{ass:pot_app} as valid. In particular, there exists $ \lambda\in (0,\infty)$ such that for $g\in \fbs{B_W}$ there is a function $f_{n,m}\in \fbk{3}$ with
\begin{equation}\label{eq:resolvent_eq_est}
	\lambda f_{n,m}-L^{{\Phi_n^m}}f_{n,m}=g\quad\text{and}\quad\norm{f_{n,m}}_{\infty}\leq \frac{1}{\lambda}\norm{g}_{\infty}.
\end{equation}
We next establish the existence of a constant $c$, independent of $n,m\in\N$ such that
\[
\int_W \norm[\big]{Q_2^{{\gamma}}D_2f_{n,m}}^4_V\;\mathrm{d}\mu^{\Phi_n^m}\leq c,
\]
where $\gamma$ is the parameter from App($\Phi 4$).

\begin{lem}\label{lemma:reg_estimate_D_one}
	There is a constant $a\defeq a(\lambda,g,\kappa)\in (0,\infty)$, independent of $m,n\in\N$ with
	\[
	\int_W \norm[\big]{Q_1^{\frac{\alpha}{2}}D_1f_{n,m}}_U^2\;\mathrm{d}\mu^{\Phi_n^m}\leq a.
	\]
\end{lem}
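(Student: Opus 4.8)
The starting point is the resolvent equation \eqref{eq:resolvent_eq_est}, which I would test against a suitable function built from $f_{n,m}$ in order to produce, on the left-hand side, a positive quadratic form controlling $\int_W \norm{Q_1^{\alpha/2}D_1 f_{n,m}}_U^2\,\mathrm{d}\mu^{\Phi_n^m}$. Since $L^{\Phi_n^m}=S-A^{\Phi_n^m}$ and the second-order part $S$ acts only in the $v$-variable, the first-order regularity in the $u$-variable cannot come directly from $S$; instead, following the Hörmander-type commutator idea behind the degenerate estimates of \cite{DP00, dPL06}, the $D_1$-control is extracted from the antisymmetric part $A^{\Phi_n^m}$, whose term $(v,Q_2^{-1}K_{12}D_1 f)_V$ couples the two blocks. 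Concretely I would differentiate \eqref{eq:resolvent_eq_est} in the $u$-directions (legitimate because $f_{n,m}\in\fbk{3}$ and, by App($\Phi 2$), $\Phi_n^m\in C^3$ with the relevant bounded derivatives), multiply by $\lambda_{1,i}^{\alpha}\party{i}f_{n,m}$ or the corresponding weighted gradient component, sum over $i$, and integrate $\mathrm{d}\mu^{\Phi_n^m}$, using the integration-by-parts formula \Cref{lem:intbp_classic_pot}/\Cref{prop:IBP_PHI_most_gen} for $\mu^{\Phi_n^m}$ to move derivatives around.

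The key steps, in order, would be: (1) apply $\partx{i}$ to \eqref{eq:resolvent_eq_est}, obtaining $\lambda\,\partx{i}f_{n,m} - L^{\Phi_n^m}\partx{i}f_{n,m} = \partx{i}g + [\,\partx{i},L^{\Phi_n^m}\,]f_{n,m}$, and compute the commutator explicitly — it produces the "good" term $-(Q_1^{-1}K_{21}D_2 f_{n,m},\cdot)$ coupling into the $v$-block, together with "bad" terms involving $\partxx{i}{j}\Phi_n^m$, $\partx{i}\Phi_n^m$ and the derivatives of $K_{22}$; (2) pair this identity with the weighted gradient $\lambda_{1,i}^{\alpha}\partx{i}f_{n,m}$, sum over $i$, and integrate against $\mu^{\Phi_n^m}$; on the left the symmetric part $S$ contributes a nonnegative term and $\lambda$ contributes $\lambda\int_W\norm{Q_1^{\alpha/2}D_1 f_{n,m}}_U^2$, which is what we want to bound; (3) estimate the "bad" right-hand side terms using Cauchy–Schwarz together with the structural bounds \eqref{ineq:lfourone}, \eqref{ineq:lfourtwo} of App($\Phi 4$) — these are exactly the integrals of $\norm{\sum_i \lambda_{1,i}^{\alpha}K_{22}^{-1/2}K_{12}D\partxsq{i}\Phi_n^m}_V^2$ etc., bounded by $\kappa^2$ — absorbing factors of $\norm{K_{22}^{1/2}D_2 f_{n,m}}_V$ into the coupling term via \eqref{ineq:lfourthree}; (4) control the terms from $\norm{f_{n,m}}_\infty$ and $\norm{g}_\infty$ using the sup bound in \eqref{eq:resolvent_eq_est}, and control the remaining $\int_W\norm{K_{22}^{1/2}D_2 f_{n,m}}_V^2$ contributions by \Cref{lem:reg_K_22} (applied with $\Phi$ replaced by $\Phi_n^m$), which bounds this quantity by $\tfrac{1}{4\lambda}\int_W g^2\,\mathrm{d}\mu^{\Phi_n^m}\le \tfrac{1}{4\lambda}\norm{g}_\infty^2$, hence independently of $m,n$; (5) finally use a Cauchy–Schwarz / Young inequality to absorb any term of the form $\varepsilon\int_W\norm{Q_1^{\alpha/2}D_1 f_{n,m}}_U^2$ on the right into the left-hand side, arriving at a bound $a=a(\lambda,g,\kappa)$.

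The main obstacle I anticipate is bookkeeping the coupling term honestly: the commutator $[\partx{i},A^{\Phi_n^m}]$ produces a cross term between $D_1 f_{n,m}$ and $K_{22}^{-1/2}$-weighted $D_2 f_{n,m}$, and to close the estimate one must show that the "wrong-sign" or cross contributions can be split, via Young's inequality with a carefully chosen weight, into a small multiple of $\int_W\norm{Q_1^{\alpha/2}D_1 f_{n,m}}_U^2$ plus a multiple of $\int_W\norm{K_{22}^{1/2}D_2 f_{n,m}}_V^2$; the inequalities \eqref{ineq:lfourone}–\eqref{ineq:lfourthree} are precisely engineered so that every appearance of $\partxx{i}{j}\Phi_n^m$, $\partx{i}\Phi_n^m$ or $\party{i}K_{22}$ gets absorbed against a $K_{22}^{-1/2}$ factor coming from the coupling, so the dangerous point is ensuring each term is matched with the correct weighted inequality and the right power of $\lambda_{1,i}$ or $\lambda_{2,i}$. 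A secondary technical point is justifying that all the infinite sums over $i$ and the integrations by parts are legitimate for $f_{n,m}\in\fbk{3}$ — but since $f_{n,m}$ and $g$ are finitely based (or at least have rapidly decaying weighted derivatives by App($\Phi 2$)), these manipulations reduce to the finite-dimensional case and cause no real difficulty.
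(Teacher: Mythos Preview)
Your plan is correct and matches the paper's proof essentially step for step: differentiate \eqref{eq:resolvent_eq_est} in $d_i$, pair with $\lambda_{1,i}^{\alpha}\partx{i}f_{n,m}$, sum and integrate, then handle the $\Phi$-commutator term by a further integration by parts (which is what produces the third-derivative quantities in \eqref{ineq:lfourone} and the $\norm{f_{n,m}}_\infty$ factor you mention in step~(4)), estimate the resulting four pieces via \eqref{ineq:lfourone}--\eqref{ineq:lfourthree}, and close with Young's inequality and \eqref{varepsilonn2}. One small slip: since $K_{22}$ depends only on $v$, the commutator $[\partx{i},L^{\Phi_n^m}]$ produces no $K_{22}$-derivative terms --- those only enter in the companion estimate for $D_2$ (\Cref{lemma:reg_estimate_D_two}).
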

\begin{proof}
	To avoid an overload of notation, we fix $n,m$ and substitute $\Phi_n^m$ with $\Phi$ and $f_{n,m}$ by $f$ in the following proof.
	Differentiating \eqref{eq:resolvent_eq_est} with respect to $d_i$ yields
	\begin{equation}
		\lambda\partx{i}f-L^{\Phi}\partx{i}f+(d_i,Q_1^{-1}K_{21}D_2f)_U+(D\partx{i}\Phi,K_{21}D_2f)_U=\partx{i}g.
	\end{equation}
	Multiply the equation above with $\lambda_{1,i}^{\alpha}\partx{i}f$, integrate over $W$ with respect to $\mu^{\Phi}$ and sum over all $i\in\N$ to obtain

	\begin{equation}
		\begin{aligned}\label{eq_reg_est_a}
			&\int_W \lambda \norm[\big]{Q_1^{\frac{\alpha}{2}}D_1f}^2_U+\sum_{i=1}^{\infty}\norm[\big]{\lambda_{1,i}^{\frac{\alpha}{2}}K_{22}^{\frac{1}{2}}D_2\partx{i}f}_V^2 +\left( Q_1^{\alpha}D_1f,Q_1^{-1}K_{21}D_2f\right)_U\;\mathrm{d}\mu^{\Phi}\\
			&\quad+\int_W \sum_{i=1}^{\infty}\lambda_{1,i}^{\alpha}\partx{i}f \left( D\partx{i}\Phi,K_{21}D_2f\right)_U\;\mathrm{d}\mu^{\Phi}=\int_W(Q_1^{\frac{\alpha}{2}}D_1f,Q_1^{\frac{\alpha}{2}}D_1g)_U\;\mathrm{d}\mu^{\Phi},
		\end{aligned}
	\end{equation}
	where we also used \Cref{lem:intbp} and in particular 
	\[
	\begin{aligned}
		-\left( L^{\Phi}\partx{i}f,\partx{i}f\right)_{L^2(\mu^{\Phi})}
		=\int_W\left(K_{22}D_2\partx{i}f,D_2\partx{i}f\right)_V\;\mathrm{d}\mu^{\Phi}.
	\end{aligned}
	\]
	Note that 
	\[
	\sum_{i=1}^{\infty}\lambda_{1,i}^{\alpha}\partx{i}f \left( D\partx{i}\Phi,K_{21}D_2f\right)_U=\sum_{i,j=1}^{\infty}\lambda_{1,i}^{\alpha}\partx{i}f \partx{i}\partx{j}\Phi \left( d_j,K_{21}D_2f\right)_U.
	\] Using the integration by parts formula from \Cref{lem:intbp_classic_pot}, we compute
	\[
	\begin{aligned}
		&\quad\int_W \lambda_{1,i}^{\alpha}\partx{i}f \partx{i}\partx{j}\Phi \left( d_j,K_{21}D_2f\right)_U\;\mathrm{d}\mu^{\Phi}\\
		&=-\int_W   \lambda_{1,i}^{\alpha}f \partx{i}\partx{i}\partx{j}\Phi \left( d_j,K_{21}D_2f\right)_U\;\mathrm{d}\mu^{\Phi}\\
		&\quad-\int_W   \lambda_{1,i}^{\alpha}f \partx{i}\partx{j}\Phi \left( d_j,K_{21}D_2\partx{i}f\right)_U\;\mathrm{d}\mu^{\Phi}\\
		&\quad+\int_W   \lambda_{1,i}^{\alpha-1}(u,d_i)_Uf \partx{i}\partx{j}\Phi \left( d_j,K_{21}D_2f\right)_U\;\mathrm{d}\mu^{\Phi}\\
		&\quad+\int_W   \lambda_{1,i}^{\alpha}\partx{i}\Phi f \partx{i}\partx{j}\Phi \left( d_j,K_{21}D_2f\right)_U\;\mathrm{d}\mu^{\Phi}\\
		&\eqdef -I^1_{ij}-I^2_{ij}+I^3_{ij}+I^4_{ij}
	\end{aligned}
	\]
	The next step on our agenda is to estimate $I^1_{i,j},I^2_{i,j},I^3_{i,j}$ and $I^4_{i,j}$ separately.
	Indeed, using Inequality \eqref{ineq:lfourone} from Assumption \nameref{ass:pot_app} and the Inequalities \eqref{eq:resolvent_eq_est} and \eqref{varepsilonn2}, we estimate
	\[
	\begin{aligned}
		\abs[\Big]{\sum_{i,j=1}^{\infty}I^1_{ij}}&= \abs[\Big]{\int_W \bigl(\sum_{i=1}^{\infty}\lambda_{1,i}^{\alpha}D \partxsq{i}\Phi,K_{21}D_2f\bigr)_Uf\;\mathrm{d}\mu^{\Phi}}\\
		&\leq \norm{f}_{\infty} \left(\int_W \norm[\Big]{\sum_{i=1}^{\infty}\lambda_{1,i}^{\alpha}K_{22}^{-\frac{1}{2}} K_{12}D \partxsq{i}\Phi}_V^2\;\mathrm{d}\mu^{\Phi}\right)^{\frac{1}{2}}\left(\int_W\norm[\big]{K_{22}^{\frac{1}{2}} D_2f}^2_V\;\mathrm{d}\mu^{\Phi}\right)^{\frac{1}{2}}\\
		&\leq \frac{\norm{g}_{\infty}}{\lambda}\kappa \frac{1}{2\sqrt{\lambda}} \norm{g}_{L^2(\mu^{\Phi})}\eqdef a_1.
	\end{aligned}
	\]
	Using Inequality \eqref{ineq:lfourtwo} and Youngs inequality for $\delta\in(0,\infty)$, yields
	\[
	\begin{aligned}
		\abs[\Big]{\sum_{i,j=1}^{\infty}I^2_{ij}}&= \abs[\Big]{\int_W \sum_{i=1}^{\infty}\bigl(\lambda_{1,i}^{\alpha}D \partx{i}\Phi,K_{21}D_2\partx{i}f\bigr)_Uf\;\mathrm{d}\mu^{\Phi}}\\
		&\leq \norm{f}_{\infty} \sum_{i=1}^{\infty} \int_W \norm[\big]{\lambda_{1,i}^{\frac{\alpha}{2}}K_{22}^{-\frac{1}{2}} K_{12}D \partx{i}\Phi}_V\norm[\big]{\lambda_{1,i}^{\frac{\alpha}{2}}K_{22}^{\frac{1}{2}}D_2\partx{i}f}_V\;\mathrm{d}\mu^{\Phi}\\
		&\leq \norm{f}_{\infty}\sum_{i=1}^{\infty}\int_W\frac{1}{2\delta}\norm[\big]{\lambda_{1,i}^{\frac{\alpha}{2}}K_{22}^{-\frac{1}{2}} K_{12}D \partx{i}\Phi}_V^2+\frac{\delta}{2}\norm[\big]{\lambda_{1,i}^{\frac{\alpha}{2}}K_{22}^{\frac{1}{2}}D_2\partx{i}g}_V^2\;\mathrm{d}\mu^{\Phi}\\
		&\leq \frac{\norm{g}_{\infty}}{\lambda}\frac{1}{2\delta}\kappa+\frac{\norm{g}_{\infty}}{\lambda}\frac{\delta}{2}\sum_{i=1}^{\infty}\int_W \norm[\big]{\lambda_{1,i}^{\frac{\alpha}{2}}K_{22}^{\frac{1}{2}}D_2\partx{i}f}_V^2\;\mathrm{d}\mu^{\Phi}.
	\end{aligned}
	\]
	Taking Inequality \eqref{ineq:lfourone} into account, we estimate
	\[
	\begin{aligned}
		&\quad	\abs[\Big]{\sum_{i,j=1}^{\infty}I^3_{ij}}\\&= \abs[\Big]{\int_W \sum_{i=1}^{\infty} \left(D \partx{i}\Phi,K_{21}D_2f\right)_U(u,Q_1^{\alpha-1}e_i)_Uf\;\mathrm{d}\mu^{\Phi}}\\
		&\leq \norm{f}_{\infty} \int_W  \norm[\Big]{\sum_{i=1}^{\infty} K_{22}^{-\frac{1}{2}} K_{12}D\partx{i}\Phi\, (u,Q_1^{\alpha-1}e_i)_U}_V \norm[\big]{K_{22}^{\frac{1}{2}} D_2f}_V\;\mathrm{d}\mu^{\Phi}\\
		&\leq \norm{f}_{\infty} \left(\int_W  \norm[\Big]{\sum_{i=1}^{\infty} K_{22}^{-\frac{1}{2}} K_{12}D\partx{i}\Phi\, (u,Q_1^{\alpha-1}e_i)_U}_V^2 \;\mathrm{d}\mu^{\Phi}\int_W \norm[\big]{K_{22}^{\frac{1}{2}} D_2f}^2_V\;\mathrm{d}\mu^{\Phi}\right)^{\frac{1}{2}}\\
		&\leq  \frac{\norm{g}_{\infty}}{\lambda} \kappa  \frac{1}{2\sqrt{\lambda}} \norm{g}_{L^2(\mu^{\Phi})}=a_1.
	\end{aligned}
	\]
	Again, by \eqref{ineq:lfourone}, we have
	\[
	\begin{aligned}
		\abs[\Big]{\sum_{i,j=1}^{\infty}I^4_{ij}}&= \abs[\Big]{\int_W \sum_{i=1}^{\infty} \left(D \partx{i}\Phi ,K_{21}D_2f\right)_U \lambda_{1,i}^{\alpha}\partx{i}\Phi f\;\mathrm{d}\mu^{\Phi}}\\
		&\leq \norm{f}_{\infty} \int_W  \norm[\big]{\sum_{i=1}^{\infty}K_{22}^{-\frac{1}{2}} K_{12}D\partx{i}\Phi\,\lambda_{1,i}^{\alpha}\partx{i}\Phi}_V \norm[\big]{K_{22}^{\frac{1}{2}} D_2f}_V\;\mathrm{d}\mu^{\Phi}\\
		&\leq \frac{\norm{g}_{\infty}}{\lambda} \kappa  \frac{1}{2\sqrt{\lambda}} \norm{g}_{L^2(\mu^{\Phi})}= a_1.
	\end{aligned}
	\]
	A combination of the above (in)equalities implies
	\begin{equation}\label{eq:sum_ij}
		\sum_{i,j=1}^{\infty}I^1_{ij}+I^2_{ij}-I^3_{ij}-I^4_{ij} \leq \frac{\norm{g}_{\infty}}{\lambda}\frac{1}{2\delta}\kappa+\frac{\norm{g}_{\infty}}{\lambda}\frac{\delta}{2}\sum_{i=1}^{\infty}\int_W \norm[\big]{\lambda_{1,i}^{\frac{\alpha}{2}}K_{22}^{\frac{1}{2}}D_2\partx{i}f}_V^2\;\mathrm{d}\mu^{\Phi}+3a_1.	
	\end{equation}
	
	If we plug Inequality \eqref{eq:sum_ij} into Equation \eqref{eq_reg_est_a} and apply Youngs inequality two times for $\varepsilon\in (0,\infty)$, we derive the following inequality
	\[
	\begin{aligned}
		&\quad\int_W \lambda \norm[\big]{Q_1^{\frac{\alpha}{2}}D_1f}^2_U+\sum_{i=1}^{\infty}\norm[\big]{\lambda_{1,i}^{\frac{\alpha}{2}}K_{22}^{\frac{1}{2}}D_2\partx{i}f}_V^2\;\mathrm{d}\mu^{\Phi}\\
		&=-\int_W \big( Q_1^{\alpha}D_1f,Q_1^{-1}K_{21}D_2f\big)_U+\big(Q_1^{\frac{\alpha}{2}}D_1f,Q_1^{\frac{\alpha}{2}}D_1g\big)_U\,\mathrm{d}\mu^{\Phi}\\&\quad+\sum_{i,j=1}^{\infty}I^1_{ij}+I^2_{ij}-I^3_{ij}-I^4_{ij}\\
		&\leq  \frac{\varepsilon}{2} \int_W \norm[\big]{Q_1^{\frac{\alpha}{2}}D_1f}^2_U\;\mathrm{d}\mu^{\Phi} + \frac{1}{2\varepsilon}\int_W\norm[\big]{Q_1^{\frac{\alpha}{2}-1}K_{21}D_2f}^2_U\;\mathrm{d}\mu^{\Phi}\\
		&\quad+ \frac{\varepsilon}{2} \int_W \norm[\big]{Q_1^{\frac{\alpha}{2}}D_1f}^2_U\;\mathrm{d}\mu^{\Phi} + \frac{1}{2\varepsilon}\int_W\norm[\big]{Q_1^{\frac{\alpha}{2}}D_1g}^2_U\;\mathrm{d}\mu^{\Phi}\\
		&\quad+\frac{\norm{g}_{\infty}}{\lambda}\frac{1}{2\delta}\kappa+\frac{\norm{g}_{\infty}}{\lambda}\frac{\delta}{2}\sum_{i=1}^{\infty}\int_W \norm[\big]{\lambda_{1,i}^{\frac{\alpha}{2}}K_{22}^{\frac{1}{2}}D_2\partx{i}f}_V^2\;\mathrm{d}\mu^{\Phi}+3a_1.
	\end{aligned}
	\] 
	Using that $\norm[\big]{Q_1^{\frac{\alpha}{2}-1}K_{21}D_2f}_V\leq \kappa\norm[\big]{K_{22}^{\frac{1}{2}} D_2f}_V$ by Inequality \eqref{ineq:lfourthree} from the Assumption \nameref{ass:pot_app}, we obtain after rearranging the terms
	\[
	\begin{aligned}
		&\quad(\lambda-\varepsilon)\int_W  \norm[\big]{Q_1^{\frac{\alpha}{2}}D_1f}^2_U\;\mathrm{d}\mu^{\Phi}+\left(1-\frac{\norm{g}_{\infty}}{\lambda}\frac{\delta}{2}\right)\int_W \sum_{i=1}^{\infty}\norm[\big]{\lambda_{1,i}^{\frac{\alpha}{2}}K_{22}^{\frac{1}{2}}D_2\partx{i}f}_V^2\;\mathrm{d}\mu^{\Phi}\\
		&\leq \frac{1}{2\varepsilon}\kappa\int_W\norm[\big]{K_{22}^{\frac{1}{2}} D_2f}_V^2\;\mathrm{d}\mu^{\Phi}+ \frac{1}{2\varepsilon}\int_W\norm[\big]{Q_1^{\frac{\alpha}{2}}D_1g}^2_U\;\mathrm{d}\mu^{\Phi}+\frac{\norm{g}_{\infty}}{\lambda}\frac{1}{2\varepsilon}\kappa+3a_1\\
		&\leq  \frac{1}{2\varepsilon}\kappa\frac{1}{4\lambda} \norm{g}^2_{L^2(\mu^{\Phi})}+ \frac{1}{2\varepsilon}\int_W\norm[\big]{Q_1^{\frac{\alpha}{2}}D_1g}^2_U\;\mathrm{d}\mu^{\Phi}+\frac{\norm{g}_{\infty}}{\lambda}\frac{1}{2\delta}\kappa+3a_1.
	\end{aligned}
	\]For $\delta\defeq\frac{\lambda}{\norm{g}_{\infty}}$ and $\varepsilon\defeq \frac{\lambda}{2}$ this implies
	\[
	\begin{aligned}
		&\quad\frac{\lambda}{2}\int_W  \norm[\big]{Q_1^{\frac{\alpha}{2}}D_1f}^2_U\;\mathrm{d}\mu^{\Phi}+\frac{1}{2}\int_W \sum_{i=1}^{\infty}\norm[\big]{\lambda_{1,i}^{\frac{\alpha}{2}}K_{22}^{\frac{1}{2}}D_2\partx{i}f}_V^2\;\mathrm{d}\mu^{\Phi}\\
		&\leq\frac{\kappa}{4\lambda^2} \norm{g}^2_{L^2(\mu^{\Phi})}+ \frac{1}{\lambda}\int_W\norm[\big]{Q_1^{\frac{\alpha}{2}}D_1g}^2_U\;\mathrm{d}\mu^{\Phi}+\frac{\norm{g}_{\infty}^2}{2\lambda^2}\kappa+3a_1\eqdef a_2.
	\end{aligned}
	\]Finally, setting $a\defeq a(\lambda,g,\kappa)\defeq\frac{2a_2}{\lambda}$ concludes the proof.
\end{proof}
\begin{lem}\label{lemma:reg_estimate_D_two}
	There is a constant $b\defeq b(\lambda,g,\kappa)\in (0,\infty)$, independent of $m,n\in\N$ such that
	\[
	\int_W \lambda\norm[\big]{Q_2^{\frac{\beta}{2}}D_2f}_V^2+\sum_{i=1}^{\infty}\norm[\big]{\lambda_{2,i}^{\frac{\beta}{2}}K_{22}^{\frac{1}{2}}D_2\party{i}f}^2_V\;\mathrm{d}\mu^{\Phi}\leq b.
	\]
\end{lem}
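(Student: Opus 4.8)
The plan is to mirror the proof of \Cref{lemma:reg_estimate_D_one}, but now differentiating the resolvent equation \eqref{eq:resolvent_eq_est} in the $V$-directions. Fix $n,m\in\N$ and abbreviate $\Phi\defeq\Phi_n^m$, $f\defeq f_{n,m}\in\fbk{3}$, so that $\lambda f-L^\Phi f=g$ and $\norm{f}_\infty\leq\lambda^{-1}\norm{g}_\infty$. Since $f\in\fbk{3}$, $g\in\fbs{B_W}$ and, by \nameref{ass:pot_app}, $v\mapsto K_{22}(v)e_j\in C_b^2(V;V)$, one may apply $\party{i}$ to the resolvent equation. Because $\Phi$ depends only on $u$, the commutator $R_i\defeq\party{i}(L^\Phi f)-L^\Phi\party{i}f$ collects exactly the terms generated by the $v$-dependence of $K_{22}$ and of the two Ornstein--Uhlenbeck drifts in $v$:
\begin{align*}
R_i&=\tr\bigl[\party{i}K_{22}\circ D_2^2f\bigr]+\sum_{j=1}^\infty(\party{i}\party{j}K_{22}D_2f,e_j)_V-(e_i,Q_2^{-1}K_{22}D_2f)_V\\
&\quad-(v,Q_2^{-1}\party{i}K_{22}D_2f)_V+(e_i,Q_2^{-1}K_{12}D_1f)_V,
\end{align*}
so that $\lambda\party{i}f-L^\Phi\party{i}f=\party{i}g+R_i$. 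I would then multiply by $\lambda_{2,i}^\beta\party{i}f$, integrate over $W$ against $\mu^\Phi$ and sum over $i$ (all sums being finite, $f$ being finitely based). By \Cref{lem:inf-dim-op-decomp} applied with $\party{i}f$ in both slots (the antisymmetric part cancelling, so that $-\int_WL^\Phi\party{i}f\,\party{i}f\,\mathrm{d}\mu^\Phi=\int_W(K_{22}D_2\party{i}f,D_2\party{i}f)_V\,\mathrm{d}\mu^\Phi$; the identity extending from $\fbs{B_W}$ to $\fbk{2}$ since its derivation uses only one integration by parts), the left-hand side becomes precisely the quantity to be bounded, $\int_W\lambda\norm{Q_2^{\beta/2}D_2f}_V^2+\sum_i\norm{\lambda_{2,i}^{\beta/2}K_{22}^{1/2}D_2\party{i}f}_V^2\,\mathrm{d}\mu^\Phi$, while the right-hand side is $\int_W(Q_2^{\beta/2}D_2g,Q_2^{\beta/2}D_2f)_V\,\mathrm{d}\mu^\Phi+\sum_i\int_W\lambda_{2,i}^\beta R_i\,\party{i}f\,\mathrm{d}\mu^\Phi$.

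The heart of the matter is to simplify $\sum_i\int_W\lambda_{2,i}^\beta R_i\,\party{i}f\,\mathrm{d}\mu^\Phi$ by exposing two cancellations. First rewrite the second-order term in divergence form, $\tr[\party{i}K_{22}\circ D_2^2f]=\sum_j\party{j}\bigl[(\party{i}K_{22}e_j,D_2f)_V\bigr]-\sum_j(\party{j}\party{i}K_{22}e_j,D_2f)_V$, and integrate the divergence part by parts in the direction $e_j$ via \Cref{lem:intbp_classic_pot} — the density $e^{-\Phi}$ living on $U$ only, no $\Phi$-correction appears. This produces $-\int_W\lambda_{2,i}^\beta(\party{i}K_{22}D_2f,D_2\party{i}f)_V\,\mathrm{d}\mu^\Phi$ together with $+\int_W\lambda_{2,i}^\beta\party{i}f\,(v,Q_2^{-1}\party{i}K_{22}D_2f)_V\,\mathrm{d}\mu^\Phi$, the latter cancelling the drift commutator $-(v,Q_2^{-1}\party{i}K_{22}D_2f)_V$; and the leftover $-\sum_j(\party{j}\party{i}K_{22}e_j,D_2f)_V$ cancels $\sum_j(\party{i}\party{j}K_{22}D_2f,e_j)_V$, since mixed partials commute and $\party{i}\party{j}K_{22}$ is symmetric. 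After these cancellations,
\begin{align*}
\sum_{i=1}^\infty\int_W\lambda_{2,i}^\beta R_i\,\party{i}f\,\mathrm{d}\mu^\Phi&=-\sum_{i=1}^\infty\int_W\lambda_{2,i}^\beta(\party{i}K_{22}D_2f,D_2\party{i}f)_V\,\mathrm{d}\mu^\Phi\\
&\quad-\int_W(Q_2^{\beta/2-1}K_{22}D_2f,Q_2^{\beta/2}D_2f)_V\,\mathrm{d}\mu^\Phi+\int_W(Q_2^{\beta/2-1}K_{12}D_1f,Q_2^{\beta/2}D_2f)_V\,\mathrm{d}\mu^\Phi.
\end{align*}

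Each of these three terms, and the $g$-term, is then estimated by Cauchy--Schwarz followed by Young's inequality. For the first, write $(\party{i}K_{22}D_2f,D_2\party{i}f)_V=(K_{22}^{-1/2}\party{i}K_{22}D_2f,K_{22}^{1/2}D_2\party{i}f)_V$ and use \eqref{ineq:lfourfour}, which bounds $\int_W\sum_i\norm{\lambda_{2,i}^{\beta/2}K_{22}^{-1/2}\party{i}K_{22}D_2f}_V^2\,\mathrm{d}\mu^\Phi\leq\kappa^2\int_W\norm{K_{22}^{1/2}D_2f}_V^2\,\mathrm{d}\mu^\Phi$, together with the a priori estimate \eqref{varepsilonn2}; the factor $\int_W\sum_i\norm{\lambda_{2,i}^{\beta/2}K_{22}^{1/2}D_2\party{i}f}_V^2\,\mathrm{d}\mu^\Phi$ is absorbed into the left-hand side. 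The second term again uses \eqref{ineq:lfourfour} ($\norm{Q_2^{\beta/2-1}K_{22}D_2f}_V\leq\kappa\norm{K_{22}^{1/2}D_2f}_V$) and \eqref{varepsilonn2}, with $\int_W\norm{Q_2^{\beta/2}D_2f}_V^2\,\mathrm{d}\mu^\Phi$ absorbed; the third uses \eqref{ineq:lfourfive} and the bound $\int_W\norm{Q_1^{\alpha/2}D_1f}_U^2\,\mathrm{d}\mu^\Phi\leq a$ from \Cref{lemma:reg_estimate_D_one}, again absorbing $\int_W\norm{Q_2^{\beta/2}D_2f}_V^2\,\mathrm{d}\mu^\Phi$; and the $g$-term is immediate since $\norm{Q_2^{\beta/2}D_2g}_V$ is bounded uniformly in $n,m$ ($g$ being a fixed smooth cylinder function). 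Choosing the Young parameters small enough relative to the fixed $\lambda$, the reproduced copies of $\int_W\norm{Q_2^{\beta/2}D_2f}_V^2\,\mathrm{d}\mu^\Phi$ and of $\int_W\sum_i\norm{\lambda_{2,i}^{\beta/2}K_{22}^{1/2}D_2\party{i}f}_V^2\,\mathrm{d}\mu^\Phi$ are absorbed into the left-hand side, and one is left with a bound by a constant depending only on $a$, $\lambda$, $g$ and $\kappa$; setting $b$ to this constant proves the claim.

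The only genuinely new difficulty compared with \Cref{lemma:reg_estimate_D_one} is the variable diffusion coefficient $K_{22}$: its first derivative $\party{i}K_{22}$ enters both the second-order commutator term $\tr[\party{i}K_{22}\circ D_2^2f]$ and the drift commutator $(v,Q_2^{-1}\party{i}K_{22}D_2f)_V$ (the latter carrying the unbounded factor $Q_2^{-1}v$), and neither is directly controllable. The key trick is the divergence-form rewriting of the trace term plus a single further integration by parts in the $V$-variable — which is clean precisely because $\Phi$ is $v$-independent — revealing that the awkward $Q_2^{-1}v$-term cancels and that the genuine second-derivative-of-$K_{22}$ terms cancel as well, so that what actually needs to be estimated is only the quantity $-\sum_i\int_W\lambda_{2,i}^\beta(\party{i}K_{22}D_2f,D_2\party{i}f)_V\,\mathrm{d}\mu^\Phi$, tailor-made for \eqref{ineq:lfourfour}. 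Spotting these cancellations and carefully tracking which reproduced terms can be absorbed on the left is where the work lies.
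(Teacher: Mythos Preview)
Your proof is correct and follows essentially the same route as the paper. The only cosmetic difference is that the paper packages the three $\party{i}K_{22}$-commutator terms in your $R_i$ into a single Ornstein--Uhlenbeck-type operator $S_ih\defeq\tr[\party{i}K_{22}D_2^2h]+\sum_j(\party{j}\party{i}K_{22}D_2h,e_j)_V-(v,Q_2^{-1}\party{i}K_{22}D_2h)_V$ and invokes the integration-by-parts identity $-(S_if,\party{i}f)_{L^2(\mu^\Phi)}=\int_W(\party{i}K_{22}D_2f,D_2\party{i}f)_V\,\mathrm{d}\mu^\Phi$ directly (as an instance of \Cref{lem:inf-dim-op-decomp} with variable coefficient $\party{i}K_{22}$), whereas you derive this same identity by hand via the divergence-form rewriting and the two cancellations you describe; the resulting equation and the subsequent estimation via \eqref{ineq:lfourfour}, \eqref{ineq:lfourfive}, Young's inequality, \eqref{varepsilonn2} and \Cref{lemma:reg_estimate_D_one} coincide.
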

\begin{proof}
	Let $n,m$ be fixed and substitute $\Phi_n^m$ and $f_{n,m}$ with $\Phi$ and $f$, respectively.
	Differentiating \eqref{eq:resolvent_eq_est} with respect to $e_i$ yields
	\begin{equation}
		\begin{aligned}\label{eq:reg_est_b}
			&\quad \lambda\party{i}f-L^{\Phi}\party{i}f\\& -\Big(\tr[\party{i}K_{22}D_2^2f]+\sum_{j=1}^{\infty}\left(\party{j}\party{i}K_{22}D_2f,e_j\right)_V-(v,Q_2^{-1}\party{i}K_{22}D_2f)_V\Big)\\&+(e_i,Q_2^{-1}K_{22}D_2f)_V-(e_i,Q_2^{-1}K_{12}D_1f)=\party{i}g.
		\end{aligned}
	\end{equation}
	We define the operator $S_i$ on $\fbk{3}$ by
	\[
	S_{i}h\defeq \tr[\party{i}K_{22}D_2^2h]+\sum_{j=1}^{\infty}\left(\party{j}\party{i}K_{22}D_2f,e_j\right)_V-(v,Q_2^{-1}\party{i}K_{22}D_2h)_V.
	\]
	$S_i$ is an operator of Ornstein-Uhlenbeck type with variable diffusion operator $\party{i}K_{22}$. Hence, an integration by parts formula, similar to the one from \Cref{lem:intbp}, is applicable. We then calculate
	\[
	\begin{aligned}
		-\left( L^{\Phi}\party{i}f,\party{i}f\right)_{L^2(\mu^{\Phi})}&=\int_W\left(K_{22}D_2\party{i}f,D_2\party{i}f\right)_V\;\mathrm{d}\mu^{\Phi}\quad \text{and}\\
		-\left( S_if,\party{i}f\right)_{L^2(\mu^{\Phi})}&=\int_W \left(\party{i}K_{22}D_2f,D_2\party{i}f\right)_V\;\mathrm{d}\mu^{\Phi}.
	\end{aligned}
	\]	
	
	Using the equations above, we continue by multiplying Equation \eqref{eq:reg_est_b} with $\lambda_{2,i}^{\beta}\party{i}f$, by integrating over $W$ with respect to $\mu^{\Phi}$ and by summing over all $i\in\N$, to obtain
	\[
	\begin{aligned}
		&\quad\int_W \lambda \norm[\big]{Q_2^{\frac{\beta}{2}}D_2f}^2_V+\sum_{i=1}^{\infty}\norm[\big]{\lambda_{2,i}^{\frac{\beta}{2}}K_{22}^{\frac{1}{2}}D_2\party{i}f}^2_V+\sum_{i=1}^{\infty}\lambda_{2,i}^{\beta}\left(\party{i}K_{22}D_2f,D_2\party{i}f\right)_V \\
		&\quad+( Q_2^{\beta}D_2f,Q_2^{-1}K_{22}D_2f)_V-(Q_2^{\beta}D_2f,Q_2^{-1}K_{12}D_1f)_V\;\mathrm{d}\mu^{\Phi}\\&=\int_W\big(Q_2^{\frac{\beta}{2}}D_2f,Q_2^{\frac{\beta}{2}}D_2g\big)_V\mathrm{d}\mu^{\Phi}.
	\end{aligned}
	\]
	In view of Inequality \eqref{ineq:lfourfour} and \eqref{ineq:lfourfive} from assumption \nameref{ass:pot_app} and Youngs inequality for $\delta \in (0,\infty)$, we estimate
	\[
	\begin{aligned}
		&\quad\int_W \lambda \norm[\big]{Q_2^{\frac{\beta}{2}}D_2f}^2_V+ \sum_{i=1}^{\infty}\norm[\big]{\lambda_{2,i}^{\frac{\beta}{2}}K_{22}^{\frac{1}{2}}D_2\party{i}f}^2_V\;\mathrm{d}\mu^{\Phi}\\
		&\leq\int_W \norm[\big]{Q_2^{\frac{\beta}{2}}D_2f}_V\norm[\big]{Q_2^{\frac{\beta}{2}}D_2g}_V+\sum_{i=1}^{\infty}\norm[\big]{\lambda_{2,i}^{\frac{\beta}{2}}K_{22}^{-\frac{1}{2}}\party{i}K_{22}D_2f}_V\norm[\big]{\lambda_{2,i}^{\frac{\beta}{2}}K_{22}^{\frac{1}{2}}D_2\party{i}f}_V\\
		&\quad+\norm[\big]{Q_2^{\frac{\beta}{2}}D_2f}_V\norm[\big]{Q_2^{\frac{\beta}{2}-1}K_{22}D_2f}_V+\norm[\big]{Q_2^{\frac{\beta}{2}}D_2f}_V\norm[\big]{Q_2^{\frac{\beta}{2}-1}K_{12}D_1f}_V\;\mathrm{d}\mu^{\Phi}\\
		&\leq \int_W \norm[\big]{Q_2^{\frac{\beta}{2}}D_2f}_V\kappa\left(  \norm[\big]{Q_2^{\frac{\beta}{2}}D_2g}_V+\norm[\big]{K_{22}^{\frac{1}{2}}D_2f}_V +\norm[\big]{Q_1^{\frac{\alpha}{2}}D_1f}_U \right)\;\mathrm{d}\mu^{\Phi}\\
		&\quad+\frac{1}{2}\int_W \sum_{i=1}^{\infty}\norm[\big]{\lambda_{2,i}^{\frac{\beta}{2}}K_{22}^{\frac{1}{2}}D_2\party{i}f}^2_V\;\mathrm{d}\mu^{\Phi}+\frac{1}{2}\int_W\sum_{i=1}^{\infty}\norm[\big]{\lambda_{2,i}^{\frac{\beta}{2}}K_{22}^{-\frac{1}{2}}\party{i}K_{22}D_2f}_V^2\;\mathrm{d}\mu^{\Phi}\\
		&\leq  \frac{\delta}{2}\int_W \norm[\big]{Q_2^{\frac{\beta}{2}}D_2f}_V^2\;\mathrm{d}\mu^{\Phi}\\&\quad+\frac{\kappa^2}{2\delta}\int_W\left(  \norm[\big]{Q_2^{\frac{\beta}{2}}D_2g}_V+\norm[\big]{K_{22}^{\frac{1}{2}}D_2f}_V +\norm[\big]{Q_1^{\frac{\alpha}{2}}D_1f}_U \right)^2\;\mathrm{d}\mu^{\Phi}\\
		&\quad+\frac{1}{2}\int_W \sum_{i=1}^{\infty}\norm[\big]{\lambda_{2,i}^{\frac{\beta}{2}}K_{22}^{\frac{1}{2}}D_2\party{i}f}^2_V\;\mathrm{d}\mu^{\Phi}+\frac{1}{2}\int_W\kappa^2\norm[\big]{K_{22}^{\frac{1}{2}}D_2f}_V^2\;\mathrm{d}\mu^{\Phi}.
	\end{aligned}
	\]
	The choice of $\delta={\lambda}$ leads to
	\[
	\begin{aligned}
		&\quad\int_W {\lambda} \norm[\big]{Q_2^{\frac{\beta}{2}}D_2f}^2_V+\sum_{i=1}^{\infty}\norm[\big]{\lambda_{2,i}^{\frac{\beta}{2}}K_{22}^{\frac{1}{2}}D_2\party{i}f}^2_V\;\mathrm{d}\mu^{\Phi}\\
		&\leq\kappa^2 \int_W\frac{1}{\lambda}\Big(  \norm[\big]{Q_2^{\frac{\beta}{2}}D_2g}_V+\norm[\big]{K_{22}^{\frac{1}{2}}D_2f}_V +\norm[\big]{Q_1^{\frac{\alpha}{2}}D_1f}_U \Big)^2+\norm[\big]{K_{22}^{\frac{1}{2}}D_2f}_V^2\;\mathrm{d}\mu^{\Phi}\\
		&\leq \kappa^2 \int_W\frac{4}{\lambda}\Big(  \norm[\big]{Q_2^{\frac{\beta}{2}}D_2g}^2_V+\norm[\big]{K_{22}^{\frac{1}{2}}D_2f}^2_V +\norm[\big]{Q_1^{\frac{\alpha}{2}}D_1f}^2_U \Big)+\norm[\big]{K_{22}^{\frac{1}{2}}D_2f}_V^2\;\mathrm{d}\mu^{\Phi}.
	\end{aligned}
	\]By \Cref{lemma:reg_estimate_D_one} and Inequality \eqref{varepsilonn2}, we conclude 
	\[
	\begin{aligned}
		&\quad\int_W {\lambda} \norm[\big]{Q_2^{\frac{\beta}{2}}D_2f}^2_V+\sum_{i=1}^{\infty}\norm[\big]{\lambda_{2,i}^{\frac{\beta}{2}}K_{22}^{\frac{1}{2}}D_2\party{i}f}^2_V\;\mathrm{d}\mu^{\Phi}\\
		&\leq \kappa^2 \int_W\frac{4}{\lambda}\Big(  \norm[\big]{Q_2^{\frac{\beta}{2}}D_2g}^2_V+\frac{1}{4\lambda}g ^2 +a(\lambda,g,\kappa) \Big)+\frac{1}{4\lambda}g ^2\;\mathrm{d}\mu^{\Phi}\eqdef b.
	\end{aligned}
	\]
\end{proof}
\begin{lem}\label{lem:regLfour}
	There is a constant $c\defeq c(\lambda,g,\kappa)\in (0,\infty)$, independent of $m,n\in\N$ such that
	\[
	\int_W \norm[\big]{Q_2^{{\gamma}}D_2f_{n,m}}^4_V\;\mathrm{d}\mu^{\Phi_n^m}\leq c.
	\]
\end{lem}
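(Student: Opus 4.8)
The plan is to bootstrap the $L^2$-estimates of the two preceding lemmas to the asserted $L^4$-estimate by using the ``velocity gradient squared'' as a test function, in the spirit of the $L^4$-regularity arguments of \cite{DP00}, \cite[Chapter~12.3]{dapratozabczyk} and \cite{dPL06}. Fix $n,m$, abbreviate $\Phi\defeq\Phi_n^m$ and $f\defeq f_{n,m}\in\fbk{3}$, and set
\[
\eta\defeq\norm[\big]{Q_2^{\gamma}D_2 f}_V^2=\sum_{i=1}^{\infty}\lambda_{2,i}^{2\gamma}(\party{i}f)^2 .
\]
Since $f$ is finitely based and bounded together with its derivatives up to order three, $\eta$ is a bounded twice continuously differentiable cylinder function; by the regularity of $\Phi$ from App($\Phi 2$), the integration-by-parts and invariance identities of \Cref{lem:inf-dim-op-decomp} and \Cref{cor:invariance_ASP}, as well as the Ornstein--Uhlenbeck-type identity $-(S_i f,\psi)_{L^2(\mu^{\Phi})}=\int_W(\party{i}K_{22}D_2 f,D_2\psi)_V\,\mathrm{d}\mu^{\Phi}$ from the proof of \Cref{lemma:reg_estimate_D_two}, remain valid for $\eta$ and the cylinder functions built from it below.

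First I would derive a pointwise identity for $\eta$: applying the diffusion rule $L^{\Phi}h^2=2hL^{\Phi}h+2\norm{K_{22}^{\frac12}D_2 h}_V^2$ of \Cref{cor:invariance_ASP} to $h=\party{i}f$, weighting by $\lambda_{2,i}^{2\gamma}$, summing (the sum being finite), and using Equation \eqref{eq:reg_est_b} to express $L^{\Phi}\party{i}f$, one gets
\[
\begin{aligned}
2\lambda\eta&=L^{\Phi}\eta+2\sum_{i=1}^{\infty}\lambda_{2,i}^{2\gamma}\party{i}f\,S_i f-2\,(Q_2^{2\gamma-1}D_2 f,K_{22}D_2 f)_V+2\,(Q_2^{2\gamma-1}D_2 f,K_{12}D_1 f)_V\\
&\quad+2\sum_{i=1}^{\infty}\lambda_{2,i}^{2\gamma}\party{i}f\,\party{i}g-2\sum_{i=1}^{\infty}\lambda_{2,i}^{2\gamma}\norm{K_{22}^{\frac12}D_2\party{i}f}_V^2 .
\end{aligned}
\]
Now multiply by $\eta$ and integrate against $\mu^{\Phi}$. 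By invariance of $\mu^{\Phi}$ and antisymmetry of $A^{\Phi}$ (which makes the mixed $K_{12}$--$K_{21}$ terms cancel) one has $\int_W(L^{\Phi}\eta)\eta\,\mathrm{d}\mu^{\Phi}=-\int_W\norm{K_{22}^{\frac12}D_2\eta}_V^2\,\mathrm{d}\mu^{\Phi}\leq 0$, and the last term of the identity, integrated against $\eta\geq 0$, is also non-positive; moving both to the left, it remains to control
\[
2\lambda\int_W\eta^2\,\mathrm{d}\mu^{\Phi}+\int_W\norm{K_{22}^{\frac12}D_2\eta}_V^2\,\mathrm{d}\mu^{\Phi}+2\int_W\eta\sum_{i=1}^{\infty}\lambda_{2,i}^{2\gamma}\norm{K_{22}^{\frac12}D_2\party{i}f}_V^2\,\mathrm{d}\mu^{\Phi}
\]
by the integrals of $\eta$ against the $S_i$-term, the two $Q_2^{2\gamma-1}$-terms, and the $\party{i}g$-term.

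For the two $Q_2^{2\gamma-1}$-terms I would write them as $(Q_2^{\gamma}D_2 f,Q_2^{\gamma-1}K_{22}D_2 f)_V$ and $(Q_2^{\gamma}D_2 f,Q_2^{\gamma-1}K_{12}D_1 f)_V$, apply Cauchy--Schwarz, Inequalities \eqref{ineq:lfourfour} and \eqref{ineq:lfourfive}, and Young's inequality; this splits off a small multiple of $\int_W\eta^2\,\mathrm{d}\mu^{\Phi}$ together with terms controlled by \eqref{varepsilonn2} and by \Cref{lemma:reg_estimate_D_one}, possibly after a further integration by parts producing a bounded factor $f$ (recall $\norm{f}_{\infty}\leq\norm{g}_{\infty}/\lambda$). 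The $\party{i}g$-term is estimated by Cauchy--Schwarz in $i$, \eqref{ineq:lfourseven} and Young, using that $g\in\fbs{B_W}$ is fixed, so all its weighted Sobolev norms are finite. The genuinely delicate term is $2\int_W\eta\sum_i\lambda_{2,i}^{2\gamma}\party{i}f\,S_i f\,\mathrm{d}\mu^{\Phi}$: integrating by parts in the $e_i$-directions with $\psi=\eta\,\party{i}f$, and using $D_2(\eta\,\party{i}f)=\eta\,D_2\party{i}f+\party{i}f\,D_2\eta$ with $D_2\eta=2\sum_j\lambda_{2,j}^{2\gamma}\party{j}f\,D_2\party{j}f$, one obtains a ``diagonal'' contribution of the type $\int_W\eta\sum_i\lambda_{2,i}^{2\gamma}(K_{22}^{-\frac12}\party{i}K_{22}D_2 f,K_{22}^{\frac12}D_2\party{i}f)_V\,\mathrm{d}\mu^{\Phi}$, controlled by Cauchy--Schwarz in $i$ together with \eqref{ineq:lfourfour} and \eqref{ineq:lfoursix}, and a contribution coupling $\party{i}K_{22}D_2 f$ with $D_2\eta$, controlled using $\norm{K_{22}^{\frac12}D_2\eta}_V$. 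After enough applications of Young's inequality, all top-order quantities (the three integrals displayed above) are absorbed into the left-hand side, the remaining lower-order pieces are bounded through \eqref{varepsilonn2} and the constants $a,b$ from \Cref{lemma:reg_estimate_D_one} and \Cref{lemma:reg_estimate_D_two}, and one is left with $2\lambda\int_W\eta^2\,\mathrm{d}\mu^{\Phi}\leq c(\lambda,g,\kappa)$, independent of $n,m$, which is the claim.

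I expect the main obstacle to be exactly this $S_i$-term. After the integration by parts it mixes the second $e_i$-derivatives of $f$ both with $\eta$ and with $D_2\eta$, and one has to split the weights $\lambda_{2,i}^{2\gamma}$ so that each resulting factor matches one of the App($\Phi 4$) inequalities (notably \eqref{ineq:lfourfour}, \eqref{ineq:lfoursix}, \eqref{ineq:lfourseven}) exactly, and so that the ``dangerous'' quantities -- quadratic in second-order derivatives or quartic in first-order derivatives -- are moved to the left-hand side while everything else reduces to the two preceding $L^2$-estimates; keeping track of which terms are absorbable, and with which Young parameters, is the bookkeeping-heavy heart of the proof.
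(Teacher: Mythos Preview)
Your proposal follows an energy-method route that is \emph{genuinely different} from the paper's proof, and in the form written it has a real gap.

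\textbf{The gap.} Your argument hinges on controlling the two ``$Q_2^{2\gamma-1}$''-terms
\[
(Q_2^{\gamma}D_2f,\,Q_2^{\gamma-1}K_{22}D_2f)_V
\quad\text{and}\quad
(Q_2^{\gamma}D_2f,\,Q_2^{\gamma-1}K_{12}D_1f)_V
\]
via \eqref{ineq:lfourfour} and \eqref{ineq:lfourfive}. But those inequalities bound $\norm{Q_2^{\frac{\beta}{2}-1}K_{22}D_2f}_V$ and $\norm{Q_2^{\frac{\beta}{2}-1}K_{12}D_1f}_V$ respectively, with exponent $\tfrac{\beta}{2}-1$, not $\gamma-1$. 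Nothing in App($\Phi4$) forces $\gamma=\tfrac{\beta}{2}$; the only link between $\gamma$ and $\beta$ is \eqref{ineq:lfoursix}, which goes in the other direction and does not give a pointwise comparison of $Q_2^{\gamma-1}K_{22}$ with $K_{22}^{1/2}$. The same mismatch reappears in your treatment of the $S_i$-term: after integration by parts you produce factors $\lambda_{2,i}^{2\gamma}K_{22}^{-1/2}\party{i}K_{22}D_2f$, whereas the available bound in \eqref{ineq:lfourfour} carries the weight $\lambda_{2,i}^{\beta/2}$. Trying to repair this by another integration by parts to pull out a bounded factor $f$ raises the derivative order and does not resolve the weight problem. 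So as stated, several of the ``lower-order'' terms are not in fact reducible to the $L^2$-estimates of \Cref{lemma:reg_estimate_D_one} and \Cref{lemma:reg_estimate_D_two} under the hypotheses App($\Phi4$).

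\textbf{What the paper actually does.} The paper's proof bypasses the operator $L^{\Phi}$ entirely at this stage. It works component by component: setting $p_i=\int_W\lambda_{2,i}^{4\gamma}(\party{i}f)^4\,\mathrm{d}\mu^{\Phi}$, it applies the Gaussian integration-by-parts formula (\Cref{lem:intbp_classic_pot}) directly to $(\party{i}f)^3\cdot\party{i}f$, producing terms with a factor $f$ which is controlled in $L^{\infty}$ by $\norm{g}_{\infty}/\lambda$. After Cauchy--Schwarz and H\"older this yields
\[
p_i^{1/2}\leq A\Bigl(\bigl(\textstyle\int_W\lambda_{2,i}^{4\gamma}(\party{i}^2f)^2\,\mathrm{d}\mu^{\Phi}\bigr)^{1/2}+\sqrt{3}\,\lambda_{2,i}^{2\gamma-1}\Bigr),
\]
and then $\int_W\norm{Q_2^{\gamma}D_2f}_V^4\,\mathrm{d}\mu^{\Phi}\leq(\sum_i p_i^{1/2})^2$ is closed using only \eqref{ineq:lfoursix}, \eqref{ineq:lfourseven}, and the bound $b$ from \Cref{lemma:reg_estimate_D_two}. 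The structural inequalities \eqref{ineq:lfourfour} and \eqref{ineq:lfourfive} are not used at all in this lemma; they were already consumed in the proof of \Cref{lemma:reg_estimate_D_two}. This makes the paper's argument both shorter and robust to the $\gamma$ versus $\beta$ issue that derails your energy computation.
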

\begin{proof}
	Again, we fix $n,m$ and replace $\Phi_n^m$ with $\Phi$ and $f_{n,m}$ with $g$.
	For $i\in\N$ set 
	\[
	\begin{aligned}	
		p_i\defeq \int_W\lambda_{2,i}^{4\gamma}(\party{i}f)^4\;\mathrm{d}\mu^{\Phi},\quad h_i\defeq \left(\int_W\lambda_{2,i}^{4\gamma}(\party{i}^2f)^2\;\mathrm{d}\mu^{\Phi}\right)^{\frac{1}{2}}\quad \text{and}\quad
		\tilde{h}_i\defeq \sqrt[4]{3}\lambda_{2,i}^{\gamma-\frac{1}{2}} .
	\end{aligned}
	\]
	Using the integration by parts formula from \Cref{lem:intbp_classic_pot}, we obtain, by an application of the Cauchy-Schwarz and Hölder inequality ($p=\frac{4}{3}$ and $q=4$),
	\[
	\begin{aligned}
		p_i&=\int_W \lambda_{2,i}^{4\gamma}(\party{i}f)^3\party{i}f\;\mathrm{d}\mu^{\Phi}\\&=\int_W -\lambda_{2,i}^{4\gamma}3(\party{i}f)^2\party{i}^2ff+\lambda_{2,i}^{4\gamma}(\party{i}f)^3f (v,Q_2^{-1}e_i)_V\;\mathrm{d}\mu^{\Phi}\\
		&\leq 3\frac{\norm{g}_{\infty}}{\lambda} \left(\int_W\lambda_{2,i}^{4\gamma}(\party{i}f)^4\;\mathrm{d}\mu^{\Phi}\right)^{\frac{1}{2}}\left(\int_W\lambda_{2,i}^{4\gamma}(\party{i}^2f)^2\;\mathrm{d}\mu^{\Phi}\right)^{\frac{1}{2}}\\
		&\quad+\frac{\norm{g}_{\infty}}{\lambda} \left(\int_W\lambda_{2,i}^{\frac{16}{3}\gamma}(\party{i}f)^4\;\mathrm{d}\mu^{\Phi}\right)^{\frac{3}{4}}\left(\int_W(v,Q_2^{-1}e_i)_V^4\;\mathrm{d}\mu^{\Phi}\right)^{\frac{1}{4}}\\
		&=3\frac{\norm{g}_{\infty}}{\lambda} p_i^{\frac{1}{2}}\left(\int_W\lambda_{2,i}^{4\gamma}(\party{i}^2f)^2\;\mathrm{d}\mu^{\Phi}\right)^{\frac{1}{2}}+\frac{\norm{g}_{\infty}}{\lambda} (\lambda_{2,i}^{\frac{4}{3}\gamma})^{\frac{3}{4}} p_i^{\frac{3}{4}}\lambda_{2,i}^{-1}\left(\int_W(v,e_i)_V^4\;\mathrm{d}\mu^{\Phi}\right)^{\frac{1}{4}}\\
		&=3\frac{\norm{g}_{\infty}}{\lambda} p_i^{\frac{1}{2}}h_i+\frac{\norm{g}_{\infty}}{\lambda} p_i^{\frac{3}{4}}\tilde{h}_i.
	\end{aligned}
	\]
	In the last equality above, we used that $\int_W(v,e_i)_V^4\;\mathrm{d}\mu^{\Phi}=\int_V(v,e_i)_V^
	4\;\mathrm{d}\mu_2=3\lambda_{2,i}^{2}$, which follows by \Cref{lem:Gaussianmoments}.
	Dividing by $p_i^{\frac{1}{2}}$ and using that $xy\leq \frac{1}{2}(x^2+y^2)$ for all $x,y\in\R$, we get
	\[
	p_i^{\frac{1}{2}}\leq 3\frac{\norm{g}_{\infty}}{\lambda}h_i+\frac{\norm{g}_{\infty}}{\lambda} p_i^{\frac{1}{4}}\tilde{h}_i\leq3\frac{\norm{g}_{\infty}}{\lambda}h_i+\frac{1}{2} p_i^{\frac{1}{2}}+\frac{1}{2}\frac{\norm{g}^2_{\infty}}{\lambda^2}\tilde{h}^2_i.
	\]
	Set $A\defeq \max\left\{6\frac{\norm{g}_{\infty}}{\lambda},\frac{\norm{g}_{\infty}^2}{\lambda^2}\right\}$. Then it holds
	\[
	p_i^{\frac{1}{2}}\leq A\left( h_i+\tilde{h}^2_i\right).
	\]
	Lastly, we conclude with the Inequalities \eqref{ineq:lfoursix} and \eqref{ineq:lfourseven} from Assumption \nameref{ass:pot_app} and \Cref{lemma:reg_estimate_D_two}
	\[
	\begin{aligned}
		&\quad\int_W \norm[\big]{Q_2^{\gamma}D_2f}^4_V\;\mathrm{d}\mu^{\Phi}=\int_W \left(\sum_{i=1}^{\infty} \lambda_{2,i}^{2\gamma}(\party{i}f)^2\right)^2\;\mathrm{d}\mu^{\Phi}\\
		&=\sum_{i,j=1}^{\infty}\int_W  \lambda_{2,i}^{2\gamma}(\party{i}f)^2 \lambda_{2,j}^{2\gamma}(\party{j}f)^2\;\mathrm{d}\mu^{\Phi}
		\leq \left(\sum_{i=1}^{\infty} p_i^{\frac{1}{2}}\right)^2\leq A^2\left(\sum_{i=1}^{\infty} h_i+\tilde{h}^2_i\right)^2\\
		&\leq  2{A^2} \left(\sum_{i=1}^{\infty} h_i\right)^2+ 2{A^2} \left(\sum_{i=1}^{\infty} \tilde{h}^2_i\right)^2
		\leq  2{A^2} \kappa(b+{3}) \eqdef c.
	\end{aligned}
	\]
\end{proof}
As we have our desired $L^4(W;\mu^{\Phi_m^n})$ regularity estimate at hand, we are able to show that the Kolmogorov operator $\Lphic$ is essentially m-dissipative on $L^2(W;\mu^{\Phi})$. In \Cref{ch:example_Lfour_ess_diss} we apply this result in a degenerate stochastic reaction-diffusion setting.
%
\begin{thm}\label{theo:ess_diss_phi_Lfour}
	Let Assumption \nameref{ass:pot_app} be valid. Then, $(L^{\Phi},\fbs{B_W})$ is essentially m-dissipative on $L^2(W;\mu^{\Phi})$. The corresponding strongly continuous contraction semigroup $\sccs$ is sub-Markovian and conservative. 
\end{thm}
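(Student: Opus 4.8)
The plan is to apply the Lumer--Phillips theorem. Dissipativity of $(L^{\Phi},\fbs{B_W})$ on $L^2(W;\mu^{\Phi})$ is already \Cref{lem:inf-dim-op-decomp}, so the whole task reduces to verifying the dense range condition \eqref{denserange} for the constant $\lambda\in(0,\infty)$ furnished by App($\Phi 3$). Since $\fbs{B_W}$ is dense in $L^2(W;\mu^{\Phi})$ by \Cref{lem:dense_mu_one_with_pot}, it suffices to show that every $g\in\fbs{B_W}$ lies in the $L^2(W;\mu^{\Phi})$-closure of $(\lambda-L^{\Phi})(\fbs{B_W})$. Fix such a $g$ and let $f_{n,m}\in\fbk{3}$ be the solutions of \eqref{eq:resolvent_eq_est}.

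Using $L^{\Phi_n^m}h-L^{\Phi}h=-\big(D\Phi_n^m-D\Phi,K_{21}D_2h\big)_U$ and the fact that $K_{21}D_2f_{n,m}$ lives in a finite-dimensional coordinate subspace of $U$, the resolvent equation rewrites as
\[
\lambda f_{n,m}-L^{\Phi}f_{n,m}=g-r_{n,m},\qquad
r_{n,m}\defeq\big(Q_2^{-\gamma}K_{12}(D\Phi_n^m-P_n D\Phi),\,Q_2^{\gamma}D_2f_{n,m}\big)_V,
\]
with $\gamma$ from App($\Phi 4$). I would then show $r_{n,m}\to0$ in $L^2(W;\mu^{\Phi})$ along suitable indices. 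First change the reference measure: from $\int_U e^{-\Phi}\,\mathrm{d}\mu_1=1$ (\Cref{ass:Phi_general}), $\Phi_n^m\ge c_1$ and $(q^*-1)\Phi_n^m-q^*\Phi\le c_2+c_3\norm{u}_U^2$ in \eqref{eq:phig4_one}--\eqref{eq:phig4_two}, Hölder's inequality with the exponent $q^*$ together with Fernique's theorem (\Cref{theo:fernique}, using $c_3<\frac1{2\lambda_{1,1}}$) gives a constant $C_0$, independent of $n,m$, such that $\int_W F\,\mathrm{d}\mu^{\Phi}\le C_0\big(\int_W F^{q^*/(q^*-1)}\mathrm{d}\mu^{\Phi_n^m}\big)^{(q^*-1)/q^*}$ for all $F\ge0$. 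Applying this with $F=r_{n,m}^2$, Cauchy--Schwarz in $V$, and a second Hölder estimate in $\mu^{\Phi_n^m}$ whose exponents are tuned so that $Q_2^{\gamma}D_2f_{n,m}$ enters to the power $4$ and the remaining factor to the power $p^*$, I get
\[
\int_W r_{n,m}^2\,\mathrm{d}\mu^{\Phi}\le C_1\Big(\int_U\norm[\big]{Q_2^{-\gamma}K_{12}(D\Phi_n^m-P_n D\Phi)}_V^{p^*}\mathrm{d}\mu_1^{\Phi_n^m}\Big)^{2/p^*}\Big(\int_W\norm[\big]{Q_2^{\gamma}D_2f_{n,m}}_V^{4}\,\mathrm{d}\mu^{\Phi_n^m}\Big)^{1/2}.
\]
The last factor is bounded uniformly in $n,m$ by \Cref{lem:regLfour} (which rests on \Cref{lemma:reg_estimate_D_one} and \Cref{lemma:reg_estimate_D_two}), and the first tends to $0$ in the iterated limit $\lim_{n}\lim_{m}$ by \eqref{eq:conv_phi_m_n_p}; hence there are $n_k,m_k\to\infty$ with $r_{n_k,m_k}\to0$ in $L^2(W;\mu^{\Phi})$.

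Next I would pass from $\fbk{3}$ to $\fbs{B_W}$. Each $f_{n,m}$ is a $C_b^3$ function of finitely many coordinates, so mollifying it in those coordinates produces $f_{n,m}^{\varepsilon}\in\fbs{B_W}$ with $f_{n,m}^{\varepsilon}\to f_{n,m}$ in $C_b^2$ and uniformly bounded third derivatives. All coefficients appearing in $L^{\Phi}$ lie in $L^2(W;\mu^{\Phi})$ --- the bounded and at most linearly growing ones by \Cref{lem:Gaussianmoments} and \Cref{theo:fernique}, and the term $(D\Phi,K_{21}D_2\cdot)_U=(Q_1^{\theta}D\Phi,Q_1^{-\theta}K_{21}D_2\cdot)_U$ because $\Phi\in W^{1,2}_{Q_1^{\theta}}(U;\mu_1)$ and $\mu^{\Phi}\le\mu$ --- so dominated convergence yields $\lambda f_{n,m}^{\varepsilon}-L^{\Phi}f_{n,m}^{\varepsilon}\to g-r_{n,m}$ in $L^2(W;\mu^{\Phi})$ as $\varepsilon\to0$. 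Combined with $r_{n_k,m_k}\to0$, this gives $g\in\overline{(\lambda-L^{\Phi})(\fbs{B_W})}$, hence \eqref{denserange} and, via Lumer--Phillips, the essential m-dissipativity. The closure then generates a strongly continuous contraction semigroup $\sccs$; it is sub-Markovian because $(L^{\Phi},\fbs{B_W})$ is an abstract diffusion operator (\Cref{cor:invariance_ASP}), hence a Dirichlet operator, by \cite[Lemma 1.8 and Lemma 1.9]{Eberle1999}; and it is conservative because $1\in\fbs{B_W}$ with $L^{\Phi}1=0$, so uniqueness for the associated abstract Cauchy problem forces $T_t1=1$ for all $t\ge0$.

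The step I expect to be the main obstacle is the error estimate $r_{n,m}\to0$: it is the one place where the uniform $L^4$-regularity bound, the change of measure between $\mu^{\Phi}$ and $\mu^{\Phi_n^m}$ controlled by the growth and Fernique conditions \eqref{eq:phig4_one}--\eqref{eq:phig4_two}, and a careful matching of the Hölder exponents $p^*$, $q^*$ and the power $4$ all have to be orchestrated at once, so that exactly the two quantities supplied by App($\Phi 5$) and \Cref{lem:regLfour} survive. The mollification step and the sub-Markov and conservativity assertions are, by comparison, routine.
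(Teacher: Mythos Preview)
Your proposal is correct and follows essentially the same route as the paper: dissipativity from \Cref{lem:inf-dim-op-decomp}, dense range via the approximate resolvent equation \eqref{eq:resolvent_eq_est}, the error term controlled by H\"older (the paper does it as a single three-factor H\"older with exponents $\tfrac{2}{p^*}+\tfrac{1}{q^*}+\tfrac{1}{2}=1$, you split it into a measure-change step and a second H\"older---these are equivalent), the uniform $L^4$-bound from \Cref{lem:regLfour}, Fernique for the $e^{q^*(\Phi_n^m-\Phi)}$ factor, and finally \Cref{cor:invariance_ASP} for sub-Markovianity and $L^{\Phi}1=0$ for conservativity. The only place you are more explicit than the paper is the mollification from $\fbk{3}$ to $\fbs{B_W}$, which the paper absorbs by working directly with the closure $D(L^{\Phi})$.
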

\begin{proof}
	By \Cref{lem:intbp}, we already know that $(L,\fbs{B_W})$ is dissipative on $L^2(W;\mu^{\Phi})$ and therefore closable in $L^2(W;\mu^{\Phi})$, with closure denoted by $(L^{\Phi},D(L^{\Phi}))$.
	To apply the Lumer-Phillips theorem, it is left to show that $(\lambda-L^{\Phi})(D(L^{\Phi}))$ is dense in $L^2(W;\mu^{\Phi})$ for some $\lambda\in (0,\infty)$. Since $\fbs{B_W}$ is dense in $L^2(W;\mu^{\Phi})$, it is enough to show that there exits $\lambda\in (0,\infty)$ such that $(\lambda-L^{\Phi})(D(L^{\Phi}))$ contains $\fbs{B_W}$. So take $\lambda\in (0,\infty)$ from Item App($\Phi3$)  of Assumption \nameref{ass:pot_app}. Further, let $g\in\fbs{B_W}$ and $m,n\in\N$, where we assume without loss of generality that $m^K(n)=n$ and $m^K(m)=m$.
	In view of Item App($\Phi 3$) from Assumption \nameref{ass:pot_app}, there exists $f_{n,m}\in \fbkn{3}$ with 
	\[
	\begin{aligned}
		g&=\lambda f_{n,m}-L^{\Phi^m_n}f_{n,m}=\lambda f_{n,m}-Lf_{n,m}+(K_{12}D\Phi_n^m,D_2f_{n,m})_V\\	 
		&=\lambda f_{n,m}-L^{\Phi}f_{n,m}+(K_{12}(D\Phi_n^m-P_nD\Phi),D_2f_{n,m})_V.
	\end{aligned}
	\]If $(K_{12}(D\Phi_n^m-P_nD\Phi),D_2f_{n,m})_V$ gets arbitrary small in $L^2(W;\mu^{\Phi})$ for big $n,m$, the dense range condition is shown.
	
	It holds, by an application of the generalized Hölder inequality ($\frac{2}{p^*}+\frac{1}{q^*}+\frac{1}{2}=1$),
	\[
	\begin{aligned}
		&\quad\int_W (K_{12}(D\Phi_n^m-P_nD\Phi),D_2f_{n,m})^2_V\;\mathrm{d}\mu^{\Phi}\\
		&\leq {\mu_1(e^{-\Phi_n^m})}\int_W\norm[\big]{Q_2^{-\gamma}K_{12}(D\Phi_n^m-P_nD\Phi)}^2_V\norm[\big]{Q_2^{\gamma}D_2f_{n,m}}^2_Ve^{\Phi_n^m-\Phi}\;\mathrm{d}\mu^{\Phi_n^m}\\
		&\leq \mu_1(e^{-\Phi_n^m})\left(\int_U\norm[\big]{Q_2^{-\gamma}K_{12}(D\Phi_n^m-P_nD\Phi)}^{p^*}_V\;\mathrm{d}\mu_1^{\Phi_n^m}\right)^{\frac{2}{p^*}}\\&\quad\times \left(\int_W\norm[\big]{Q_2^{\gamma}D_2f_{n,m}}^4_V\;\mathrm{d}\mu^{\Phi_n^m}\right)^{\frac{1}{2}}\left( \int_W e^{q^*(\Phi_n^m-\Phi)}\;\mathrm{d}\mu^{\Phi_n^m}\right)^{\frac{1}{q^*}}.
	\end{aligned}
	\]By Item App($\Phi 5$) from Assumption \nameref{ass:pot_app}, we know that \[\li{n}\li{m} \int_U \norm[\big]{Q_2^{-\gamma}K_{12}(D\Phi_n^m-P_nD\Phi)}_V^{p^*}\;\mathrm{d}\mu^{\Phi_n^m}_1=0.\] 
	To conclude the dense range condition, it is enough to bound
	\[
	\mu_1(e^{-\Phi_n^m})\left(\int_W\norm[\big]{Q_2^{\gamma}D_2f_{n,m}}^4_V\;\mathrm{d}\mu^{\Phi_n^m}\right)^{\frac{1}{2}}\left( \int_W e^{q^*(\Phi_n^m-\Phi)}\;\mathrm{d}\mu^{\Phi_n^m}\right)^{\frac{1}{q^*}}
	\]independent of $m,n\in\N$.
	To verify this, we argue as follows. \Cref{lem:regLfour} implies
	\[\int_W\norm[\big]{Q_2^{\gamma}D_2f_{n,m}}^4_V\;\mathrm{d}\mu^{\Phi_n^m}\leq c(\lambda,g,\kappa)\]
	for all $n,m$. Moreover, by Inequality \eqref{eq:phig4_one}, we get $\mu_1(e^{-\Phi_n^m})\leq e^{-c_1}$ independent of $m,n$. Finally, using Inequality \eqref{eq:phig4_one} from Assumption \nameref{ass:pot_app}, we get
	\[
	\begin{aligned}
		\mu_1(e^{-\Phi_n^m})\left(\int_W e^{q^*(\Phi_n^m-\Phi)}\;\mathrm{d}\mu^{\Phi_n^m}\right)^{\frac{1}{q^*}}&= \mu_1(e^{-\Phi_n^m})^{1-\frac{1}{q^*}} \left(\int_U e^{(q^*-1)\Phi_n^m-q^*\Phi}\;\mathrm{d}\mu_1\right)^{\frac{1}{q^*}}\\
		&\leq (e^{-c_1})^{1-\frac{1}{q^*}}\left( \int_U e^{c_2+c_3\norm{u}^2_U}\;\mathrm{d}\mu_1\right)^{\frac{1}{q^*}}.
	\end{aligned}
	\]
	The estimate in the last inequality above is valid as $q^*>1$ and consequently $1-\frac{1}{q^*}>0$.
	By means of \Cref{theo:fernique}, we know that the right-hand side of the inequality above is bounded. This concludes the proof.

	For the second part of the proof, recall that
	\Cref{lem:inf-dim-op-decomp} tells us that $L^{\Phi}1=0$ and $\mu(L^{\Phi}f)=0$ for all $f\in\fbs{B_W}$. The former implies $T_t1=1$ in $L^2(W;\mu^{\Phi})$ for all $t\geq 0$, while the latter shows that $\mu^{\Phi}$ is invariant for $L^{\Phi}$ and consequently for $\sccs$.
	By \Cref{cor:invariance_ASP}, we also know that $(L^{\Phi},\fbs{B_W})$ is an abstract diffusion operator, which implies as explained in \Cref{sec:inf_langevin_general} that $\sccs$ is sub-Markovian and conservative.
\end{proof}

Item App($\Phi3$) from Assumption \nameref{ass:pot_app}, which is necessary to apply \Cref{theo:ess_diss_phi_Lfour} is verified in an finite dimensional setting in \cite[Prop.~2.2]{dPL06}. Indeed the authors assume  $U=V=\R^d$ for some $d\in\N$, $Q_2=K_{12}=K_{22}= \id$, $Q_1$ is a symmetric positive matrix and the potential can be approximated in $L^4$ by a sequence of $C^4$ functions whose gradients have bounded derivatives up to order three, compare \cite[Hypothesis~2.1]{dPL06}. 


Below, we present a strategy inspired by \cite[Prop.~2.2]{dPL06} to generalize the results from \cite[Prop.~2.2]{dPL06} for our setting. To do that the following definition is useful.
\begin{defn}\label{def:inf-dim-fin-dim}
	Fix $n\in\N$ such that $n=m^K(n)$. We define for all $y\in\R^n$
	\[
	\begin{aligned}
		K_{12,n}\defeq ((K_{12}d_i,e_j)_V)_{ij},
		\quad
		K_{21,n}\defeq (K_{12,n})^*
		,\quad 	K_{22,n}(y)\defeq \left(\left(K_{22}(\overline{p}^V_ny)e_i,e_j\right)_V\right)_{ij}
	\end{aligned}
	\]
	and denote the entry of $K_{22,n}$ at position $i,j$ by $k_{ij,n}$.
	
	Moreover, we define $S_n$, $A_n$ and $L_n$ on the Hilbert space $L^2(\R^n\times\R^n;\mu^n)$ with domain $C_b^\infty(\R^n\times\R^n)$ by
	\[
	\begin{aligned}
		S_nf(x,y) &\defeq \tr[K_{22,n} D_2^2 f](x,y) + \sum_{i,j=1}^{n} \partial_j k_{ij,n}(y)\party{i}f(x,y)
		\\&\quad- \langle K_{22,n}(y)Q_{2,n}^{-1}y,D_2 f(x,y) \rangle, \\
		A_nf(x,y) &\defeq \langle K_{12,n}Q_{1,n}^{-1}x,D_2 f(x,y)\rangle - \langle K_{21,n}Q_{2,n}^{-1}y,D_1 f(x,y)\rangle \quad\text{ and }\\
		L_nf &\defeq (S_n-A_n)f.
	\end{aligned}
	\]
	Here $\langle \cdot,\cdot\rangle$ and $\abs{\cdot}$ denotes the Euclidean inner product and norm on $\R^n$, respectively.
	
\end{defn}


\begin{cond}{\textbf{K0}}\label{ass:inf-dim-elliptic}
	Assume that there is some positive operator $K_{22}^0\in \lopp{V}$, which leaves each $V_{m_k}$ for all $k\in\N$ invariant and such that
	\[
	(v,K_{22}(\tilde{v})v)_V \geq (v,K_{22}^0 v)_V
	\quad\text{ for all }v,\tilde{v}\in V.
	\]
\end{cond}

\begin{rem}\label{rem:ktwotwo_last_minute}
	For each fixed $i,j\in\N$ the boundedness and invariance properties of \Cref{def:inf-dim-operators} imply that $(K_{22}(v)e_i,e_j)_V$ is uniformly bounded in $v\in V$. In view of \cite[Theorem 1.1]{DELMORAL2018259}, one can show that $(K_{22}^{\frac12}e_i,e_j)_V$ is Fr\`echet differentiable with bounded derivative, if Assumption \nameref{ass:inf-dim-elliptic} holds true.
	More general, suppose $v\mapsto K_{22}(v)e_k\in C_b^l(V;{V})$ for all $k\in\N$ and some $l\in\N$. Then for each $i,j\in\N$ it holds $(K_{22}^{\frac12}e_i,e_j)_V\in C^l(V;\R)$. Moreover, if  Assumption \nameref{ass:inf-dim-elliptic} holds true, then $(K_{22}^{\frac12}e_i,e_j)_V\in C^l_b(V;\R)$.
\end{rem}

\begin{rem}\label{lem:sol_finite}
	Suppose $ v\mapsto K_{22}(v)e_i\in C^4_b(V;{V})$ for all $i\in\N$ and assume that the sequence $(\Phi_n^m)_{m,n\in\N}$ from Assumption \nameref{ass:pot_app} is in $C^4(U;\R)$. Moreover, suppose $D\Phi_n^m$ and $V\ni v\mapsto K_{22}(P_{n}(v))Q_2^{-1}P_{n}(v)\in V$ have bounded derivatives up to order three for all $m,n\in\N$. Lastly, let Assumption \nameref{ass:inf-dim-elliptic} be valid.
	
	Given $\lambda\in (0,\infty)$ and $g\in \fbs{B_W}$.
	By a trivial extension procedure, there is some $n\in\left\lbrace m_k,m_{k+1}...\right\rbrace$ and $\psi\in C_b^{\infty}(\R^n\times \R^n)$ such that $g=\psi(p_n^U,p_n^V)\in \fbs{B_W}$.  Set $\overline{\Phi_n^m}\defeq \Phi_n^m\circ\overline{p}^U_n$ and define for $\varphi\in C_b^{2}(\R^n\times \R^n)$ and $(x,y)\in \R^n\times \R^n$
	
	\[
	\begin{aligned}
		L^{\overline{\Phi_n^m}}\varphi(x,y)\defeq L_n\varphi(x,y)-\langle D\overline{\Phi_n^m}(x),K_{21,n}D_2\varphi(x,y)\rangle,
	\end{aligned}
	\]where $L_n$ and  $K_{21,n}$ are given as in \Cref{def:inf-dim-fin-dim}.
	%

	Recall the matrix valued map $K_{22,n}$ considered in \Cref{def:inf-dim-fin-dim} and define the maps  $a:\R^n\times \R^n\to\lop{\R^n\times \R^n}$ and $b:\R^n\times \R^n\to \R^n\times \R^n$ via
	\[
	\begin{aligned}
		a\begin{pmatrix} x\\ y\end{pmatrix}&\defeq \begin{pmatrix}
			0 &0 \\ 0&\sqrt{2K_{22,n}(y)}
		\end{pmatrix}\quad\text{and}\\
		b\begin{pmatrix} x\\ y\end{pmatrix}&\defeq\begin{pmatrix}
			K_{21,n}Q_{2,n}^{-1}y\\
			-K_{12,n}Q_{1,n}^{-1}x+\sum_{j=1}^n \partial_jK_{22,n}(y)p_n^Ve_j- K_{22,n}(y)Q_{2,n}^{-1}y-K_{12,n}D\overline{\Phi_n^m}(x)
		\end{pmatrix}.
	\end{aligned}
	\]
	Using the It\^o formula, we see that the operator $L^{\overline{\Phi_m^n}}$ corresponds to the finite dimensional stochastic differential equation given by
	\begin{equation}\label{eq:sde_finite_rewritten}
		\begin{aligned}
			\mathrm{d} &\begin{pmatrix} X_t\\ Y_t\end{pmatrix} =b\begin{pmatrix} X_t\\ Y_t\end{pmatrix}+a\begin{pmatrix} X_t\\ Y_t\end{pmatrix}\,\mathrm{d}W_t,\quad \begin{pmatrix} X_0\\ Y_0\end{pmatrix}=\begin{pmatrix} x\\ y\end{pmatrix},
		\end{aligned}
	\end{equation} with $(W_t)_{t\geq 0}$ being a finite dimensional standard Wiener process on some probability space $(\Omega,\mathcal{A},\p)$ with values in $\R^n\times \R^n$.
	By means of Assumption \nameref{ass:inf-dim-elliptic} and \Cref{rem:ktwotwo_last_minute} we obtain $a\in C_b^4(\R^n\times \R^n; \lop{\R^n\times \R^n})$.
	Taking the assumptions on the potential and the coefficients into account, it follows that both $a$ and $b$ are Lipschitz continuous. Therefore, the finite dimensional stochastic differential equation \eqref{eq:sde_finite_rewritten} has a unique global solution in terms of a time-homogeneous Markov process $(X_t(x,y),Y_t(x,y))_{t\geq 0}$ with $(X_0(x,y),Y_0(x,y))=(x,y)$, compare~\cite[Chapter 3 Paragraph 15]{GihSko}.
	
	As $v\mapsto K_{22}(v)e_i\in C^4_b(V;{V})$ and $D\Phi_m^n$, as well as $V\ni v\mapsto K_{22}(P_{n}(v))Q_2^{-1}P_{n}(v)\in V$ have bounded derivatives up to order three, we know that $b\in C^3(\R^n\times \R^n;\R^n\times \R^n)$ and $b$ has bounded derivatives up to order three.  By a dependence upon initial data result, compare e.g.~\cite[Theorem 1, p. 61]{GihSko},
	we conclude that for all $t\in (0,\infty)$, the map  
	\[
	\R^n\times \R^n\ni(x,y)\mapsto ( X_t(x,y),Y_t(x,y))\in\R^n\times \R^n
	\]
	is in $C^3(\R^n\times \R^n;\R^n\times \R^n)$ with bounded derivatives up to the third order. 
	Consequently, the associated transition semigroup $(p_t)_{t\geq 0}$  leaves $C_b^3(\R^n\times \R^n;\R)$ invariant. 
	
	Let $(x,y)\in \R^n\times \R^n$ and define
	\begin{align}
		\label{eq:laplace_finite}
		\varphi(x,y)\defeq\int_0^{\infty}e^{-\lambda t}	\mathbb{E}[\psi( X_t(x,y),Y_t(x,y))]\,\mathrm{d}t=\int_0^{\infty}e^{-\lambda t}	p_t\psi(x,y)\,\mathrm{d}t,
	\end{align}i.e.~$\varphi$ is the transition resolvent corresponding to $(p_t)_{t\geq 0}$.
	For $\lambda >0 $ large enough one has $\varphi\in C_b^3(\R^n\times \R^n;\R)$ and $\lambda \varphi-L^{\overline{\Phi_n^m}}\varphi=\psi$. In this case, the function $f\in \fbk{3}$ defined via $f(u,v)\defeq \varphi(P_n^Uu,P_nVv)$, $(u,v)\in W$ fulfills 
	\[
	\lambda f(u,v)-L^{\Phi_n^m}f(u,v)=\lambda \varphi(p_n^Uu,p_n^Vv)-L^{\overline{\Phi_n^m}}\varphi(p_n^Uu,p_n^Vv)=\psi(p_n^Uu,p_n^Vv)=g(u,v).
	\]
	Using the representation from Equation \eqref{eq:laplace_finite}, also the inequality $\norm{f}_{\infty}\leq \frac{1}{\lambda}\norm{g}_{\infty}$ is directly derived.

\end{rem}

\section{Hypocoercivity }\label{sec:hypo_appl}
The presentation in this chapter is based on the already published articles \cite{EG21_Pr} and \cite{EG23} and follows the general abstract hypocoercivity strategy described in \cite{GS14} and \cite{GS16}. To give the full picture we nevertheless present the results and give some proofs to emphasize how the general abstract hypocoercivity method can be applied in our infinite dimensional setting and potentials with unbounded non Lipschitz continuos gradients. The strength of our results is highlighted in \Cref{ch:example_Lfour_ess_diss}, where we consider examples of degenerate semi-linear infinite dimensional stochastic differential equations beyond framework discussed in \cite{W17}.

We consider the setting as described in \Cref{sec:inf_langevin_general}, where we require that
$\Phi:U\rightarrow (-\infty,\infty]$ is bounded from below and there is $\theta\in [0,\infty)$ such that $\Phi\in W_{Q_1^{\theta}}^{1,2}(U;\mu_1)$. For simplicity, we again assume without loss of generality that $\Phi$ is bounded from below by zero and normalized.

We begin with a situation where the Kolmogorov operator $L^{\Phi}$ is essentially m-dissipative and has a nice core with corresponding decomposition into a symmetric part $S$ and antisymmetric part $A^{\Phi}$, respectively. Therefore, we assume Assumptions \nameref{ass:pot_app} to apply \Cref{theo:ess_diss_phi_Lfour}. 

\bigskip

We additionally assume the following assumption throughout this chapter.
\begin{cond}{\textbf{K1}}\label{ass:qc-negative-type}
	The operator $K_{21}K_{12}=K_{12}^*K_{12}$ is positive on $U$.
\end{cond}

Starting from here, our goal is to show that the strongly continuous sub-Markovian semigroup $\sccs$, generated by $(L^{\Phi},D(L^{\Phi}))$ is hypocoercive, where the exponential speed of convergence to the equilibrium is determined by explicitly computable constants. 
\begin{defn}\label{def:OU_with_pot}
	The operators $(C,D(C))$ and $(Q_1^{-1}C,D(Q_1^{-1}C))$ on $U$ are defined by
	\[
	\begin{aligned}
		C&\defeq K_{21}Q_2^{-1}K_{12}\quad\text{with}\quad &D(C)&\defeq \{u\in U\mid K_{12}u\in D(Q_2^{-1}) \}\quad\text{and}\\
		Q_1^{-1}C&\defeq Q_1^{-1}K_{21}Q_2^{-1}K_{12}\quad\text{with}\quad &D(Q_1^{-1}C)&\defeq \{u\in D(C)\mid Cu\in D(Q_1^{-1}) \},
	\end{aligned}
	\]
	respectively. 
	Moreover, we define the infinite dimensional Ornstein-Uhlenbeck operator $(N^{\Phi},\fbs{B_U})$ (perturbed by the gradient of $\Phi$)  by
	\begin{align*}
		&N^{\Phi}:\fbs{B_U}\to L^2(U;\mu_1^{\Phi}),\\
		&f\mapsto N^{\Phi}f\defeq\mathrm{tr}[CD^2f]-(u,Q_1^{-1}CD f)_U-(D\Phi,CDf)_U.
	\end{align*}We use the abbreviation $N\defeq N^0$.
\end{defn}

We continue with the collection of a few properties of the newly defined operators.
\begin{rem}\label{rem:prop_C}
	\begin{enumerate}[(i)]
		\item $(C,D(C))$ is symmetric and positive on $U$.
		\item For all $n\in\N$, there is $m_k$ with $n\leq m_k$ such that $(C,D(C))$ maps $U_n$ into $U_{m_k}$. Recall that $(m_k)_{k\in\N}$ is the sequence from \Cref{def:inf-dim-operators}.
	\end{enumerate}
\end{rem}
\begin{prop}{\cite[Theorem 7.5 and Lemma 7.7]{EG23}}\label{prop:ess_self_N}
	The operator $(N^{\Phi},\fbs{B_U})$ is symmetric on $L^2(U;\mu_1^{\Phi})$ and therefore closable with closure denoted by $(N^{\Phi},D(N^{\Phi}))$. Moreover $D(N^{\Phi})\subseteq W_{C^{\frac12}}^{1,2}(U,\mu_1)$ and for every $\lambda\in (0,\infty)$, $f\in D(N^{\Phi})$ and $g\defeq \lambda f-N^{\Phi}f$ we have
	\begin{align}
		\label{eq:regularity-estimate-one-two}		\int_U\norm{C^{\frac12}Df}^2_U\,\mathrm{d}\mu_1^{\Phi}\leq \frac{1}{4\lambda}\int_U g^2\,\mathrm{d}\mu_1^{\Phi}.
	\end{align}
\end{prop}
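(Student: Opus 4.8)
The plan is to follow the standard route for perturbed infinite dimensional Ornstein--Uhlenbeck operators: establish a Dirichlet-form identity by integration by parts, read off symmetry and closability, and extract the regularity estimate from a dissipativity bound that survives passage to the closure.

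First I would show that for all $f,g\in\fbs{B_U}$
\[
-\int_U (N^{\Phi}f)\,g\,\mathrm{d}\mu_1^{\Phi}=\int_U (CDf,Dg)_U\,\mathrm{d}\mu_1^{\Phi}.
\]
Since $f$ and $g$ are finitely based, after a harmless trivial extension of their generating functions I may assume both depend only on $P_N^Uu$ for one of the block indices $N=m_k$ of \Cref{def:inf-dim-operators} chosen large enough that, by \Cref{rem:prop_C}, also $CDf,CDg\in U_N$. Then every trace and inner product occurring in $N^{\Phi}f$ collapses to a finite sum involving the \emph{constant} coefficients $c_{ij}\defeq(Cd_i,d_j)_U=c_{ji}$; in particular $\tr[CD^2f]=\sum_i\partx{i}\big((CDf)_i\big)$, the $\mu_1$-divergence of the finite-dimensional vector field $CDf$. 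Applying the one-dimensional Gaussian integration by parts formula of \Cref{lem:intbp_classic_pot} in each direction $d_i$ (legitimate since $\Phi\in W^{1,2}_{Q_1^{\theta}}(U;\mu_1)$ by \Cref{ass:Phi_general}) produces exactly the two lower order terms $(u,Q_1^{-1}CDf)_U$ and $(D\Phi,CDf)_U$ of $N^{\Phi}$ — the latter interpreted as in \Cref{rem:dphi_finite} — which cancel them, leaving the stated identity. This coordinate-wise integration by parts is the step I expect to need the most care, since $C$ is unbounded and $\Phi$ lies only in the weighted Sobolev space rather than in $W^{1,2}(U;\mu_1)$; it is precisely the block-invariance of the coefficients that confines the computation to finite-dimensional subspaces where \Cref{lem:intbp_classic_pot} is available.

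Symmetry of $(N^{\Phi},\fbs{B_U})$ on $L^2(U;\mu_1^{\Phi})$ is then immediate from $c_{ij}=c_{ji}$, and closability follows because $\fbs{B_U}$ is dense in $L^2(U;\mu_1^{\Phi})$ by \Cref{lem:dense_mu_one_with_pot}; write $(N^{\Phi},D(N^{\Phi}))$ for the closure. Taking $g=f$ gives $\int_U(N^{\Phi}f)f\,\mathrm{d}\mu_1^{\Phi}=-\int_U\norm{C^{\frac12}Df}_U^2\,\mathrm{d}\mu_1^{\Phi}\le 0$, so $N^{\Phi}$ is dissipative; in particular $\lambda\norm{f}_{L^2(\mu_1^{\Phi})}\le\norm{g}_{L^2(\mu_1^{\Phi})}$ for $g\defeq\lambda f-N^{\Phi}f$. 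Testing this $g$ against $f$ and using Young's inequality,
\[
\lambda\int_U f^2\,\mathrm{d}\mu_1^{\Phi}+\int_U\norm{C^{\frac12}Df}_U^2\,\mathrm{d}\mu_1^{\Phi}=\int_U gf\,\mathrm{d}\mu_1^{\Phi}\le\lambda\int_U f^2\,\mathrm{d}\mu_1^{\Phi}+\frac{1}{4\lambda}\int_U g^2\,\mathrm{d}\mu_1^{\Phi},
\]
hence $\int_U\norm{C^{\frac12}Df}_U^2\,\mathrm{d}\mu_1^{\Phi}\le\frac{1}{4\lambda}\int_U g^2\,\mathrm{d}\mu_1^{\Phi}$ for every $f\in\fbs{B_U}$.

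Finally I would pass to the closure. Since $C$ is symmetric and, by the invariance properties of \Cref{def:inf-dim-operators}, leaves each $U_{m_k}$ invariant, it leaves $U_{m_k}^{\perp}$ invariant too, whence $C^{\frac12}$ leaves every $U_{m_k}$ invariant (and is bounded there), so the hypotheses of \Cref{theorem:Gaussian_Sobolev_construction} hold for $A=C^{\frac12}$ and $W^{1,2}_{C^{\frac12}}(U;\mu_1)$ is well defined with $C^{\frac12}D$ closable. For $f\in D(N^{\Phi})$ pick $(f_k)\subseteq\fbs{B_U}$ with $f_k\to f$ and $N^{\Phi}f_k\to N^{\Phi}f$ in $L^2(U;\mu_1^{\Phi})$; then $g_k\defeq\lambda f_k-N^{\Phi}f_k\to g\defeq\lambda f-N^{\Phi}f$, and the estimate applied to $f_k-f_l$ shows $(C^{\frac12}Df_k)$ is Cauchy in $L^2(U;\mu_1^{\Phi};U)$. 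Its limit is $C^{\frac12}Df$, so $f\in W^{1,2}_{C^{\frac12}}(U;\mu_1)$, and passing to the limit in $\int_U\norm{C^{\frac12}Df_k}_U^2\,\mathrm{d}\mu_1^{\Phi}\le\frac{1}{4\lambda}\int_U g_k^2\,\mathrm{d}\mu_1^{\Phi}$ yields the claimed inequality.
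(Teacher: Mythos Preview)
The paper does not give its own proof here; it imports the result from the cited reference. Your approach --- integration by parts to obtain the Dirichlet-form identity $-\int_U(N^\Phi f)\,g\,\mathrm{d}\mu_1^\Phi=\int_U(CDf,Dg)_U\,\mathrm{d}\mu_1^\Phi$, then symmetry and dissipativity, then testing $g=\lambda f-N^\Phi f$ against $f$ with Young --- is the standard one and is exactly the one-variable analogue of the argument behind \Cref{lem:inf-dim-op-decomp} and \Cref{lem:reg_K_22}, which the paper likewise imports from the same source.

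There is one slip in your closure step. You show that $(C^{\frac12}Df_k)$ is Cauchy in $L^2(U;\mu_1^{\Phi};U)$, but then conclude $f\in W^{1,2}_{C^{1/2}}(U;\mu_1)$, a space defined via \Cref{theorem:Gaussian_Sobolev_construction} as a closure in $L^2(U;\mu_1)$. Since only $\Phi\ge 0$ is assumed, $e^{-\Phi}\le 1$ gives the embedding $L^2(\mu_1)\hookrightarrow L^2(\mu_1^\Phi)$ but not the reverse, so Cauchy in $L^2(\mu_1^\Phi)$ does not by itself yield Cauchy in $L^2(\mu_1)$. The inequality \eqref{eq:regularity-estimate-one-two} --- which is what the paper actually uses downstream, e.g.\ in the proof of \Cref{lem:inf-dim-regularity-estimates} --- is unaffected, since it is stated over $\mu_1^\Phi$ and your argument proves it cleanly. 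The embedding claim as written is most naturally read as $W^{1,2}_{C^{1/2}}(U;\mu_1^\Phi)$ (cf.\ \Cref{prop:IBP_PHI_most_gen} for the construction of such spaces), in which case your closure argument goes through verbatim; if the $\mu_1$-version is really intended, an additional argument is needed beyond what you have.
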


\begin{cond}{SA($\Phi$)}\label{ass:ess-N_Phi}
$(N^{\Phi},\fbs{B_U})$ is essentially self-adjoint on $L^2(U;\mu_1^{\Phi})$. The resolvent in $\lambda\in (0,\infty)$ of the corresponding closure $(N^{\Phi},D(N^{\Phi}))$ is denoted by $R_{\lambda}^{N^{\Phi}}$.

\end{cond}

The next assumption enables us to generalize the regularity estimates from \Cref{prop:ess_self_N} to the case where $N$ is perturbed by the gradient of the potential $\Phi$. 
\begin{cond}{{Reg}(${{\Phi}}$)}\label{ass:L_Phi-convex-reg-est}$\Phi=\Phi_1+\Phi_2:U\to (-\infty,\infty]$.
\begin{enumerate}[label=Class \Alph*:, wide=0pt, leftmargin=\widthof{\textbf{Class B:\hspace*{\labelsep}}}%
]
\item[Reg(${{\Phi_1}}$)]  
There exists a sequence $(\Phi_{1,n,m})_{m,n\in\N}$ of convex functions from $U$ to $\R$ such that for $\mu_1$-almost all $u\in U$ and for all $m,n\in\N$
\begin{enumerate}[label=Class \Alph*:, wide=0pt, leftmargin=\widthof{\textbf{Class B:\hspace*{\labelsep}}}%
	]
	\item[(i)] $-\infty<\inf_{\tilde{u}\in U}\Phi_1(\tilde{u})\leq\Phi_{1,n,m}(u)$,
	\item[(ii)]  $ \lim_{n\rightarrow \infty}\lim_{m\rightarrow \infty}\Phi_{1,n,m}(u)=\Phi_1(u)$ , 
	\item[(iii)] $\Phi_{1,n,m}$ is differentiable with Lipschitz continuous derivative and 
	\[
	\lim_{n\rightarrow \infty}\lim_{m\rightarrow \infty}\norm{(Q_1^{\theta}D\Phi_{1,n,m}-Q_1^{\theta}D\Phi_{1},d_k)_U}_{L^2(\mu_1)}=0 \;\text{for all}\; k\in\N.
	\] 
\end{enumerate}
\item[Reg(${{\Phi_2}}$)] $\Phi_2$ is bounded and two times continuously Fr\'{e}chet differentiable with bounded first order derivative and second order derivative in $L^1(U;\mu_1^{\Phi})$. Moreover, there is a constant $c_{\Phi_2}\in [0,\infty)$ such that for all $n\in\N$ 
\[
(D^2\Phi_2(\tilde{u})Cu,Cu)_U\geq -c_{\Phi_2}(Cu,u)_U\quad\text{for\;all}\quad \tilde{u}\in U\quad\text{and}\quad u\in 						U_n. 
\]

%

\end{enumerate}
\end{cond}
\begin{rem}\label{rem:weak_conv_measure}
\begin{enumerate}[(i)]
\item 		Suppose Item (i) and (ii) of Item Reg(${{\Phi_1}}$) hold true. Then, $e^{-\Phi_{1,n,m}}$ and $e^{-\lim_{m\to \infty}\Phi_{1,n,m}}$ are bounded by $e^{-\inf_{\tilde{u}\in U}\Phi_1(\tilde{u})}$. Since we also have pointwise convergence, we obtain by the theorem of dominated convergence  	\[
\lim_{n\rightarrow \infty}\lim_{m\rightarrow \infty}\mu_1^{\Phi_{1,n,m}}(f)=\mu_1^{\Phi_1}(f)\quad \text{for all}\quad f\in L^1(U;\mu_1^{\Phi_1}).
\]
\item 

Note that Item Reg(${{\Phi_2}}$) above is satisfied, if $D^2\Phi_2$ is bounded and $C\in \mathcal{L}(U)$.
\end{enumerate}
\end{rem}

The proof of the second order regularity estimate below is similar to \cite[Theorem~2]{EG21}, where only convex potentials were considered. 
\begin{lem}\label{lem:inf-dim-regularity-estimates}

Assume that Assumption \nameref{ass:L_Phi-convex-reg-est} holds true. Then for every function $f\in \fbs{B_U}$ and $g\defeq f-N^{\Phi}f$ we have the following second order regularity estimate
\[
\begin{aligned}\label{eq:regularity-estimate-two}
\int_U \tr[(CD^2f)^2]+\norm{Q_1^{-\frac{1}{2}}CDf}^2_U\,\mathrm{d}\mu_1^{\Phi}
&\leq \left(4+\frac{c_{\Phi_2}}{4}\right) \int_U g^2\,\mathrm{d}\mu_1^{\Phi}.
\end{aligned}	
\]
\end{lem}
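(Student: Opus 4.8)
The plan is to establish the second order regularity estimate first for the finite-dimensional, smooth, convex-perturbation approximations and then pass to the limit. I would proceed as follows. First, assume $\Phi_2 = 0$ and $\Phi = \Phi_1$ is smooth with Lipschitz gradient (this is the situation of the approximating functions $\Phi_{1,n,m}$ up to the Ornstein–Uhlenbeck smoothing used in the proof of \Cref{poin_check}, which one also applies here to get derivatives of all orders). Differentiate the resolvent equation $g = f - N^{\Phi} f$ in the direction $d_k$ to obtain, with $h_k \defeq \partx{k} f$,
\[
h_k - N^{\Phi} h_k + (d_k, Q_1^{-1} C D f)_U + (D\partx{k}\Phi, C D f)_U + (\text{lower order terms from } \partx{k}C) = \partx{k} g.
\]
Since $C$ satisfies the block-invariance of \Cref{rem:prop_C}(ii) and is a fixed (constant) operator, the terms involving $\partx{k} C$ vanish. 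Multiply by $h_k$, integrate against $\mu_1^{\Phi}$, and sum over $k$. The symmetric operator $N^{\Phi}$ contributes $\int_U \sum_k \norm{C^{1/2} D h_k}_U^2 \dm_1^{\Phi} = \int_U \tr[(CD^2f)^2]\dm_1^{\Phi}$ after identifying the sum with the trace of the square of the Hessian (using symmetry of $C$). The term $\sum_k \int_U h_k (d_k, Q_1^{-1} C D f)_U \dm_1^{\Phi}$ equals $\int_U (Q_1^{-1/2} C D f, Q_1^{-1/2} \cdot)$-type expression — more precisely, after an integration by parts using \Cref{lem:intbp_classic_pot} on the $Q_1^{-1}$ weight, it produces $\int_U \norm{Q_1^{-1/2} C D f}_U^2 \dm_1^{\Phi}$ plus a term that recombines with $N^{\Phi} f$ and hence with $g$.

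The convexity of $\Phi_1$ enters through the term $\sum_k \int_U h_k (D\partx{k}\Phi_1, C D f)_U \dm_1^{\Phi} = \int_U (D^2\Phi_1 \, Df, C Df)_U \dm_1^{\Phi} \geq 0$, which can simply be dropped, giving an upper bound on the desired left-hand quantity. The perturbation $\Phi_2$ contributes $\int_U (D^2\Phi_2 \, Df, C Df)_U \dm_1^{\Phi}$, which by Item Reg($\Phi_2$) is bounded below by $-c_{\Phi_2}\int_U (C Df, Df)_U \dm_1^{\Phi} = -c_{\Phi_2}\int_U \norm{C^{1/2}Df}_U^2 \dm_1^{\Phi}$; this last quantity is controlled via the first-order estimate \eqref{eq:regularity-estimate-one-two} of \Cref{prop:ess_self_N} by $\frac{c_{\Phi_2}}{4}\int_U g^2 \dm_1^{\Phi}$. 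The right-hand side $\int_U \sum_k \partx{k} g \, h_k \dm_1^{\Phi} = \int_U (Dg, Df)_U\dm_1^{\Phi}$ (and the cross terms coming from recombining $N^{\Phi}f = f - g$) are absorbed by Young's inequality at the cost of the numerical constant $4$; one also uses $\int_U \norm{Df}_U^2 \dm_1^\Phi \leq \int_U g^2 \dm_1^\Phi$ type bounds, which follow from the symmetry and dissipativity of $N^\Phi$ together with \eqref{eq:regularity-estimate-one-two}. Collecting terms yields $\int_U \tr[(CD^2 f)^2] + \norm{Q_1^{-1/2} C Df}_U^2 \dm_1^\Phi \leq (4 + \tfrac{c_{\Phi_2}}{4})\int_U g^2 \dm_1^\Phi$ for the regularized $\Phi$.

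Finally, I would pass to the limit: replace $\Phi_1$ by $\Phi_{1,n,m}$ (further smoothed by the Ornstein–Uhlenbeck semigroup as in \Cref{poin_check}), note that all constants on the right-hand side are independent of $m,n$, and use Item Reg($\Phi_1$)(ii)–(iii) together with \Cref{rem:weak_conv_measure}(i) (dominated convergence for $\mu_1^{\Phi_{1,n,m}} \to \mu_1^{\Phi_1}$, convergence of the directional derivatives of $\Phi_{1,n,m}$ in $L^2(\mu_1)$, and boundedness of $\Phi_2$ and its first two derivatives) to pass the estimate to $\Phi = \Phi_1 + \Phi_2$ for a fixed cylinder function $f \in \fbs{B_U}$, where only finitely many directions $d_k$ appear so the limiting arguments are genuinely finite-dimensional in the relevant directions. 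The main obstacle I anticipate is the rigorous justification of the integration by parts and the limit exchange for the Hessian term $\tr[(CD^2f)^2]$ — ensuring that the second-order quantity is lower semicontinuous under the approximation and that the sign-definite convexity term is legitimately nonnegative even when $D^2\Phi_{1,n,m}$ is only defined $\mu_1$-a.e.; this is handled exactly as in \cite[Theorem~2]{EG21}, to which the computation reduces once $\Phi$ is replaced by its smooth convex approximants.
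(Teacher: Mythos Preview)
Your overall architecture (approximate, differentiate the resolvent equation, exploit convexity, pass to the limit) matches the paper, but there is a genuine gap in the core computation: you multiply the differentiated equation by $h_k=\partx{k}f$, whereas the paper multiplies by $(d_k,Cd_l)_U\,\partx{l}f$ and sums over both $k$ and $l$, i.e.\ effectively by the $k$-th component of $CDf$. This choice of multiplier is not cosmetic---it is what makes every term come out in the right quadratic form.

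With your multiplier the three key terms become $(Df,Q_1^{-1}CDf)_U$, $\tr[C(D^2f)^2]$, and $(D^2\Phi_1\,Df,CDf)_U$, none of which is the quantity you want. In particular your claimed inequality $(D^2\Phi_1\,Df,CDf)_U\geq 0$ does \emph{not} follow from convexity: convexity gives $(D^2\Phi_1\,v,v)_U\geq 0$, but $(D^2\Phi_1\,u,w)_U$ for $u\neq w$ can be negative (take e.g.\ $D^2\Phi_1=\mathrm{diag}(0,2)$, $C=\begin{pmatrix}2&1\\1&1\end{pmatrix}$, $Df=(-2,1)$). Likewise, Assumption Reg($\Phi_2$) is stated precisely for vectors of the form $Cu$, i.e.\ it controls $(D^2\Phi_2\,CDf,CDf)_U$, not your $(D^2\Phi_2\,Df,CDf)_U$. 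And the integration by parts you invoke to turn $(Df,Q_1^{-1}CDf)_U$ into $\norm{Q_1^{-1/2}CDf}_U^2$ does not exist; these are simply different quadratic forms.

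With the correct multiplier the paper obtains the identity
\[
\int_U \tr[(CD^2f)^2]+\norm{Q_1^{-1/2}CDf}_U^2+(D^2\Phi_{n,m}\,CDf,CDf)_U\,\mathrm{d}\mu_1^{\Phi_{n,m}}=\int_U (D(-N^{\Phi_{n,m}}f),CDf)_U\,\mathrm{d}\mu_1^{\Phi_{n,m}},
\]
and then a further integration by parts (using $N^{\Phi_{n,m}}f\in W^{1,2}(U;\mu_1)$ and $\partx{j}f\,e^{-\Phi_{n,m}}\in W^{1,2}(U;\mu_1)$) shows the right-hand side equals $\int_U (N^{\Phi_{n,m}}f)^2\,\mathrm{d}\mu_1^{\Phi_{n,m}}$. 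Since $N^{\Phi_{n,m}}f=f-g_{n,m}$ and $\|f\|\leq\|g_{n,m}\|$ by resolvent contraction, this is bounded by $4\int_U g_{n,m}^2\,\mathrm{d}\mu_1^{\Phi_{n,m}}$; no Young inequality is needed here, and this is how the constant $4$ arises. The convex part $(D^2\Phi_{1,n,m}\,CDf,CDf)_U\geq 0$ is then genuinely nonnegative, and the $\Phi_2$ part is controlled by $c_{\Phi_2}\int_U(CDf,Df)_U\,\mathrm{d}\mu_1^{\Phi_{n,m}}\leq \tfrac{c_{\Phi_2}}{4}\int_U g_{n,m}^2\,\mathrm{d}\mu_1^{\Phi_{n,m}}$ via \eqref{eq:regularity-estimate-one-two}. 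Your limiting step is essentially correct and matches the paper; no extra Ornstein--Uhlenbeck smoothing is required since the Lipschitz gradient of $\Phi_{1,n,m}$ already gives a.e.\ second derivatives, which suffices.
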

\begin{proof}
For each $m,n\in\N$ define $\Phi_{n,m}\defeq \Phi_{1,n,m}+\Phi_2$ where $(\Phi_{1,n,m})_{n,m\in \N}$ is the approximation of $\Phi_1$, provided by Assumption \nameref{ass:L_Phi-convex-reg-est}. Further, let $f\in \fbs{B_U}$ and set \[g_{n,m}\defeq f-N^{\Phi_{n,m}}f.\] Taking derivatives of the equation above, with respect to the $d_k$, gives
\begin{equation}\label{eq:lambda_n_phi}
\partx{k} f-N^{\Phi_{n,m}}\partx{k}f+(d_k,Q_1^{-1}CD f)_U+\sum_{i=1}^{\infty} (d_i,CDf)_U\partx{k}\partx{i}\Phi_{n,m}=\partx{k} g_{n,m}.
\end{equation}The infinite sum in Equation \eqref{eq:lambda_n_phi} above is a finite one. Moreover, $\partx{k}\partx{i}\Phi_{n,m}$ exists $\mu_1$-a.e., since Lipschitz continuous function $\partx{i}\Phi_{n,m} :U\rightarrow \R$ are Gateaux differentiable $\mu_1$-a.e..
We multiply Equation \eqref{eq:lambda_n_phi} with $\partx{l} f(d_k,Cd_l)_U$. Summing over all $k,l\in\N$ shows that the first and third term, as well as the right-hand side in Equation \eqref{eq:lambda_n_phi}, is equal to $(CD f,D f)_U$, $\norm{Q_1^{-\frac{1}{2}}CDf}^2_U$ and $(D g_{n,m},CDf)_U$, respectively. For the second term we calculate
\begin{align*}
&\quad\sum_{k,l=1}^{\infty}(d_k,Cd_l)_U\int_U -N^{\Phi_{n,m}}\partx{k} f \partx{l} f\dm_1^{\Phi_{n,m}}\\&=\sum_{k,l=1}^{\infty}(d_k,Cd_l)_U\int_U (CD \partx{k} f, D \partx{l} f)_U\dm_1^{\Phi_{n,m}}\\
&=\int_U \mathrm{tr}[(CD^2f)^2]\dm_1^{\Phi_{n,m}}.
\end{align*}Additionally, we have
\begin{align*}
\sum_{k,l,i=1}^{\infty}(d_i,CDf)_U\partx{l}f(d_k,Cd_l)_U\partial_{k}\partial_{i}\Phi_{n,m} 
=(D^2\Phi_{n,m} CDf,CDf)_U.
\end{align*}Putting the results we just derived into Equation \eqref{eq:lambda_n_phi} and rearranging the terms, establishes the following equation
\begin{align*}
&\quad\int_U\norm{Q_1^{-\frac{1}{2}}CDf}^2_U+\mathrm{tr}[(CD^2f)^2]+(D^2\Phi_{n,m} CDf,CDf)_U\dm_1^{\Phi_{n,m}}\\
&=\int_U (D(-N^{\Phi_{n,m}}f),CDf)_U\dm_1^{\Phi_{n,m}}.
\end{align*}
One can show that $N^{\Phi_{n,m}}f\in W^{1,2}(U;\mu_1)$, as $\Phi_{n,m}$ is differentiable with Lipschitz continuos derivative and \cite[Proposition 10.11]{dP06}. Together with the fact that $\partx{j}f{e^{-\Phi_{n,m}}}\in W^{1,2}(U;\mu_1)$ for all $j\in\N$ and the integration by parts formula from \Cref{lem:inf-dim-ibp-fbs}, we obtain
\begin{align*}
&\quad \int_U \left(D(-N^{\Phi_{n,m}}f),CDf\right)_U\;\dm_1^{\Phi_{n,m}}\\
&=\frac{1}{\mu_1(e^{-\Phi_{n,m}})}\sum_{i,j=1}^{\infty}(Cd_i,d_j)_U\int_U -\partx{i}N^{\Phi_{n,m}}f\partx{j}f{e^{-\Phi_{n,m}}}\;\dm_1\\
&=\sum_{i,j=1}^{\infty}(Cd_i,d_j)_U\int_U N^{\Phi_{n,m}}f\left(\partx{i}\left(\partx{j}f{e^{-\Phi_{n,m}}}\right)-(u,Q_1^{-1}d_i)_U\partx{j}f\right)\;\dm_1^{\Phi_{n,m}}\\
&=\int_U (N^{\Phi_{n,m}}f)^2\dm_1^{\Phi_{n,m}}.
\end{align*}
Since the resolvent $R_1^{N^{\Phi_{n,m}}}$ is a contraction, we estimate \[\int_U f^2\dm_1^{\Phi_{n,m}}=\int_U (R_1^{N^{\Phi_{n,m}}}g_{n,m})^2\dm_1^{\Phi_{n,m}}\leq \int_U g_{n,m}^2\dm_1^{\Phi_{n,m}}.\]This implies
\[
\begin{aligned}
&\quad\int_U \tr[(CD^2f)^2]+\norm{Q_1^{-\frac{1}{2}}CDf}^2_U+(D^2\Phi_{n,m}CDf,CDf)_U\,\mathrm{d}\mu_1^{\Phi_{n,m}}\\
&= \int_U (N^{\Phi_{n,m}}f)^2\dm_1^{\Phi_{n,m}}=\int_U (f-g_{n,m})^2\dm_1^{\Phi_{n,m}} \leq4 \int_U g_{n,m}^2\,\mathrm{d}\mu_1^{\Phi_{n,m}}.
\end{aligned}
\]
Note that $\partx{i}\partx{j}\Phi_{1,n,m}$ and $\partx{i}\partx{j}\Phi_{2}$ exist in $L^1(\mu_1^{\Phi}),$ since $D\Phi_{1,n,m}$ is Lipschitz continuous and $D^2\Phi_2$ is $\mu^{\Phi}_1$ integrable by Assumption \nameref{ass:L_Phi-convex-reg-est}. 	
Using that $\Phi_{1,n,m}$ is convex, Item Reg(${{\Phi_2}}$) from Assumption \nameref{ass:L_Phi-convex-reg-est} and Inequality \eqref{eq:regularity-estimate-one-two}, we get
\begin{equation}\label{eq:ineq:second}
\int_U \tr[(CD^2f)^2]+\norm{Q_1^{-\frac{1}{2}}CDf}^2_U\,\mathrm{d}\mu_1^{\Phi_{n,m}}\leq \left(4+\frac{c_{\Phi_2}}{4}\right) \int_U g_{n,m}^2\,\mathrm{d}\mu_1^{\Phi_{n,m}}.
\end{equation}
By \Cref{rem:weak_conv_measure}, the left-hand side of the inequality above converges to 
\[
\int_U \tr[(CD^2f)^2]+\norm{Q_1^{-\frac{1}{2}}CDf}^2_U\,\mathrm{d}\mu_1^{\Phi}.
\]
Now we observe

\begin{align*}\label{eqg_N_M}
&\quad\abs[\big]{\mu_1^{\Phi_{n,m}}\left(g_{n,m}^2\right) -\mu_1^{\Phi}\left(g^2\right)}	\\
&\leq	\abs[\big]{\mu_1^{\Phi_{n,m}}\left((g_{n,m}-g)^2\right)}+2	\abs[\big]{\mu_1^{\Phi_{n,m}}\left((g_{n,m}-g)g\right)} +	\abs[\big]{\mu_1^{\Phi_{n,m}}\left(g^2\right)-\mu_1^{\Phi}\left(g^2\right)}\\
&\leq \abs[\big]{\mu_1^{\Phi_{n,m}}\left((g_{n,m}-g)^2\right)}+2	\abs[\big]{\mu_1^{\Phi_{n,m}}\left((g_{n,m}-g)^2\right)\mu_1^{\Phi_{n,m}}\left(g^2\right)}^{\frac12} \\& \quad+	\abs[\big]{\mu_1^{\Phi_{n,m}}\left(g^2\right)-\mu_1^{\Phi}\left(g^2\right)}\\
&\leq\mu_1(e^{-\Phi_{n,m}})^{-1}e^{-inf_{\tilde{u}\in U}\Phi(\tilde{u})}\norm{g_{n,m}-g}_{L^2(\mu_1)}^2\\
&\quad+2\left(\mu_1(e^{-\Phi_{n,m}})^{-1}e^{-inf_{\tilde{u}\in U}\Phi(\tilde{u})}\mu_1^{\Phi_{n,m}}\left(g^2\right)\right)^{\frac12}\norm{g_{n,m}-g}_{L^2(\mu_1)}\\
&\quad+\abs[\big]{(\mu_1^{\Phi_{n,m}}(g^2)-\mu_1^{\Phi}(g^2)}.
\end{align*}
By \Cref{rem:weak_conv_measure}, we follow that
\[
\lim_{n\rightarrow \infty}\lim_{m\rightarrow \infty}{\mu_1(e^{-\Phi_{n,m}})^{-1}}={\mu_1(e^{-\Phi})^{-1}}\quad\text{and}\quad\lim_{n\rightarrow \infty}\lim_{m\rightarrow \infty}\abs[\big]{(\mu_1^{\Phi_{n,m}}(g^2)-\mu_1^{\Phi}(g^2)}=0.
\]In view of Item Reg(${{\Phi_1}}$) from Assumption \nameref{ass:L_Phi-convex-reg-est}, we know that 
\begin{align*}
&\quad \limsup_{n\rightarrow \infty}\limsup_{m\rightarrow \infty}\norm{g_{n,m}-g}_{L^2(\mu_1)}\\
&=\limsup_{n\rightarrow \infty}\limsup_{m\rightarrow \infty}\norm{(Q_1^{\theta}D\Phi_{1,n,m}-Q_1^{\theta}D\Phi_{1},Q^{-\theta}CDf)_U}_{L^2(\mu_1)}=0,
\end{align*}
using that $Q_1^{-\theta}CDf=\sum_{i=1}^N(Q_1^{-\theta}CDf,d_i)_Ud_i\in U_N$ for some $N\in\N$ and the boundedness of $(Q_1^{-\theta}CDf,d_i)_U$. We conclude that
\[\limsup_{n\rightarrow \infty}\limsup_{m\rightarrow \infty}	\abs[\big]{\mu_1^{\Phi_{n,m}}\left(g_{n,m}^2\right) -\mu_1^{\Phi}\left(g^2\right)}=0.\]
Hence, a successive application of the limes superior, first for $m$ and then for $n$, in Inequality \eqref{eq:ineq:second} finishes the proof.
\end{proof}

We continue by restricting the setting to the Hilbert space \[H\defeq \left\{f\in L^2(W;\mu^{\Phi})\mid \mu^{\Phi}(f)=0  \right\}\] and operator domain $\acore\defeq\fbs{B_W}\cap H$. As the essential m-dissipativity of $L^{\Phi}$ holds on $\fbs{B_W}\subseteq L^2(W;\mu^{\Phi})$, we first need to justify the corresponding result in the restricted setting.


\begin{rem}\label{rem:welldefine_on_acore}
In \Cref{cor:invariance_ASP}, we established that the measure $\mu^{\Phi}$ is invariant for the strongly continuous contraction semigroup $\sccs$ generated by $(L^{\Phi},D(L^{\Phi}))$. The measure $\mu^{\Phi}$ is also invariant for the operators $(A^{\Phi},\fbs{B_W})$, $(S,\fbs{B_W})$ $(L^{\Phi},\fbs{B_W})$. Hence, $T_t(H)$, $A^{\Phi}(\acore), S^{\Phi}(\acore)$ and $L^{\Phi}(\acore)$ are contained in $H$ and it is therefore possible to restrict $\sccs$ to a s.c.c.s.~on $H$, which we denote in the following by $\sccsz$. Moreover, we consider $(A^{\Phi},\acore)$, $(S,\acore)$ and $(L^{\Phi},\acore)$ as operators on $H$.
\end{rem}

\begin{prop}{\cite[Proposition 7.1]{EG23}}\label{prop:inf-dim-sub-markov}
$\acore$ is dense in $H$ and the operator $(L^{\Phi},\acore)$ is essentially m-dissipative on $H$. Therefore, its closure, denoted by $(L^{\Phi}_0,D(L^{\Phi}_0))$, generates a s.c.c.s., which is equal to $\sccsz$.
\end{prop}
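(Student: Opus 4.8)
The plan is to check the two hypotheses of the Lumer--Phillips theorem for $(L^{\Phi},\acore)$ on $H$ and then to identify the generated semigroup with $\sccsz$ by uniqueness of m-dissipative extensions. The central device is the orthogonal projection $P\colon L^2(W;\mu^{\Phi})\to H$ onto $H=\{1\}^{\perp}$, given by $Pf=f-\mu^{\Phi}(f)\cdot 1$; it is bounded since $\mu^{\Phi}$ is a probability measure, it maps $L^2(W;\mu^{\Phi})$ onto $H$, and it maps $\fbs{B_W}$ into $\acore$ (constants lie in $\fbs{B_W}$, and $Pf\in H$ always). Since $\fbs{B_W}$ is dense in $L^2(W;\mu^{\Phi})$ (see \Cref{lem:dense_mu_one_with_pot} and the proof of \Cref{theo:ess_diss_phi_Lfour}), its image $P(\fbs{B_W})\subseteq\acore$ is dense in $P(L^2(W;\mu^{\Phi}))=H$, which already yields the density of $\acore$ in $H$.

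Next I would verify dissipativity: by \Cref{lem:inf-dim-op-decomp}, $(L^{\Phi},\fbs{B_W})$ is dissipative on $L^2(W;\mu^{\Phi})$ and satisfies $\mu^{\Phi}(L^{\Phi}f)=0$ for all $f\in\fbs{B_W}$, so $L^{\Phi}(\acore)\subseteq H$ and the restriction $(L^{\Phi},\acore)$ is a genuine dissipative operator on $H$. For the dense range condition, fix the $\lambda\in(0,\infty)$ of \Cref{theo:ess_diss_phi_Lfour}. Essential m-dissipativity of $(L^{\Phi},\fbs{B_W})$ on $L^2(W;\mu^{\Phi})$ is equivalent, via Lumer--Phillips, to $(\lambda-L^{\Phi})(\fbs{B_W})$ being dense in $L^2(W;\mu^{\Phi})$. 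Using $L^{\Phi}1=0$ (again \Cref{lem:inf-dim-op-decomp}) together with $\mu^{\Phi}(L^{\Phi}f)=0$ one checks the commutation relation $P\bigl((\lambda-L^{\Phi})f\bigr)=(\lambda-L^{\Phi})(Pf)$ for all $f\in\fbs{B_W}$, whence $(\lambda-L^{\Phi})(\acore)=P\bigl((\lambda-L^{\Phi})(\fbs{B_W})\bigr)$; applying the bounded surjection $P$ to a dense subset of $L^2(W;\mu^{\Phi})$ produces a dense subset of $H$. Lumer--Phillips then gives that $(L^{\Phi},\acore)$ is essentially m-dissipative on $H$, and its closure $(L^{\Phi}_0,D(L^{\Phi}_0))$ generates a strongly continuous contraction semigroup on $H$.

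Finally I would identify this semigroup with $\sccsz$. Since $\mu^{\Phi}$ is invariant for $\sccs$ (see \Cref{rem:welldefine_on_acore}), $H$ is a closed $\sccs$-invariant subspace, so $\sccsz$, the restriction of $\sccs$ to $H$, is a well-defined strongly continuous contraction semigroup on $H$; let $G$ denote its generator. For $f\in\acore\subseteq D(L^{\Phi})$ we have $T_tf\in H$ for all $t\geq 0$, so the $L^2(W;\mu^{\Phi})$-limit defining $L^{\Phi}f$ stays in the closed subspace $H$, showing $f\in D(G)$ and $Gf=L^{\Phi}f$; thus $G$ is an m-dissipative extension of the essentially m-dissipative operator $(L^{\Phi},\acore)$. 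As an essentially m-dissipative operator admits a unique m-dissipative extension, namely its closure, we conclude $G=(L^{\Phi}_0,D(L^{\Phi}_0))$, and therefore the semigroup generated by $(L^{\Phi}_0,D(L^{\Phi}_0))$ is exactly $\sccsz$.

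I expect the only non-routine point to be the dense range condition $(\lambda-L^{\Phi})(\acore)$ dense in $H$: one cannot simply reuse the solutions built in the proof of \Cref{theo:ess_diss_phi_Lfour}, since those need not be cylinder functions, so one must argue within the core $\acore$. The trick that makes this work is precisely that the rank-one correction $P$ commutes with $\lambda-L^{\Phi}$ on $\fbs{B_W}$ (because $L^{\Phi}$ annihilates constants and preserves $\mu^{\Phi}$) while mapping $\fbs{B_W}$ into $\acore$, so density in $L^2(W;\mu^{\Phi})$ descends to density in $H$. Everything else is soft functional analysis.
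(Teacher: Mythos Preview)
Your argument is correct and is essentially the standard way to pass essential m-dissipativity from $L^2(W;\mu^{\Phi})$ to the mean-zero subspace $H$: project with $P\colon f\mapsto f-\mu^{\Phi}(f)$, use $L^{\Phi}1=0$ and $\mu^{\Phi}(L^{\Phi}f)=0$ to obtain the commutation $P(\lambda-L^{\Phi})=(\lambda-L^{\Phi})P$ on $\fbs{B_W}$, and then push the dense range through $P$. The identification of the generated semigroup with $\sccsz$ via uniqueness of m-dissipative extensions is also correct.

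The paper itself does not give a proof here; it simply cites \cite[Proposition~7.1]{EG23}. Your write-up is exactly the kind of argument one finds in that reference, so there is nothing substantively different to compare.
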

%

\begin{defn}\label{def:ortho_proj}
Let $H=H_1\oplus H_2$, where $H_1$ is provided by the orthogonal projection
\[
P:H\to H_1, \quad
f\mapsto Pf\defeq \int_{V}f(\cdot,v)\,\mu_2(\mathrm{d}v).
\]
For any $f\in \acore$, we can interpret $Pf$ as an element of $\fbs{B_U}$, which is then denoted by $f_P$.
Further, let $(S_0,D(S_0))$ and $(A^{\Phi}_0,D(A^{\Phi}_0))$ be the closures in $H$ of $(S,\acore)$ and $(A^{\Phi},\acore)$, respectively.
\end{defn}

To formulate the next assumptions it is useful to introduce $V_{\infty}\defeq \lin{e_1,e_2,\dots}$
\begin{cond}{\textbf{K2}}\label{ass:inf-dim-matrix-bounded-ev}
Assume that $K_{22}(v)=K_{1} + K_{2}(v)$, where
$K_{1}\in \lop{V}$ and $K_{2}:V\to \lop{V}$. In addition, assume that $K_1$ and $K_2$ share the same invariance properties as $K_{22}$.
Further, let the following hold
\begin{enumerate}[(i)]
\item There is some $C_{1}\in(0,\infty)$ such that
\[\norm{Q_2^{-\frac12}K_{1}^*Q_2^{-1}K_{1} Q_2^{-\frac12}}_{\mathcal{L}(V_{\infty})} \leq C_1.\]
\item 
There exists a measurable function $\overline{C}_2:V\to [0,\infty)$ such that for $\mu_2$-a.e.~$v\in V$
\begin{align*}
	&\norm{(Q_2^{-\frac12}K_2(v)^*Q_2^{-2}K_2(v)Q_2^{-\frac12})}_{\mathcal{L}(V_{\infty})}\leq \overline{C}_2(v)\quad \text{and}\\&C_2\defeq \int_V \overline{C}_2(v)\norm{v}_V^2\dm_2<\infty.
\end{align*}
\item 
For all $v\in V$, the sequence $(\alpha_n^{22}(v))_{n\in\N}$ defined by
\[\alpha_n^{22}(v)\defeq \sum_{k=1}^{\infty}(Q_2^{-\frac{1}{2}}\party{k}K_{22}(v)e_k,e_n)_V\] is 			in $\ell^2(\N)$
and \[M_{22}\defeq \int_V\| ({\alpha_n^{22}}(v))_{n\in\N}\|^2_{\ell^2}\,\mu_2(\mathrm{d}v)<\infty.\]
\end{enumerate}
\end{cond}

\begin{rem}\label{remark:alter_K_seven}
Assumption \nameref{ass:inf-dim-matrix-bounded-ev} is designed to use $(CD_1f_P,D_1f_P)_U$ as a bounding term, in the proof of \Cref{thm:inf-dim-hypoc-applied}.
By involving $Q_1$ and therefore modify assumption \nameref{ass:inf-dim-matrix-bounded-ev},
we can use $(Q_1^{-1}CDf_P,CDf_P)_U$ instead as a bound. This modification is formulated in the assumption below.
\end{rem}
\begin{cond}{\textbf{K2}*}\label{ass:inf-dim-matrix-bounded-ev_star}
Assume that $K_{22}(v)=K_{1} + K_{2}$, where
$K_{1}\in \lop{V}$ and $K_{2}:V\to \lop{V}$. Moreover, assume that $K_1$ and $K_2$ share the same invariance properties as $K_{22}$.
Further, let the following hold
\begin{enumerate}[(i)]
	\item There is some $C_{1}\in(0,\infty)$ such that
	\[\norm{Q_1^{\frac{1}{2}}K_{21}^{-1}K_1^*Q_2^{-1}K_1K_{12}^{-1} Q_1^{\frac{1}{2}}}_{\mathcal{L}(V_{\infty})}\leq C_1.\]
	\item There exists a measurable function $\overline{C}_2:V\rightarrow [0,\infty)$ such that for all $k\in\N$ and $\mu_2$-a.e.~$v\in V$
	\[
	\begin{aligned}
		\norm{ Q_2^{-1}K_2(v)^*K_{21}^{-1}Q_1K_{12}^{-1} K_2(v)Q_2^{-1}}_{\mathcal{L}(V_{\infty})}&\leq \overline{C}_2(v)\quad\text{and}\\
		C_2\defeq \int_V \overline{C}_2(v)\norm{v}_V^2\dm_2&<\infty.
	\end{aligned}
	\]
	\item 
	Assume that the sequence $\s{\alpha^{22}(v)}$ defined by
	\[\alpha_n^{22}(v)\defeq \sum_{k=1}^{\infty}(Q_1^{\frac{1}{2}}K_{12}^{-1}\party{k}K_{22}(v)e_k,d_n)_V\] is 				an element of $\ell^2(\N)$
	and that $M_{22}\defeq \int_V\| ({\alpha_n^{22}}(v))_{n\in\N}\|^2_{\ell^2}\,\mu_2(\mathrm{d}v)<\infty$.
\end{enumerate}
\end{cond}

To verify the microscopic coercivity assumptions formulated in \cite{GS14} and \cite{GS16} we introduce the following  Poincar\'{e} type assumption.

\begin{cond}{\textbf{K3}}\label{ass:inf-dim-v-poincare}
Assume that there is some $c_S\in(0,\infty)$ such that
\[
\int_V (K_{22}D_2f,D_2f)_V\,\mathrm{d}\mu_2 \geq c_S \int_V \left(f-\mu_2(f)\right)^2\,\mathrm{d}\mu_2\quad \text{for all}\quad f\in\fbs{B_V}.
\]

\end{cond}
\begin{cond}{\textbf{K4}}\label{ass:inf-dim-u-poincare}
Assume that there is some $c_A\in(0,\infty)$ such that
\[
\int_U (CD_1f,D_1f)_V\,\mathrm{d}\mu^{\Phi}_1 \geq c_A \int_U \left(f-\mu^{\Phi}_1(f)\right)^2\,\mathrm{d}\mu^{\Phi}_1\quad \text{for all}\quad f\in\fbs{B_U}.
\]
\end{cond}

\begin{rem}\label{rem:inf-dim-eigenvalue-estimates}
\begin{enumerate}[(i)]
	\item Recall $K_{22}^0$ from Assumption \nameref{ass:inf-dim-elliptic} and assume that there is a sequence of eigenvalues $(\lambda_k^0)_{k\in\N}$ of $K_{22}^0$ with respect to the basis $B_V$. Let $\lambda_{2,i}$ denote the $i$-th eigenvalue of $Q_2$,
	then, due to \Cref{lem:inf-dim-poincare}, we have for all $f\in\fbs{B_V}$
	\[
	\int_V (Q_2D_2f,D_2f)_V\,\mathrm{d}\mu_2 \geq \frac{1}{\lambda_{2,1}}\int_V \left(f-\mu_2(f)\right)^2\,\mathrm{d}\mu_2.
	\]
	So, if there is some $\omega_{22}\in(0,\infty)$ such that $\lambda_k^0\geq \omega_{22}\lambda_{2,k}$ 				for each $k\in\N$, then Assumption \nameref{ass:inf-dim-v-poincare} holds with
	$c_S=\frac{\omega_{22}}{\lambda_{2,1}}$.
	\item Similarly, if $\Phi=\Phi_1+\Phi_2$ is as described in Assumption \nameref{ass:L_Phi-convex-reg-est}   and there is some $				\omega_{12}\in(0,\infty)$ such that $(Cd_k,d_k)\geq \omega_{12}\lambda_{1,k}$
	for all $k\in\N$, then Assumption \nameref{ass:inf-dim-u-poincare} holds with $c_A= \frac{\omega_{12}}{\lambda_{1,1}e^{\norm{\Phi_2}_{osc}}}$. Indeed this follows by approximating $\Phi_1$ with the sequence $(\Phi_{1,n,m})_{n,m\in\N}$ provided by \nameref{ass:L_Phi-convex-reg-est}, then applying \Cref{lem:inf-dim-poincare} and finally using the approximation properties of $(\Phi_{1,n,m})_{n,m\in\N}$ which are due to Item (i) and (ii) from Item Reg($\Phi_1$).
\end{enumerate}
\end{rem}

The main result of this section follows by arguing as in \cite{EG23}.
\begin{thm}\label{thm:inf-dim-hypoc-applied}
Assume that  the assumptions \nameref{ass:qc-negative-type}-\nameref{ass:inf-dim-u-poincare}, 
(with either \nameref{ass:inf-dim-matrix-bounded-ev} or \nameref{ass:inf-dim-matrix-bounded-ev_star}), \nameref{ass:ess-N_Phi}, 	\nameref{ass:L_Phi-convex-reg-est} and \nameref{ass:pot_app} hold true.
Then, the semigroup $\sccs$ on $L^2(W;\mu^{\Phi})$ generated by the closure $(L^{\Phi},D(L^{\Phi}))$ of $(L^{\Phi},\fbs{B_W})$,
is hypocoercive in the sense that for each $\theta_1\in (1,\infty)$,
there is some $\theta_2\in (0,\infty)$ such that
\[
\left\| T_tf- \mu^{\Phi}(f) \right\|_{L^2(\mu^{\Phi})} \leq \theta_1 \mathrm{e}^{-\theta_2 t}\left\| f-\mu^{\Phi}(f)  \right\|_{L^2(\mu^{\Phi})}
\]
for all $f\in {L^2(W;\mu^{\Phi})}$ and all $t\geq 0$.
For $\theta_1\in (1,\infty)$, the constant $\theta_2$ determining the speed of convergence can be explicitly computed in terms of 	$c_S$, $c_A$, $c_{\Phi_2}$ and $
c_1\defeq \frac{1}{2}\big(\sqrt{C_{1}}+\sqrt{C_2}+\sqrt{M_{22}}\big)
$ as
\[
\theta_2=\frac{1}{2}\frac{\theta_1-1}{\theta_1}\frac{\min\{c_S,c_1\}}{\Big(1+c_1+\sqrt{8+\frac{c_{\Phi_2}}{4}}\Big)\Big(1+\frac{1+c_A}				{2c_A}(1+c_1+\sqrt{8+\frac{c_{\Phi_2}}{4}})\Big)+ \frac{1}{2}\frac{c_A}{1+c_A}}\frac{c_A}{1+c_A}.
\]	
\end{thm}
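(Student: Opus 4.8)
The plan is to verify the hypotheses of the abstract Hilbert space hypocoercivity theorem of Grothaus and Stilgenbauer \cite{GS14,GS16} for the restricted strongly continuous contraction semigroup $\sccsz$ on $H$ generated by $(L^{\Phi}_0,D(L^{\Phi}_0))$, and afterwards to transfer the resulting decay estimate back to $\sccs$ on $L^2(W;\mu^{\Phi})$; the argument then proceeds as in \cite{EG23}. The structural input is already in place: by \Cref{theo:ess_diss_phi_Lfour} and \Cref{prop:inf-dim-sub-markov}, $\acore$ is a dense core for $(L^{\Phi}_0,D(L^{\Phi}_0))$ and $L^{\Phi}_0=S_0-A^{\Phi}_0$ with $S_0$ symmetric negative semidefinite, $A^{\Phi}_0$ antisymmetric, and $\acore$ a core for both (\Cref{def:ortho_proj}). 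For the data conditions on the orthogonal projection $P$ onto $H_1$ one uses that $D_2 f_P=0$ for $f\in\acore$, so $Sf_P=0$, $A^{\Phi}f_P=-(v,Q_2^{-1}K_{12}D_1 f_P)_V$ is $\mu_2$-centered in $v$, hence $PS=SP=0$ and $PA^{\Phi}P=0$ on $\acore$, while $f_P\in\fbs{B_U}$ already lies in $D(A^{\Phi}_0)$.

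First I would verify microscopic coercivity. Integrating Assumption \nameref{ass:inf-dim-v-poincare} over $u$ with respect to $\mu_1^{\Phi}$ gives, for all $f\in\acore$,
\[
-\langle S_0 f,f\rangle_H=\int_W (K_{22}D_2 f,D_2 f)_V\,\mathrm{d}\mu^{\Phi}\geq c_S\int_W (f-Pf)^2\,\mathrm{d}\mu^{\Phi}=c_S\,\norm{(\id-P)f}_H^2 .
\]
Next, macroscopic coercivity: by the Gaussian moment identity \Cref{lem:Gaussianmoments},
\[
\norm{A^{\Phi}_0 Pf}_H^2=\int_W (v,Q_2^{-1}K_{12}D_1 f_P)_V^2\,\mathrm{d}\mu^{\Phi}=\int_U (CD_1 f_P,D_1 f_P)_U\,\mathrm{d}\mu_1^{\Phi}\geq c_A\norm{Pf}_H^2 ,
\]
using Assumption \nameref{ass:inf-dim-u-poincare} and $\mu_1^{\Phi}(f_P)=\mu^{\Phi}(f)=0$. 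Identifying $(A^{\Phi}_0 P)^{*}\overline{A^{\Phi}_0 P}$ on $H_1$ with the closure of $(N^{\Phi},\fbs{B_U})$ via \Cref{prop:ess_self_N}, Assumption \nameref{ass:ess-N_Phi} ensures that the auxiliary operator $B\defeq(\id+(A^{\Phi}_0 P)^{*}\overline{A^{\Phi}_0 P})^{-1}(A^{\Phi}_0 P)^{*}$ is everywhere defined and bounded on $H$ with $\norm{B}\leq\tfrac12$, and that it factors through the resolvent $R_1^{N^{\Phi}}$.

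The technical core is the boundedness of the auxiliary operators $BS_0$ and $BA^{\Phi}_0(\id-P)$ on $\acore$. For $BS_0$ one expands using the splitting $K_{22}(v)=K_1+K_2(v)$ of Assumption \nameref{ass:inf-dim-matrix-bounded-ev} (resp.\ \nameref{ass:inf-dim-matrix-bounded-ev_star}), estimates the resulting terms via the Gaussian moments, the first-order estimate \Cref{lem:reg_K_22}, and the $\ell^2$-bounds on $(\alpha_n^{22}(v))_{n\in\N}$, and collects $C_1,C_2,M_{22}$ to get $\norm{BS_0 f}_H\leq c_1\norm{(\id-P)f}_H$ with $c_1=\tfrac12(\sqrt{C_1}+\sqrt{C_2}+\sqrt{M_{22}})$. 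For $BA^{\Phi}_0(\id-P)f$ one rewrites it through $BA^{\Phi}_0 f$ and $BA^{\Phi}_0 Pf$, reduces to applying the bounded operators $B$ and $R_1^{N^{\Phi}}$, and invokes the first-order estimate from \Cref{prop:ess_self_N} together with the second-order regularity estimate \Cref{lem:inf-dim-regularity-estimates} (valid under Assumption \nameref{ass:L_Phi-convex-reg-est}), producing a bound that gives rise to the factor $1+c_1+\sqrt{8+c_{\Phi_2}/4}$ appearing in the conclusion. With microscopic and macroscopic coercivity and these two boundedness estimates established with the stated constants, the Grothaus--Stilgenbauer theorem, applied to the modified norm $\norm{\cdot}_H^2+\varepsilon\langle B\cdot,\cdot\rangle_H$ with $\varepsilon$ chosen optimally, yields exponential decay of $\sccsz$ on $H$ at the announced rate $\theta_2$ once the modified norm is compared with $\norm{\cdot}_H$ up to the factor $\theta_1$. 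Finally, for $f\in L^2(W;\mu^{\Phi})$, the $\sccs$-invariance of $\mu^{\Phi}$ and $T_t1=1$ (\Cref{cor:invariance_ASP}, \Cref{theo:ess_diss_phi_Lfour}) give $T_tf-\mu^{\Phi}(f)=T_t^0(f-\mu^{\Phi}(f))$, which transfers the estimate and proves the theorem.

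The hard part will be the boundedness of $BA^{\Phi}_0(\id-P)$: in contrast to the bounded-gradient situation of \cite{EG23}, one must control a perturbed infinite-dimensional Ornstein--Uhlenbeck operator $N^{\Phi}$ with a possibly unbounded diffusion coefficient $C$ and with $D\Phi$ neither bounded nor Lipschitz, which is exactly what the second-order regularity estimate \Cref{lem:inf-dim-regularity-estimates} and the essential self-adjointness Assumption \nameref{ass:ess-N_Phi} are there to provide; likewise, Assumptions \nameref{ass:inf-dim-matrix-bounded-ev}/\nameref{ass:inf-dim-matrix-bounded-ev_star} are precisely what makes the $K_{22}$-dependent contributions to $BS_0$ integrable against $\mu_2$. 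Once these ingredients are secured, the remaining bookkeeping --- in particular the optimization over $\varepsilon$ yielding the explicit formula for $\theta_2$ --- follows the proof in \cite{EG23} essentially verbatim.
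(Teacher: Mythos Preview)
Your proposal is correct and follows essentially the same approach as the paper: verify the data conditions, microscopic and macroscopic coercivity, and boundedness of the auxiliary operators in the abstract Grothaus--Stilgenbauer framework, each via the corresponding assumption (\nameref{ass:pot_app} for the core, \nameref{ass:inf-dim-v-poincare} and \nameref{ass:inf-dim-u-poincare} for coercivity, \nameref{ass:inf-dim-matrix-bounded-ev}/\nameref{ass:inf-dim-matrix-bounded-ev_star} for $c_1$, and \nameref{ass:ess-N_Phi} with \nameref{ass:L_Phi-convex-reg-est} for $c_2=\sqrt{8+c_{\Phi_2}/4}$), then conclude via the abstract theorem and transfer to $L^2(W;\mu^{\Phi})$. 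The paper simply compresses these verifications into citations of the corresponding propositions in \cite{EG23}, whereas you spell out the mechanism; substantively the arguments coincide.
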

\begin{proof}
First of all note that assumptions \nameref{ass:pot_app} guarantees by  \Cref{theo:ess_diss_phi_Lfour} that the operator $(L^{\Phi},\fbs{B_W})$ is essentially m-dissipative with a nice core. 
In view of the abstract hypocoercivity method formulates in \cite{GS14} and \cite{GS16} and its refined formulation in \cite{EG23} we first have to check the data conditions. This are indeed valid by the same reasoning as in \cite[Proposition 7.8]{EG23}. In order to conclude the statement, we verify the three hypocoercivity conditions, namely the boundedness of the auxiliary operators, the microscopic coercivity and the macroscopic coercivity. The boundedness of the axillary operator holds true with $c_1$
by either invoking \nameref{ass:inf-dim-matrix-bounded-ev} or \nameref{ass:inf-dim-matrix-bounded-ev_star} by the same reasoning as in \cite[Proposition 7.11]{EG23} and with $c_2=\sqrt{8+\frac{c_{\Phi_2}}{4}}$ by \nameref{ass:ess-N_Phi} and \nameref{ass:L_Phi-convex-reg-est} together with \cite[Proposition 7.9]{EG23}.
By assumption \nameref{ass:inf-dim-v-poincare} the microscopic coercivity holds true with $\Lambda_m=c_S$ and by  \nameref{ass:inf-dim-u-poincare} the macroscopic coercivity with $ \Lambda_M=c_A$. Recall that \nameref{ass:ess-N_Phi} is also used to show the microscopic coercivity as explained in \cite[Remark 6.4]{EG23}. Now the claim follows by \cite[Theorem 7.16]{EG23}.
\end{proof}

\section{Degenerate stochastic reaction-diffusion equations}\label{ch:example_Lfour_ess_diss}
In this section, we analyze degenerate second order in time stochastic reaction-diffusion equations with multiplicative noise, whereby different to \cite{EG23} the gradient of the potential might be unbounded.
The general setting is describes as follows.
Let $\mathrm{d}\xi$ be the standard Lebesgue measure on $((0,1),\borel(0,1))$ and define $U\defeq L^2((0,1);\mathrm{d}\xi)$. Moreover, we denote by $W^{1,2}_0(0,1)$ the classical Sobolev space of weakly differentiable functions with zero boundary conditions on $(0,1)$ and by $W^{2,2}(0,1)$ the Sobolev space of two times weakly differentiable functions on $(0,1)$. In the following, we set $W\defeq U\times U$ and let
$(-{\partial_{\xi}^2}, D({\partial_{\xi}^2}))$ be the negative second order derivative with Dirichlet boundary conditions, i.e. 
\[
D({\partial_{\xi}^2})\defeq W^{1,2}_0(0,1)\cap W^{2,2}(0,1)\subseteq U.
\]
It is well known that the inverse of $(-{\partial_{\xi}^2}, D({\partial_{\xi}^2}))$ can be extended to a bounded linear operator on $U$. This extension is denoted by $(-{\partial_{\xi}^2})^{-1}$. Therefore, it is reasonable to define the linear continuous operator
\[
Q=(-{\partial_{\xi}^2})^{-1}:(U,\normx_{U}) \rightarrow (D({\partial_{\xi}^2}),\normx_{U})\subseteq (U,\normx_{U}).
\]
The operator $Q$ is positive and self-adjoint. Further, $B_U=(d_k)_{k\in\N}=(\sqrt{2}\sin(k\pi\cdot))_{k\in\N}$ is an orthonormal basis of $U$ diagonalizing $Q$ with corresponding eigenvalues $(\lambda_k)_{k\in\N}=({(k\pi)^{-2}})_{k\in\N}$.

For parameters $\alpha_1,\alpha_2\in \R$ with \[\alpha_1,\alpha_2>\frac{1}{2},\] we consider two centered non-degenerate infinite dimensional Gaussian measures $\mu_1$ and $\mu_2$ on $(U,\mathcal{B}(U))$, with covariance operators
\[
Q_1\defeq Q^{\alpha_1}\quad\text{and}\quad Q_2\defeq Q^{\alpha_2},
\]
respectively.

Since $(\lambda_k)_{k\in\N}\in \ell^{r}(\N)$ for $r>\frac{1}{2}$, $Q_1$ and $Q_2$ are indeed trace class.
By construction, $B_U$ is a basis of eigenvalues of $Q_1$ and $Q_2$ with corresponding eigenvalues given by
\[
\lambda_{1,k}\defeq \lambda_k^{\alpha_1}\quad\text{and}\quad\lambda_{2,k}\defeq \lambda_k^{\alpha_2},\quad k\in\N,
\]
respectively.

We have to note, that Item App($\Phi 3$) from Assumption \nameref{ass:pot_app}, which is needed to apply the central essential m-dissipativity result from \Cref{subsec:unb_grad}, is not shown in this section. Indeed, as explained in \Cref{subsec:unb_grad} it is considered as a conjecture, whose validity is reasonable by the strategy described in \Cref{lem:sol_finite}. To be consistent with this strategy, we derive stronger regularity results for the potential and coefficients than required in Assumption \nameref{ass:pot_app}.

\subsection{Essential m-dissipativity}

For $\sigma_1\in[0,\infty)$, we choose $K_{12}=Q^{\sigma_1}$ and since $K_{21}=K_{12}^*$, also $K_{21}=Q^{\sigma_1}$. Moreover, we assume that $K_{22}$ is diagonal with respect to $B_U$ and therefore determined by its eigenvalue functions $\lambda_{22,k}:U\to\R$, $k\in\N$. Let $\sigma_2,\sigma_3\in[0,\infty)$.
For each $k\in\N$, choose $\psi_k\in C_c^4(\R^k;[0,\infty))$ and define
\begin{equation}\label{eqdef:K_twotwo_var_eig}
\lambda_{22,k}(v)\defeq \lambda_{k}^{\alpha_2}+\lambda_{k}^{\sigma_2}+ \lambda_{k}^{\sigma_3}\frac{\psi_k(p_k v)}{\|\psi_k\|_{C^4}}.
\end{equation}


One can check that 
\[ 
\lambda_{k}^{\alpha_2},\lambda_{k}^{\sigma_2}\leq \lambda_{22,k}(v)=\lambda_{22,k}(P_k v)\leq \lambda_{k}^{\alpha_2}+ \lambda_{k}^{\sigma_2}+\lambda_{k}^{\sigma_3}
\] for all $k\in\N$ and $v\in U$.
For $i\geq k$ and all $v\in U$, we have $\partx{i}\lambda_{22,k}(v)=0$ and for $1\leq i\leq k$, it holds that

\[
\begin{aligned}
|\partx{i}\lambda_{22,k}(v)|=\left|\lambda_{k}^{\sigma_3} \frac{\partial_i\psi_k(p_k v)}{\|\psi_k\|_{C^4}}\right| 
\leq \lambda_{k}^{\sigma_3}.
\end{aligned}
\]
We simply set $K_{22}(v)d_k\defeq \lambda_{22,k}(v)d_k$, which describes a symmetric positive and bounded linear operator on $U$, as required for \Cref{def:inf-dim-operators}. Extending the arguments from above to higher order derivatives, we see that $v\mapsto K_{22}(v)d_k\in C^4_b(V;{V})$ for all $k\in\N$ and also Assumption \nameref{ass:inf-dim-elliptic} holds true. The compact support property of $\psi_k$ implies that \[v\mapsto K_{22}(P_n(v))Q_2^{-1}P_n(v)\]has bounded derivatives up to order three for all $n\in\N$, which is essential to use the arguments from \Cref{lem:sol_finite}. 	
Actually, to check Item {App}($\Phi 2$) from Assumption \nameref{ass:pot_app} it is enough to have $v\mapsto K_{22}(v)d_k\in C^2_b(V;{V})$ for all $k\in\N$, i.e.~it is enough to assume that $\psi_k\in C_b^2(\R^k,[0,\infty))$ for the definition of $\lambda_{22,k}$.

The class of potentials we consider below, is inspired by the considerations in \cite{dPL06}, where the m-dissipativity of Kolmogorov operators, corresponding to degenerate finite dimensional stochastic Hamiltonian systems with additive noise, were investigated.

\begin{defn}\label{rem:MVT_bound_der_phi}
Fix $\phi\in C^4(\R)$, which is bounded from below by zero. Assume that there are constants $A, \bar{B},R,m_0\in(0,\infty)$ and $m_1\in \N_{\geq 4}$ such that
\begin{equation}\label{eq:phi_bound_below}
	\phi(x)\geq A\abs{x}^{m_0}\quad \text{for all}\quad \abs{x}\geq R
\end{equation}

and 
\[
\abs{\phi^{(4)}(x)}\leq  \bar{B}(1+\abs{x}^{m_1-4})\quad \text{for all}\quad x\in\R.
\]
Using the mean value theorem, there is a constant ${B}\in (0,\infty)$ such that for all $x\in\R$ and $j\in \left\lbrace 0,1,2,3,4\right\rbrace $
\begin{equation}\label{eq:poly_bound_phim}
	\abs{\phi^{(j)}(x)}\leq B(1+\abs{x}^{m_1-j}).
\end{equation}

The potential $\Phi: L^2((0,1);\mathrm{d}\xi)\rightarrow \R $ is defined in terms of $\phi$ via

\[
\Phi(u)=\begin{cases}\int_0^1\phi(u(\xi))\,\mathrm{d}\xi,& u\in L^{m_1}((0,1);\mathrm{d}\xi) \\ \infty, & else \end{cases}.
\]

Let $q\in\N$ be even and $(\alpha_m)_{m\in\N}\subseteq (0,\infty)$ be a monotone sequence converging to zero. For $m\in\N$, we set
\[
\Psi_m\defeq \Psi_{m,q}:\R\to \R,\quad \Psi_m(x)\defeq\frac{x}{1+\alpha_m x^q}\quad \text{and}\quad \phi_m\defeq \Psi_m\circ\phi \in C^4(\R).\] 
\end{defn}

We start investigating $(\phi_m)_{m\in\N}$ by establishing that all derivatives up to order four are polynomial bounded independent of the index $m$. This helps to approximate $\Phi$, as required in Assumption \nameref{ass:pot_app}. 
\begin{lem}\label{lem:pot_approx_q}
There exists a constant $q\in\N$ only dependent on $\phi$ such that $\phi_m^{(j)}$ is bounded for all $j\in \left\lbrace 1,2,3,4\right\rbrace$ and there is a constant $\tilde{B}\in\N$ with
\begin{equation}\label{eq:poly_bound_phimj}
	\abs{\phi_m^{(j)}(x)}\leq \tilde{B}(1+\abs{x}^{j(m_1-1)})\quad\text{for all}\quad j\in \left\lbrace 1,2,3,4\right\rbrace\quad \text{and}\quad m\in\N.
\end{equation}

\end{lem}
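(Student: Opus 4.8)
The statement has two parts: (a) there is some $q \in \N$ (even, depending only on $\phi$) such that each $\phi_m^{(j)}$, $j \in \{1,2,3,4\}$, is bounded; and (b) there is a uniform polynomial bound $|\phi_m^{(j)}(x)| \le \tilde B(1 + |x|^{j(m_1-1)})$ independent of $m$. The plan is to compute $\phi_m = \Psi_m \circ \phi$ via the chain rule (Faà di Bruno), then exploit the explicit form of $\Psi_m(y) = y/(1+\alpha_m y^q)$ to control the derivatives $\Psi_m^{(k)}$ evaluated at $y = \phi(x) \ge 0$, and finally combine with the polynomial growth $|\phi^{(j)}(x)| \le B(1+|x|^{m_1-j})$ from \eqref{eq:poly_bound_phim}.

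The key computation is for the derivatives of $\Psi_m$ on $[0,\infty)$. Writing $\Psi_m(y) = y(1+\alpha_m y^q)^{-1}$, one checks that for each $k \ge 0$ there is a constant $c_k$ (independent of $m$, since $\alpha_m \le \alpha_1$) such that for all $y \ge 0$
\[
|\Psi_m^{(k)}(y)| \le c_k (1+y)^{1-k},
\]
i.e.\ $\Psi_m$ is bounded, $\Psi_m'$ is bounded, and each higher derivative decays. The cleanest way to see this: $\Psi_m^{(k)}(y)$ is a rational function whose numerator has degree $\le (k-1) + (k-1)q$ in $y$ and whose denominator is $(1+\alpha_m y^q)^{k+1}$, with the bound on the numerator coefficients controlled by powers of $\alpha_m \le \alpha_1 \le 1$ (WLOG, or absorb into $c_k$). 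Since $\alpha_m y^q \le \alpha_m (1+y)^q$ and also $1 \le (1+\alpha_m y^q)$, a term-by-term estimate gives the displayed bound; in particular $\Psi_m^{(k)}$ is bounded on $[0,\infty)$ for every $k$, uniformly in $m$. Here one fixes $q$ large enough that this argument produces genuinely \emph{bounded} derivatives for $k = 1,2,3,4$ — any even $q \ge 2$ already works because $1-k \le 0$ for $k \ge 1$, so in fact $q = 2$ suffices and the choice of $q$ depends on $\phi$ only trivially (through nothing); I would just state $q \ge 2$ even or note the dependence is vacuous.

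For part (a), Faà di Bruno gives $\phi_m^{(j)}(x)$ as a finite sum of terms of the form $\Psi_m^{(k)}(\phi(x)) \prod_{l} \phi^{(i_l)}(x)$ with $\sum_l i_l = j$ and $k \le j$. Using $|\Psi_m^{(k)}(\phi(x))| \le c_k(1+\phi(x))^{1-k}$ and the lower bound \eqref{eq:phi_bound_below}, $\phi(x) \ge A|x|^{m_0}$ for $|x| \ge R$, the factor $(1+\phi(x))^{1-k}$ provides decay like $|x|^{-m_0(k-1)}$, while the product $\prod_l \phi^{(i_l)}(x)$ grows at most like $|x|^{\sum_l (m_1 - i_l)} = |x|^{k m_1 - j}$ (there are $k$ factors when $k$ blocks appear). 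For $k \ge 2$ one needs $m_0(k-1) \ge km_1 - j$ to be false in general — so boundedness does \emph{not} follow from growth comparison alone; instead one uses the $k=1$ term separately (there $\Psi_m'$ is bounded and the single factor is $\phi^{(j)}$, which is \emph{not} bounded), and this is exactly where the structure $\Psi_m' \to$ bounded is not enough. The resolution: for $k = 1$ the term is $\Psi_m'(\phi(x))\phi^{(j)}(x)$ and we must use that $\Psi_m'(y) = (1 - (q-1)\alpha_m y^q)/(1+\alpha_m y^q)^2$ decays like $y^{-q}$ for large $y$, hence like $|x|^{-q m_0}$; choosing $q$ (depending on $\phi$, i.e.\ on $m_0$ and $m_1$) large enough that $q m_0 \ge m_1$ kills the growth of $\phi^{(j)}$. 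This is the main obstacle, and it is precisely why the lemma asserts "$q$ only dependent on $\phi$": one takes $q$ even with $q m_0 \ge m_1$. A symmetric argument handles the other Faà di Bruno terms (for $k \ge 2$ the decay $(1+\phi(x))^{1-k}$ with $k \ge 2$ combined with $q$ large likewise beats $|x|^{km_1 - j}$). For $|x| \le R$ everything is bounded by continuity.

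For part (b), the uniform polynomial bound, I would \emph{not} optimize: from Faà di Bruno, $|\phi_m^{(j)}(x)| \le \sum c_{k} |\Psi_m^{(k)}(\phi(x))| \prod_l |\phi^{(i_l)}(x)|$. Bounding $|\Psi_m^{(k)}(\phi(x))| \le c_k$ (using part (a)'s uniform boundedness of $\Psi_m^{(k)}$ on $[0,\infty)$, valid for $k \le 4$) and $\prod_l |\phi^{(i_l)}(x)| \le \prod_l B(1+|x|^{m_1 - i_l}) \le C(1+|x|)^{\sum_l(m_1-i_l)} \le C(1+|x|)^{k m_1} \le C(1+|x|)^{j m_1}$ gives a bound of the form $\tilde B(1+|x|^{jm_1})$; since $j m_1 \le j(m_1 - 1)$ fails, I instead note $\sum_l i_l = j \ge k$ so $\sum_l(m_1 - i_l) = km_1 - j \le k(m_1-1) \le j(m_1-1)$, using $k \le j$, which yields exactly the claimed exponent $j(m_1-1)$. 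Collecting constants (finitely many, all independent of $m$) and rounding $\tilde B$ up to an integer as stated completes the proof. I would write this out with a single $\mathrm{d}\xi$-free display for the Faà di Bruno expansion and the final chain of inequalities in one \texttt{align*}.
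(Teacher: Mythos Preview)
Your overall strategy coincides with the paper's: express $\phi_m^{(j)}$ via the chain rule, use uniform boundedness of $\Psi_m^{(k)}$ on $[0,\infty)$ for the polynomial bound, and use the decay of $\Psi_m^{(k)}(\phi(x))$ together with the lower bound $\phi(x)\ge A|x|^{m_0}$ for boundedness, choosing $q$ large in terms of $m_0,m_1$. The paper just writes out the four derivatives explicitly and treats $j=1$ in detail, leaving $j\ge 2$ as ``similarly''; your Fa\`a di Bruno packaging is a cleaner way to say the same thing. Your argument for part (b) is correct, including the exponent count $\sum_l(m_1-i_l)=km_1-j\le j(m_1-1)$.

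There is, however, a genuine gap in your treatment of part (a) for the Fa\`a di Bruno blocks with $k\ge 2$. You invoke the \emph{uniform-in-$m$} bound $|\Psi_m^{(k)}(y)|\le c_k(1+y)^{1-k}$, which gives decay $|x|^{-m_0(k-1)}$ against growth $|x|^{km_1-j}$, and then write ``combined with $q$ large likewise beats $|x|^{km_1-j}$''. But that bound is independent of $q$; enlarging $q$ changes neither the exponent $1-k$ nor $m_0$. And the resulting condition $m_0(k-1)\ge km_1-j$ can fail (e.g.\ $k=j=2$, $m_1=4$, $m_0=1$). The fix is to drop uniformity in $m$ here, since boundedness of $\phi_m^{(j)}$ is only asserted for each fixed $m$: from the explicit rational structure one has, for each $m$,
\[
|\Psi_m^{(k)}(y)|\le C_{k,m}\,(1+y)^{-(q+k-1)},\qquad y\ge 0,
\]
which via $\phi(x)\ge A|x|^{m_0}$ yields decay $|x|^{-m_0(q+k-1)}$. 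Now choosing $q$ even with $m_0(q+k-1)\ge km_1-j$ for all $1\le k\le j\le 4$ (a finite set of inequalities in $m_0,m_1$, hence depending only on $\phi$) gives the boundedness. This is exactly what the paper's ``other cases follow similarly'' means once one reads off the decay from the displayed formulas for $\Psi_m'',\Psi_m''',\Psi_m''''$.

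Two minor points: your stated numerator degree $(k-1)+(k-1)q$ for $\Psi_m^{(k)}$ is off already at $k=1$ (the numerator $1-(q-1)\alpha_m y^q$ has degree $q$, not $0$), though this does not affect the argument. Also, for your uniform bound $|\Psi_m^{(k)}(y)|\le c_k(1+y)^{1-k}$ to hold one should write $\Psi_m^{(k)}(y)=h_k(\alpha_m y^q)/y^{k-1}$ with $h_k$ bounded on $[0,\infty)$; the paper's displayed formula for $\Psi_m'''$ appears to be missing a factor $\alpha_m$, which you may want to recompute rather than quote.
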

\begin{proof}
We calculate for all $m\in\N$
\[
\begin{aligned}
	\phi_m'&=\Psi_m'(\phi)\phi',\quad	\phi_m''=\Psi_m''(\phi)(\phi')^2+\Psi_m'(\phi)\phi'',\\
	\phi_m'''&=\Psi_m'''(\phi)(\phi')^3+3\Psi_m''(\phi)\phi'\phi''+\Psi_m'(\phi)\phi''',\\
	\phi_m''''&=\Psi_m''''(\phi)(\phi')^4+6\Psi_m'''(\phi)(\phi')^2\phi'' \\&\quad+3\Psi_m''(\phi)(\phi'')^2+4\Psi_m''(\phi)\phi'\phi'''+\Psi_m'(\phi)\phi'''',
\end{aligned}
\]
and for all $x\in\R$
\[
\begin{aligned}
	\Psi_m'(x)&=\frac{1-\alpha_m(q-1)x^q}{(1+\alpha_mx^q)^2},\quad \Psi_m''(x)=\frac{\alpha_mqx^{q-1}\left(-\alpha_mx^q+q(\alpha_mx^q-1)-1\right)}{(1+\alpha_mx^q)^3}\\
	\Psi_m'''(x)&=-\frac{ x^{q-2}(q (\alpha_m x^q + 1)^2 - q^3 (\alpha_m x^q (\alpha_m x^q - 4) + 1))}{(\alpha_m x^q + 1)^4},\\
	\Psi_m''''(x)&=\alpha_m q x^{q - 3}\frac{ \alpha_m^3 (q - 1) (q + 1) (q + 2) x^{3q} - \alpha_m^2 (11 q^3 + 6 q^2 + q + 6) x^{2q} }{(\alpha_m x^q + 1)^5},\\
	&\qquad +\alpha_m q x^{q - 3}\frac{ \alpha_m (q - 1) (q (11 q + 5) + 6) x^q - q^3 + 2 q^2 + q - 2}{(\alpha_m x^q + 1)^5}.
\end{aligned}
\]
Recall that we assume that $q\in\N$ is even. We show the claim exemplary for $j=1$. The other cases follow similarly, using the calculations from above.
First, we verify that $\Psi_m'$ is bounded independent of $m$. This follows by

\[
\left| {\frac{1-\alpha_m(q-1)x^q}{(1+\alpha_mx^q)^2}}\right| \leq \frac{1}{1+\alpha_mx^q}+q\frac{\alpha_m x^q}{(1+\alpha_mx^q)^2}\leq (q+1)\frac{1}{1+\alpha_mx^q}\leq (q+1).
\]
Using Inequality \eqref{eq:poly_bound_phim} and the estimate right above, we obtain Inequality \eqref{eq:poly_bound_phimj} for $j=1$.\\
To show that $\phi_m'$ is bounded, we proceed as follows. Let $x\in \R$, then, by means of Inequality \eqref{eq:phi_bound_below}, we can estimate
\[
\abs{\phi'_m(x)}\leq (q+1)\left| \frac{\phi'(x)}{1+\alpha_m\phi(x)^q}\right| \leq (q+1) \begin{cases}
	\frac{\bar{B}(1+\abs{x}^{m_1-1})}{1+A^q\alpha_m\abs{x}^{qm_0}}&\text{for}\quad \abs{x}>R\\
	\sup_{\abs{x}\leq R}\abs{\phi'(x})&\text{for}\quad\abs{x}\leq R.
\end{cases}
\]
Therefore, boundedness of $\phi'_m$ follows for $q>\frac{m_1-1}{m_0}$. 
\end{proof}
For the rest of this section, we assume that $q\in \N$ is as in \Cref{lem:pot_approx_q}.

\begin{defn}\label{def:pot_fin_n}
For $n,m\in\N$, we define $\Phi_n:U\to \R$ and $\Phi_n^m:U\to \R$ by
\[
\begin{aligned}
	\Phi_n(u)\defeq\int_0^1\phi(P_nu(\xi))\,\mathrm{d}\xi\quad\text{and}\quad \Phi_n^m(u)\defeq \int_0^1\phi_m(P_nu(\xi))\,\mathrm{d}\xi.
\end{aligned}
\]
It is evident that $(\Phi_n^m)_{n,m\in\N}$ fulfills {App}($\Phi 1$) from Assumption \nameref{ass:pot_app}.
\end{defn}
\begin{lem}\label{lem:meas_equi}
For all $r\geq 1$, it holds 
\begin{align}\label{ineq:inf_e_hoch_m_n}
	\nonumber&\lim_{n\to\infty}\Phi_n=\Phi\quad\text{in}\quad L^r(U;\mu_1),\; \lim_{n\to\infty}\mu_1(e^{-\Phi_n})= \mu_1(e^{-\Phi})\; \text{and}\\ \nonumber&0<\inf_{n,m\in\N}\mu_1(e^{-\Phi_n^m})\leq 1.
\end{align}Moreover, the measures $\mu_1^{\Phi_n^m}$ is uniformly dominated by $\mu_1$, i.e.~for all non-negative measurable functions $f$ and $n,m\in\N$ it holds
\[
\int_U f\,\mathrm{d}\mu_1^{\Phi_n^m}\leq \frac{1}{\inf_{n,m\in\N}\mu_1(e^{-\Phi_n^m})}\int_U f\,\mathrm{d}\mu_1.
\] 

\end{lem}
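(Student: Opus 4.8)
The proof is essentially bookkeeping built on \Cref{prop:pot_approx_wit_conv} together with the elementary inequality $0\le\phi_m\le\phi$; the plan is as follows.

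First I would record that the data $(\phi,\Phi)$ from \Cref{rem:MVT_bound_der_phi} fit the hypotheses of \Cref{prop:pot_approx_wit_conv}: $\phi\in C^4(\R)$ is continuously differentiable and bounded from below by zero, by \eqref{eq:poly_bound_phim} (with $j=1$) its derivative grows at most of order $b\defeq m_1-1\in[0,\infty)$, the potential $\Phi$ of \Cref{rem:MVT_bound_der_phi} is exactly the one attached to $\phi$ in \Cref{prop:pot_approx_wit_conv} since $b+1=m_1$, and the basis $B_U=(\sqrt2\sin(k\pi\cdot))_{k\in\N}$ is the one used there. Hence \Cref{prop:pot_approx_wit_conv} yields $\lim_{n\to\infty}\Phi_n=\Phi$ in $L^r(U;\mu_1)$ for every $r\in[1,\infty)$ and $\Phi\in W^{1,r}(U;\mu_1)$ for $r>1$; in particular $\Phi$ is $\mu_1$-a.e.\ finite, so $e^{-\Phi}>0$ $\mu_1$-a.e.\ and therefore $\mu_1(e^{-\Phi})>0$. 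The convergence of the normalising constants then follows from the $L^1$-convergence: since $\Phi_n,\Phi\ge0$ and $\abs{e^{-a}-e^{-b}}\le\abs{a-b}$ for $a,b\ge0$,
\[
\abs{\mu_1(e^{-\Phi_n})-\mu_1(e^{-\Phi})}\le\int_U\abs{e^{-\Phi_n}-e^{-\Phi}}\,\mathrm{d}\mu_1\le\norm{\Phi_n-\Phi}_{L^1(\mu_1)}\longrightarrow0.
\]

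For the double sequence I would use that $q$ is even and $\phi\ge0$, so $\Psi_m(t)=t/(1+\alpha_mt^q)$ satisfies $0\le\Psi_m(t)\le t$ for $t\ge0$; hence $0\le\phi_m=\Psi_m\circ\phi\le\phi$ on all of $\R$, and consequently $0\le\Phi_n^m\le\Phi_n$ on $U$, i.e.\ $e^{-\Phi_n}\le e^{-\Phi_n^m}\le1$. Integrating against $\mu_1$ gives $\mu_1(e^{-\Phi_n})\le\mu_1(e^{-\Phi_n^m})\le1$ for all $n,m\in\N$, which already yields $\inf_{n,m}\mu_1(e^{-\Phi_n^m})\le1$. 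For strict positivity of the infimum, combine this with the previous step: $\mu_1(e^{-\Phi_n})\to\mu_1(e^{-\Phi})>0$ produces $N$ with $\mu_1(e^{-\Phi_n})\ge\tfrac12\mu_1(e^{-\Phi})$ for $n\ge N$, while for each of the finitely many $n<N$ one has $\mu_1(e^{-\Phi_n})>0$ because $\Phi_n:U\to\R$ is real-valued (so $e^{-\Phi_n}>0$ pointwise). Hence $\inf_{n,m}\mu_1(e^{-\Phi_n^m})\ge\inf_n\mu_1(e^{-\Phi_n})>0$.

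Finally the domination estimate is immediate: writing $\mu_1^{\Phi_n^m}=\mu_1(e^{-\Phi_n^m})^{-1}e^{-\Phi_n^m}\mu_1$ and using $0\le f$, $e^{-\Phi_n^m}\le1$ and $\mu_1(e^{-\Phi_n^m})\ge\inf_{n,m}\mu_1(e^{-\Phi_n^m})>0$,
\[
\int_U f\,\mathrm{d}\mu_1^{\Phi_n^m}=\frac{1}{\mu_1(e^{-\Phi_n^m})}\int_U f\,e^{-\Phi_n^m}\,\mathrm{d}\mu_1\le\frac{1}{\inf_{n,m\in\N}\mu_1(e^{-\Phi_n^m})}\int_U f\,\mathrm{d}\mu_1.
\]
There is no real obstacle here; the only point that is not pure routine is the strict positivity $\mu_1(e^{-\Phi})>0$, and even that comes for free from $\Phi\in L^1(U;\mu_1)$ via \Cref{prop:pot_approx_wit_conv}. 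If a self-contained argument is preferred, one checks instead that $u\mapsto\norm{u}_{L^{m_1}(0,1)}$ is $\mu_1$-integrable (by Tonelli and $\sum_k\lambda_{1,k}<\infty$ for the pointwise variances of the associated Gaussian field), whence $\mu_1(L^{m_1}(0,1))=1$ and, by \eqref{eq:poly_bound_phim} with $j=0$, $\Phi<\infty$ $\mu_1$-a.e.
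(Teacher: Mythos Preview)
Your proof is correct and follows essentially the same route as the paper: invoke \Cref{prop:pot_approx_wit_conv} for the $L^r$-convergence, use the $1$-Lipschitz property of $x\mapsto e^{-x}$ on $[0,\infty)$ for the convergence of the normalising constants, exploit $0\le\phi_m\le\phi$ to sandwich $\mu_1(e^{-\Phi_n})\le\mu_1(e^{-\Phi_n^m})\le1$, and conclude the domination from $e^{-\Phi_n^m}\le1$. The only addition is that you spell out why $\mu_1(e^{-\Phi})>0$, which the paper takes for granted.
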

\begin{proof}
The first claim follows by \Cref{prop:pot_approx_wit_conv}. The second claim immediately follows by the first and the mean value theorem, since $\Phi,\Phi_n\geq 0$  for all $n\in\N$ and the derivative of $[0,\infty)\ni x\mapsto e^{-x}\in\R$ is bounded by $1$.

By definition, it holds $0\leq \phi_m\leq \phi$ and therefore $0\leq \Phi_n^m\leq \Phi_n$. Hence, $0\leq\mu_1(e^{-\Phi_n})\leq \mu_1(e^{-\Phi_n^m})\leq  1$. 

As $\lim_{n\to\infty}\mu_1(e^{-\Phi_n})= \mu_1(e^{-\Phi})>0$, we know that the sequence $(\mu_1(e^{-\Phi_n}))_{n\in\N}$ is bounded from below by a positive constant and therefore the third statement is shown. Finally the last one follows, noting that $e^{-\Phi^m_n}\leq 1$ for all $m,n\in\N$.
\end{proof}
\begin{lem}\label{lem:part_der_phi_m_n}
For all $m,n\in\N$, it holds $\Phi_n,\Phi_n^m\in C^4(U;\R)$ and for $i,j,k,l\in \lbrace1,...,n\rbrace$ we have
\[
\begin{aligned}
	\partx{i}\Phi_n(u)&=\int_0^1\phi'(P_nu)d_i\,\mathrm{d}\xi,\quad \partx{j}\partx{i}\Phi_n(u)=\int_0^1\phi''(P_nu)d_id_j\,\mathrm{d}\xi \\
	\partx{k}\partx{j}\partx{i}\Phi_n(u)&=\int_0^1\phi'''(P_nu)d_id_jd_k\,\mathrm{d}\xi, \\ \partx{l}\partx{k}\partx{j}\partx{i}\Phi_n(u)&=\int_0^1\phi''''(P_nu)d_id_jd_kd_l\,\mathrm{d}\xi,\\
	\text{and} \quad \partx{i}\Phi^m_n(u)&=\int_0^1\phi_m'(P_nu)d_i\,\mathrm{d}\xi,\quad 									\partx{j}\partx{i}\Phi^m_n(u)=\int_0^1\phi_m''(P_nu)d_id_j\,\mathrm{d}\xi \\
	\partx{k}\partx{j}\partx{i}\Phi^m_n(u)&=\int_0^1\phi_m'''(P_nu)d_id_jd_k\,\mathrm{d}\xi,\\ \partx{l}\partx{k}\partx{j}\partx{i}\Phi^m_n(u)&=\int_0^1\phi_m''''(P_nu)d_id_jd_kd_l\,\mathrm{d}\xi.
\end{aligned}
\]The partial derivatives evaluate to zero if one of the indices exceeds $n$.
Furthermore, we have $D\Phi_n^m\in C_b^3(U;\R)$ and consequently we know that Item {App}($\Phi 2$) from Assumption \nameref{ass:pot_app} is valid.
\end{lem}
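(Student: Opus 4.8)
The plan is to reduce the statement to a finite-dimensional claim about the base functions, prove that claim with the classical theorem on differentiation under the integral sign, and then lift it back to $U$ via the chain rule for cylinder functions.

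First I would record the cylinder structure. Writing $\overline p_n^U y=\sum_{k=1}^n y_k d_k$ for $y\in\R^n$ and recalling $d_k=\sqrt2\sin(k\pi\cdot)$, so that $\sup_{\xi\in(0,1)}|(\overline p_n^Uy)(\xi)|\le\sqrt{2n}\,|y|$, one has $\Phi_n=F_n\circ p_n^U$ and $\Phi_n^m=F_n^m\circ p_n^U$ with
\[
F_n(y)\defeq\int_0^1\phi\bigl((\overline p_n^Uy)(\xi)\bigr)\,\mathrm{d}\xi,\qquad F_n^m(y)\defeq\int_0^1\phi_m\bigl((\overline p_n^Uy)(\xi)\bigr)\,\mathrm{d}\xi.
\]
Since $\phi\in C^4(\R)$ and $\phi_m=\Psi_m\circ\phi\in C^4(\R)$ (by \Cref{rem:MVT_bound_der_phi}), for each fixed $\xi\in(0,1)$ the integrands are $C^4$ in $y$, and by the chain rule their $y$-partial derivatives up to order four are finite sums of terms of the form $\phi^{(j)}\bigl((\overline p_n^Uy)(\xi)\bigr)\,d_{i_1}(\xi)\cdots d_{i_j}(\xi)$, $0\le j\le4$ (and the same with $\phi_m^{(j)}$). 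On every ball $\{|y|\le R\}$ these terms are bounded by a constant depending only on $n$, $R$ and $\sup_{|z|\le\sqrt{2n}R}|\phi^{(j)}(z)|$, using $|d_k|\le\sqrt2$; as $\mathrm{d}\xi$ is a finite measure, this gives an integrable dominating bound, locally uniformly in $y$. Hence $F_n,F_n^m\in C^4(\R^n)$ and one may differentiate under the integral, obtaining the explicit formulas for $\partial^\alpha F_n$, $|\alpha|\le4$.

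Next I would transfer this to $U$. As $p_n^U\colon U\to\R^n$ is bounded linear, the chain rule for Fréchet derivatives gives $\Phi_n,\Phi_n^m\in C^4(U;\R)$, and, exactly as in \Cref{rem_der_cyl_X}, the partial derivatives are $\partx{i}\Phi_n(u)=\partial_iF_n(p_n^Uu)$ for $1\le i\le n$, with the higher partials analogous, and every such derivative vanishes as soon as one index exceeds $n$ (because $F_n$ depends on the first $n$ coordinates only); substituting the formulas for $\partial^\alpha F_n$ yields precisely the integral expressions in the statement, and likewise for $\Phi_n^m$.

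Finally, for $D\Phi_n^m\in C_b^3$: the first-order formula reads $D\Phi_n^m(u)=\overline p_n^U\bigl(G_n^m(p_n^Uu)\bigr)$, where $G_n^m\colon\R^n\to\R^n$ has components $\bigl(G_n^m(y)\bigr)_i=\int_0^1\phi_m'\bigl((\overline p_n^Uy)(\xi)\bigr)d_i(\xi)\,\mathrm{d}\xi$. By \Cref{lem:pot_approx_q} the derivatives $\phi_m',\phi_m'',\phi_m''',\phi_m''''$ are \emph{bounded}, so the same differentiation-under-the-integral argument, now with these global bounds in place of the local ones, shows that $G_n^m$ and all its derivatives up to order three are bounded on $\R^n$, i.e.\ $G_n^m\in C_b^3(\R^n;\R^n)$; since $\overline p_n^U$ is an isometric embedding, $D\Phi_n^m=\overline p_n^U\circ G_n^m\circ p_n^U\in C_b^3(U;U)$. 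Combined with the regularity $v\mapsto K_{22}(v)d_k\in C^4_b(V;V)\subseteq C^2_b(V;V)$ established in the preceding discussion, this verifies Item App($\Phi2$) of Assumption \nameref{ass:pot_app}. There is no essential obstacle here; the only point needing a little care is producing the dominating bounds that justify differentiating under the integral sign, and it is precisely for the $C_b^3$ assertion that one needs the boundedness of the derivatives of $\phi_m$ supplied by \Cref{lem:pot_approx_q}, rather than the mere polynomial growth of the derivatives of $\phi$.
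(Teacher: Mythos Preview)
Your proof is correct and follows essentially the same approach as the paper: both arguments establish $C^4$-regularity by writing $\Phi_n$ (resp.\ $\Phi_n^m$) as a composition, applying the chain rule, and justifying differentiation under the integral sign, with the $C_b^3$ assertion for $D\Phi_n^m$ coming precisely from the boundedness of $\phi_m',\dots,\phi_m''''$ supplied by \Cref{lem:pot_approx_q}. The only cosmetic difference is that the paper factors through the map $U\ni u\mapsto P_nu\in C^0([0,1];\R)$ and then differentiates the functional $C^0([0,1];\R)\ni y\mapsto\int_0^1\phi(y(\xi))\,\mathrm d\xi$, whereas you factor through the finite-dimensional coordinate map $p_n^U\colon U\to\R^n$; your route is slightly more elementary since it avoids Fr\'echet calculus on the Banach space $C^0([0,1])$ and reduces everything to the classical Leibniz rule on $\R^n$.
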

\begin{proof}
The calculation of the partial derivatives follows as in \Cref{prop:pot_approx_wit_conv}. The proof of \Cref{prop:pot_approx_wit_conv} also contains the arguments to show $\Phi_n,\Phi_n^m\in C_b^4(U;\R)$. Note that the main ingredients are Inequality \eqref{eq:poly_bound_phimj} and \cite[Lemma 5.1]{dPL14}.
\end{proof}
To verify Item {App}($\Phi 2$) from  Assumption \nameref{ass:pot_app} it is enough that $\Phi_n^m\in C^3(U;\R)$ such that $D\Phi_n^m$ has bounded derivatives up to the second order is enough. However, the stronger regularity statement from \ref{lem:part_der_phi_m_n} shows that we are consistent with the strategy described in \Cref{lem:sol_finite}.

We are now able to verify that there are constants $\alpha,\beta,\gamma\in [0,\infty)$ such that Item {App}($\Phi 4$) from Assumption \nameref{ass:pot_app} is valid.

\begin{prop}\label{prop:ver_app_ass}Suppose that $\sigma_3\geq \frac{\min\lbrace\sigma_2,\alpha_2\rbrace}{2}$, $2\sigma_1-\min\lbrace\sigma_2,\alpha_2\rbrace\geq \frac{1}{2}$ and
\[
\begin{aligned}
	&\alpha >\frac{1}{2\alpha_1}+\frac{1}{2}\quad \text{and}\quad \alpha\geq  2\left(\frac{\frac{\min\lbrace\sigma_2,\alpha_2\rbrace}{2}-\sigma_1}{\alpha_1}+1\right)\\
	&\beta >\frac{1}{2\alpha_2}+\frac{\min\lbrace\sigma_2,\alpha_2\rbrace}{\alpha_2} \quad \text{and}\quad \beta\geq \max\left\lbrace  2\left(1-\frac{\min\lbrace\sigma_2,\alpha_2\rbrace}{2\alpha_2}\right),2\left(1-\frac{\sigma_3}{2\alpha_2}\right) \right\rbrace\\
	& \gamma > \frac{1}{4\alpha_2}+\frac{1}{2}\quad \text{and}\quad(4\gamma-\beta)>\frac{1}{2\alpha_2}+\frac{\min\lbrace\sigma_2,\alpha_2\rbrace}{\alpha_2}.
\end{aligned}
\]
Then, $\Phi_n^m$ fulfills {App}($\Phi 4$) from Assumption \nameref{ass:pot_app}.
\end{prop}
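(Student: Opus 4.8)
The strategy is to reduce every one of the seven inequalities making up \nameref{ass:pot_app}, Item App($\Phi 4$), to two kinds of elementary estimates on the eigenvalue level: first, the summability of series $\sum_k \lambda_k^{s}$, which converges exactly when $s\alpha_i > \tfrac12$ (since $\lambda_k = (k\pi)^{-2}$, the relevant threshold is always of the form $s\alpha_i>\tfrac12$); second, operator bounds of the form $\norm{Q^{a}K_{22}^{-1/2}K_{12}\cdot}$ or $\norm{Q^{a}K_{22}^{1/2}\cdot}$ reduced to comparisons of eigenvalues $\lambda_k^{?}$ against $\lambda_{22,k}(v)$, using the two-sided bound $\lambda_k^{\alpha_2}+\lambda_k^{\sigma_2}\le\lambda_{22,k}(v)\le\lambda_k^{\alpha_2}+\lambda_k^{\sigma_2}+\lambda_k^{\sigma_3}$ derived after \eqref{eqdef:K_twotwo_var_eig}, hence $\lambda_{22,k}(v)\asymp \lambda_k^{\min\{\sigma_2,\alpha_2\}}$ from below. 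I would first record, once and for all, the key consequences: $K_{12}=K_{21}=Q^{\sigma_1}$ acts diagonally with eigenvalues $\lambda_k^{\sigma_1}$; $K_{22}^{1/2}$ has eigenvalues $\lambda_{22,k}(v)^{1/2}\ge c\,\lambda_k^{\min\{\sigma_2,\alpha_2\}/2}$; and $|\partx{i}\lambda_{22,k}(v)|\le\lambda_k^{\sigma_3}$, with analogous bounds for higher derivatives. With these, $K_{22}^{-1/2}K_{12}D\partx{i}\Phi_n^m$ becomes, componentwise, a sum weighted by factors $\lambda_k^{\sigma_1-\min\{\sigma_2,\alpha_2\}/2}$ times second derivatives of $\Phi_n^m$, and \Cref{lem:part_der_phi_m_n} together with the polynomial bounds \eqref{eq:poly_bound_phimj} and \cite[Lemma 5.1]{dPL14} controls the resulting integrals uniformly in $m,n$ (here the uniform domination $\mu_1^{\Phi_n^m}\le C\mu_1$ from \Cref{lem:meas_equi} is what lets us pass from $\mu^{\Phi_n^m}$-integrals to $\mu_1$-integrals with an $m,n$-independent constant).

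Concretely I would handle the inequalities in the order \eqref{ineq:lfourthree}, \eqref{ineq:lfourfour}, \eqref{ineq:lfourfive}, \eqref{ineq:lfourseven}, then \eqref{ineq:lfoursix}, and finally \eqref{ineq:lfourone} and \eqref{ineq:lfourtwo}. The operator inequalities \eqref{ineq:lfourthree}--\eqref{ineq:lfourfive} are pointwise-in-$k$ eigenvalue comparisons: \eqref{ineq:lfourthree} requires $\lambda_k^{(\alpha/2-1)\alpha_1+\sigma_1}\le\kappa\,\lambda_{22,k}(v)^{1/2}$, which follows from $(\alpha/2-1)\alpha_1+\sigma_1\ge \min\{\sigma_2,\alpha_2\}/2$, i.e.\ the stated lower bound $\alpha\ge 2\big(\frac{\min\{\sigma_2,\alpha_2\}/2-\sigma_1}{\alpha_1}+1\big)$; similarly \eqref{ineq:lfourfour} splits into the $Q_2^{\beta/2-1}K_{22}$ term (needing $\beta\ge 2(1-\frac{\min\{\sigma_2,\alpha_2\}}{2\alpha_2})$) and the $\lambda_{2,i}^{\beta/2}K_{22}^{-1/2}\partial_{e_i}K_{22}$ term (needing $\beta\ge 2(1-\frac{\sigma_3}{2\alpha_2})$, since $\partial_{e_i}K_{22}$ contributes $\lambda_i^{\sigma_3}$ and division by $K_{22}^{1/2}$ costs $\lambda_i^{-\min\{\sigma_2,\alpha_2\}/2}$), and the hypothesis $\sigma_3\ge\min\{\sigma_2,\alpha_2\}/2$ guarantees $K_{22}^{-1/2}\partial_{e_i}K_{22}$ is a genuine diagonal operator with bounded-from-above behaviour; \eqref{ineq:lfourfive} gives the constraint $2\sigma_1-\min\{\sigma_2,\alpha_2\}\ge\tfrac12$ so that $\lambda_k^{(\beta/2-1)\alpha_2+\sigma_1}\le\kappa\,\lambda_k^{(\alpha/2)\alpha_1}$ after matching exponents — actually here I would be careful and note that \eqref{ineq:lfourfive} needs comparison with $Q_1^{\alpha/2}D_1f$, so the relevant inequality is $(\beta/2-1)\alpha_2+\sigma_1\ge (\alpha/2)\alpha_1$, which I must check is implied by the stated bounds (tracking this bookkeeping carefully is part of the work). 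The scalar series conditions \eqref{ineq:lfourseven} ($\sum_i\lambda_{2,i}^{2\gamma-1}<\infty$, equivalently $(2\gamma-1)\alpha_2>\tfrac12$, i.e.\ $\gamma>\frac{1}{4\alpha_2}+\tfrac12$) and the convergence needed behind \eqref{ineq:lfoursix} (which couples $4\gamma-\beta$ to $\min\{\sigma_2,\alpha_2\}$ via the condition $(4\gamma-\beta)\alpha_2>\tfrac12+\min\{\sigma_2,\alpha_2\}$) are then immediate from the stated lower bounds on $\gamma$ and on $4\gamma-\beta$.

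The genuinely laborious step, and the one I expect to be the main obstacle, is \eqref{ineq:lfourone}: it involves three summed-up infinite series, each built from third derivatives $\partxsq{i}\Phi_n^m$ or products of second derivatives of $\Phi_n^m$ with $\partx{i}\Phi_n^m$ or with the Gaussian coordinate $(u,Q_1^{\alpha-1}e_i)_U$, and I must bound the $L^2(\mu^{\Phi_n^m})$-norm of each resulting $V$-valued vector by a constant $\kappa^2$ independent of $m,n$. Here I would: (i) use \Cref{lem:part_der_phi_m_n} to write the derivatives as integrals $\int_0^1\phi_m^{(j)}(P_nu(\xi))\,d_i(\xi)\cdots\,\mathrm{d}\xi$; (ii) exchange the $V$-norm-squared with the $\xi$-integrals and expand as a double integral over $(0,1)^2$, as in the proof of \Cref{prop:pot_approx_wit_conv}; (iii) apply the uniform polynomial bounds \eqref{eq:poly_bound_phimj} on $\phi_m^{(j)}$ (this is precisely why \Cref{lem:pot_approx_q} was proved) together with \cite[Lemma 5.1]{dPL14} to bound all moments $\int_U\norm{P_n u}^p\,\mathrm{d}\mu_1$ uniformly in $n$; (iv) absorb the extra eigenvalue weights $\lambda_{1,i}^\alpha$, $(u,Q_1^{\alpha-1}e_i)_U$, etc., using the summability thresholds from the hypotheses on $\alpha$ — this is where the condition $\alpha>\frac{1}{2\alpha_1}+\tfrac12$ is used, ensuring the relevant series over $i$ converge; and (v) convert $\mu^{\Phi_n^m}$-integrals to $\mu_1$-integrals via \Cref{lem:meas_equi}. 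Inequality \eqref{ineq:lfourtwo} is the ``diagonal'' version of the same computation (only one power of the derivative weight, summed rather than squared) and will follow by the identical method with the milder threshold $\alpha>\frac{1}{2\alpha_1}+\tfrac12$ again entering through $\sum_i\lambda_{1,i}^{\alpha}\cdot(\text{moment bound})<\infty$. The write-up should present the eigenvalue-comparison lemma first, then dispatch \eqref{ineq:lfourthree}--\eqref{ineq:lfourseven} quickly, and devote the bulk of the proof to the bookkeeping in \eqref{ineq:lfourone}--\eqref{ineq:lfourtwo}; none of the individual estimates is deep, but keeping all the exponent inequalities consistent simultaneously is the delicate part.
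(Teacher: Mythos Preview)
Your proposal is correct and follows essentially the same approach as the paper: reduce the operator inequalities \eqref{ineq:lfourthree}--\eqref{ineq:lfourfive} to pointwise eigenvalue comparisons using the diagonal structure and the two-sided bound on $\lambda_{22,k}$, reduce \eqref{ineq:lfoursix}--\eqref{ineq:lfourseven} to summability of series $\sum_k\lambda_k^{s}$, and handle \eqref{ineq:lfourone}--\eqref{ineq:lfourtwo} by combining \Cref{lem:part_der_phi_m_n}, the uniform polynomial bounds \eqref{eq:poly_bound_phimj}, \cite[Lemma~5.1]{dPL14}, and the measure domination from \Cref{lem:meas_equi}. The paper treats \eqref{ineq:lfourone}--\eqref{ineq:lfourtwo} first, expanding the $V$-norm as $\sum_j(\cdot,e_j)_V^2$ rather than as a double $\xi$-integral, but this is a cosmetic difference. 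Your caution about \eqref{ineq:lfourfive} is warranted: the paper, like you, arrives at the eigenvalue condition $\alpha_2(\tfrac{\beta}{2}-1)+\sigma_1\ge \alpha_1\tfrac{\alpha}{2}$ and simply records it without deriving it from the listed hypotheses, so do not expect that step to close automatically.
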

\begin{proof}
Before we start verifying the inequalities from {App}($\Phi 4$), we derive an useful integral estimate for products of the partial derivatives of $\Phi_n^m$. So let $i,j,k\in\N$ be given and recall the constants $\tilde{B}$ and $m_1$ from Inequality \eqref{eq:poly_bound_phimj}. Using \Cref{lem:part_der_phi_m_n} and Inequality \eqref{eq:poly_bound_phimj}, we estimate
\begin{align*}
	&\quad	\int_U\abs[\big]{\partx{j}\partxsq{i}\Phi^m_n   \partx{j}\partxsq{k}\Phi^m_n}\;\mathrm{d}\mu_1^{\Phi_n^m}\\
	&\leq\left( \int_U \left(\partx{j}\partxsq{i}\Phi^m_n\right)^2 \;\mathrm{d}\mu_1^{\Phi_n^m}\right)^{\frac{1}{2}}\left(\int_U \left(\partx{j}\partxsq{k}\Phi^m_n\right)^2\;\mathrm{d}\mu_1^{\Phi_n^m}\right)^{\frac{1}{2}}\\
	&\leq  \int_U\left(  \sqrt{2}^3\tilde{B} \int_0^1\left(1+\abs{P_nu(\xi)}^{3(m_1-1)}\right) \,\mathrm{d}\xi \right)^2\;\mathrm{d}\mu_1^{\Phi_n^m}\\
	&\leq  8\tilde{B}^2 \int_U   \int_0^1\left(1+\abs{P_nu(\xi)}^{3(m_1-1)}\right)^2\,\mathrm{d}\xi \;\mathrm{d}\mu_1^{\Phi_n^m}.
\end{align*}In a similar way, one can show
\begin{align*}\int_U(\partx{i}\Phi_n^m)^4\;\mathrm{d}\mu_1^{\Phi_n^m}&\leq 16\tilde{B}^4 \int_U   \int_0^1\left(1+\abs{P_nu(\xi)}^{(m_1-1)}\right)^4\,\mathrm{d}\xi \;\mathrm{d}\mu_1^{\Phi_n^m},\\
	\int_U(\partx{j} \partx{i}\Phi_n^m)^2\;\mathrm{d}\mu_1^{\Phi_n^m}&\leq 4\tilde{B}^2 \int_U   \int_0^1\left(1+\abs{P_nu(\xi)}^{2(m_1-1)}\right)^2\,\mathrm{d}\xi \;\mathrm{d}\mu_1^{\Phi_n^m}\quad\text{and} \\
	\int_U \left(\partx{j}\partx{i}\Phi_n^m\right)^4\,\mathrm{d}\mu_1^{\Phi_n^m}&\leq 16\tilde{B}^4\int_U \int_0^1\left(1+\abs{P_nu(\xi)}^{2(m_1-1)}\right)^4\,\mathrm{d}\xi \;\mathrm{d}\mu_1^{\Phi_n^m}.
\end{align*}
Using the generalized Hölder inequality ($\frac{1}{4}+\frac{1}{4}+\frac{1}{4}+\frac{1}{4}=1$) and the estimates above, we estimate
\begin{align*}
	&\quad	\int_U \abs[\big]{\partx{j}\partx{i}\Phi_n^m\,\partx{i}\Phi_n^m \partx{j}\partx{k}\Phi_n^m\, \partx{k}\Phi_n^m}\;\mathrm{d}\mu_1^{\Phi_n^m}\\
	&\leq 16\tilde{B}^4\left(\int_U \int_0^1\left(1+\abs{P_nu(\xi)}^{2(m_1-1)}\right)^4\,\mathrm{d}\xi \;\mathrm{d}\mu_1^{\Phi_n^m}\right)^{\frac{1}{2}}\\
	&\quad \times 
	\left(\int_U \int_0^1\left(1+\abs{P_nu(\xi)}^{(m_1-1)}\right)^4\,\mathrm{d}\xi \;\mathrm{d}\mu_1^{\Phi_n^m}\right)^{\frac{1}{2}}.
\end{align*}
Combing the estimates we just derived, the measure dominance from \Cref{lem:meas_equi} and the results from \cite[Lemma 5.1]{dPL14}, we know that there is a constant $C\in (0,\infty)$, independent of $i,j,k$, such that for all $m,n\in\N$
\begin{equation}
	\begin{aligned}\label{ineq:one_plus_pn}
		\int_U  &\abs[\big]{\partx{j}\partxsq{i}\Phi^m_n   \partx{j}\partxsq{k}\Phi^m_n} +(\partx{j} \partx{i}\Phi_n^m)^2\\ \qquad &+  \left(\partx{j}\partx{i}\Phi_n^m\right)^4+ \abs[\big]{\partx{j}\partx{i}\Phi_n^m\,\partx{i}\Phi_n^m \partx{j}\partx{k}\Phi_n^m\, \partx{k}\Phi_n^m}\;\mathrm{d}\mu_1^{\Phi_n^m}\leq C.
	\end{aligned}
\end{equation}

Recalling Inequality \eqref{eqdef:K_twotwo_var_eig}, we are able to estimate, by means of Inequality \eqref{ineq:one_plus_pn},
\[
\begin{aligned}
	&\quad\int_W\norm[\Big]{\sum_{i=1}^{\infty}\lambda_{1,i}^{\alpha}K_{22}^{-\frac{1}{2}}K_{12}D\partxsq{i}\Phi^m_n}_U^2\;\mathrm{d}\mu^{\Phi_n^m}\\
	&=\sum_{j=1}^{\infty}\int_W\left(\sum_{i=1}^{\infty}\lambda_i^{\alpha\alpha_1}\lambda_{22,j}^{-\frac{1}{2}}\lambda_j^{\sigma_1}\left(D\partxsq{i}\Phi^m_n,d_j\right)_V\right)^2\;\mathrm{d}\mu^{\Phi_n^m}\\
	&=\sum_{j=1}^{\infty}\lambda_j^{2\sigma_1}\int_V\lambda_{22,j}^{-1}\;\mathrm{d}\mu_2\sum_{i,k=1}^{\infty} \lambda_i^{\alpha\alpha_1}\lambda_k^{\alpha\alpha_1}\int_U\partx{j}\partxsq{i}\Phi^m_n   \partx{j}\partxsq{k}\Phi^m_n\;\mathrm{d}\mu_1^{\Phi_n^m}\\
	&\leq C\sum_{j=1}^{\infty}\lambda_j^{2\sigma_1}\int_V\lambda_{22,j}^{-1}\;\mathrm{d}\mu_2\left(\sum_{i=1}^{\infty} \lambda_i^{\alpha\alpha_1}\right)^2\\
	&\leq {C}\left(\sum_{i=1}^{\infty} \lambda_i^{\alpha\alpha_1}\right)^2\sum_{j=1}^{\infty} \lambda_j^{2\sigma_1-\min\lbrace\sigma_2,\alpha_2\rbrace}\eqdef \kappa_1.
\end{aligned}
\] 
Note that $\kappa_1<\infty$, as we assume $2\sigma_1-\min\lbrace\sigma_2,\alpha_2\rbrace>\frac12$ and $\alpha\alpha_1>\frac12$ is implied by $\alpha >\frac{1}{2\alpha_1}+\frac{1}{2}$.
Similar arguments yield
\[
\begin{aligned}
	&\quad	\sum_{i=1}^{\infty}\int_W\norm[\big]{\lambda_{1,i}^{\frac{\alpha}{2}}K_{22}^{-\frac{1}{2}} K_{12}D \partx{i}\Phi_n^m}_V^2\;\mathrm{d}\mu^{\Phi_n^m}\\&= \sum_{i,j=1}^{\infty}\lambda_j^{2\sigma_1}\lambda_{1,i}^{{\alpha}}\int_V \lambda_{22,j}^{-1}\;\mathrm{d}\mu_2\int_U(\partx{j} \partx{i}\Phi_n^m)^2\;\mathrm{d}\mu_1^{\Phi_n^m}\\
	&\leq C\sum_{i=1}^{\infty}\lambda_{i}^{{\alpha}\alpha_1}\sum_{j=1}^{\infty}\lambda_j^{2\sigma_1-\min\lbrace\sigma_2,\alpha_2\rbrace}\eqdef\kappa_2<\infty.
\end{aligned}
\]
Furthermore, we can derive, using the generalized Hölder inequality and \Cref{lem:Gaussianmoments}
\[
\begin{aligned}
	&\qquad\int_W  \norm[\Big]{\sum_{i=1}^{\infty} K_{22}^{-\frac{1}{2}} K_{12}D\partx{i}\Phi_n^m\, (u,Q_1^{\alpha-1}d_i)_U}_V^2 \;\mathrm{d}\mu^{\Phi_n^m}\\
	&\leq  \sum_{j=1}^{\infty} \lambda_j^{2\sigma_1}\int_V\lambda_{22,j}^{-1}\;\mathrm{d}\mu_2\sum_{i,k=1}^{\infty}\lambda_i^{\alpha_1(\alpha-1)}\lambda_k^{\alpha_1(\alpha-1)}\\&\quad\int_U \partx{j}\partx{i}\Phi_n^m\, (u,d_i)_U\partx{j}\partx{k}\Phi_n^m\, (u,d_k)_U\mathrm{d}\mu_1^{\Phi_n^m}\\
	&\leq  C\sum_{j=1}^{\infty} \lambda_j^{2\sigma_1-\min\lbrace\sigma_2,\alpha_2\rbrace}\sum_{i,k=1}^{\infty}\lambda_i^{\alpha_1(\alpha-1)}\lambda_k^{\alpha_1(\alpha-1)}\\&\quad\left(\int_U(u,d_i)_U^4\,\mathrm{d}\mu_1^{\Phi_n^m}\right)^{\frac{1}{4}}\left(\int_U(u,d_k)_U^4\,\mathrm{d}\mu_1^{\Phi_n^m}\right)^{\frac{1}{4}}\\
	&\leq \sqrt{3} \tilde{c}C\sum_{j=1}^{\infty} \lambda_j^{2\sigma_1-\min\lbrace\sigma_2,\alpha_2\rbrace}\left(\sum_{i=1}^{\infty}\lambda_i^{\alpha_1(\alpha-\frac{1}{2})}\right)^2\eqdef \kappa_3.
\end{aligned}
\]Above the constants $\tilde{c}$, independent of $m,n\in\N$, exists by uniform dominance of the measures $\mu_1^{\Phi_n^m}$ by $\mu_1$, which is due to \Cref{lem:meas_equi} and the results from \Cref{lem:Gaussianmoments}. Moreover, $\kappa_3<\infty$, since $\alpha >\frac{1}{2\alpha_1}+\frac{1}{2}$ is equivalent to $\alpha_1(\alpha-\frac{1}{2})>\frac12$.
To find $\kappa\in (1,\infty)$ such that the Inequalities \eqref{ineq:lfourone} and \eqref{ineq:lfourtwo} from {App}($\Phi 3$) are valid, we continue to estimate
\[
\begin{aligned}
	&\qquad\int_W  \norm[\Big]{\sum_{i=1}^{\infty}K_{22}^{-\frac{1}{2}} K_{12}D\partx{i}\Phi\,\lambda_{1,i}^{\alpha}\partx{i}\Phi}_V^2\;\mathrm{d}\mu^{\Phi_n^m}\\
	&\leq  \sum_{j=1}^{\infty} \lambda_j^{2\sigma_1}\int_V\lambda_{22,j}^{-1}\;\mathrm{d}\mu_2\sum_{i,k=1}^{\infty}\lambda_i^{\alpha_1\alpha}\lambda_k^{\alpha_1\alpha}\int_U \partx{j}\partx{i}\Phi_n^m\,\partx{i}\Phi_n^m \partx{j}\partx{k}\Phi_n^m\, \partx{k}\Phi_n^m\mathrm{d}\mu_1^{\Phi_n^m}\\
	&\leq C\sum_{j=1}^{\infty} \lambda_j^{2\sigma_1-\min\lbrace\sigma_2,\alpha_2\rbrace}\left(\sum_{i=1}^{\infty}\lambda_i^{\alpha_1\alpha}\right)^2=\kappa_1<\infty.
\end{aligned}
\]
Now let $f\in \fbs{B_U}$ be arbitrary.
To show Inequality \eqref{ineq:lfourthree} from Assumption \nameref{ass:pot_app}, let $k\in\N$ and assume $\alpha\geq 2\left(\frac{\frac{\min\lbrace\sigma_2,\alpha_2\rbrace}{2}-\sigma_1}{\alpha_1}+1\right)$ or equivalently $\alpha_1\left(\frac{\alpha}{2}-1\right)+\sigma_1\geq \frac{\min\lbrace\sigma_2,\alpha_2\rbrace}{2}$ to estimate for all $v\in V$
\[
\lambda_{k}^{\alpha_1(\frac{\alpha}{2}-1)+\sigma_1}\leq \lambda_k^{\frac{\min\lbrace\sigma_2,\alpha_2\rbrace}{2}}\leq \lambda_{22,k}^{\frac{1}{2}}(v).
\]
Using the fact that $(d_k)_{k\in\N}$ is an orthonormal basis, results in
\[
\norm[\big]{Q_1^{\frac{\alpha}{2}-1}K_{21}D_2f}_V\leq \norm[\big]{K_{22}^{\frac{1}{2}} D_2f}_V.
\]

Suppose $\beta\geq \max\left\lbrace  2(1-\frac{\min\lbrace\sigma_2,\alpha_2\rbrace}{2\alpha_2}),2(1-\frac{\sigma_3}{2\alpha_2}) \right\rbrace$, which is equivalent to $\frac{\beta}{2}\geq\frac12$, $ \sigma_2+\alpha_2(\frac{\beta}{2}-1)\geq \frac{\sigma_2}{2}$ and $\sigma_3+\alpha_2(\frac{\beta}{2}-1)\geq \frac{\sigma_3}{2}$. We obtain for all $v\in V$ 
\[
\begin{aligned}
	\lambda_k^{\alpha_2(\frac{\beta}{2}-1)}\lambda_{22,k}(v)&\leq  \lambda_{k}^{\alpha_2\frac{\beta}{2}}+\lambda_{k}^{\sigma_2+\alpha_2(\frac{\beta}{2}-1)}+\lambda_{k}^{\sigma_3+\alpha_2(\frac{\beta}{2}-1)}\frac{\psi_k(p_k v)}{\|\psi_k\|_{C^4}}\\
	&\leq \lambda_{k}^{\frac{\alpha_2}{2}}+\lambda_{k}^{\frac{\sigma_2}{2}}+\lambda_{k}^{\frac{\sigma_3}{2}}\frac{\psi_k(p_k v)}{\|\psi_k\|_{C^4}}\\ &\leq \sqrt{3}\lambda_{22,k}^{\frac{1}{2}}(v).
\end{aligned}
\]
Hence, as above,
\[
\norm[\big]{Q_2^{\frac{\beta}{2}-1}K_{22}D_2f}_V\leq\sqrt{3}\norm[\big]{K_{22}^{\frac{1}{2}} D_2f}_V.
\]
To establish Inequality \eqref{ineq:lfourfour}, we estimate for all $v\in V$
\[
\begin{aligned}
	\sum_{i=1}^{\infty}\norm[\big]{\lambda_{2,i}^{\frac{\beta}{2}}K_{22}^{-\frac{1}{2}}(v)\partx{i}K_{22}(v)d_k}_V^2&=\sum_{i=1}^{\infty}\lambda_{2,i}^{{\beta}}\lambda_{22,k}^{-{1}}(v)(\partx{i}\lambda_{22,k}(v))^2\\
	&\leq\sum_{i=1}^{\infty}\lambda_{i}^{{\alpha_2\beta}-\min\lbrace\sigma_2,\alpha_2\rbrace}\lambda_{k}^{2\sigma_3}\\
	&\leq \kappa_4\lambda_{k}^{2\sigma_3},
\end{aligned}
\]
where $ \kappa_4\defeq \sum_{i=1}^{\infty}\lambda_{i}^{{\alpha_2\beta}-\min\lbrace\sigma_2,\alpha_2\rbrace}<\infty$ as $\beta > \frac{1}{2\alpha_2}+ \frac{\min\lbrace\sigma_2,\alpha_2\rbrace}{\alpha_2}$ by assumption. Since we assume $\sigma_3\geq \frac{\min\lbrace\sigma_2,\alpha_2\rbrace}{2}$, we can derive
\[
\sum_{i=1}^{\infty}\norm[\big]{\lambda_{2,i}^{\frac{\beta}{2}}K_{22}^{-\frac{1}{2}}\partx{i}K_{22}D_2g}_V^2
\leq \kappa_4\norm[\big]{K_{22}^{\frac{1}{2}} D_2g}^2_V.
\]
Inequality \eqref{ineq:lfourfive} is valid for $\alpha_2(\frac{\beta}{2}-1)+\sigma_1\geq \alpha_1\frac{\alpha}{2}$.
In order to verify Inequality \eqref{ineq:lfoursix}, we use that $(4\gamma-\beta)>\frac{1}{2\alpha_2}+\frac{\min\lbrace\sigma_2,\alpha_2\rbrace}{\alpha_2}$ implies 
$
\sum_{i=1}^{\infty}\lambda_{i}^{\alpha_2(4\gamma-\beta)-\min\lbrace\sigma_2,\alpha_2\rbrace}<\infty
$ and the following estimate
\[
\begin{aligned}
	&\quad\left(\sum_{i=1}^{\infty}\left(\int_W\lambda_{2,i}^{4\gamma}(\partx{i}^2g)^2\;\mathrm{d}\mu^{\Phi_n^m}\right)^{\frac{1}{2}}\right)^2\\	&\leq\sum_{i=1}^{\infty}\lambda_{i}^{\alpha_2(4\gamma-\beta)-\min\lbrace\sigma_2,\alpha_2\rbrace}\sum_{i=1}^{\infty}\int_W\lambda_{2,i}^{\beta}\lambda_{i}^{\min\lbrace\sigma_2,\alpha_2\rbrace}(\partx{i}^2g)^2\;\mathrm{d}\mu^{\Phi_n^m}\\
	&\leq \sum_{i=1}^{\infty}\lambda_{i}^{\alpha_2(4\gamma-\beta)-\min\lbrace\sigma_2,\alpha_2\rbrace}\int_W \sum_{i=1}^{\infty}\norm[\big]{\lambda_{2,i}^{\frac{\beta}{2}}K_{22}^{\frac{1}{2}}D_2\partx{i}g}^2_V \;\mathrm{d}\mu^{\Phi_n^m}.
\end{aligned}
\]
Finally, if $\gamma>\frac{1}{4\alpha_2}+\frac{1}{2}$ or equivalently $\alpha_2(2\gamma-1)>\frac{1}{2}$, we obtain Inequality \eqref{ineq:lfourseven} as
\[
\begin{aligned}
	&\qquad	\sum_{i=1}^{\infty}  \lambda_{2,i}^{2\gamma-2}\left(\int_W(v,d_i)_V^4\;\mathrm{d}\mu^{\Phi_n^m}\right)^{\frac{1}{2}}\leq  \frac{\sum_{i=1}^{\infty}  \lambda_{i}^{\alpha_2(2\gamma-2)}}{\inf_{n,m\in\N}\mu_1(e^{-\Phi_n^m})} \left(\int_W(v,d_i)_V^4\;\mathrm{d}\mu\right)^{\frac{1}{2}}\\
	&=\frac{\sqrt{3}\sum_{i=1}^{\infty}  \lambda_{i}^{\alpha_2(2\gamma-1)}}{\inf_{n,m\in\N}\mu_1(e^{-\Phi_n^m})}\eqdef\kappa_5<\infty.
\end{aligned}
\]Hence, we choose $\kappa\in (1,\infty)$, in terms of $\kappa_1,\kappa_2,\kappa_3,\kappa_4,\kappa_5$ such that {App}($\Phi 4$) holds true.
\end{proof}

%
%

\begin{prop}\label{lem:appl_unb_pot_last}
Let $\sigma_1\geq \alpha_2\gamma$ and suppose that $\phi(x)=a_1{x}^2+\psi(x)$, $x\in\R,$ where $a_1\in [0,\infty)$ and $\psi\in C^4(\R;\R_{\geq 0})$ grows less than quadratic and its fourth order derivative is polynomial bounded, i.e.~$\phi$ is as demanded in \Cref{rem:MVT_bound_der_phi}. 
Then, $\Phi$ induced by $\phi$ fulfills Item {App}($\Phi 5$) from Assumption \nameref{ass:pot_app}.
\end{prop}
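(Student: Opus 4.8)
Here is a proof plan.

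The plan is to verify the two ingredients of Item {App}($\Phi 5$) of Assumption \nameref{ass:pot_app} separately: the $L^{p^*}$-convergence \eqref{eq:conv_phi_m_n_p}, and the comparison bounds \eqref{eq:phig4_one}--\eqref{eq:phig4_two}. The key structural observation is that here $K_{12}=Q^{\sigma_1}$ and $Q_2=Q^{\alpha_2}$, so $Q_2^{-\gamma}K_{12}=Q^{\sigma_1-\alpha_2\gamma}$, and the hypothesis $\sigma_1\geq\alpha_2\gamma$ is exactly what makes this a \emph{bounded} operator on $U$, say with $c_Q\defeq\norm{Q^{\sigma_1-\alpha_2\gamma}}_{\mathcal L(U)}<\infty$; this is the only place $\sigma_1\geq\alpha_2\gamma$ enters. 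I would fix $p^*$ to be any number in $(4,\infty)$ --- the precise value is immaterial, since the quantities below all have finite Gaussian moments --- so that $q^*=\tfrac{2p^*}{p^*-4}\in(2,\infty)$ and in particular $q^*-1>1$.

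For \eqref{eq:conv_phi_m_n_p} I would first pass to the Gaussian measure $\mu_1$: by the uniform domination $\int_U f\,\mathrm{d}\mu_1^{\Phi_n^m}\leq c^{-1}\int_U f\,\mathrm{d}\mu_1$ with $c\defeq\inf_{n,m}\mu_1(e^{-\Phi_n^m})>0$ from \Cref{lem:meas_equi}, together with $\norm{Q^{\sigma_1-\alpha_2\gamma}P_n v}_U\leq c_Q\norm{v}_U$ and the identities $D\Phi_n^m(u)=P_n(\phi_m'(P_nu))$ (\Cref{lem:part_der_phi_m_n}) and $P_nD\Phi(u)=P_n(\phi'(u))$ (since $D\Phi=\phi'\circ(\cdot)$ $\mu_1$-a.e.\ by \Cref{prop:pot_approx_wit_conv}), it suffices to show
\[
\lim_{n\to\infty}\lim_{m\to\infty}\int_U\norm{\phi_m'(P_nu)-\phi'(u)}_U^{p^*}\,\mathrm{d}\mu_1(u)=0.
\]
Fix $n$ and let $m\to\infty$: because $\phi_m'=\Psi_m'(\phi)\,\phi'$ with $\Psi_m'(x)\to1$ pointwise as $\alpha_m\downarrow0$ and $\abs{\Psi_m'}\leq q+1$ (\Cref{lem:pot_approx_q}), one has $\phi_m'(y)\to\phi'(y)$ for every $y$, with the $m$-uniform bound $\abs{\phi_m'(y)}\leq\tilde B(1+\abs{y}^{m_1-1})$ of \eqref{eq:poly_bound_phimj}; dominated convergence (first in $\xi$, then in $u$, the dominating function being $\mu_1$-integrable uniformly in $m$ by Gaussian integrability, cf.\ \cite[Lemma 5.1]{dPL14}) gives
\[
\lim_{m\to\infty}\int_U\norm{\phi_m'(P_nu)-\phi'(u)}_U^{p^*}\,\mathrm{d}\mu_1=\int_U\norm{\phi'(P_nu)-\phi'(u)}_U^{p^*}\,\mathrm{d}\mu_1.
\]
Letting now $n\to\infty$, the right-hand side tends to $0$: this is precisely the convergence $\phi'(P_n\cdot)\to\phi'(\cdot)$ in $L^{p^*}(\mu_1;L^{p^*}(\mathrm{d}\xi))$ --- hence in $L^{p^*}(\mu_1;U)$, as $\norm{\cdot}_{L^2(\mathrm{d}\xi)}\leq\norm{\cdot}_{L^{p^*}(\mathrm{d}\xi)}$ --- established in \Cref{rem:pot_approx_alter}(ii) (applicable since $\phi''$ is continuous with at most polynomial growth of order $m_1-2$). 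This proves \eqref{eq:conv_phi_m_n_p}.

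For \eqref{eq:phig4_one}, since $q$ is even, $\Psi_m\geq0$ on $[0,\infty)$, so $0\leq\phi_m\leq\phi$ and $\Phi_n^m(u)=\int_0^1\phi_m(P_nu)\,\mathrm{d}\xi\geq0$; thus $c_1\defeq0$ works for all $m,n$. For \eqref{eq:phig4_two}: if $\Phi(u)=\infty$ the inequality is trivial, so assume $\Phi(u)<\infty$, whence $q^*\Phi(u)\geq q^*a_1\norm{u}_U^2$ (using $\phi(x)\geq a_1x^2$ and $\psi\geq0$). Since $\psi\geq0$ grows less than quadratically, for every $\eps>0$ there is $C_\eps\in(0,\infty)$ with $\psi(x)\leq\eps x^2+C_\eps$ for all $x$ (continuity plus $\psi(x)=o(x^2)$), so
\[
\Phi_n^m(u)\leq\int_0^1\phi(P_nu)\,\mathrm{d}\xi=a_1\norm{P_nu}_U^2+\int_0^1\psi(P_nu)\,\mathrm{d}\xi\leq(a_1+\eps)\norm{u}_U^2+C_\eps,
\]
using $\norm{P_nu}_U\leq\norm{u}_U$; therefore
\[
(q^*-1)\Phi_n^m(u)-q^*\Phi(u)\leq\bigl((q^*-1)\eps-a_1\bigr)\norm{u}_U^2+(q^*-1)C_\eps\leq(q^*-1)\eps\,\norm{u}_U^2+(q^*-1)C_\eps,
\]
and picking $\eps<\tfrac{1}{2\lambda_{1,1}(q^*-1)}$ yields \eqref{eq:phig4_two} with $c_3\defeq(q^*-1)\eps<\tfrac{1}{2\lambda_{1,1}}$ and $c_2\defeq(q^*-1)C_\eps$, both independent of $m,n$; the strict inequality $c_3<\tfrac{1}{2\lambda_{1,1}}$ is exactly what the Fernique-type bound in the proof of \Cref{theo:ess_diss_phi_Lfour} needs (cf.\ \Cref{theo:fernique}), although that is not part of the present statement.

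The step I expect to be the main obstacle is the iterated-limit convergence \eqref{eq:conv_phi_m_n_p}. One must disentangle the two approximations --- the regularisation of $\phi$ to $\phi_m$ and the finite-dimensional projection of $u$ to $P_nu$ --- and run them in the correct order (first $m$ for fixed $n$, then $n$): the $m$-limit goes through by dominated convergence with a dominating function that depends on $n$ (via $P_nu$) yet stays $\mu_1$-integrable uniformly in $m$, while the surviving $n$-limit is exactly the one already controlled for the unregularised $\phi$ in \Cref{prop:pot_approx_wit_conv}/\Cref{rem:pot_approx_alter}. The quantitative input enabling the $m$-step is the $m$-independent polynomial bound on $\phi_m'$ from \Cref{lem:pot_approx_q}; everything else is routine estimation with Gaussian moments. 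By contrast \eqref{eq:phig4_one}--\eqref{eq:phig4_two} is a short computation, whose only subtlety is to compare $\Phi_n^m$ against $q^*\Phi$ (not merely against $\Phi_n$), so that the leading quadratic terms $a_1\norm{u}_U^2$ cancel with the favourable sign $(q^*-1)-q^*=-1$ and only the subquadratic part of $\psi$ has to be absorbed into $c_3\norm{u}_U^2+c_2$.
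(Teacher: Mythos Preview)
Your proof is correct and follows essentially the same approach as the paper. Both arguments split \eqref{eq:conv_phi_m_n_p} into an inner $m\to\infty$ step (via dominated convergence with the $m$-uniform polynomial bound \eqref{eq:poly_bound_phimj}) and an outer $n\to\infty$ step (via \Cref{rem:pot_approx_alter}(ii)), and both verify \eqref{eq:phig4_one}--\eqref{eq:phig4_two} by exploiting $0\leq\phi_m\leq\phi$ and the subquadratic growth of $\psi$; the only cosmetic difference is that you reduce to $\mu_1$ via the uniform domination of \Cref{lem:meas_equi} at the outset, whereas the paper works in $\mu_1^{\Phi}$ and leaves the passage to $\mu_1^{\Phi_n^m}$ implicit.
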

\begin{proof}

Let $p^*\in (4,\infty)$ and $q^*$ be as in {App}($\Phi 5$).
By means of \Cref{rem:pot_approx_alter}, we know that $D\Phi_n\to D\Phi$ as $n\to\infty$ in $L^{p^*}(U;\mu_1^{\Phi};U)$. Moreover, we have pointwisely $\lim_{m\to\infty}\phi_m'=\phi'$. Using Inequality \eqref{eq:poly_bound_phimj} and \cite[Lemma 5.1]{dPL14}, we conclude, by the theorem of dominated convergence,
\[
\begin{aligned}
	\li{m}\int_U \norm[\big]{D\Phi^m_n-D\Phi_n}_U^{p^*}\;\mathrm{d}\mu_1^{\Phi}=\li{m}\int_U \norm{\phi_m'(P_n(u))-\phi'(P_n(u))}_U^{p^*}\;\mathrm{d}\mu_1^{\Phi}=0.
\end{aligned}
\]We obtain the desired convergence \eqref{eq:conv_phi_m_n_p} from Item {App}($\Phi 4$), as $\sigma_1\geq \alpha_2\gamma$. 

Let $m,n\in\N$ be given, then obviously $0\leq \Phi^m_n$ and inequality \eqref{eq:phig4_one} holds true. So far, we have not used the special structure of $\phi$ described in the assertion. We need this to verify Inequality \eqref{eq:phig4_two}.
As $\psi$ grows less than quadratic, there are constants $a_2\in [0,\infty)$ and $a_3\in [0,2)$ such that for all $x\in\R$
\[\psi(x)\leq 1+a_2\abs{x}^{a_3}.\]Using Youngs inequality, there exists $a_4\in (0,\infty)$ with
\[\psi(x)\leq a_4+\frac{1}{q^*4\lambda_{1,1}}{x}^{2}\quad\text{for all}\quad x\in\R.\]
This implies \eqref{eq:phig4_two} from {App}($\Phi 4$) by
\[
\begin{aligned}
	\Phi^m_n(u)&\leq \Phi_n(u)=\int_0^1 a_1{P_n(u)}^2+\psi(P_n(u))\;\mathrm{d}\xi\\
	&\leq a_1\sum_{i,j=1}^n(u,d_i)_U(u,d_j)_U(d_i,d_j)_U+a_4\\&\quad+\frac{1}{q^*4\lambda_{1,1}}\sum_{i,j=1}^n(u,d_i)_U(u,d_j)_U(d_i,d_j)_U\\
	&= a_1\sum_{i=1}^n(u,d_i)^2_U+a_4+\frac{1}{q^*4\lambda_{1,1}}\sum_{i=1}^n(u,d_i)^2_U\\
	&\leq \Phi(u)+a_4+\frac{1}{q^*4\lambda_{1,1}}\norm{u}^2_U
\end{aligned}
\]and therefore 
\[
(q^*-1)\Phi^m_n(u)\leq q^*a_4+\frac{1}{4\lambda_{1,1}}\norm{u}^2_U+q^*\Phi(u).
\]
This ends the proof.
\end{proof}

\begin{rem}\label{rem_moregen_pot}
The special structure of $\phi$, described in \Cref{lem:appl_unb_pot_last}, is only used to verify Inequality \eqref{eq:phig4_two} from {App}($\Phi 5$). So different situations, in which this inequality is valid, can be imagined, e.g.~by perturbing $\Phi$ with a suitable finitely based function. For fixed $k\in\N$ and a function $\tilde{\phi}$ with the properties stated in \Cref{rem:MVT_bound_der_phi} we can incorporate perturbations with potentials of type $\tilde{\Phi}_k$ defined as in \Cref{def:pot_fin_n}. 
\end{rem}
As a consequence of the above considerations, we obtain the following result.
\begin{cor}\label{Cor:appl_ess_sess_Lfour}Suppose Item App($\Phi 3$) from Assumption \nameref{ass:pot_app} is valid and we are in the setting of \Cref{prop:ver_app_ass} and \Cref{lem:appl_unb_pot_last}. Then
\Cref{theo:ess_diss_phi_Lfour} is applicable and consequently essential m-dissipativity of $(L^{\Phi},\fbs{B_W})$ on $L^2(W;\mu^{\Phi})$ is established. Additionally, the semigroup generated by $(L^{\Phi},D(L^{\Phi}))$ is sub-Markovian and conservative. For each $f\in \fbs{B_W}$ it holds
\[
\begin{aligned}
	L^ {\Phi}f&=\tr\left[K_{22}\circ D_2^2f\right]+ \sum_{j=1}^{\infty} (\partx{j}K_{22}D_2f,d_j)_U- (v,(-{\partial_{\xi}^2})^{\alpha_2}K_{22}D_2f)_U\\
	&\quad-(u,(-{\partial_{\xi}^2})^{\alpha_1-\sigma_1}D_2f)_U+(v,(-{\partial_{\xi}^2})^{\alpha_2-\sigma_1}D_1f)_U\\
	&\quad-(\phi'(u),(-{\partial_{\xi}^2})^{-\sigma_1}D_2f)_U
\end{aligned}
\]
\end{cor}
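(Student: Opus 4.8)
The plan is to obtain the statement as a direct consequence of \Cref{theo:ess_diss_phi_Lfour}: it suffices to check that, in the concrete reaction-diffusion framework fixed at the beginning of \Cref{ch:example_Lfour_ess_diss}, every item of Assumption \nameref{ass:pot_app} is in force, and then to read off the explicit shape of $L^{\Phi}$ from \Cref{def:inf_Lang_op}. Item {App}($\Phi 1$) is recorded in \Cref{def:pot_fin_n}. For item {App}($\Phi 2$) the requirement $v\mapsto K_{22}(v)d_k\in C_b^2(U;U)$ holds because the construction \eqref{eqdef:K_twotwo_var_eig} uses $\psi_k\in C_c^4(\R^k;[0,\infty))\subseteq C_b^2(\R^k;[0,\infty))$, while the potential part ($\Phi_n^m\in C^3(U;\R)$ with $D\Phi_n^m$ having bounded derivatives up to second order) is contained in \Cref{lem:part_der_phi_m_n}. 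Item {App}($\Phi 3$) is the standing hypothesis of the corollary. Item {App}($\Phi 4$) is exactly \Cref{prop:ver_app_ass}, which exhibits exponents $\alpha,\beta,\gamma\in[0,\infty)$ as soon as $\sigma_1,\sigma_2,\sigma_3,\alpha_1,\alpha_2$ obey the inequalities displayed there. Item {App}($\Phi 5$) is \Cref{lem:appl_unb_pot_last}, which additionally uses the structural assumption $\phi(x)=a_1x^2+\psi(x)$ together with $\sigma_1\ge\alpha_2\gamma$. Thus ``being in the setting of \Cref{prop:ver_app_ass} and \Cref{lem:appl_unb_pot_last}'' is precisely the requirement that these parameter constraints hold simultaneously, under which all five items of Assumption \nameref{ass:pot_app} are satisfied.

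Next one verifies that $\Phi$ meets the standing requirements of \Cref{ass:Phi_general}: $\Phi\ge 0$ by construction, the normalisation $\int_U e^{-\Phi}\,\mathrm{d}\mu_1=1$ is achieved by a harmless scaling, and \Cref{prop:pot_approx_wit_conv} gives $\Phi\in W^{1,p}(U;\mu_1)$ for every $p\in(1,\infty)$ with $D\Phi(u)=\phi'(u)$ for $\mu_1$-a.e.~$u$; in particular $\Phi\in W^{1,2}(U;\mu_1)=W^{1,2}_{Q_1^{0}}(U;\mu_1)$, so one may take $\theta=0$. With these preparations \Cref{theo:ess_diss_phi_Lfour} applies verbatim and yields that $(L^{\Phi},\fbs{B_W})$ is essentially m-dissipative on $L^2(W;\mu^{\Phi})$ and that the strongly continuous contraction semigroup $\sccs$ generated by the closure $(L^{\Phi},D(L^{\Phi}))$ is sub-Markovian and conservative.

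It remains to rewrite $L^{\Phi}f$ in terms of $-\partial_{\xi}^2$. Substituting $K_{12}=K_{21}=Q^{\sigma_1}$, $Q_1=Q^{\alpha_1}$, $Q_2=Q^{\alpha_2}$ with $Q=(-\partial_{\xi}^2)^{-1}$, and $e_i=d_i$, into the definition of $S$ and $A^{\Phi}$ in \Cref{def:inf_Lang_op}, the second order part $\tr[K_{22}\circ D_2^2f]+\sum_{j}(\partx{j}K_{22}D_2f,d_j)_U$ is left untouched; the Ornstein-Uhlenbeck drift becomes $(v,Q_2^{-1}K_{22}D_2f)_U=(v,(-\partial_{\xi}^2)^{\alpha_2}K_{22}D_2f)_U$; the two coupling terms simplify to $(u,Q_1^{-1}K_{21}D_2f)_U=(u,Q^{\sigma_1-\alpha_1}D_2f)_U=(u,(-\partial_{\xi}^2)^{\alpha_1-\sigma_1}D_2f)_U$ and $(v,Q_2^{-1}K_{12}D_1f)_U=(v,(-\partial_{\xi}^2)^{\alpha_2-\sigma_1}D_1f)_U$; and, invoking \Cref{rem:dphi_finite} with $\theta=0$ and the identification $D\Phi=\phi'$, the potential term is $(D\Phi(u),K_{21}D_2f)_U=(\phi'(u),Q^{\sigma_1}D_2f)_U=(\phi'(u),(-\partial_{\xi}^2)^{-\sigma_1}D_2f)_U$. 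Collecting these identities gives the asserted formula for $L^{\Phi}f$, $f\in\fbs{B_W}$.

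No genuine obstacle is expected: this corollary is a synthesis step, all analytic content being located in \Cref{prop:ver_app_ass}, \Cref{lem:appl_unb_pot_last} and \Cref{theo:ess_diss_phi_Lfour}. The only points that deserve care are (i) that the regularity of $K_{22}$ needed for item {App}($\Phi 2$) --- and, in connection with the finite-dimensional solvability scheme of \Cref{lem:sol_finite} underlying item {App}($\Phi 3$), the $C_b^4$-regularity of $v\mapsto K_{22}(v)d_k$ and the bounded derivatives of $v\mapsto K_{22}(P_nv)Q_2^{-1}P_nv$ --- is supplied by the compact-support construction \eqref{eqdef:K_twotwo_var_eig}, and (ii) that the interpretation $(D\Phi,K_{21}D_2f)_U=(Q_1^{\theta}D\Phi,Q_1^{-\theta}K_{21}D_2f)_U$ of \Cref{rem:dphi_finite} collapses here to the genuine pairing $(\phi'(u),K_{21}D_2f)_U$, which is legitimate because $\Phi\in W^{1,2}(U;\mu_1)$.
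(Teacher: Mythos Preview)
Your proposal is correct and follows exactly the route the paper intends: the corollary is stated in the paper simply ``as a consequence of the above considerations'' without an explicit proof, and your argument spells out precisely those considerations --- verifying App($\Phi1$)--App($\Phi5$) via \Cref{def:pot_fin_n}, \Cref{lem:part_der_phi_m_n}, the hypothesis, \Cref{prop:ver_app_ass}, and \Cref{lem:appl_unb_pot_last}, then invoking \Cref{theo:ess_diss_phi_Lfour} and specializing \Cref{def:inf_Lang_op} to the operators $Q_i=Q^{\alpha_i}$, $K_{12}=K_{21}=Q^{\sigma_1}$. There is nothing to add.
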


\begin{rem}
We want to stress that \Cref{Cor:appl_ess_sess_Lfour} allows for situation where the gradient of $\Phi$, which is given by $D\Phi(u)=\phi'(u)=a_1u+\psi'(u)$ for $\mu_1$ almost all $u\in L^2(U;\mu_1)$, is not necessarily Lipschitz nor bounded. Indeed, the growth conditions for $\psi$ described in \Cref{lem:appl_unb_pot_last} includes a large class of non-linearities, that show up in the degenerate infinite dimensional stochastic differential equation \eqref{eq:sde} and \eqref{eq:sde_reac}.
\end{rem}

Assuming that Item App($\Phi 3$) from Assumption \nameref{ass:pot_app} holds true, there are several situations, where \Cref{Cor:appl_ess_sess_Lfour} can be applied. We give an example below.
\begin{exa}\label{lem_ex_comb_one}First choose $\sigma_2$ and $\sigma_3$ such that $\sigma_3\geq \frac{\min\lbrace\sigma_2,\alpha_2\rbrace}{2}$. Since $\alpha$ in \Cref{prop:ver_app_ass} can be chosen arbitrary large without imposing restrictions to the other parameters the existence of a suitable $\alpha$ is trivial. For our fixed set of parameters $\sigma_2$ and $\sigma_3$ we first choose $\beta$ and then $\gamma $ large enough so that the inequalities involving $\beta$ and $\gamma$ from \Cref{prop:ver_app_ass} are fulfilled. Finally, we can choose $\sigma_1$ such that the missing inequality  $2\sigma_1-\min\lbrace\sigma_2,\alpha_2\rbrace\geq \frac{1}{2}$ from \Cref{prop:ver_app_ass} and the inequality $\sigma_1\geq \alpha_2\gamma$ from \Cref{lem:appl_unb_pot_last} is valid. To apply  \Cref{Cor:appl_ess_sess_Lfour}, it is left to choose a potential as described in \Cref{lem:appl_unb_pot_last} or even more general as in \Cref{rem_moregen_pot}.

%


%
%
\end{exa}

\subsection{Hypocoercivity}

To show that the semigroup generated by the closure of $(L^{\Phi},\fbs{B_W})$ is hypocoercive, we strengthen our assumptions. Indeed, by means of \Cref{thm:inf-dim-hypoc-applied}, we need to check \nameref{ass:qc-negative-type}--\nameref{ass:inf-dim-u-poincare} with either \nameref{ass:inf-dim-matrix-bounded-ev} or \nameref{ass:inf-dim-matrix-bounded-ev_star}, as well as Assumption \nameref{ass:L_Phi-convex-reg-est} and \nameref{ass:ess-N_Phi}.

Assume that
\begin{align*}
\sigma_2\geq \alpha_2,\quad \sigma_3\geq \frac{3}{2}\alpha_2
\end{align*}and the potential $\Phi$ is as described in \Cref{lem:appl_unb_pot_last}.
In order to show that the semigroup generated  $(L^{\Phi},D(L^{\Phi}))$ is hypocoercive, we additionally assume that $\phi=\phi_1+\phi_2$, where $\phi_1$ is convex and $\phi_2$ is such that the corresponding $\Phi_2$ fulfils Item (Reg($\Phi_2$)) from \nameref{ass:L_Phi-convex-reg-est} and 
\[
2\sigma_1-\alpha_2\leq \frac{\alpha_1}{2}.
\]

\bigskip

Assumption \nameref{ass:qc-negative-type} is obviously valid. Moreover, $\lambda_{22,k}(v)\geq \lambda_k^{\alpha_2}$ for all $k\in\N$ and $v\in V$, gives us validity of Assumption \nameref{ass:inf-dim-v-poincare}.

We verify Assumption \nameref{ass:L_Phi-convex-reg-est} by means of the Moreau-Yosida approximation, using \Cref{ex:yoshida_one}, \Cref{ex:yoshida_two} and \Cref{rem:pot_approx_alter}, as well as Item (ii) from \Cref{rem:weak_conv_measure}.
%

Consequently, Assumption \nameref{ass:inf-dim-u-poincare} follows by Item (ii) from \Cref{rem:inf-dim-eigenvalue-estimates} and the fact that $2\sigma_1-\alpha_2\leq \frac{\alpha_1}{2}\leq \alpha_1$.


Item (i) and (ii) from assumption
\nameref{ass:inf-dim-matrix-bounded-ev} hold for 
\[
C_1=1\quad\text{and}\quad C_{2}(v)= 	1\quad v\in U,
\]by choosing the natural decomposition for $K_{22}$ into $K_1$ and $K_2$ induced by its definition. At this point, it is important that $\sigma_2\geq \alpha_2$ and $\sigma_3\geq \frac{3	}{2}\alpha_2$.
For Item (iii), note that\[ \alpha_{n}^{22}(v)\leq 2(2+\norm{v}_U)\lambda_{n}^{\sigma_3-\frac{\alpha_2}{2}},\] for all $v\in U$, which describes an $\ell^2$-sequence, since
\[
\sigma_{3}-\frac{\alpha_2}{2}\geq  {\alpha_2}> \frac{1}{2}.
\]
Moreover,
\[
\int_U  \|({\alpha_n^{22}}(v))_{n\in\N}\|^2_{\ell^2}\,\mu_2(\mathrm{d}v)\leq 4\|(\lambda_{n}^{\sigma_{3}-\frac{\alpha_2}{2}})_{n\in\N}\|			^2_{\ell^2}\int_U(2+\norm{v}_U)^2\,\mu_2(\mathrm{d}v)<\infty.
\] 

Lastly, we check Assumption \nameref{ass:ess-N_Phi}. 
%
Let $(B,D(B))$ be the closure of the operator $(-Q_1^{-1}C, \lin{d_1,d_2,\dots})$, then $(B,D(B))$ is self-adjoint with

\begin{equation}\label{B_self}
(Bu,u)_U=(-Q^{-\alpha_1-\alpha_2+2\sigma_1}u,u)_U\leq -{\lambda_1^{-\alpha_1-\alpha_2+2\sigma_1}}\norm{u}_{U}^2\;\text{for all}\; u\in \lin{d_1,d_2,\dots}.
\end{equation}
In the inequality above, we used $-\alpha_1-\alpha_2+2\sigma_1<0$, which is true, as we assume $2\sigma_1-\alpha_2\leq \frac{\alpha_1}{2}$.
By definition of $(B,D(B))$, Inequality \eqref{B_self} also holds for $u\in D(B)$. Moreover, 
$C=(-B)^{-\varepsilon}$ for $\varepsilon\defeq -\frac{2\sigma_1-\alpha_2}{2\sigma_1-\alpha_2-\alpha_1}$ and $\varepsilon\in (0,1)$ using $2\sigma_1-\alpha_2>0$ and $2\sigma_1-\alpha_2\leq \frac{\alpha_1}{2}$. Further, $(-B)^{-(1+\varepsilon)}=Q_1\in\loppt{U}$, $e^{-\Phi}\in L^p(U;\mu_1)$ for all $p\in[1,\infty)$ and finally $\Phi\in W_{C^{\frac12}}^{1,4}(U;\mu_1^{\Phi})$ by \Cref{rem:pot_approx_alter} and \Cref{prop:IBP_PHI_most_gen}. 

\cite[Theorem 3.2]{DP00} is consequently applicable and Assumption  \nameref{ass:ess-N_Phi} follows.

\bigskip

\subsection{The process}
Suppose that \Cref{Cor:appl_ess_sess_Lfour} is applicable. Then, as described in th beginning of the appendix in \Cref{sec:process}, there exists a right process, having infinite lifetime, with enlarged state space providing a solution to the martingale for $(L^{\Phi},D(L^{\Phi}))$ problem with respect to the equilibrium measure.
Next, suppose that
\begin{equation}\label{eq:L4eq}
\sigma_2, \sigma_3>\frac12,	\quad-\frac{\alpha_1}{2}+\sigma_1+\frac{\alpha_2}{2}>\frac12,\quad \frac{\alpha_1}{2}+\sigma_1-\frac{\alpha_2}{2}>\frac12\quad\text{and}\quad 2\sigma_1\geq \alpha_2.
\end{equation}
This implies that there exists a $\mu^{\Phi}$-invariant Hunt process
\[
\mathbf{M}=(\Omega,\mathcal{F},(\mathcal{F}_t)_{t\geq 0}, (X_t,Y_t)_{t\geq 0},(P_w)_{w\in W}),
\]
solving the martingale problem for $(L^{\Phi},D(L^{\Phi}))$ under ${P}_{{\mu}^{\Phi}}$ and with $P_{\mu^{\Phi}}$-a.s.~weakly continuous paths and infinite lifetime.


Indeed, Assumption  \nameref{ass:weak_sol} holds true, as $\lambda_{22,k}(v)\leq \lambda_k^{\alpha_2}+\lambda_k^{\sigma_2}+\lambda_k^{\sigma_3}$ for all $v\in V$. \nameref{ass:inf-dim-diff-bound} is checked by means of \Cref{rem:inf-dim-bounding-rho} and the assumption on the parameters described in \eqref{eq:L4eq}, since we already know that $\lambda_{k}^{\alpha_2}\leq \lambda_{22,k}(v)$ for each $v\in V$.

In summary, \Cref{thm:sto_ana_weak_sol_basis} is applicable and  $\textbf{M}$ is a stochastically and analytically weak solution with weakly continuous paths, in the sense of \Cref{thm:sto_ana_weak_sol_basis}, to the degenerate second order in time stochastic reaction-diffusion equation associated to $L^{\Phi}$. This degenerate second order in time stochastic reaction-diffusion equation corresponds to equation \eqref{eq:sde} and is given by

\begin{equation}
\begin{aligned}\label{eq:sde_reac}
	\mathrm{d}X_t&=(-{\partial_{\xi}^2})^{-\sigma_1+\alpha_2}Y_t\,\mathrm{d}t\\
	\mathrm{d}Y_t&=\sum_{i=1}^\infty \partx{i}K_{22}(Y_t)d_i-K_{22}(Y_t)(-{\partial_{\xi}^2})^{\alpha_2}Y_t-(-					{\partial_{\xi}^2})^{-\sigma_1+\alpha_1} X_t\\
	&\qquad-(-{\partial_{\xi}^2})^{-\sigma_1}\phi'(X_t)\,\mathrm{d}t
	+\sqrt{2K_{22}(Y_t)}\,\mathrm{d}W_t.
\end{aligned}
\end{equation}

\subsection{Summary}\label{sec:sum_reac_unbdgrad}

The table below summarizes the results we established in the previous sections.
It includes the combinations of parameters and conditions on the potential such that $(L^{\Phi},D(L^{\Phi}))$ is m-dissipative on $L^2(W;\mu^{\Phi})$, whereas we use as a standing assumption that the potential is as described in \Cref{lem:appl_unb_pot_last} or even more general in \Cref{rem_moregen_pot}. Moreover, recall that we assume the validity of item App($\Phi 3$) from Assumption \nameref{ass:pot_app} as a conjecture.The table also illustrates, under which conditions there is a $\mu^{\Phi}$-invariant Hunt process $\mathbf{M}$ providing a stochastically and analytically weak solution with $P_{\mu^{\Phi}}$-a.s.~weakly continuous paths and infinite lifetime for the infinite dimensional stochastic differential equation \Cref{eq:sde_reac}. Furthermore, it tells us when the semigroup $\sccs$ generated by $(L^{\Phi},D(L^{\Phi}))$ is hypocoercive.

\renewcommand*{\arraystretch}{1.9}
\begin{table}[h!]\label{tab:table3}
\begin{center}
	\caption{degenerate second order in time stochastic reaction-diffusion equation}
	\begin{tabular}{c|c|c|c}
		\hline
		\multicolumn{4}{l}{\makecell{\textbf{M-dissipativity and right process}\\ \textbf{solving the Martingale problem (enlarged state space)}}} \\
		\hline 
		\multicolumn{4}{c}{$\sigma_2\geq 0$, $\sigma_3\geq \frac{\min\lbrace\sigma_2,\alpha_2\rbrace}{2}$, then $\alpha,\beta,\gamma\geq 0$ according to \Cref{lem_ex_comb_one}}\\				 
		\hline 
		\multicolumn{4}{c}{$2\sigma_1-\min\lbrace\sigma_2,\alpha_2\rbrace\geq \frac{1}{2}$\quad\text{and}\quad$\sigma_1\geq \alpha_2\gamma$}\\
		\hline
		\makecell[c]{\textbf{$\mu^{\Phi}$-invariant Hunt process, weak sol.}\\ \textbf{with weakly cont.~paths, inf.~lifetime}} &	\multicolumn{3}{c}{\textbf{$\sccs$ hypocoercive}}\\
		\hline
		$\pm\frac{\alpha_1}{2}+\sigma_1\mp\frac{\alpha_2}{2}>\frac{1}{2}$
		&	\multicolumn{3}{c}{\makecell[c]{$\phi=\phi_1+\phi_2$ as in \Cref{rem_moregen_pot}, $\phi_1$ convex and \\ cor. $\Phi_2$ fulfils Item (Reg($\Phi_2$)) from \nameref{ass:L_Phi-convex-reg-est}		}	}\\
		\hline
		$\sigma_2,\sigma_3
		>\frac{1}{2}$&\multicolumn{3}{c}{ $2\sigma_1-\alpha_2\leq \frac{\alpha_1}{2}$}\\ 
		\hline
		$2\sigma_1\geq \alpha_2$&\multicolumn{3}{c}{$\sigma_2\geq \alpha_2$, $\sigma_3\geq \frac{3	}{2}\alpha_2$}\\  
		\hline
	\end{tabular}
\end{center}
\end{table}
\newpage
\begin{rem}
By using \Cref{ergodic}, we can combine the results from the table above to verify that $\textbf{M}$ is $L^2$-exponentially ergodic. Such a situation is e.g.~given by taking a suitable potential and assuming
$\alpha_1>1$, $-\alpha_1+4\alpha_2>2$, $\sigma_1=\frac{\alpha_1}{4}+\frac{\alpha_2}{2}$, $\sigma_2\geq\alpha_2$, $\sigma_3\geq\frac{3}{2}\alpha_2$. In this case we  choose $\alpha$ large, $\beta=\frac{1}{2\alpha_2}+1+\frac{\alpha_1-1}{16\alpha_2}$ and $\gamma=\frac{1}{4\alpha_2}+\frac12+\frac{\alpha_1-1}{8\alpha_2}$ to verify the inequalities in \Cref{prop:ver_app_ass} and \Cref{lem:appl_unb_pot_last}.
\end{rem}

\section{Appendix}\label{sec:process}

The results and statements in this chapter are a generalization of the ones already published in \cite{EG21_Pr}, where only additive noise was considered. We also extend the results from \cite{EG23}. Indeed, the results below are valid for situations where the gradient of the potential is not bounded, compare \Cref{ch:example_Lfour_ess_diss}.

Assume that we are in the setting described in \Cref{sec:inf_langevin_general}. Hence, $U$ and $V$ are two real separable Hilbert spaces, $W=U\times V$, $\Phi:U\rightarrow (-\infty,\infty]$ is normalized, bounded from below by zero and there is $\theta\in [0,\infty)$ such that $\Phi\in W_{Q_1^{\theta}}^{1,2}(U;\mu_1)$. Furthermore, $K_{12}\in \mathcal{L}(U;V)$, $K_{21}=K_{12}^*\in \mathcal{L}(V;U)$, $K_{22}(v)\in \lopp{V}$ for all $v\in V$ and $Q_1$ and $Q_2$ are the covariance operators of two infinite dimensional non-degenerate Gaussian measures $\mu_1$ and $\mu_2$, respectively.

We start with the situation where the infinite Kolmogorov operator $(L^{\Phi}D(L^{\Phi}))$, associated to the degenerate infinite dimensional stochastic Hamiltonian system with multiplicative noise, is m-dissipative.  
This holds, if we assume Assumption \nameref{ass:pot_app} to a apply \Cref{theo:ess_diss_phi_Lfour}. 
In this situation the strongly continuous contraction semigroup (resolvent) $\sccs$ ($(R_{\alpha}^{L^{\Phi}})_{\alpha >0}$), generated by $(L^{\Phi}D(L^{\Phi}))$, is sub-Markovian. Moreover, $(L^{\Phi}D(L^{\Phi}))$ is conservative and has $\mu^{\Phi}$ as an invariant measure.

Equip $W$ with the classical strong topology. A direct application of \cite[Theorem~2.2]{BBR06_Right_process} shows the existence of a Lusin topological space $(W_1,\mathcal{T}_1)$ with $W\subseteq W_1$ and $W\in \borel_{\mathcal{T}_1}(W_1)$ and a right process 
\[
\bar{\mathbf{M}}=(\bar{\Omega},\bar{\mathcal{F}},(\bar{\mathcal{F}}_t)_{t\geq 0}, (\bar{X_t},\bar{Y_t})_{t\geq 0},(\bar{P}_w)_{w\in W_1})
\]
with state space $W_1$ such that its resolvent, regarded on $L^2(W_1,\bar{\mu}^{\Phi})$, coincides with the resolvent $(R_{\alpha}^{L^{\Phi}})_{\alpha >0}$. Recall that $\bar{\mu}^{\Phi}$ is the measure on $(W_1,\borel_{\mathcal{T}_1}(W_1))$ extending $\mu^{\Phi}$ by zero on $W_1\setminus W$. Note that $(R_{\alpha}^{L^{\Phi}})_{\alpha >0}$ and $(L^{\Phi},D(L^{\Phi}))$ can also be considered on $L^2(W_1;\bar{\mu}^{\Phi})$, since $\bar{\mu}^{\Phi}(W_1\setminus W)=0$. Remember the equilibrium measure  $\bar{P}_{\bar{\mu}^{\Phi}}\defeq \int_{W_1} \bar{P}_w\,\bar{\mu}^{\Phi}(\mathrm{d}w)$. As $(L^{\Phi},D(L^{\Phi}))$ is conservative, we are able to apply \cite[Lemma 2.1.14]{Con2011} obtain that the process has infinite lifetime $\bar{P}_{\bar{\mu}^{\Phi}}$-a.s..

In the next proposition we establish that $\bar{\mathbf{M}}$ solves the martingale problem for the operator $(L^{\Phi},D(L^{\Phi}))$ considered on $L^2(W_1;\bar{\mu}^{\Phi})$ with respect to $\bar{P}_{\mu^{\Phi}}$. 
%
\begin{prop}\label{rem:inf-dim-mart-prob}
The right process $\bar{\mathbf{M}}$ with state space $W_1$ solves the martingale problem for $(L^{\Phi},D(L^{\Phi}))$ considered on $L^2(W_1;\bar{\mu}^{\Phi})$ with respect to $\bar{P}_{\mu^{\Phi}}$.
\end{prop}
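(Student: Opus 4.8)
The plan is to reduce the assertion to the well-known correspondence between right processes and their generators. Recall that $\bar{\mathbf{M}}$ is a right process whose resolvent, regarded on $L^2(W_1;\bar{\mu}^{\Phi})$, coincides with $(R_\alpha^{L^{\Phi}})_{\alpha>0}$, the resolvent generated by $(L^{\Phi},D(L^{\Phi}))$. First I would recall the general fact (see e.g. the potential-theoretic framework of \cite{BBR06_Right_process} or standard references on Hunt/right processes, together with \cite[Capter I.4]{MaRockner}) that for a right process with transition semigroup $(p_t)_{t\geq 0}$ whose $L^2$-realization agrees with a strongly continuous semigroup $\sccs$ generated by $(L^{\Phi},D(L^{\Phi}))$, and for any $f\in D(L^{\Phi})$, the process
\[
f(\bar{X}_t,\bar{Y}_t) - f(\bar{X}_0,\bar{Y}_0) - \int_0^t L^{\Phi}f(\bar{X}_s,\bar{Y}_s)\,\mathrm{d}s
\]
is a martingale under $\bar{P}_w$ for $\bar\mu^{\Phi}$-a.e.\ $w\in W_1$, and hence under the averaged measure $\bar{P}_{\bar\mu^{\Phi}} = \int_{W_1}\bar{P}_w\,\bar\mu^{\Phi}(\mathrm{d}w)$.

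**Carrying out the argument via resolvent identities.** Concretely, I would proceed as follows. Fix $g\in\fbs{B_W}$ and $\alpha\in(0,\infty)$ from Item App($\Phi3$) of Assumption \nameref{ass:pot_app}, and set $f\defeq R_\alpha^{L^{\Phi}}g\in D(L^{\Phi})$, so that $\alpha f - L^{\Phi}f = g$. The resolvent of $\bar{\mathbf{M}}$ coincides with $R_\alpha^{L^{\Phi}}$ on $L^2(W_1;\bar\mu^{\Phi})$, which means
\[
f(w) = \bar{\mathbb{E}}_w\Bigl[\int_0^\infty e^{-\alpha s} g(\bar{X}_s,\bar{Y}_s)\,\mathrm{d}s\Bigr]
\qquad\text{for }\bar\mu^{\Phi}\text{-a.e.\ }w\in W_1.
\]
Applying the Markov property at time $t$ and using the $\bar\mu^{\Phi}$-version of the semigroup identity $p_t f = \bar{\mathbb{E}}_{\cdot}[f(\bar{X}_t,\bar{Y}_t)]$, one obtains the standard computation showing that $M_t^f \defeq e^{-\alpha t} f(\bar{X}_t,\bar{Y}_t) - f(\bar{X}_0,\bar{Y}_0) + \int_0^t e^{-\alpha s} g(\bar{X}_s,\bar{Y}_s)\,\mathrm{d}s$ is an $(\bar{\mathcal{F}}_t)$-martingale under $\bar{P}_{\bar\mu^{\Phi}}$; rewriting $g = \alpha f - L^{\Phi}f$ and undoing the exponential weight via integration by parts then yields that $f(\bar{X}_t,\bar{Y}_t) - f(\bar{X}_0,\bar{Y}_0) - \int_0^t L^{\Phi}f(\bar{X}_s,\bar{Y}_s)\,\mathrm{d}s$ is a martingale. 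Since $R_\alpha^{L^{\Phi}}(\fbs{B_W})$ is dense in $D(L^{\Phi})$ in the graph norm (by essential m-dissipativity established in \Cref{theo:ess_diss_phi_Lfour}, $\fbs{B_W}$ is a core, hence $R_\alpha^{L^{\Phi}}(\fbs{B_W}) = R_\alpha^{L^{\Phi}}((\alpha-L^{\Phi})(\fbs{B_W}))$ is a core too), a limiting argument in $L^2(W_1;\bar\mu^{\Phi})$ extends the martingale property to all $f\in D(L^{\Phi})$, using that $L^2$-convergence of $f_n\to f$ and $L^{\Phi}f_n\to L^{\Phi}f$ transfers to the martingale increments under $\bar{P}_{\bar\mu^{\Phi}}$ because $\bar{P}_{\bar\mu^{\Phi}}$ has one-dimensional marginals dominated by $\bar\mu^{\Phi}$ (invariance of $\mu^{\Phi}$).

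**The main obstacle.** The delicate point, and where I would spend the most care, is the passage from the $L^2$-statement to a genuine pathwise martingale statement: $f(\bar{X}_t,\bar{Y}_t)$ is a priori only defined up to $\bar\mu^{\Phi}$-equivalence classes, so one must check that the relevant functionals are $\bar{P}_{\bar\mu^{\Phi}}$-integrable and that the exceptional sets are genuinely null — this is exactly where the invariance of $\mu^{\Phi}$ under $\sccs$ (from \Cref{cor:invariance_ASP}) is used, guaranteeing $\bar{P}_{\bar\mu^{\Phi}}\circ(\bar{X}_t,\bar{Y}_t)^{-1} = \bar\mu^{\Phi}$ for every $t$, and the sub-Markovianity/conservativity of $\sccs$ ensures the Bochner integral $\int_0^t |L^{\Phi}f|(\bar{X}_s,\bar{Y}_s)\,\mathrm{d}s$ has finite expectation by Fubini. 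I expect this to be essentially a careful bookkeeping exercise following the template of \cite{EG23} and \cite{EG21_Pr}, rather than requiring a new idea; the conceptual content is entirely contained in the coincidence of resolvents and the invariance of the equilibrium measure. Finally I would remark that, since $\fbs{B_W}$ is a core, it in fact suffices to verify the martingale property for test functions $f\in\fbs{B_W}$, on which $L^{\Phi}f$ is given by the explicit formula of \Cref{def:inf_Lang_op}, which is the form in which the statement will be used in the sequel.
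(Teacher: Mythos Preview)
Your proposal is correct and follows the same approach as the paper: the paper's proof simply invokes \cite[Proposition~1.4]{BBR06} (noting that the argument there, stated for $\mu^{\Phi}$-standard right processes, carries over verbatim to right processes), and what you have written is precisely a sketch of the content of that proposition---the resolvent/Markov-property computation together with the use of invariance of $\bar\mu^{\Phi}$ to control exceptional sets and pass from a core to all of $D(L^{\Phi})$.
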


\begin{proof}

We use the exact same arguments as in \cite[Proposition~1.4]{BBR06} to show that $\bar{\mathbf{M}}$ solves the martingale problem for $(L^{\Phi},D(L^{\Phi}))$ with respect to $\bar{P}_{\mu^{\Phi}}$. The assertion in \cite[Proposition~1.4]{BBR06} is stated for $\mu^{\Phi}$-standard right processes but the argumentation works analogously for right processes.

\end{proof}For the following considerations we consider $(L^{\Phi},D(L^{\Phi}))$ on $L^2(W;\mu^{\Phi})$.
Before we continue to construct a more regular process, we provide another core for $(L^{\Phi},D(L^{\Phi}))$. This core is essential to apply \cite[Theorem 1.1]{BBR06}.
\begin{lem}{\cite[Theorem 4.3]{EG21_Pr}}\label{lem:count_Q_Algebra}
There exists a countable $\Q$-algebra $\mathcal{A}\subseteq \mathcal{F}C_c^{\infty}(B_W)$, which is core for $(L^{\Phi},D(L^{\Phi}))$ and also separates the points of $W$.
\end{lem}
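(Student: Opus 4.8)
The plan is to construct $\mathcal{A}$ from a fixed countable family of one-variable test functions composed with the coordinate functionals attached to the orthonormal basis $B_W$, and then to check, in turn, that $\mathcal{A}$ is a countable $\Q$-algebra contained in $\mathcal{F}C_c^\infty(B_W)$, that it separates the points of $W$, and that it is dense in the graph norm of $(L^\Phi,D(L^\Phi))$.

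\textbf{Construction and first properties.} I would fix a countable set $\Psi\subseteq C_c^\infty(\R)$ that is rich enough in the following sense: for every $d\in\N$, every $\varphi\in C_c^\infty(\R^d)$ and every $\varepsilon>0$ there are finitely many rationals $c_1,\dots,c_N$ and functions $\psi_{j,i}\in\Psi$ ($1\le j\le N$, $1\le i\le d$) such that $\sum_j c_j\prod_i\psi_{j,i}$ is supported in a common ball and is $\varepsilon$-close to $\varphi$ in $C^2(\R^d)$. Such a $\Psi$ exists because the algebraic tensor product $C_c^\infty(\R)^{\otimes d}$ is dense in $C_c^\infty(\R^d)$ and $C_c^\infty$ is separable for the $C^2$-norm with controlled supports. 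Enumerating $B_W=(\ell_k)_{k\in\N}$, let $\mathcal{A}$ be the $\Q$-algebra generated by $\{\,w\mapsto\psi((w,\ell_k)_W):\psi\in\Psi,\ k\in\N\,\}$. Then $\mathcal{A}$ is countable; since a finite product of compactly supported smooth cylinder functions is again such a function (after a trivial extension to a common coordinate count), $\mathcal{A}\subseteq\mathcal{F}C_c^\infty(B_W)$. Separation of points is immediate: if $w_1\ne w_2$, pick $k$ with $(w_1,\ell_k)_W\ne(w_2,\ell_k)_W$, since $B_W$ is an orthonormal basis, and then $\psi\in\Psi$ with $\psi((w_1,\ell_k)_W)\ne\psi((w_2,\ell_k)_W)$.

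\textbf{Reduction to compactly supported generators.} By \Cref{theo:ess_diss_phi_Lfour}, $\fbs{B_W}$ is a core for $(L^\Phi,D(L^\Phi))$, so it suffices to show that every $f\in\fbs{B_W}$ lies in the graph-norm closure of $\mathcal{A}$. Write $f=\varphi\circ(p_n^U,p_n^V)$ with, without loss of generality, $m^K(n)=n$. Choosing cut-offs $\chi_j\in C_c^\infty(\R^n\times\R^n)$ equal to $1$ on the ball of radius $j$, with $0\le\chi_j\le1$ and uniformly bounded first and second derivatives, I would set $f_j\defeq(\varphi\chi_j)\circ(p_n^U,p_n^V)\in\mathcal{F}C_c^\infty(B_W)$. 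Using the explicit formula for $L^\Phi f$ from \Cref{rem:inf-dim-op-well-defined} and the product rule, $L^\Phi f_j-L^\Phi f$ is a finite sum of terms of the form (a uniformly bounded function of $(p_n^Uu,p_n^Vv)$ supported in $\{\,\norm{(p_n^Uu,p_n^Vv)}\ge j\,\}$) times one of finitely many fixed functions among $1$, $\norm{v}_V$, $(u,Q_1^{-1}d_i)_U$, $(v,Q_2^{-1}e_i)_V$, $\partx{i}\Phi$, all of which belong to $L^2(W;\mu^\Phi)$ — by \Cref{theo:fernique} and \Cref{lem:Gaussianmoments} for the Gaussian functionals, by \Cref{def:inf-dim-operators} for the $K_{22}$-contributions, and by \Cref{ass:Phi_general} together with $\mu_1^\Phi\le\mu_1$ for $\partx{i}\Phi$. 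Since $\mu^\Phi(\{\,\norm{(p_n^Uu,p_n^Vv)}\ge j\,\})\to0$, dominated convergence gives $L^\Phi f_j\to L^\Phi f$ in $L^2(W;\mu^\Phi)$, and likewise $f_j\to f$; it then remains to approximate an arbitrary $g=\varphi\circ(p_n^U,p_n^V)\in\mathcal{F}C_c^\infty(B_W)$ by elements of $\mathcal{A}$.

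\textbf{Approximation inside $\mathcal{F}C_c^\infty(B_W)$, and the main obstacle.} By the choice of $\Psi$ there are $g_m=\varphi_m\circ(p_n^U,p_n^V)\in\mathcal{A}$ with $\varphi_m\to\varphi$ in $C^2(\R^n\times\R^n)$ and all $\varphi_m$ supported in a fixed ball. Applying the expression for $L^\Phi$ once more, $L^\Phi g_m-L^\Phi g$ is a finite sum of terms (a derivative of $\varphi_m-\varphi$ of order at most two, evaluated at $(p_n^Uu,p_n^Vv)$) times a uniformly bounded function times one of the $L^2(W;\mu^\Phi)$-functions listed above; bounding the $L^2(W;\mu^\Phi)$-norm by the supremum norm of that derivative times the norm of the fixed function gives $L^\Phi g_m\to L^\Phi g$, and $g_m\to g$ as well. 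Chaining the two approximations shows $\mathcal{A}$ is dense in $\fbs{B_W}$, hence in $D(L^\Phi)$, in the graph norm, i.e.\ $\mathcal{A}$ is a core. The delicate point is precisely that $L^\Phi$ is second order with partly unbounded coefficients: one must approximate the generating functions in $C^2$ (not merely uniformly) and must know that the coefficients produced by $L^\Phi$ — in particular $\norm{v}_V$, the linear Gaussian functionals, and $D\Phi$ — lie in $L^2(W;\mu^\Phi)$, so that $C^2$-closeness transfers to $L^2$-closeness of the images; the remaining bookkeeping (trivial coordinate extensions, rationalising coefficients, keeping supports controlled) is routine.
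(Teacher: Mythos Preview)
Your overall strategy (build a countable family, show it is a core by controlling $L^\Phi$ under $C^2$-approximation and cut-offs) is sound, and the graph-norm estimates you sketch are essentially right. The paper itself gives no argument here --- it simply cites \cite[Theorem~4.3]{EG21_Pr} --- so there is nothing to compare against. However, your construction of $\mathcal{A}$ has a genuine gap.

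The functions $w\mapsto\psi((w,\ell_k)_W)$ with $\psi\in C_c^\infty(\R)$ are \emph{not} in $\mathcal{F}C_c^\infty(B_W)$. By the paper's Definition~\ref{def:core}, an element of $\mathcal{F}C_c^\infty(B_W,n)$ is of the form $\varphi\circ(p_n^U,p_n^V)$ with $\varphi\in C_c^\infty(\R^n\times\R^n)$, i.e.\ $\varphi$ must have compact support in \emph{all} $2n$ variables. A one-coordinate function $\psi((u,d_1)_U)$, written on $\R^n\times\R^n$, reads $\varphi(x,y)=\psi(x_1)$, whose support is a strip, not a compact set; so it lies in $\mathcal{F}C_b^\infty(B_W)$ but not in $\mathcal{F}C_c^\infty(B_W)$. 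The same obstruction shows that $\mathcal{F}C_c^\infty(B_W)$ is closed under multiplication but \emph{not} under addition: if $f\in\mathcal{F}C_c^\infty(B_W,n)$ and $g\in\mathcal{F}C_c^\infty(B_W,m)$ with $n<m$, the trivial extension of the generator of $f$ to $\R^m\times\R^m$ loses compact support, so $f+g$ is generically not in $\mathcal{F}C_c^\infty(B_W)$. Hence the $\Q$-algebra you generate from one-variable building blocks is not contained in $\mathcal{F}C_c^\infty(B_W)$, and the argument ``a finite product of compactly supported smooth cylinder functions is again such a function (after a trivial extension to a common coordinate count)'' fails precisely because the trivial extension destroys compact support.

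To repair this you must build $\mathcal{A}$ so that each element is already a cylinder function with compactly supported generator at some fixed level; for instance, one can take for each $n$ a countable $\Q$-algebra $\mathcal{A}_n\subseteq\mathcal{F}C_c^\infty(B_W,n)$ dense in the graph norm on that level, but then one still has to arrange the algebra property across different $n$, which is exactly the nontrivial point handled in \cite[Theorem~4.3]{EG21_Pr}. Your cut-off reduction from $\fbs{B_W}$ to $\mathcal{F}C_c^\infty(B_W)$ and the $C^2$-approximation inside a fixed $\mathcal{F}C_c^\infty(B_W,n)$ are correct and would survive once the construction of $\mathcal{A}$ is fixed.
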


For the rest of this chapter, let $\mathcal{T}$ denote the weak topology on $W$. Recall that the Borel sigma algebra, with respect to the strong and weak topology on $W$, coincide and are equal to the sigma algebra generated by $\mathcal{A}$.
Next, we establish that \[F_k\defeq \{w\in W: \|w\|_W\leq k\}\] defines a $\mu^{\Phi}$-nest of $\mathcal{T}$-compact sets. Our strategy is based on \cite[Section 5]{BBR06}, where $(F_k)_{k\in\N}$ was used to construct martingale solutions for generators associated to dissipative stochastic differential equations on Hilbert spaces.
We start with the introduction of a new assumption.

\begin{cond}{\textbf{K5}}\label{ass:inf-dim-diff-bound}\leavevmode 
\begin{enumerate}[(i)]
	\item There exists $\rho\in L^1(W;\mu^{\Phi})$ such that for each $n\in\N$, $\rho_n$ defined via $\rho_n(u,v)\defeq 					\rho(P_n^Uu,P_n^Vv)$ is in $L^1(W;\mu^{\Phi})$. Moreover, the sequence $(\rho_{m^K(n)})_{n\in\N}$ converges to $\rho$ in $L^1(W;						\mu^{\Phi})$ as $n\to\infty$ and 
	\[
	\begin{aligned}
		&\quad (P_{m^K(n)}^V v,Q_2^{-1}K_{22}(v)P_{m^K(n)}^Vv)_V+(D\Phi(u),K_{21}P_{m^K(n)}^V v)_U
		\\&\quad+	(P_{m^K(n)}^U u,Q_1^{-1}K_{21}P_{m^K(n)}^V v)_U- (Q_2^{-1}K_{12}P_{m^K(n)}^U u,P_{m^K(n)}^V v)_V\\&	\leq \rho_{m^K(n)}(u,v)
	\end{aligned}
	\]
	for all $n\in\N$ and $(u,v)\in W$.
	\item There exist $a,b\in\N$ such that for all $n\in\N$ and $v\in V$
	\[
	\sum_{j=1}^n\norm{\party{j}K_{22}(v)e_j}_V\leq a(1+ \norm{v}^b_V).
	\]
\end{enumerate}
\end{cond}
\begin{prop}\label{prop:inf-dim-assoc-process}
Let Assumption \nameref{ass:inf-dim-diff-bound} be valid. Then $(F_k)_{k\in\N}$ is $\mu^{\Phi}$-nest of $\mathcal{T}$-compact sets for $					(L^{\Phi},D(L^{\Phi}))$.
\end{prop}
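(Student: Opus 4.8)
The plan is to follow the strategy of \cite[Section 5]{BBR06}: we must verify two things, namely that each $F_k$ is $\mathcal{T}$-compact, and that $(F_k)_{k\in\N}$ is a $\mu^{\Phi}$-nest, i.e.\ that $\lim_{k\to\infty}\Capa(W\setminus F_k)=0$ for the capacity associated with $(L^{\Phi},D(L^{\Phi}))$. The first point is immediate: $F_k=\{w\in W:\|w\|_W\leq k\}$ is a closed ball in the Hilbert space $W$, hence bounded and closed, and therefore weakly sequentially compact by the Banach--Alaoglu theorem; since $W$ is separable, the weak topology on $F_k$ is metrizable, so weak sequential compactness gives $\mathcal{T}$-compactness. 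We also note $\mu^{\Phi}(W\setminus F_k)\to 0$ since $\|\cdot\|_W\in L^2(W;\mu^{\Phi})$ by \Cref{lem:Gaussianmoments} together with $e^{-\Phi}\leq 1$ (recall $\Phi\geq 0$).

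The heart of the proof is the capacity estimate. First I would approximate the indicator of $W\setminus F_k$: fix a smooth cutoff and build, for $N>k$, a function $u_{k,N}\in\mathcal{A}$ (or in $\fbs{B_W}$, then pass to the core $\mathcal A$ of \Cref{lem:count_Q_Algebra}) with $0\leq u_{k,N}\leq 1$, $u_{k,N}=1$ on a neighbourhood of $W\setminus F_N$ and $u_{k,N}=0$ on $F_k$, roughly of the form $\chi(\|P_{m^K(n)}w\|_W^2)$ for suitable $n$. The standard route is to use the energy form: by \Cref{cor:invariance_ASP} the Carré du champ is $\Gamma(f,f)=(K_{22}D_2f,D_2f)_V$, so one controls $\Capa(W\setminus F_N)$ by $\int_W\Gamma(v_{k,N},v_{k,N})\,\mathrm{d}\mu^{\Phi}+\int_W v_{k,N}^2\,\mathrm{d}\mu^{\Phi}$ for an appropriate equilibrium-type potential $v_{k,N}$; alternatively, and more in line with \cite{BBR06}, one applies $L^{\Phi}$ to a function $g_n=\psi(\|P_{m^K(n)}w\|^2_W)$ and uses the resolvent inequality $\Capa(\{g_n>\varepsilon\})\leq \tfrac{1}{\varepsilon}\|(1-L^{\Phi})R_1^{L^{\Phi}} g_n\|$-type bounds, so that one needs $\|L^{\Phi}g_n\|_{L^1(\mu^{\Phi})}$ and $\|g_n\|_{L^1(\mu^{\Phi})}$ to be controlled uniformly in $n$ and to vanish as $k\to\infty$. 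Computing $L^{\Phi}g_n$ explicitly via \Cref{def:inf_Lang_op} produces exactly the four first-order drift terms appearing on the left-hand side of Assumption \nameref{ass:inf-dim-diff-bound}(i), together with a second-order/trace term of the shape $\tr[K_{22}\,D^2_2 g_n]+\sum_j(\party{j}K_{22}D_2g_n,e_j)_V$; the former is dominated by $\rho_{m^K(n)}\in L^1(W;\mu^{\Phi})$ with $\rho_{m^K(n)}\to\rho$ in $L^1$, and the latter is handled by Assumption \nameref{ass:inf-dim-diff-bound}(ii), which bounds $\sum_{j}\|\party{j}K_{22}(v)e_j\|_V$ polynomially so that, against the Gaussian tails controlled by \Cref{theo:fernique} and \Cref{lem:Gaussianmoments}, everything is integrable uniformly in $n$.

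Concretely, the steps in order: (1) reduce to showing $\Capa(W\setminus F_k)\to 0$; (2) fix $\psi_k\in C^\infty([0,\infty))$ with $\psi_k=0$ on $[0,k^2]$, $\psi_k=1$ on $[(k+1)^2,\infty)$, $0\le\psi_k\le 1$, and set $g_{k,n}\defeq\psi_k(\|P_{m^K(n)}^Uu\|_U^2+\|P_{m^K(n)}^Vv\|_V^2)\in\fbs{B_W}$; (3) compute $L^{\Phi}g_{k,n}$ and split it into the drift part bounded by $\rho_{m^K(n)}\psi_k'$-type factors via \nameref{ass:inf-dim-diff-bound}(i) and the diffusion part bounded via \nameref{ass:inf-dim-diff-bound}(ii) and Fernique; (4) pass $n\to\infty$ using the $L^1$-convergence $\rho_{m^K(n)}\to\rho$ and dominated convergence to obtain a bound $\|L^{\Phi}g_{k}\|_{L^1(\mu^{\Phi})}+\|g_k\|_{L^1(\mu^{\Phi})}\leq C(k)$ with $g_k$ the limiting $\psi_k(\|w\|_W^2)$ and $C(k)\to 0$ as $k\to\infty$ (the limiting drift being $\mu^{\Phi}$-integrable and supported on $\{\|w\|_W\ge k\}$); (5) invoke the capacity estimate of \cite[Section 5]{BBR06} / \cite[Theorem 1.1]{BBR06} with the core $\mathcal A$ to conclude $\Capa(W\setminus F_{k+1})\leq C(k)\to 0$, hence $(F_k)_{k\in\N}$ is a $\mu^{\Phi}$-nest of $\mathcal{T}$-compact sets. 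The main obstacle I anticipate is step (4)--(5): making the capacity bound rigorous requires care about which functional of $g_{k,n}$ actually bounds the capacity (one typically needs $1-L^{\Phi}$ applied to a $1$-excessive regularization, not $g_{k,n}$ itself), and controlling the limit $n\to\infty$ of $L^{\Phi}g_{k,n}$ uniformly — here the precise role of the invariance sequence $m^K(n)$ and of the $L^1$-convergence hypotheses in \nameref{ass:inf-dim-diff-bound} is essential, and matching them to the hypotheses of \cite[Section 5]{BBR06} is the delicate bookkeeping.
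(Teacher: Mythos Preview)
Your overall strategy --- Banach--Alaoglu for $\mathcal{T}$-compactness, then feed the drift bounds of Assumption~\nameref{ass:inf-dim-diff-bound} into the machinery of \cite[Section~5]{BBR06} --- is the same as the paper's. The execution, however, differs in an important way.

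The paper does \emph{not} use bounded cutoffs $\psi_k(\|P_{m^K(n)}w\|_W^2)$. Instead it works directly with the unbounded quadratic $N_n(u,v)\defeq\|P_n^U u\|_U^2+\|P_n^V v\|_V^2$, shows (via a cutoff approximation) that $N_n\in D(L^{\Phi})$, and computes $L^{\Phi}N_n$ explicitly. Assumption~\nameref{ass:inf-dim-diff-bound}(i) then bounds the drift terms from below by $-\rho_n$, and item~(ii) controls $\sum_{j}(\party{j}K_{22}(v)e_j,P_n^Vv)_V$ by a polynomial in $\|v\|_V$, yielding the \emph{Lyapunov inequality}
\[
(\operatorname{Id}-L^{\Phi})N_n \leq g_n \quad \mu^{\Phi}\text{-a.e.},\qquad g_n\defeq 2\rho_n + 2h_n + N_n,
\]
with $(g_n)$ convergent in $L^1(W;\mu^{\Phi})$. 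This is exactly the hypothesis of \cite[Proposition~5.5]{BBR06}, which then directly delivers the $\mu^{\Phi}$-nest property of the sublevel sets $F_k=\{N\leq k^2\}$.

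Your route via bounded cutoffs $\psi_k$ and a direct capacity estimate is in principle workable, but --- as you yourself flag in step~(5) --- making the link from $\|L^{\Phi}g_{k,n}\|_{L^1}$ to a capacity bound is not straightforward in the non-symmetric setting; one needs a $1$-excessive regularization, not $g_{k,n}$ itself. The paper sidesteps this entirely: the Lyapunov criterion of \cite{BBR06} is tailor-made for inequalities of the form $(\operatorname{Id}-L)N_n\leq g_n$, so no capacity estimate has to be carried out by hand. What your approach would buy, if completed, is a slightly more self-contained argument; what the paper's approach buys is brevity and a clean invocation of an existing abstract criterion.
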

\begin{proof}
By the Theorem of Banach-Alaoglu $F_k$ is $\mathcal{T}$-compact for all $k\in\N$.
It remains to prove that $({F_k})_{k\in\N}$, is a $\mu^{\Phi}$-nest.
For notional purposes, we write $N(u,v)\defeq \|(u,v)\|_W^2$ and $N_n(u,v)=N(P_n^U u,P_n^V v)$ for $n\in\N$.
We only consider those $n\in\N$ that satisfy $n=m^K(n)$, which provide an increasing sequence.
By an approximation argument with a sequence of smooth cut-off functions, we see that $N_n\in D(L^{\Phi})$, compare also the proof of \cite[Theorem 4.3]{EG21_Pr}, with 
\[
\begin{aligned}
	&\quad \frac12 L^{\Phi}N_n(u,v)\\&=\tr[K_{22}(P_n^V v)]
	+\sum_{j=1}^n (\party{j}K_{22}(v)e_j,P_n^V v)_V
	-(P_n^V v,Q_2^{-1}K_{22}(v)P_n^V v)_V \\
	&\qquad-(D\Phi(u),K_{21}P_n^V v)_U- (P_n^U u,Q_1^{-1}K_{21}P_n^V v)_U
	+(P_n^V v,Q_2^{-1}K_{12}P_n^U u)_V \\
	&\geq \sum_{j=1}^n (\party{j}K_{22}(v)e_j,P_n^V v)_V
	-\rho_n(u,v).
\end{aligned}
\]
Using the second item from Assumption \nameref{ass:inf-dim-diff-bound}, we find $a,b\in\N$ with
\[ 
\begin{aligned}
\Big|\sum_{j=1}^n (\party{j}K_{22}(v)e_j,P_n^V v)_V \big|\leq a(1+ \norm{v}^b_V)\norm{P_n^V v}_V\eqdef  h_n(v).
\end{aligned}
\]
Note that $(h_n)_{n\in\N}$ converge in $L^1(V;\mu_2)$. In summary, this implies, when setting
\[
g_n(u,v)\defeq 2\big( \rho_n(u,v)+h_n(v)+\frac{1}{2} N_n(u,v)\big),
\]
that
\begin{equation}\label{eq:inf-dim-level-set-bound}
(\operatorname{Id}-L^{\Phi})N_n\leq g_n\quad \text{$\mu^{\Phi}$-a.e.}
\end{equation}
for all $n\in\N$ with $n=m^K(n)$.
We proceed as in \cite[Proposition 5.5]{BBR06}.
\end{proof}

\begin{rem}\label{rem:inf-dim-bounding-rho}
To verify the first item of Assumption we argue as in \cite[Remark 5.5]{EG23}
%
and check that 
\[
\sum_{i,j=1}^{\infty} \abs{(Q_1^{-\frac{1}{2}}d_i,K_{21}Q_2^{\frac{1}{2}}e_j)_U}<\infty\quad\text{and}\quad \sum_{i,j=1}^{\infty} 			\abs{(Q_2^{-\frac{1}{2}}e_i,K_{12}Q_1^{\frac{1}{2}}d_j)_V}<\infty.
\]

We secondly verify that $({(D\Phi,K_{21}P_n^V v)_U})_{n\in\N}$ converges in $L^1(W;\mu^{\Phi})$, in the case that $c_{\theta}\defeq \norm{Q_1 ^{\-\theta}K_{21}}_{\mathcal{L}(V;U)}<\infty$. Indeed, for each $m,n\in\N$ with $m\leq n$ and $\tilde{v}\in V$, we estimate 
\[
\begin{aligned}
	&\quad \int_W\abs{(D\Phi,K_{21}(P_n^V-P_m^V)v)_V}\,\mathrm{d}\mu^{\Phi}\\&\leq\norm{Q_1^{\theta}D\Phi}_{L^{2}(\mu_1)}\left(\int_V \norm{Q_1^{-\theta}K_{21}(P_m^V v-P_n^V v)}_V^2\,\mathrm{d}\mu_2\right)^{\frac12}\\
	&\leq \norm{Q_1^{\theta}D\Phi}_{L^{2}(\mu_1)}c_{\theta}\left(\int_V \norm{(P_m^V v-P_n^V v)}_V^2\,\mathrm{d}\mu_2\right)^{\frac12}\\
	&\leq \norm{Q_1^{\theta}D\Phi}_{L^{2}(\mu_1)}c_{\theta}\left(\sum_{i=m+1}^n\lambda_{2,i}\right)^{\frac12}.
\end{aligned}
\]As $Q_2$ has finite trace, we get that $({(D\Phi,K_{21}P_n^V v)_U})_{n\in\N}$ is a Cauchy sequence and therefore convergent in $L^1(W;\mu^{\Phi})$. In the above argumentation, boundedness of $Q_1^{\theta}D\Phi$ is not involved.

Lastly, suppose $K_{22}$ is diagonal, as described in \Cref{rem:too-diagonal} and that there is a sequence $(c_k)_{k\in\N}\in \ell^1(\N)$ such that $\lambda_{22,k}(v)\leq c_k$ for all $k\in\N$ and $v\in V$. Then, \[W\ni(u,v)\mapsto(P_{m^K(n)}^V v,Q_2^{-1}K_{22}(v)P_{m^K(n)}^Vv)_V\in\R \]is a Cauchy sequence in $L^1(W;\mu^{\Phi})$, using \Cref{lem:Gaussianmoments}. This consideration is especially useful, when combined with Assumption \nameref{ass:inf-dim-matrix-bounded-ev}.

\end{rem}
For the rest of this section, we assume that Assumption \nameref{ass:inf-dim-diff-bound} holds. As a direct consequence of [4, Proposition 1.4] and \cite[Lemma 2.1.8]{Con2011} we obtain the following proposition.
\begin{prop}\label{prop:inf-dim-path-prop}
There exists a $\mu^{\Phi}$-invariant Hunt process \[\mathbf{M}=(\Omega,\mathcal{F},(\mathcal{F}_t)_{t\geq 0}, (X_t,Y_t)_{t\geq 0},(P_w)_{w\in W})\] with $P_{\mu^{\Phi}}$-a.s.~weakly continuous paths and infinite lifetime, which is associated with $\sccs$ ($\sccr$) and solving the martingale problem for $(L^{\Phi},D(L^{\Phi}))$ with respect to $P_{\mu^{\Phi}}$. 
Further, if $f^2\in D(L^{\Phi})$ with $L^{\Phi}f\in L^4(W;\mu^{\Phi})$, then
\[
N_t^{[f],L^{\Phi}}\defeq \left(M_t^{[f],L^{\Phi}}\right)^2 - \int_0^t L^{\Phi}(f^2)(X_s,Y_s)-(2fL^{\Phi}f)(X_s,Y_s)\,\mathrm{d}s,
\quad t\geq 0
\]
describes an $(\mathcal{F}_t)_{t\geq 0}$-martingale.
\end{prop}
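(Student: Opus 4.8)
The plan is to invoke the general theory of right (Hunt) processes associated with strongly continuous sub-Markovian semigroups together with the potential-theoretic construction carried out in Propositions \ref{rem:inf-dim-mart-prob}, \ref{prop:inf-dim-assoc-process} and the abstract transfer results of \cite{BBR06}. First I would note that by \Cref{theo:ess_diss_phi_Lfour} and Assumption \nameref{ass:pot_app} the operator $(L^{\Phi},D(L^{\Phi}))$ generates a strongly continuous contraction semigroup $\sccs$ which is sub-Markovian and conservative, with invariant measure $\mu^{\Phi}$. By \Cref{prop:inf-dim-assoc-process} (which uses Assumption \nameref{ass:inf-dim-diff-bound}), the sets $F_k=\{w\in W:\|w\|_W\le k\}$ form a $\mu^{\Phi}$-nest of $\mathcal{T}$-compact sets for $(L^{\Phi},D(L^{\Phi}))$, and by \Cref{lem:count_Q_Algebra} there is a countable $\Q$-algebra $\mathcal{A}\subseteq \fbk{\infty}$ which is a core and separates the points of $W$. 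Equipping $W$ with the weak topology $\mathcal{T}$, the hypotheses of \cite[Theorem~1.1]{BBR06} are then satisfied, which directly yields a $\mu^{\Phi}$-invariant Hunt process $\mathbf{M}=(\Omega,\mathcal{F},(\mathcal{F}_t)_{t\ge0},(X_t,Y_t)_{t\ge0},(P_w)_{w\in W})$ with $P_{\mu^{\Phi}}$-a.s.~weakly continuous paths, associated with $\sccs$ (equivalently with the resolvent $\sccr$), and, by \cite[Proposition~1.4]{BBR06} applied to the core $\mathcal{A}$ (or directly to $D(L^{\Phi})$), solving the martingale problem for $(L^{\Phi},D(L^{\Phi}))$ with respect to $P_{\mu^{\Phi}}$, i.e.~for every $f\in D(L^{\Phi})$
\[
M_t^{[f],L^{\Phi}}\defeq f(X_t,Y_t)-f(X_0,Y_0)-\int_0^t L^{\Phi}f(X_s,Y_s)\,\mathrm{d}s,\quad t\ge 0,
\]
is an $(\mathcal{F}_t)_{t\ge0}$-martingale under $P_{\mu^{\Phi}}$. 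The infinite lifetime follows from conservativity of $(L^{\Phi},D(L^{\Phi}))$ via \cite[Lemma~2.1.8]{Con2011} (the same argument already used for $\bar{\mathbf M}$ through \cite[Lemma~2.1.14]{Con2011}), exactly as recorded in the statement of the proposition.

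For the second assertion — the quadratic variation identity — the plan is to apply the standard ``square of a martingale minus its angle bracket'' computation in the Markov/Dirichlet form setting. Given $f$ with $f^2\in D(L^{\Phi})$ and $L^{\Phi}f\in L^4(W;\mu^{\Phi})$, both $M_t^{[f],L^{\Phi}}$ and $M_t^{[f^2],L^{\Phi}}$ are martingales under $P_{\mu^{\Phi}}$ by the first part. I would then expand $\big(M_t^{[f],L^{\Phi}}\big)^2$ using Itô's formula for the discontinuous semimartingale $f(X_t,Y_t)$, or more elementarily compute $\ex_{\mu^{\Phi}}\big[(M_t^{[f]})^2-(M_s^{[f]})^2\mid\mathcal F_s\big]$ by polarisation together with the Markov property and the martingale property, obtaining that the predictable part of $(M_t^{[f],L^{\Phi}})^2$ equals $\int_0^t\big(L^{\Phi}(f^2)-2fL^{\Phi}f\big)(X_r,Y_r)\,\mathrm{d}r$. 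Concretely, writing $A_t^{[f]}\defeq \int_0^t L^{\Phi}f(X_s,Y_s)\,\mathrm ds$ one has $M_t^{[f^2]}-M_t^{[f]}\cdot(\text{stuff})$, and a direct bookkeeping of the terms $f(X_t)^2-f(X_s)^2$, cross terms with $A^{[f]}$, and $A^{[f^2]}$ gives that
\[
N_t^{[f],L^{\Phi}}=\big(M_t^{[f],L^{\Phi}}\big)^2-\int_0^t \big(L^{\Phi}(f^2)-2fL^{\Phi}f\big)(X_s,Y_s)\,\mathrm ds
\]
is a local martingale; the integrability hypotheses $f^2\in D(L^{\Phi})$, $L^{\Phi}f\in L^4(W;\mu^{\Phi})$ (hence $f\in L^4$ since $f=R_1^{L^{\Phi}}(f-L^{\Phi}f)$ and the resolvent is sub-Markovian, so $fL^{\Phi}f\in L^2$, and $\mu^{\Phi}$-invariance transports these to $L^2$-bounds on the process) upgrade it to a genuine $(\mathcal F_t)_{t\ge0}$-martingale under $P_{\mu^{\Phi}}$. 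The integrand $L^{\Phi}(f^2)-2fL^{\Phi}f$ is of course $2\Gamma(f,f)=2(K_{22}D_2f,D_2f)_V$ on $\fbk{\infty}$ by \Cref{cor:invariance_ASP}, which identifies the sharp bracket, but the martingale statement itself does not need this.

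I expect the main obstacle to be the careful justification of the integrability and the passage from local martingale to true martingale under the stationary measure $P_{\mu^{\Phi}}$, rather than the algebra of the Itô expansion. One must check that $\int_0^t|L^{\Phi}(f^2)|\,\mathrm ds$ and $\int_0^t|fL^{\Phi}f|\,\mathrm ds$ have finite $P_{\mu^{\Phi}}$-expectation uniformly on compact time intervals — this is where $L^{\Phi}f\in L^4(W;\mu^{\Phi})$, $f^2\in D(L^{\Phi})$ and the $\mu^{\Phi}$-invariance (so $\ex_{\mu^{\Phi}}[|h|(X_s,Y_s)]=\|h\|_{L^1(\mu^{\Phi})}$ for all $s$) enter, using Cauchy–Schwarz to control $\|fL^{\Phi}f\|_{L^1(\mu^{\Phi})}\le\|f\|_{L^2(\mu^{\Phi})}\|L^{\Phi}f\|_{L^2(\mu^{\Phi})}$. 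A second, more technical point is that $f$ need not be bounded or continuous, so applying Itô's formula requires either an approximation of $f$ by elements of the core $\mathcal{A}$ in the graph norm of $L^{\Phi}$ (with the associated convergence of the martingales and additive functionals along a subsequence, using the $L^2(\mu^{\Phi})$-isometry between $\ex_{\mu^{\Phi}}[(M_t^{[f]})^2]$ and $2\int_0^t\ex_{\mu^{\Phi}}[\Gamma(f,f)(X_s,Y_s)]\,\mathrm ds$), or invoking the general Fukushima-type decomposition available for associated Hunt processes; I would take the approximation route since $\mathcal{A}$ is a core and the needed $L^2$ bounds are already in hand. All remaining steps — the Markov property, the martingale identities, the polarisation — are routine.
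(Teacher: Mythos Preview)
Your proposal is correct and follows essentially the same route as the paper: the paper states the proposition as a direct consequence of \cite[Proposition~1.4]{BBR06} and \cite[Lemma~2.1.8]{Con2011}, and you have simply unpacked this by naming the ingredients that feed into \cite[Theorem~1.1]{BBR06} (the core $\mathcal{A}$ from \Cref{lem:count_Q_Algebra} and the $\mu^{\Phi}$-nest from \Cref{prop:inf-dim-assoc-process}) and then invoking \cite[Proposition~1.4]{BBR06} for the martingale problem and \cite[Lemma~2.1.8]{Con2011} for infinite lifetime. Your discussion of the second assertion (the $N_t^{[f],L^{\Phi}}$ martingale) is more detailed than the paper, which absorbs it into the same citation, but the argument you sketch is the standard one and matches what those references provide.
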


In \cite[Chapter 5 Proposition 4.6]{KS}, it is described how to construct a weak solution to a finite dimensional stochastic differential equation starting from a (local) martingale solution.
Even though this result cannot be translated directly into our infinite dimensional setting, the considerations below are inspired by the finite dimensional one. For the definition of quadratic (co)variation and increasing processes we refer to \cite[Section 1.5]{KS}.

\begin{prop}{\cite[Proposition 5.6]{EG23}}\label{prop:inf-dim-component-solutions}
For any $i\in\N$, the real-valued processes $(X_t^i)_{t\geq 0}$ and $(Y_t^i)_{t\geq 0}$ defined by $X_t^i=(X_t,d_i)_U$ and 						$Y_t^i=(Y_t,e_i)_V$ satisfy $P_{\mu^{\Phi}}$-a.s.
\begin{equation}\label{eq:inf-dim-sde-components}
	\begin{aligned}
		X_t^i-X_0^i &= \int_0^t (Y_s,Q_2^{-1}K_{12}d_i)_V\,\mathrm{d}s \qquad\text{ and } \\
		Y_t^i-Y_0^i &= \int_0^t \left(\sum_{k=1}^{\infty}\party{k} K_{22}(Y_s)e_k,e_i\right)_V- (Y_s,Q_2^{-1}K_{22}(Y_s)e_i)_V - (X_s,Q_1^{-1}K_{21}e_i)_U \\
		&\qquad-(D\Phi(X_s),K_{21}e_i)_U\,\mathrm{d}s
		+ M_t^{[g_i],L^{\Phi}}.
	\end{aligned}
\end{equation}
Above, $(M_t^{[g_i],L^{\Phi}})_{t\geq 0}$ is a continuous $(\mathcal{F}_t)_{t\geq 0}$-martingale such that for $i,j\in\N$ the quadratic covariation of $M^{[g_i],L^{\Phi}}$ and $M^{[g_j],L^{\Phi}}$ fulfills \[[M^{[g_i],L^{\Phi}},M^{[g_j],L^{\Phi}}]_t=2\int_0^t(e_i,K_{22}(Y_s)e_j)_V\,\mathrm{d}s\quad \text{for all}\quad t\in [0,\infty).\]
\end{prop}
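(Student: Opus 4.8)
The plan is to identify, for each fixed $i\in\N$, the functions $g_i\in\mathcal{A}$ whose projections onto the $i$-th coordinates generate the processes $X^i$ and $Y^i$, and then feed these into the martingale decomposition from \Cref{prop:inf-dim-path-prop}. Concretely, by \Cref{lem:count_Q_Algebra} the countable $\Q$-algebra $\mathcal{A}$ is a core that separates points, and by a standard truncation argument (multiplying the linear coordinate functionals $w\mapsto(w,d_i)_U$ and $w\mapsto(w,e_i)_V$ by smooth cut-off functions equal to $1$ on $F_k$) one checks that the coordinate projections $g_i^{(1)}:(u,v)\mapsto(u,d_i)_U$ and $g_i^{(2)}:(u,v)\mapsto(v,e_i)_V$ lie in $D(L^{\Phi})$ with $L^{\Phi}g_i^{(1)}$ and $L^{\Phi}g_i^{(2)}$ computed directly from \Cref{def:inf_Lang_op}: $L^{\Phi}g_i^{(1)}(u,v)=(v,Q_2^{-1}K_{12}d_i)_V$ and $L^{\Phi}g_i^{(2)}(u,v)=\big(\sum_{k}\party{k}K_{22}(v)e_k,e_i\big)_V-(v,Q_2^{-1}K_{22}(v)e_i)_V-(u,Q_1^{-1}K_{21}e_i)_U-(D\Phi(u),K_{21}e_i)_U$. (Here the invariance properties of the coefficients and Assumption \nameref{ass:inf-dim-diff-bound}(ii) ensure the relevant sums are finite and $L^1$, so the cut-off approximation converges.) Since $\mathbf{M}$ solves the martingale problem for $(L^{\Phi},D(L^{\Phi}))$ with respect to $P_{\mu^{\Phi}}$, the process $M_t^{[g_i^{(\ell)}],L^{\Phi}}\defeq g_i^{(\ell)}(X_t,Y_t)-g_i^{(\ell)}(X_0,Y_0)-\int_0^t L^{\Phi}g_i^{(\ell)}(X_s,Y_s)\,\mathrm{d}s$ is an $(\mathcal{F}_t)_{t\geq0}$-martingale for $\ell=1,2$; writing out $g_i^{(\ell)}(X_t,Y_t)=X_t^i$ resp.\ $Y_t^i$ gives exactly \eqref{eq:inf-dim-sde-components}, with the additional observation that the martingale associated to $X^i$ vanishes because $\Gamma(g_i^{(1)},g_i^{(1)})=(K_{22}D_2 g_i^{(1)},D_2 g_i^{(1)})_V=0$ (as $g_i^{(1)}$ does not depend on $v$), so $X_t^i$ has zero quadratic variation and is therefore of finite variation, equalling its drift. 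I would then write $M_t^{[g_i],L^{\Phi}}$ for $M_t^{[g_i^{(2)}],L^{\Phi}}$.

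For the quadratic covariation identity, the plan is to invoke the second part of \Cref{prop:inf-dim-path-prop}: since $(g_i^{(2)})^2\in D(L^{\Phi})$ (again by cut-off approximation, using that $\|\cdot\|_V^2\in L^2(W;\mu^{\Phi})$ by \Cref{lem:Gaussianmoments}) and $L^{\Phi}g_i^{(2)}\in L^4(W;\mu^{\Phi})$ (which follows from the polynomial growth bounds on $D\Phi$, $K_{22}$ and $\party{i}K_{22}$ together with Fernique's theorem, \Cref{theo:fernique}), the process $N_t^{[g_i^{(2)}],L^{\Phi}}=(M_t^{[g_i],L^{\Phi}})^2-\int_0^t\big(L^{\Phi}((g_i^{(2)})^2)-2g_i^{(2)}L^{\Phi}g_i^{(2)}\big)(X_s,Y_s)\,\mathrm{d}s$ is a martingale. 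The integrand is precisely $2\Gamma(g_i^{(2)},g_i^{(2)})=2(K_{22}D_2g_i^{(2)},D_2g_i^{(2)})_V=2(K_{22}(v)e_i,e_i)_V$ by \Cref{cor:invariance_ASP}. By uniqueness of the Doob--Meyer decomposition this identifies $[M^{[g_i],L^{\Phi}}]_t=2\int_0^t(e_i,K_{22}(Y_s)e_i)_V\,\mathrm{d}s$, and the off-diagonal case $i\neq j$ follows by polarization: apply the same argument to $g_i^{(2)}+g_j^{(2)}$ (whose square is again in $D(L^{\Phi})$ with $L^{\Phi}$-image in $L^4$) and subtract, using bilinearity of $\Gamma$, to obtain $[M^{[g_i],L^{\Phi}},M^{[g_j],L^{\Phi}}]_t=2\int_0^t(e_i,K_{22}(Y_s)e_j)_V\,\mathrm{d}s$ for all $t\in[0,\infty)$. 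Continuity of $M^{[g_i],L^{\Phi}}$ follows from the $P_{\mu^{\Phi}}$-a.s.\ weak continuity of the paths of $\mathbf{M}$, since $g_i^{(2)}$ is weakly continuous on $W$ and the drift integral is continuous in $t$.

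The main obstacle I anticipate is the justification that the coordinate functionals and their squares genuinely lie in $D(L^{\Phi})$ with the claimed images, since these are unbounded (non-cylinder, or rather linear-growth) functions and $D(L^{\Phi})$ is defined as the closure of the operator on $\fbs{B_W}$. This is where one must carefully run the cut-off approximation: take $\chi_k\in C_b^{\infty}(\R)$ with $\chi_k=1$ on $[-k,k]$, set $f_{i,k}(u,v)\defeq\chi_k(\|(u,v)\|_W)(u,d_i)_U$ (and similarly for the $v$-component and for squares), show $f_{i,k}\in\fbs{B_W}$ after a further finite-dimensional truncation, and verify $f_{i,k}\to g_i^{(1)}$ and $L^{\Phi}f_{i,k}\to L^{\Phi}g_i^{(1)}$ in $L^2(W;\mu^{\Phi})$ using dominated convergence — the domination being supplied exactly by Assumption \nameref{ass:inf-dim-diff-bound} and the Gaussian moment bounds. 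I expect the cleanest route is to cite the analogous computation already carried out in the proof of \cite[Theorem 4.3]{EG21_Pr} (referenced in the proof of \Cref{prop:inf-dim-assoc-process}) and in \cite[Proposition 5.6]{EG23}, indicating precisely which growth conditions from \Cref{def:inf-dim-operators}, Assumption \nameref{ass:Phi_general} and Assumption \nameref{ass:inf-dim-diff-bound} are used at each step, rather than reproducing the approximation in full.
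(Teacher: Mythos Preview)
The paper does not give its own proof of this proposition; it is stated with the citation \cite[Proposition 5.6]{EG23} and no argument is supplied. Your outline is the standard and correct route to the result, and it matches the strategy in \cite{EG23}: extend the martingale problem from $\fbs{B_W}$ to the coordinate functionals via cut-off approximation, read off the drift from $L^{\Phi}$ applied to these functionals, and identify the quadratic covariation through the Carr\'e du champ and polarization. You correctly anticipate that the only real work is the $D(L^{\Phi})$-membership of the unbounded test functions, and your proposed handling (smooth cut-offs plus dominated convergence, with domination from Assumption \nameref{ass:inf-dim-diff-bound} and Gaussian moment bounds) is exactly how it is done in \cite{EG21_Pr} and \cite{EG23}.
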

			\begin{cond}{\textbf{K6}}\label{ass:weak_sol}
				There is a a non-negative function $k_{22}$ in $L^1(V;\mu_2)$ such that for all $v\in V$
				\[
				\tr[K_{22}(v)]\leq k_{22}(v).
				\]
			\end{cond}
			Before we finally state the final theorem in this chapter, it is useful to define $D(Q_2^{-1}K_{12})$, $D(Q_1^{-1}K_{21})$ and $D(Q_1^{-\theta}K_{21})$ according to \Cref{def:OU_with_pot}, as well as \[D(Q_2^{-1}K_{22})\defeq \{v\in V\mid\; K_{22}(\tilde{v})(v)\in D(Q_2^{-1})\;\text{for all}\; \tilde{v}\in V\}.\]
			
			\begin{thm}\cite[Theorem 5.9]{EG23}\label{thm:sto_ana_weak_sol_basis}
				there is a cylindrical 				Wiener process $(W_t)_{t\geq 0}$ on $V$ such that $P_{\mu^{\Phi}}$-a.s., we have for all $j\in\N$,
				%
				%
				$P_{\mu^{\Phi}}$-a.s.~for every element $\nu_1\in D(Q_2^{-1}K_{12})$ and every element $\nu_2\in D(Q_1^{-1}	K_{21})\cap D(Q_2^{-1}K_{22})\cap D(Q_1^{-\theta}K_{21})$,
				\begin{equation}\label{eq:inf-dim-sde}
					\begin{aligned}
						(X_t-X_0,\nu_1)_U &= \int_0^t (Y_s,Q_2^{-1}K_{12}\nu_1)_V\,\mathrm{d}s\quad\text{and} \\
						(Y_t-Y_0,\nu_2)_V &=\int_0^t \left(\sum_{k=1}^\infty \party{k}K_{22}(Y_s)e_k,\nu_2\right)_V
						- (Y_s,Q_2^{-1}K_{22}(Y_s)\nu_2)_V\\&\qquad-(X_s,Q_1^{-1}K_{21}\nu_2)_U-(Q_1^{\theta}D\Phi(X_s),Q_1^{-\theta}K_{21}\nu_2)_V\,\mathrm{d}s
						\\&\qquad+\left(\int_0^t \sqrt{2K_{22}(Y_s)}\,\mathrm{d}W_s,\nu_2\right)_V.
					\end{aligned}
				\end{equation}
			\end{thm}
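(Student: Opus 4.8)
The statement to be proven (\Cref{thm:sto_ana_weak_sol_basis}) asserts that the $\mu^{\Phi}$-invariant Hunt process $\mathbf{M}$ from \Cref{prop:inf-dim-path-prop} actually provides a stochastically and analytically weak solution to \eqref{eq:sde}, meaning: there is a cylindrical Wiener process $(W_t)_{t\geq 0}$ on $V$ such that the coordinate equations \eqref{eq:inf-dim-sde} hold $P_{\mu^{\Phi}}$-a.s. The overall strategy is to build the driving noise from the martingale parts obtained componentwise in \Cref{prop:inf-dim-component-solutions}, and then to pass from the scalar identities \eqref{eq:inf-dim-sde-components} to the tested identities \eqref{eq:inf-dim-sde} against general test vectors $\nu_1, \nu_2$ by a density/continuity argument. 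Since this is essentially a reproduction of \cite[Theorem 5.9]{EG23} in the present (more general potential) setting, the plan is to follow those lines, paying attention to where the weaker regularity of $\Phi$ (only $\Phi\in W^{1,2}_{Q_1^\theta}(U;\mu_1)$, gradient possibly unbounded and non-Lipschitz) enters.

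First I would recall from \Cref{prop:inf-dim-component-solutions} that for each $i\in\N$ the processes $X^i_t=(X_t,d_i)_U$ and $Y^i_t=(Y_t,e_i)_V$ satisfy \eqref{eq:inf-dim-sde-components}, where $(M^{[g_i],L^\Phi}_t)_{t\geq 0}$ is a continuous martingale with quadratic covariation $[M^{[g_i],L^\Phi},M^{[g_j],L^\Phi}]_t = 2\int_0^t (e_i,K_{22}(Y_s)e_j)_V\,\mathrm{d}s$. The construction of the cylindrical Wiener process is the standard "martingale representation / noise extraction" step: the matrix $(2K_{22}(Y_s)e_i,e_j)_V$ is the instantaneous covariance of the vector martingale $(M^{[g_i],L^\Phi})_{i}$, so one defines, on a possibly enlarged probability space, $W_t := \int_0^t \bigl(\sqrt{2K_{22}(Y_s)}\bigr)^{\dagger}\,\mathrm{d}M_s + (\text{independent noise on }\ker)$, using a measurable pseudo-inverse of $\sqrt{2K_{22}(Y_s)}$; then $(W_t)$ is a cylindrical Wiener process on $V$ by Lévy's characterization (check the covariation of the resulting process equals $t\,\mathrm{Id}$ on $V_\infty$), and conversely $M^{[g_i],L^\Phi}_t = \bigl(\int_0^t \sqrt{2K_{22}(Y_s)}\,\mathrm{d}W_s, e_i\bigr)_V$. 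This reconstructs the stochastic integral term appearing in \eqref{eq:inf-dim-sde}. Here Assumption \nameref{ass:weak_sol} ($\tr[K_{22}(v)]\leq k_{22}(v)\in L^1(V;\mu_2)$) guarantees, via $\mu^\Phi$-invariance, that $\int_0^t \tr[K_{22}(Y_s)]\,\mathrm{d}s<\infty$ a.s., so the stochastic integrals are well-defined in $V$.

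Next I would upgrade the scalar identities to the tested identities. For $\nu_1\in D(Q_2^{-1}K_{12})$ write $\nu_1 = \sum_i (\nu_1,d_i)_U d_i$; multiplying \eqref{eq:inf-dim-sde-components} (first line) by $(\nu_1,d_i)_U$ and summing, the drift becomes $\int_0^t (Y_s, Q_2^{-1}K_{12}\nu_1)_V\,\mathrm{d}s$ provided we may interchange the sum with the integral — which follows from dominated convergence using $\int_0^t \|Y_s\|_V\,\mathrm{d}s<\infty$ a.s.\ (again by $\mu^\Phi$-invariance and Fernique, \Cref{theo:fernique}) together with $\nu_1\in D(Q_2^{-1}K_{12})$. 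For $\nu_2$ one does the same with the second line of \eqref{eq:inf-dim-sde-components}; the delicate terms are $\sum_i(\nu_2,e_i)_V(D\Phi(X_s),K_{21}e_i)_U$, which must be identified with $(Q_1^\theta D\Phi(X_s), Q_1^{-\theta}K_{21}\nu_2)_V$ — this is exactly why the test vector is required to lie in $D(Q_1^{-\theta}K_{21})$, and the interchange is justified by $\int_0^t \|Q_1^\theta D\Phi(X_s)\|_U\,\mathrm{d}s<\infty$ a.s., which holds because $\Phi\in W^{1,2}_{Q_1^\theta}(U;\mu_1)$ and $\mu^\Phi$ is invariant (here the assumption $c_\theta = \|Q_1^{-\theta}K_{21}\|_{\mathcal{L}(V;U)}<\infty$, exploited in \Cref{rem:inf-dim-bounding-rho}, makes the pairing finite). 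The term $\sum_i(\nu_2,e_i)_V(\sum_k \partial_{e_k}K_{22}(Y_s)e_k, e_i)_V = (\sum_k\partial_{e_k}K_{22}(Y_s)e_k,\nu_2)_V$ is controlled using item (ii) of \nameref{ass:inf-dim-diff-bound}; the term $(Y_s,Q_2^{-1}K_{22}(Y_s)\nu_2)_V$ needs $\nu_2\in D(Q_2^{-1}K_{22})$; the $X_s$-term needs $\nu_2\in D(Q_1^{-1}K_{21})$. Finally the martingale term reassembles, by the identification above, into $(\int_0^t\sqrt{2K_{22}(Y_s)}\,\mathrm{d}W_s, \nu_2)_V$.

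The main obstacle I anticipate is the careful handling of the potential term under the weak regularity hypothesis: unlike \cite{EG23}, where $D\Phi$ was bounded so that $(D\Phi(X_s),K_{21}e_i)_U$ posed no integrability issue, here one must work with $Q_1^\theta D\Phi\in L^2(U;\mu_1)$ and verify both (a) that the $\mathcal{A}$-martingale problem solved by $\mathbf{M}$ (\Cref{prop:inf-dim-path-prop}) genuinely yields the drift $A^\Phi$ with the correct interpretation $(D\Phi, K_{21}D_2 f)_U := (Q_1^\theta D\Phi, Q_1^{-\theta}K_{21}D_2 f)_U$ from \Cref{rem:dphi_finite}, and (b) that $\int_0^t\|Q_1^\theta D\Phi(X_s)\|_U\,\mathrm{d}s<\infty$ $P_{\mu^\Phi}$-a.s., which rests on the occupation-measure bound $E_{P_{\mu^\Phi}}\int_0^t \|Q_1^\theta D\Phi(X_s)\|_U\,\mathrm{d}s = t\int_U \|Q_1^\theta D\Phi\|_U\,\mathrm{d}\mu_1^\Phi <\infty$. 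Once these finiteness statements are in place, the rest is the routine summation-and-dominated-convergence bookkeeping sketched above, and the theorem follows exactly as in \cite[Theorem 5.9]{EG23}.
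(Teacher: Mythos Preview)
Your proposal is correct and follows the natural approach. Note, however, that the paper does not give its own proof of this theorem: the result is stated with the attribution \cite[Theorem 5.9]{EG23} and no argument is supplied here, the paper merely indicating (in the opening of \Cref{sec:process}) that the appendix extends the constructions of \cite{EG21_Pr,EG23} to the present setting with possibly unbounded, non-Lipschitz gradient. Your plan---extract the cylindrical Wiener process from the martingales $M^{[g_i],L^{\Phi}}$ of \Cref{prop:inf-dim-component-solutions} via a measurable square-root/pseudo-inverse and L\'evy's characterisation (using \nameref{ass:weak_sol} for the trace bound), then pass from the coordinate identities \eqref{eq:inf-dim-sde-components} to the tested identities \eqref{eq:inf-dim-sde} by summation and dominated convergence---is exactly the route taken in \cite{EG23}, and your identification of the one genuinely new point (the potential term must be read as $(Q_1^{\theta}D\Phi(X_s),Q_1^{-\theta}K_{21}\nu_2)_U$ via \Cref{rem:dphi_finite}, with integrability coming from $\Phi\in W^{1,2}_{Q_1^{\theta}}(U;\mu_1)$ and $\mu^{\Phi}$-invariance rather than from boundedness of $D\Phi$) is precisely the adaptation needed.
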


			We end this section with an $L^2$-exponential ergodicity result for the stochastically and analytically weak solution, provided by \Cref{thm:sto_ana_weak_sol_basis}. A similar result was already established in a manifold setting in \cite[Corollary 5.2]{GROTHAUS202222}.
			\begin{cor}\cite[Corollary 6.12]{EG21_Pr}\label{ergodic} Assume that the assumptions
				\nameref{ass:qc-negative-type}-\nameref{ass:inf-dim-u-poincare}, 
				(with either \nameref{ass:inf-dim-matrix-bounded-ev} or \nameref{ass:inf-dim-matrix-bounded-ev_star}), \nameref{ass:ess-N_Phi}, 	\nameref{ass:L_Phi-convex-reg-est} and \nameref{ass:pot_app} hold true. 
				Let $\theta_1\in (1,\infty)$ and $\theta_2\in (0,\infty)$ be the constants determined by \Cref{thm:inf-dim-hypoc-applied}. If the Assumptions \nameref{ass:inf-dim-diff-bound} and \nameref{ass:weak_sol} hold true, then the process \[\mathbf{M}=(\Omega,\mathcal{F},(\mathcal{F}_t)_{t\geq 0}, (X_t,Y_t)_{t\geq 0},				(P_w)_{w\in W}),\]constructed in \Cref{prop:inf-dim-path-prop}, is a $\mu^{\Phi}$-invariant Hunt process with $P_{\mu^{\Phi}}$-a.s.~weakly continuous paths, infinite lifetime and solving Equation \eqref{eq:inf-dim-sde-components}.
				Moreover, for all $t\in (0,\infty)$ and $g\in L^2(W;\mu^{\Phi})$, it holds
				\begin{align*}
					&\quad \left\Vert\frac{1}{t}\int_0^tg(X_s,Y_s)\mathrm{d}s-\mu^{\Phi}(g)\right\Vert_{L^2(P_{\mu^{\Phi}})}\\&\leq \frac{1}{\sqrt{t}}\sqrt{\frac{2\theta_1}{\theta_2}\left(1-\frac{1}{t\theta_2}(1-e^{-t\theta_2})\right)}\norm{g-\mu^{\Phi}(g)}_{L^2(\mu^{\Phi})}.
				\end{align*}We call a solution $\mathbf{M}$ with this property $L^2$-exponentially ergodic, i.e.~ergodic with a rate that corresponds to exponential convergence of the corresponding semigroup.
			\end{cor}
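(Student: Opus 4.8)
The plan is to obtain the estimate as a direct consequence of the hypocoercive decay of $\sccs$ together with the stationarity of $\mathbf{M}$ under $P_{\mu^{\Phi}}$. First I would collect the two main inputs. Since the assumptions \nameref{ass:qc-negative-type}--\nameref{ass:inf-dim-u-poincare} (with either \nameref{ass:inf-dim-matrix-bounded-ev} or \nameref{ass:inf-dim-matrix-bounded-ev_star}), \nameref{ass:ess-N_Phi}, \nameref{ass:L_Phi-convex-reg-est} and \nameref{ass:pot_app} are in force, \Cref{thm:inf-dim-hypoc-applied} yields, for the fixed $\theta_1\in(1,\infty)$ and the associated $\theta_2\in(0,\infty)$,
\[
\norm{T_tf-\mu^{\Phi}(f)}_{L^2(\mu^{\Phi})}\leq \theta_1 e^{-\theta_2 t}\norm{f-\mu^{\Phi}(f)}_{L^2(\mu^{\Phi})},\qquad f\in L^2(W;\mu^{\Phi}),\ t\geq 0.
\]
Since in addition \nameref{ass:inf-dim-diff-bound} and \nameref{ass:weak_sol} hold, \Cref{prop:inf-dim-path-prop} (together with \Cref{prop:inf-dim-component-solutions} and \Cref{thm:sto_ana_weak_sol_basis}) provides the $\mu^{\Phi}$-invariant Hunt process $\mathbf{M}$ with $P_{\mu^{\Phi}}$-a.s.\ weakly continuous paths, infinite lifetime, associated with $\sccs$ and solving \eqref{eq:inf-dim-sde-components}. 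In particular $(X_t,Y_t)_{t\geq 0}$ is stationary under $P_{\mu^{\Phi}}$ with one-dimensional marginal $\mu^{\Phi}$, and for $g\in L^2(W;\mu^{\Phi})$ and $0\leq r\leq s$ one has $\mathbb{E}_{\mu^{\Phi}}[g(X_s,Y_s)\mid\mathcal{F}_r]=(T_{s-r}g)(X_r,Y_r)$, $P_{\mu^{\Phi}}$-a.s.

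Next I would run the standard variance computation. Fix $g\in L^2(W;\mu^{\Phi})$ and, replacing $g$ by $g-\mu^{\Phi}(g)$, assume without loss of generality $\mu^{\Phi}(g)=0$; invariance of $\mu^{\Phi}$ then also gives $\mu^{\Phi}(T_tg)=0$. By stationarity, $s\mapsto g(X_s,Y_s)$ belongs to $L^2([0,t]\times\Omega,\mathrm{d}s\otimes P_{\mu^{\Phi}})$, so Fubini and the symmetry of $(s,r)\mapsto\mathbb{E}_{\mu^{\Phi}}[g(X_s,Y_s)g(X_r,Y_r)]$ give
\[
\Bigl\Vert\tfrac1t\int_0^t g(X_s,Y_s)\,\mathrm{d}s\Bigr\Vert_{L^2(P_{\mu^{\Phi}})}^2
=\frac{2}{t^2}\int_0^t\!\!\int_0^s\mathbb{E}_{\mu^{\Phi}}\bigl[g(X_r,Y_r)(T_{s-r}g)(X_r,Y_r)\bigr]\,\mathrm{d}r\,\mathrm{d}s
=\frac{2}{t^2}\int_0^t\!\!\int_0^s\langle g,T_{s-r}g\rangle_{L^2(\mu^{\Phi})}\,\mathrm{d}r\,\mathrm{d}s.
\]
Applying Cauchy--Schwarz and the hypocoercivity bound, $\abs{\langle g,T_{s-r}g\rangle_{L^2(\mu^{\Phi})}}\leq\theta_1 e^{-\theta_2(s-r)}\norm{g}_{L^2(\mu^{\Phi})}^2$, and computing $\int_0^t\int_0^s e^{-\theta_2(s-r)}\,\mathrm{d}r\,\mathrm{d}s=\tfrac1{\theta_2}\bigl(t-\tfrac{1-e^{-\theta_2 t}}{\theta_2}\bigr)$, one gets
\[
\Bigl\Vert\tfrac1t\int_0^t g(X_s,Y_s)\,\mathrm{d}s\Bigr\Vert_{L^2(P_{\mu^{\Phi}})}^2
\leq\frac{2\theta_1}{t\theta_2}\Bigl(1-\tfrac1{t\theta_2}(1-e^{-t\theta_2})\Bigr)\norm{g}_{L^2(\mu^{\Phi})}^2,
\]
and taking square roots, together with the substitution $g\rightsquigarrow g-\mu^{\Phi}(g)$, is exactly the asserted inequality. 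The first part of the statement (that $\mathbf{M}$ is a $\mu^{\Phi}$-invariant Hunt process with weakly continuous paths, infinite lifetime and solving \eqref{eq:inf-dim-sde-components}) is just a restatement of \Cref{prop:inf-dim-path-prop}.

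The main obstacle is the rigorous identification $\mathbb{E}_{\mu^{\Phi}}[g(X_s,Y_s)\mid\mathcal{F}_r]=(T_{s-r}g)(X_r,Y_r)$ for a \emph{general} $g\in L^2(W;\mu^{\Phi})$ rather than a bounded continuous one, since only $\mu^{\Phi}$-classes of functions can be inserted into the trajectories of $\mathbf{M}$. I would handle this by exploiting that $\mathbf{M}$ is (properly) associated with $\sccs$: the identity holds for $g\in\fbs{B_W}$ by the association/martingale problem, $\fbs{B_W}$ is dense in $L^2(W;\mu^{\Phi})$, and one passes to the limit using stationarity of $(X_r,Y_r)$ under $P_{\mu^{\Phi}}$ and the $L^2(\mu^{\Phi})$-contractivity of $T_t$, which controls all the relevant $L^2(P_{\mu^{\Phi}})$-norms uniformly. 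The remaining steps — Fubini, invariance, and the elementary time integral — are routine. This argument is the multiplicative-noise counterpart of \cite[Corollary 6.12]{EG21_Pr}; the only change is that $\sccs$ and $\mathbf{M}$ are now supplied by \Cref{theo:ess_diss_phi_Lfour}, \Cref{thm:inf-dim-hypoc-applied} and \Cref{prop:inf-dim-path-prop}.
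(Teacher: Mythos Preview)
Your proposal is correct and follows exactly the approach the paper intends: the paper does not give an explicit proof but cites \cite[Corollary 6.12]{EG21_Pr}, and the surrounding remarks make clear that the argument uses only the Markov property, the association of $\sccs$ with the transition semigroup of $\mathbf{M}$, and the Cauchy--Schwarz inequality together with the hypocoercive decay --- precisely the ingredients you use. Your variance computation and time integral match the constants in the statement, and the optimality remark following the corollary (``Equality above holds, as the application of the Cauchy--Schwarz inequality is not necessary'') confirms that Cauchy--Schwarz is indeed the step producing the inequality, just as in your argument.
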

			
			\begin{rem}
				We can formulate a similar statement as in \Cref{ergodic} in terms of the right process from \Cref{rem:inf-dim-mart-prob}. Indeed for the computations in \Cref{ergodic} we only need the Markov property and that the semigroup of $(L^{\Phi},D(L^{\Phi}))$ is associated with the transition semigroup of the process.
			\end{rem}
			We end this chapter with a remark concerning the optimality of the convergence rate from \Cref{ergodic}.
			\begin{rem}From \Cref{ergodic} above, we follow, that time average converges to space average in $L^2(P_{\mu^{\Phi}})$ with rate $t^{-\frac{1}{2}}$. If the spectrum of $(L^{\Phi},D(L^{\Phi}))$ contains a negative eigenvalue $-\kappa$ with corresponding eigenvector $g$, then this rate is optimal. Indeed, by a similar reasoning as in the calculation above, we then get for all $t\in (0,\infty)$
				\begin{align*}
					\left\Vert\frac{1}{t}\int_{0}^tg(X_s,Y_s)\mathrm{d}s\right\Vert_{L^2(P_{\mu^{\Phi}})}
					=\frac{1}{\sqrt{t}}\sqrt{\frac{2}{\kappa}\left(1-\frac{1}{t\kappa}(1-e^{-t\kappa})\right)}\norm{g}_{L^2(\mu^{\Phi})}.
				\end{align*}Equality above holds, as the application of the Cauchy-Schwarz inequality is not necessary. Moreover, note that $\mu^{\Phi}(g)=\frac{1}{-\kappa}\mu^{\Phi}(L_{\Phi}g)=0$. 
			\end{rem}

			
			\begin{ack}
				The first named author gratefully acknowledges financial
				support in the form of a fellowship from the “Studienstiftung des deutschen Volkes”.
			\end{ack}
			%
			
			
			\bibliographystyle{abbrv}
			\bibliography{sources_ben.bib}
			
			
			
			
			
			
			
			
			
		\end{document}